\documentclass[a4paper, reqno]{amsart}

\usepackage{lmodern,microtype}
\usepackage{xr-hyper}
\usepackage[dvipsnames,svgnames,x11names,hyperref]{xcolor}
\usepackage[colorlinks,citecolor=Mahogany,linkcolor=Mahogany,urlcolor=Mahogany,filecolor=Mahogany]{hyperref}


\usepackage{amsmath,amstext,amssymb,amsthm,amsfonts,mathtools,mathrsfs,stmaryrd,indentfirst,bbm,units,verbatim,enumerate}
\usepackage{caption}
\captionsetup{
	font=small}

\numberwithin{equation}{section}


\newcommand{\cat}[1]{\mathsf{#1}}

\newcommand{\mr}[1]{{\rm #1}}

\usepackage{tikz,tikz-cd}


\newcommand{\CircNum}[1]{\ooalign{\hfil\raise .00ex\hbox{\scriptsize #1}\hfil\crcr\mathhexbox20D}}

\newcommand{\bA}{\mathbb{A}}

\newcommand{\bC}{\mathbb{C}}

\newcommand{\bE}{\mathbb{E}}
\newcommand{\bF}{\mathbb{F}}

\newcommand{\bL}{\mathbb{L}}

\newcommand{\bN}{\mathbb{N}}

\newcommand{\bQ}{\mathbb{Q}}
\newcommand{\bR}{\mathbb{R}}

\newcommand{\bZ}{\mathbb{Z}}

\newcommand{\gA}{\bold{A}}
\newcommand{\gB}{\bold{B}}
\newcommand{\gC}{\bold{C}}
\newcommand{\gD}{\bold{D}}
\newcommand{\gE}{\bold{E}}

\newcommand{\gI}{\bold{I}}

\newcommand{\gK}{\bold{K}}
\newcommand{\gL}{\bold{L}}
\newcommand{\gM}{\bold{M}}
\newcommand{\gN}{\bold{N}}

\newcommand{\gR}{\bold{R}}

\newcommand{\gT}{\bold{T}}

\newcommand{\gX}{\bold{X}}

\newcommand{\cC}{\mathcal{C}}

\newcommand{\cT}{\mathcal{T}}

\newcommand{\bk}{\mathbbm{k}}

\newcommand\lra{\longrightarrow}
\newcommand\lla{\longleftarrow}

\newcommand\End{\mathrm{End}}

\newcommand\colim{\operatorname*{colim}}
\newcommand\hocolim{\operatorname*{hocolim}}

\newcommand\Ker{\operatorname*{Ker}}

\newcommand{\hcoker}{/\!\!/}

\newcommand{\fS}{\mathfrak{S}}


\newcommand{\N}{\mathbb{N}}
\newcommand{\Z}{\mathbb{Z}}

\newcommand{\Hom}{\mathrm{Hom}}

\newcommand{\Aut}{\mathrm{Aut}}

\newcommand{\Simp}{\mathrm{Simp}}

\newcommand{\Cok}{\mathrm{Cok}}

\renewcommand{\epsilon}{\varepsilon}

\newcommand{\nin}{\not\in}

\newcommand{\Alg}{\cat{Alg}}

\newcommand{\R}{A} 

\mathchardef\ordinarycolon\mathcode`\:
\mathcode`\:=\string"8000
\begingroup \catcode`\:=\active
  \gdef:{\mathrel{\mathop\ordinarycolon}}
\endgroup

\usepackage{amsthm}
\theoremstyle{plain}
\newtheorem{MainThm}{Theorem}
\newtheorem{MainCor}[MainThm]{Corollary}

\newtheorem{theorem}{Theorem}[section]

\newtheorem{proposition}[theorem]{Proposition}
\newtheorem{lemma}[theorem]{Lemma}

\newtheorem{corollary}[theorem]{Corollary}
\newtheorem{conjecture}[theorem]{Conjecture}

\theoremstyle{definition}
\newtheorem{definition}[theorem]{Definition}
\newtheorem{convention}[theorem]{Convention}

\theoremstyle{remark}

\newtheorem{remark}[theorem]{Remark}
\newtheorem{example}[theorem]{Example}
\newtheorem*{remark*}{Remark}

\title[$E_\infty$-cells and general linear groups of infinite fields]{$E_\infty$-cells and general linear groups\\of infinite fields}

\author{S{\o}ren Galatius}
\address{Department of Mathematics\\
	University of Copenhagen\\
	Denmark}
\email{galatius@math.ku.dk}

\author{Alexander Kupers}
\address{Department of Computer and Mathematical Sciences \\ University of Toronto Scarborough \\ 
1265 Military Trail, Toronto, ON M1C 1A4 \\
Canada}
\email{a.kupers@utoronto.ca}

\author{Oscar Randal-Williams}
\address{Centre for Mathematical Sciences\\
 Wilberforce Road\\
 Cambridge CB3 0WB\\
 UK}
\email{o.randal-williams@dpmms.cam.ac.uk}

\date{\today}
\subjclass[2010]{18F25, 20G15, 55P48}

\begin{document}

\begin{abstract} We study the general linear groups of infinite fields (or more generally connected semi-local rings with infinite residue fields) from the perspective of $E_\infty$-algebras. We prove that there is a vanishing line of slope 2 for their $E_\infty$-homology, and analyse the groups on this line by determining all invariant bilinear forms on Steinberg modules. We deduce from this a number of consequences regarding the unstable homology of general linear groups, in particular answering questions of Rognes, Suslin, Mirzaii, and others.  
\end{abstract}

\vspace*{-4ex}

\maketitle

\thispagestyle{empty}
\enlargethispage{2ex}
\vspace{-7ex}

\renewcommand\contentsname{}

\setcounter{tocdepth}{1}
\tableofcontents

\section{Introduction}

The algebraic $K$-theory of a commutative ring $\R$ arises as the group-completion of an $E_\infty$-algebra
\[\gR \simeq \coprod_{[M]} B\mr{GL}(M),\]
where the coproduct is over isomorphism classes of finitely-generated projective $\R$-modules. In this paper we establish a fundamental new property of this $E_\infty$-algebra \emph{before} group-completion, in the case that $\R$ is an infinite field or more generally a semi-local ring with all residue fields infinite. This property concerns the grading of $\gR$ by rank, and to define this we must additionally assume that $\R$ is connected. Then there is a decomposition $\gR \simeq \coprod_{n \geq 0} \gR(n)$ with
\begin{equation*}
	\gR(n) \simeq \coprod_{\substack{[M]\\r(M)=n}} B\mr{GL}(M),
\end{equation*}
where the coproduct is now over isomorphism classes of finitely-generated projective $\R$-modules of rank $n$. As rank is additive under direct sum of $\R$-modules, this endows $\gR$ with the structure of an $\bN$-graded $E_\infty$-algebra (i.e.\ an $E_\infty$-algebra in the category $\cat{sSet}^\bN$ of functors from $\bN$, regarded as a category with only identity morphisms, to simplicial sets).

There is a homology theory for $E_\infty$-algebras, an $E_\infty$-version of Andr\'e--Quillen homology for simplicial commutative rings. We will refer to the corresponding homology groups as \emph{$E_\infty$-homology groups}; the reader should see Section \ref{sec:overview-e_k-cells} and \cite{e2cellsIv3} for more details.  It associates bigraded abelian groups $H^{E_\infty}_{n,d} (\gR)$ to the object $\gR \in \cat{Alg}_{E_\infty}(\smash{\cat{sSet}^\bN})$ described above, where the $n$-grading comes from the rank and the $d$-grading is the homological degree. In fact, these $E_\infty$-homology groups depend only on the $\bZ$-linearisation $\gR_\bZ$, considered as an $E_\infty$-algebra in the category $\smash{\cat{sMod}_\bZ^\bN}$ of $\bN$-graded simplicial $\bZ$-modules.

One interpretation of these $E_\infty$-homology groups is that they describe how to construct a minimal cellular (or even CW-) approximation to $\gR_\bZ$ within the category $\cat{Alg}_{E_\infty}(\smash{\cat{sMod}^\bN_\bZ})$, which in turn may be used to infer properties of $\pi_*(\gR_\bZ(n))$, the homology of general linear groups over $\R$. We have used a similar device to study an $E_2$-algebra constructed from mapping class groups of surfaces in \cite{e2cellsII} and to study an $E_\infty$-algebra constructed from general linear groups of finite fields in \cite{e2cellsIII}, but the main result of this paper is that in the situation described here the $E_\infty$-homology groups enjoy exceptionally strong vanishing properties:

\begin{MainThm}\label{thm:A}\ 
If $\R$ is a connected semi-local ring with infinite residue fields, then $H_{n,d}^{E_\infty}(\gR_\bZ)=0$ for $d < 2n-2$.
\end{MainThm}

In contrast, in all other situations where we know analogous vanishing results the range is of the form $d < n + \text{constant}$: the situation here is ``twice as good''. In view of the vanishing range in Theorem \ref{thm:A} it becomes important to evaluate the groups $H_{n,d}^{E_\infty}(\gR_\bZ)$ in the critical degrees $d = 2n-2$.

\begin{MainThm}\label{thm:B}\ 
If $\R$ is an infinite field, then $H_{n, 2n-2}^{E_\infty}(\gR_\bZ)$ is given by
\[\mr{Tor}_{1}^{\Gamma_\bZ[x]}(\bZ,\bZ)_n
\cong \begin{cases}
	\bZ \,\{x\} & \text{if $n=1$},\\
	\bZ/p \,\{\gamma_{p^k}(x)\} & \text{if $n = p^k$ with $p$ prime},\\
	0 & \text{otherwise},
\end{cases}\]
and in particular is finite for $n \geq 2$.
\end{MainThm}

\begin{remark*} Theorem~\ref{thm:B} would follow more generally for any connected semi-local ring with infinite residue fields, provided a suitable analogue of Theorem~\ref{thm:C} below held for such rings. We formulate this as Conjecture~\ref{conj:double-steinberg-local}.  In Section~\ref{sec:coinv-e_2-steinb} we prove this conjecture for projective modules of rank $\leq 3$, which is sufficient for some of our intended applications.\end{remark*}

\subsection{Rognes' connectivity conjecture} \label{sec:rognes-connec-intr}

These results are directly related to conjectures of Rognes about his spectrum-level rank filtration of the algebraic $K$-theory spectrum $\gK(\R)$ of a commutative ring $A$ \cite{rognesrank}.  Let us for notational reasons assume $A$ is connected and has the property that all finitely-generated projective modules are free.  Rognes' rank filtration is then an ascending exhaustive filtration
\[* \subset F_0 \gK(\R) \subset F_1 \gK(\R) \subset F_2 \gK(\R) \subset \cdots \subset \gK(\R)\]
by subspectra, and he identified the filtration quotients as
\[\frac{F_n\gK(\R)}{F_{n-1}\gK(\R)} \simeq \gD(\R^n) \hcoker \mr{GL}_n(\R),\]
the homotopy orbits of $\mr{GL}_n(\R)$ acting on a certain spectrum $\gD(\R^n)$ called the \emph{stable building}. Assuming further that $\R$ is a ring with ``many units'' (e.g.\ a semi-local ring with infinite residue fields) we will show that this is related to $E_\infty$-homology by an isomorphism
\begin{equation}\label{eq:EInftyIsStableBuilding}
	H^{E_\infty}_{n,d}(\gR_\bZ) \cong H_{d}(\gD(\R^{n}) \hcoker \mr{GL}_{n}(\R);\bZ).
\end{equation}

Based on calculations for $n \leq 3$, Rognes has conjectured that, for $\R$ a Euclidean domain or local ring, $\gD(\R^n)$ is $(2n-3)$-connected (the ``Connectivity Conjecture'', \cite[Conjecture 12.3]{rognesrank}, \cite[Conjecture 4.6.1]{RognesMot}, \cite[Conjecture 1.2]{Rognes16notes}). He also conjectured that the $\mr{GL}_n(\R)$-coinvariants of $H_{2n-2}(\gD(\R^n))$ are torsion for $n \geq 2$ \cite[Conjecture 4.6.3]{RognesMot}.

It seems that for many of the intended applications it suffices to know these results after taking homotopy orbits by $\mr{GL}_n(\R)$, and in view of \eqref{eq:EInftyIsStableBuilding} this is what we have established in Theorem \ref{thm:A}, at least in the semi-local ring case, and Theorem \ref{thm:B}, at least in the infinite field case. Since this paper appeared, Miller, Patzt, and Wilson have proved the connectivity conjecture for all fields \cite[Theorem 1.2]{MillerPatztWilson}.

\subsection{An invariant pairing on the Steinberg module}
\label{sec:bilin-form-steinb} 
The proof of Theorem~\ref{thm:B} involves a result about Steinberg modules.

When $A = \bF$ is a field and $V$ is a non-zero finite-dimensional $\bF$-vector space, the Solomon--Tits theorem asserts that the nerve of the partially ordered set of proper non-zero linear subspaces of $V$ has the homotopy type of a wedge of spheres of dimension $\dim(V) - 2$.  This nerve is the \emph{Tits building} $T(V)$, and the \emph{Steinberg module} is the reduced homology group
\begin{equation*}
  \mr{St}(V) \coloneqq \tilde{H}_{\dim(V)-2}({T}(V);\bZ).
\end{equation*}
As the top-dimensional homology of a simplicial complex, $\mr{St}(V)$ is a subgroup of the augmented simplicial chains $\smash{\widetilde{C}}_{\dim(V) - 2}(T(V))$, which is free abelian on the set of complete flags in $V$.  We therefore obtain a pairing on $\mr{St}(V)$ by restricting the pairing on simplicial chains in which the set of complete flags form an orthonormal basis. The resulting pairing
\begin{equation*}
  \langle -, -\rangle \colon \mr{St}(V) \otimes \mr{St}(V) \lra \bZ
\end{equation*}
is symmetric, bilinear, $\mr{GL}(V)$-invariant, and positive definite, and we will show that it is universal among bilinear $\mr{GL}(V)$-invariant pairings:

\begin{MainThm}\label{thm:C}
  The induced map on coinvariants $(\mr{St}(V) \otimes \mr{St}(V))_{\mr{GL}(V)} \to \bZ$
  is an isomorphism.
\end{MainThm}
There are natural homomorphisms $\mr{St}(V) \otimes \mr{St}(W) \to \mr{St}(V \oplus W)$, giving $n \mapsto (\mr{St}(\bF^n) \otimes \mr{St}(\bF^n))_{\mr{GL}_n(\bF)} \cong \bZ$ the structure of a graded-commutative ring; in Section~\ref{sec:products} we show that the $\bZ$'s in each degree assemble to a divided power algebra. Theorem \ref{thm:C} provides the input for Theorem \ref{thm:B} through the identification of the $E_2$-homology groups as
\[H^{E_2}_{n,d}(\gR_\bZ) \cong H_{d-2(n-1)}(\mr{GL}_n(\R);\mr{St}(\R^n) \otimes \mr{St}(\R^n))\]
when $\R$ is an infinite field, showing that $H^{E_2}_{n,2n-2}(\gR_\bZ) \cong \bZ$ for all $n \geq 1$. Proving Theorem \ref{thm:B} amounts to understanding how this changes when passing from $E_2$- to $E_\infty$-homology. The reader may recognise the formula in Theorem \ref{thm:B} as the indecomposables of a divided power algebra.

We prove Theorem~\ref{thm:C} in Section~\ref{sec:coinvariants-of}, and in Section~\ref{sec:indec-steinb-modul} we discuss several consequences, including the following.

\begin{MainCor}\label{thmcor:irred}
  The $\bk[\mr{GL}(V)]$-module $\bk \otimes_\bZ \mr{St}(V)$ is indecomposable, for any connected commutative ring $\bk$.
\end{MainCor}
Recall a module over a ring is irreducible if it contains no non-zero proper submodules, and indecomposable if it contains no non-zero proper summands. Since this paper first appeared, Putman and Snowden \cite[Theorem A]{PutmanSnowden} have proved that $\bk \otimes_\bZ \mr{St}(V)$ is irreducible for any field $\bk$.

\subsection{Applications to the homology of general linear groups}
\label{sec:homol-gener-line}

Theorems \ref{thm:A} and \ref{thm:B}, concerning $E_\infty$-homology, have many consequences for the unstable homology of general linear groups, especially when combined with explicit information about such homology groups in low degree and low rank. We will prove a selection of such results, chosen to display different kinds of behaviour and whose proofs exhibit various techniques for working with cellular $E_\infty$-algebras.

Firstly, in Section \ref{sec:NS} we will show how to use our methods to recover a result of Suslin \cite[Theorem 3.4]{SuslinCharClass}, Nesterenko and Suslin \cite[Theorems 2.7, 3.25]{SN} and Guin \cite[Th{\'e}or{\`e}me 2]{Guin}, asserting that for a connected semi-local ring $\R$ with infinite residue fields we have
\[H_{*}(\mr{GL}_n(\R), \mr{GL}_{n-1}(\R);\bZ) = 0 \qquad \text{for $*<n$},\] 
as well as an isomorphism
\[H_{n}(\mr{GL}_n(\R), \mr{GL}_{n-1}(\R);\bZ) \cong K_n^M(\R)\]
between relative group homology and Milnor $K$-theory, which we recall is the graded ring generated by $K_1^M(\R) = \R^\times$ subject to the relations $a \cdot b = 0 \in K_2^M(\R)$ when $a,b\in\R^\times$ satisfy $a + b = 1$. This can be thought of as an analogue of a ``secondary homological stability'' result, saying the relative homology of the stabilisation map not only vanishes in bidegrees $(n,d)$ with $d<n$ but also for $d=n$ is presented in rank $\leq 2$.

We will then extend Nesterenko and Suslin's theorem in the following way. In Section \ref{sec:NSplus1} we explain how
\[\bigoplus_{n \geq 1} H_{n+1}(\mr{GL}_n(\R),\mr{GL}_{n-1}(\R);\bQ)\]
may be made into a module over $K_*^M(\R)_\bQ \coloneqq \bQ \otimes_\bZ K_*^M(\R)$, and we then show how to generate this module efficiently. Our answer is expressed in terms of the third Harrison homology (see \cite{HarrisonCoh}) of the graded-commutative ring $K_*^M(\R)_\bQ$.

\begin{MainThm}\label{thm:Harrison}
For any connected semi-local ring $\R$ with all residue fields infinite, there is a natural homomorphism of graded $\bQ$-vector spaces
\begin{equation*}
\bigoplus_{n \geq 0} \mr{Harr}_{3}(K_*^M(\R)_\bQ)_n \lra \bQ \otimes_{K_*^M(\R)_\bQ} \bigoplus_{n \geq 0}  H_{n+1}(\mr{GL}_n(A),\mr{GL}_{n-1};\bQ),
\end{equation*}
which is an isomorphism for $n \geq 5$. If $\R$ is an infinite field then this map is an isomorphism for $n \geq 4$.
\end{MainThm}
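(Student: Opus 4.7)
The plan is to exploit the $E_\infty$-cellular framework of Section~\ref{sec:e_infty-cells} together with the slope-$2$ vanishing line, which together reduce the computation of $H_{n,n+1}(\overline{\gR}_\bQ/\sigma)$ to the determination of the ``relations among generators'' after the Milnor K-theory generators identified by Nesterenko--Suslin--Guin have been attached along the vanishing line.

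The first step is to use the identification from Section~\ref{sec:NS} that the $E_\infty$-indecomposables of $\overline{\gR}_\bQ/\sigma$ along the vanishing line are canonically $K_*^M(\R)_\bQ$. An $E_\infty$-cell attaching map from the free $E_\infty$-algebra on these generators into $\overline{\gR}_\bQ/\sigma$ is then, by the vanishing line, an isomorphism on the line itself and a surjection one row above, so it suffices to identify the kernel of the induced map on indecomposables one row above. Rationally, $E_\infty$-algebras are equivalent to commutative DGAs, so the free $E_\infty$-algebra on $K_*^M(\R)_\bQ$ becomes a free CDGA; its derived indecomposables (Andr\'e--Quillen homology) in the relevant bidegree are computed by the degree-$3$ piece of the Harrison complex of $K_*^M(\R)_\bQ$. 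Identifying the complex of ``tree-level three relations modulo shuffle relations'' with $\mathrm{Harr}_3$ produces the asserted natural homomorphism, which is a surjection in the range where the vanishing line gives full control.

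The range of isomorphism is controlled by two inputs. The slope-$2$ vanishing line forces any further corrections in bidegree $(n, n+1)$ to be trivial, so the only contribution above the line comes from the first layer of relations, which is exactly $\mathrm{Harr}_3$; for infinite fields this applies from $n \geq 4$. In the semi-local case, Conjecture~\ref{conj:double-steinberg-local} is only proven for ranks $\leq 3$ (per Section~\ref{sec:coinv-e_2-steinb}), degrading the effective range by one and yielding $n \geq 5$. The main technical obstacle is the bookkeeping in the rational $E_\infty$-cell spectral sequence: one must verify that the first nonvanishing term above the line is isomorphic to the degree-$3$ Harrison complex of $K_*^M(\R)_\bQ$, rather than to a truncation or subquotient. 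This uses formality of the rational $E_\infty$-operad together with a comparison between the commutative indecomposables of the bar construction (which compute $E_\infty$-homology rationally) and the classical Harrison complex in the appropriate weight.
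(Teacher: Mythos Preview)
Your proposal has the right ingredients in the background---the slope-$2$ vanishing line, the rational equivalence of $E_\infty$-algebras with CDGAs, and the identification of rational Andr\'e--Quillen homology with Harrison homology---but it contains a conceptual error about where $\mathrm{Harr}_3$ enters, and it is missing the structural step that produces the $K_*^M(\R)_\bQ$-module indecomposables in the target.

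The error is in the sentence ``the free $E_\infty$-algebra on $K_*^M(\R)_\bQ$ becomes a free CDGA; its derived indecomposables \ldots\ are computed by the degree-$3$ piece of the Harrison complex of $K_*^M(\R)_\bQ$.'' A \emph{free} algebra has no higher derived indecomposables; $\mathrm{Harr}_*(K_*^M(\R)_\bQ)$ is the Andr\'e--Quillen homology of the algebra $K_*^M(\R)_\bQ$ itself. Accordingly the paper does not map a free algebra into $\gR' \simeq \overline{\gR}_\bQ/\sigma$; it goes the other way, attaching $(n,d)$-cells with $d \geq n+2$ to $\gR'$ to build an $E_\infty$-algebra $\gK$ whose homology is $K_*^M(\R)_\bQ$ concentrated on the diagonal, so that $H^{E_\infty}_{n,n+2}(\gK) = \mathrm{Harr}_3(K_*^M(\R)_\bQ)_n$. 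The second gap is that you never explain how $E_\infty$-cell data computes $\bQ \otimes_{K_*^M(\R)_\bQ}(\cdots)$. The paper does this by a cofibration sequence of bar constructions identifying this tensor product with $H^{\overline{\gR}'}_{n,n+2}(\overline{\gK})$, and then a canonical multiplicative filtration (Proposition~\ref{prop:CanMultFiltOnBar}) whose associated graded is free on the relative $E_\infty$-indecomposables of $(\gK,\gR')$; the long exact sequence of that pair is what introduces $\mathrm{Harr}_3$. The isomorphism range then comes from the vanishing of $H^{E_\infty}_{n,n+1}(\gR')$ and $H^{E_\infty}_{n,n+2}(\gR')$, read off from Figure~\ref{fig:eInfHomology}: for fields the improvement to $n \geq 4$ uses Corollary~\ref{cor:einfty-hom-indec}, and your attribution of the semi-local restriction to the partial status of Conjecture~\ref{conj:double-steinberg-local} is correct in spirit.
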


\sloppy For readers unfamiliar with Harrison homology let us mention that it is a summand of Hochschild homology \cite[Theorem 1.1]{Barr}. Expressing the latter as Tor-groups, our result implies the degree $n$ part of the graded vector space $\mr{Tor}_3^{K_*^M(\R)_\bQ}(\bQ,\bQ)$ surjects onto the $K_*^M(\R)_\bQ$-module indecomposables of $\bigoplus_{n \geq 1} H_{n+1}(\mr{GL}_n(\R),\mr{GL}_{n-1}(\R);\bQ)$.

Under various concrete assumptions on the ring $\R$ our methods can give stronger results about the relative homology $H_*(\mr{GL}_n(\R),\mr{GL}_{n-1}(\R))$, going far beyond the Nesterenko--Suslin line $* = n$.  The following are three examples of such results:

\begin{MainThm}\label{thm:stab-special-assumptions}
  \mbox{}
  \begin{enumerate}[(i)]
    \item If $\R$ is a connected semi-local ring with all residue fields infinite and such that $K_2(\R)_\bQ=0$ then
    \[H_d(\mr{GL}_n(\R),\mr{GL}_{n-1}(\R);\bQ)=0\]
    in degrees $d < \tfrac{4n-1}{3}$.
		
		 \item \label{enum:p-div} If $p$ is a prime number and $\R$ is a connected semi-local ring with all residue fields infinite and such that $\R^\times \otimes \bZ/p=0$ then
    \[H_d(\mr{GL}_n(\R),\mr{GL}_{n-1}(\R);\bZ/p) = 0\]
    in degrees $d < \frac{3}{2}n$.
		
    \item \label{enum:alg-closed} If $\bF$ is an algebraically closed field then
    \[H_{d}(\mr{GL}_n(\bF), \mr{GL}_{n-1}(\bF);\bZ/p)=0\]
    in degrees $d < \tfrac{5}{3}n$, for all primes $p$.
  \end{enumerate}
\end{MainThm}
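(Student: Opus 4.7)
I would apply the $E_\infty$-cell framework of Section~\ref{sec:e_infty-cells} to the $E_\infty$-algebra $B\mathsf{P}_\R$, exploiting in each case that the additional hypotheses on $\R$ force further vanishing of $E_\infty$-indecomposables in bidegrees where they would otherwise contribute to $H_d(\mr{GL}_n(\R),\mr{GL}_{n-1}(\R);\bk)$. The main structural input is the slope-$2$ vanishing line for $E_\infty$-indecomposables of Section~\ref{sec:e_infty-cells}, combined with the two ``bottom layer" identifications: the Nesterenko-Suslin line $d=n$ is controlled by Milnor $K$-theory, and the line $d=n+1$ is controlled rationally by $\mr{Harr}_3(K_*^M(\R)_\bQ)$ via Theorem~\ref{thm:Harrison}. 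A standard bar/cell-attachment argument then converts vanishing of indecomposables into vanishing of the relative homology in the corresponding range.

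For part (i), the hypothesis $K_2(\R)_\bQ=0$ forces $K_n^M(\R)_\bQ=0$ for all $n\geq 2$ by antisymmetry and generation of $K_*^M$ by symbols of degree one, killing the $(n,n)$-bidegree cells for $n\geq 2$. Moreover the ring $K_*^M(\R)_\bQ$ reduces to the square-zero graded algebra $\bQ \oplus \R^\times_\bQ$; I would then compute $\mr{Harr}_3$ of this algebra via its bar complex to establish vanishing in the relevant range of internal degrees, killing the $(n,n+1)$-bidegree cells below the threshold. Combining this with the slope-$2$ vanishing of higher cells and tracking operadic compositions should yield the vanishing $d < \tfrac{4n-1}{3}$. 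For part (ii), the hypothesis $\R^\times \otimes \bZ/p=0$ yields $K_n^M(\R) \otimes \bZ/p=0$ for all $n\geq 1$, so all mod-$p$ Nesterenko-Suslin cells vanish; a mod-$p$ analogue of the Harrison analysis should handle the $d=n+1$ layer, giving slope $\tfrac{3}{2}$. For part (iii), algebraic closure of $\bF$ makes $\bF^\times$ divisible, so (ii) applies immediately; further use of Suslin rigidity and the divisibility of $K_*^M(\bF)$ for algebraically closed $\bF$ should kill additional cells, improving the slope to $\tfrac{5}{3}$.

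The main obstacle will be the precise vanishing analysis of the $(n,n+1)$-layer of $E_\infty$-indecomposables under each hypothesis: this requires a Harrison/Hochschild computation tailored to the special shape of $K_*^M(\R)$ in each case, and for parts (ii)--(iii) a mod-$p$ strengthening of Theorem~\ref{thm:Harrison}. One then has to carefully track how the surviving cells compose under the $E_\infty$-operad to produce homology classes, and verify that no non-trivial class below the claimed vanishing line is generated. The rest of the argument should follow by standard cell-attachment bookkeeping once the indecomposable layers are controlled.
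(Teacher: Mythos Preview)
Your proposal conflates two different objects: the relative homology $H_{n,d}(\overline{\gR}/\sigma)=H_d(\mr{GL}_n,\mr{GL}_{n-1})$, which is what you want to bound, and the $E_\infty$-indecomposables $H^{E_\infty}_{n,d}(\gR)$, which provide the cellular input. The Nesterenko--Suslin line and Theorem~\ref{thm:Harrison} are statements about the former; the slope-$2$ vanishing is about the latter. These cannot simply be ``combined'': Theorem~\ref{thm:Harrison} only identifies the $K_*^M$-module \emph{indecomposables} of $\bigoplus_n H_{n,n+1}(\overline{\gR}_\bQ/\sigma)$, and only for $n\geq 5$, so it does not by itself kill any $E_\infty$-cells. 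Conversely, the $E_\infty$-indecomposables of $\gR_\bQ$ in ranks $n=1$ (all of $H_*(\R^\times)$) and $n=2$ (the pre-Bloch group $\mathfrak{p}(\R)$ in degree $3$) are not affected by the hypothesis $K_2(\R)_\bQ=0$, so your proposed mechanism of ``killing cells below the threshold'' does not get off the ground.

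The paper's argument instead introduces, for each part, an auxiliary $E_\infty$-algebra receiving a map from $\gR'=\gR_\bk\cup^{E_\infty}_\sigma D^{1,1}$, and shows that the \emph{relative} $E_\infty$-homology of the pair has a steep vanishing line under the hypothesis. For (i) one builds $\gK$ with $H_{n,d}(\gK)=K_n^M(\R)_\bQ$ concentrated on $d=n$; the hypothesis $K_2(\R)_\bQ=0$ makes $K_*^M(\R)_\bQ$ Koszul, whence $H^{E_\infty}_{n,d}(\gK)$ lives on $d=2n-1$, and the long exact sequence against the slope-$2$ line for $\gR'$ yields $H^{E_\infty}_{n,d}(\gK,\gR')=0$ for $d-1<\tfrac{4}{3}n$. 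A skeletal spectral sequence converging to $H_{*,*}(\gK)$ then forces the claimed vanishing of $H_{*,*}(\gR')\cong H_{*,*}(\overline{\gR}_\bQ/\sigma)$. For (ii) one compares with $\gN$ (each $\gN(n)\simeq\bk$); the hypothesis $\R^\times\otimes\bZ/p=0$ wipes out the rank-$1$ column of $H^{E_\infty}_{*,*}(\gN,\gR_{\bZ/p})$ directly by universal coefficients, and no Harrison-type input is used. For (iii) the improvement to slope $\tfrac{5}{3}$ comes from additionally killing the $(n,d)=(2,4)$ entry of $H^{E_\infty}_{*,*}(\gN,\gR_{\bZ/p})$, which is (a quotient of) $\mathfrak{p}(\bF)\otimes\bZ/p$; this vanishes for algebraically closed $\bF$ by the Dupont--Sah theorem that $\mathfrak{p}(\bF)$ is divisible, not by Suslin rigidity or divisibility of $K_*^M$.
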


In particular, part \eqref{enum:alg-closed} of this theorem implies that for algebraically closed fields $H_{n+1}(\mr{GL}_n(\bF), \mr{GL}_{n-1}(\bF);\bZ/p)=0$ for all $n \geq 2$ and all primes $p$. This relates to the ``higher pre-Bloch groups'' suggested by Loday \cite[Section 4.4]{lodaycomp} and denoted $\mathfrak{p}_n(\bF)$ by Mirzaii \cite{mirzaiiclosed}. These groups are expected to be related to $H_{n+1}(\mr{GL}_n(\bF))$ in a way that specialises to the relationship between $H_3(\mr{GL}_2(\bF))$ and the pre-Bloch group $\mathfrak{p}(\bF) = \mathfrak{p}_2(\bF)$.  As we explain in Section \ref{sec:MirzYagConj}, our results imply
\[\mathfrak{p}_n(\bF) \otimes \bZ/p = \begin{cases}
\bZ/p & \text{$n$ odd,}\\
0 & \text{$n$ even,}
\end{cases}\]
which resolves \cite[Conjecture 3.5]{mirzaiiclosed}.  We also resolve a closely related conjecture made earlier by \cite[Conjecture 0.2]{yagunov}.

Finally, in Section~\ref{sec:weak-inject-conj} we prove the following result, which implies a new case of Suslin's ``injectivity conjecture.''
\begin{MainThm}\label{mainthm:WeakInj}
  If $\bF$ is an infinite field and $\bk$ is a field in which $(n-1)!$ is invertible then the stabilisation map
  \[H_n(\mr{GL}_{n-1}(\bF);\bk) \lra H_n(\mr{GL}_n(\bF);\bk)\]
  is injective.
\end{MainThm}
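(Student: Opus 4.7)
The plan is to prove the theorem by showing that the connecting homomorphism
\[\partial \colon H_{n+1}(\mr{GL}_n(\bF), \mr{GL}_{n-1}(\bF); \bk) \lra H_n(\mr{GL}_{n-1}(\bF); \bk)\]
is zero; by the long exact sequence of the pair, this is equivalent to the desired injectivity. The strategy leverages the $E_\infty$-algebra structure on $B\mathsf{P}_\bF$: the $E_\infty$-multiplication induces Pontryagin products on both absolute and relative homology, giving $\bigoplus_n H_*(\mr{GL}_n, \mr{GL}_{n-1}; \bk)$ the structure of a graded module over Milnor $K$-theory $K_*^M(\bF) \otimes \bk$, via the Nesterenko--Suslin identification used in Section~\ref{sec:NSplus1}.

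First I would identify the generators of $H_{n+1}(\mr{GL}_n, \mr{GL}_{n-1}; \bk)$ as a $K_*^M(\bF) \otimes \bk$-module. Over $\bQ$, Theorem~\ref{thm:Harrison} gives a Harrison-homology description that identifies the module indecomposables in this bidegree. Combined with the slope-$2$ vanishing line for $E_\infty$-homology of $B\mathsf{P}_\bF$ and the explicit description of classes on the Nesterenko--Suslin diagonal coming from Theorem~\ref{thm:A}, this pins down where indecomposable cells can live. Extending from $\bQ$ to a general $\bk$ with $(n-1)! \in \bk^\times$ is done by a $\Sigma_{n-1}$-symmetrisation (averaging/transfer) argument on the underlying $E_\infty$-cell chain complex; the factor $(n-1)!$ is exactly the order of the group of symmetries that must be inverted for this averaging to split off any potentially obstructing classes. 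The outcome should be that $H_{n+1}(\mr{GL}_n, \mr{GL}_{n-1}; \bk)$ is generated under the $K_*^M \otimes \bk$-module structure by classes of pure product type $\alpha \cdot \beta$ with $\alpha \in K_*^M(\bF) \otimes \bk$ of positive degree and $\beta$ of strictly lower rank.

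Having such a generating set, a Leibniz-type identity for $\partial$ with respect to the Pontryagin product finishes the proof. Indeed, the $E_\infty$-multiplication $B\mr{GL}_a \times B\mr{GL}_b \to B\mr{GL}_{a+b}$ restricts to a map of pairs $(B\mr{GL}_a, B\mr{GL}_{a-1}) \times B\mr{GL}_b \to (B\mr{GL}_{a+b}, B\mr{GL}_{a+b-1})$, so for $\xi = \alpha \cdot \beta$ with $\alpha \in H_*(\mr{GL}_a, \mr{GL}_{a-1};\bk)$ and $\beta \in H_*(\mr{GL}_b;\bk)$ one has $\partial \xi = (\partial \alpha) \cdot \beta$. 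Taking $\alpha$ to be a Milnor $K$-theory class, $\partial \alpha = 0$: Milnor $K$-theory classes stabilise (they are the permanent cycles on the Nesterenko--Suslin line), so they lift to $H_*(\mr{GL}_a;\bk)$ before passing to the pair, forcing $\partial \alpha$ to vanish. Consequently $\partial$ vanishes on every generator, and thus on all of $H_{n+1}(\mr{GL}_n, \mr{GL}_{n-1}; \bk)$.

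The main obstacle is the decomposability step: precisely controlling $H_{n+1}(\mr{GL}_n, \mr{GL}_{n-1}; \bk)$ so that genuinely indecomposable (non-product) classes either do not appear or can be averaged away. This is exactly where the hypothesis $(n-1)! \in \bk^\times$ is essential—without it, the $\Sigma_{n-1}$-symmetrisation fails, sporadic indecomposable $E_\infty$-cells can survive in this bidegree, and they can a priori contribute non-trivially to $\partial$, so the approach would not give injectivity.
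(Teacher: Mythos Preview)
Your approach---showing that the connecting homomorphism $\partial$ vanishes by exhibiting every class in $H_{n+1}(\mr{GL}_n,\mr{GL}_{n-1};\bk)$ as a $K_*^M$-multiple of a lower-rank class---is precisely the argument the paper sketches in the Remark immediately following Theorem~8.10. However, as the paper makes explicit there, this argument works only under the additional hypothesis that $K_*^M(\bF)_\bQ$ is Koszul. Theorem~\ref{thm:Harrison} identifies the $K_*^M$-module indecomposables of $\bigoplus_n H_{n,n+1}(\overline{\gR}_\bQ/\sigma)$ with $\mr{Harr}_3(K_*^M(\bF)_\bQ)$, and this Harrison homology need not vanish for $n$ large: its vanishing for $n > 3$ is exactly Koszulness, which is an open conjecture (related to Parshin--Beilinson for fields of positive characteristic, and not known in general). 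So your ``decomposability step'' genuinely fails for arbitrary infinite $\bF$, already over $\bQ$. The proposed $\fS_{n-1}$-averaging cannot repair this: the obstruction is not about passing from $\bQ$ to $\bk$, but about the existence of indecomposable classes over $\bQ$ itself.

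The paper's actual proof is substantially different. It does not attack $\partial$ directly; instead it verifies Mirzaii's coequalizer conjecture (exactness of the sequence~(8.6) coming from the diagram~(8.5)), and then invokes \cite[Proposition~4]{MirzaiiInj} to conclude injectivity by induction on $n$. Exactness of~(8.6) is proved by identifying the sequence with a portion of the $E^1$-page of the spectral sequence for the bar construction $B(\bk,\overline{\gE_\infty(S^{1,0}\otimes\bk[B\bF^\times])},\overline{\gR}_\bk)$, and then showing that the relevant $E^2$-terms vanish. This uses two inputs: a $\mr{Tor}$-vanishing argument (Proposition~8.17) built on Suslin's theorem, and a connectivity estimate for the bar construction (Proposition~8.18) coming from the $E_\infty$-homology vanishing of Figure~\ref{fig:eInfHomology}. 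The invertibility of $(n-1)!$ enters in Lemmas~8.14--8.15 and Corollary~8.16, where it is needed to control the homology of the free $E_\infty$-algebra on $B\bF^\times$ in the relevant range---quite different from the role you envisioned for it.
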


Suslin asked more generally whether the stabilisation map $H_i(\mr{GL}_{n-1}(\bF);\bQ) \to H_i(\mr{GL}_n(\bF);\bQ)$ might be injective for all infinite fields and all $i$ \cite[Problem 4.13]{Sah}, \cite[Remark 7.7]{borelyang}, \cite[Conjecture 2]{DeJeu}, \cite[Conjecture 1]{MirzaiiInj}. Our argument completes an approach of Mirzaii \cite{MirzaiiInj}.

In a different direction, in Section \ref{sec:steinberg-vanishing} we apply our methods to analyse the homology of the Steinberg module, in particular showing it vanishes in low degrees:
\begin{MainThm}\label{thm:SteinbergHomology}
If $\R$ is a connected semi-local ring with infinite residue fields, then $H_{d}(\mr{GL}_n(\R) ; \mr{St}(\R^n)) = 0$ for $d<\tfrac{1}{2}(n-1)$.
\end{MainThm}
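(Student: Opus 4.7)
The strategy is to reinterpret Steinberg homology as an $E_1$-derived indecomposable of the $E_\infty$-algebra $\overline{\gR}$ (the augmentation ideal of the chain-level $E_\infty$-algebra associated to $\bigsqcup_n B\mr{GL}_n(\R)$ under block sum), and then to invoke the slope~$2$ vanishing line for $E_\infty$-homology established in Section~\ref{sec:e_infty-cells}.

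The first step is to establish an identification of the form
\begin{equation*}
  H^{E_1}_{n,d}(\overline{\gR}) \cong H_{d - c(n)}(\mr{GL}_n(\R); \mr{St}(\R^n))
\end{equation*}
for an appropriate linear shift $c(n)$. The $E_1$-homology is computed by the bar construction $B(\overline{\gR})$, whose $p$-th simplicial level at rank $n$ is a sum over ordered compositions $(n_1, \ldots, n_p)$ of $n$ with $n_i \geq 1$ of chain complexes $C_*(B\mr{GL}_{n_1}(\R) \times \cdots \times B\mr{GL}_{n_p}(\R))$. Such compositions are in bijection with ordered proper flags in $\R^n$, i.e., the $(p-2)$-simplices of the Tits building $T(\R^n)$, and the $\mr{GL}_n(\R)$-stabilizers are the associated parabolic subgroups. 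Under the infinite-residue-field hypothesis, the Nesterenko--Suslin argument shows that each parabolic has the same homology as its Levi factor, so the bar complex recovers the hyperhomology of $\mr{GL}_n(\R)$ with coefficients in $\tilde C_*(T(\R^n))$. Solomon--Tits then yields the displayed identification.

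With this in hand, the theorem reduces to a slope-$\tfrac{3}{2}$ vanishing for $E_1$-homology of $\overline{\gR}$. To obtain this from the slope-$2$ vanishing of $E_\infty$-homology, I would filter $\overline{\gR}$ by an $E_\infty$-cell structure with all cells of bidegree $(n_0, d_0)$ satisfying $d_0 \geq 2(n_0-1)$, and compute $E_1$-homology via the associated graded. By Koszul duality, the $E_1$-homology of the free $E_\infty$-algebra on a cell of bidegree $(n_0, d_0)$ is concentrated along a line of slope $(d_0+1)/n_0$; for $n_0 \geq 2$ this slope is at least $\tfrac{3}{2}$. Summing contributions from all cells and iterated brackets, and translating through the shift $c(n)$, yields the asserted Steinberg vanishing.

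The main obstacle is in the last step: rank-$1$ $E_\infty$-cells have a vacuous slope constraint ($d_0 \geq 0$), and naive iterated brackets of such cells could appear below the slope-$\tfrac{3}{2}$ line in $E_1$-homology. I would address this by working relative to the $E_\infty$-subalgebra generated by rank $1$, essentially the free algebra on $H_*(B\R^\times)$, so that the problematic iterated brackets of rank-$1$ cells are absorbed into this subalgebra and do not contribute to the relative $E_1$-homology of $\overline{\gR}$. Carrying out this relative analysis within the $E_\infty$-cells framework of Section~\ref{sec:e_infty-cells}, and verifying that passing to the relative setting does not affect the identification with Steinberg homology in the range of interest, is the technical heart of the argument.
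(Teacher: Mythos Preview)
Your overall strategy matches the paper's: interpret Steinberg homology as $E_1$-homology via the bar construction and the Nesterenko--Suslin property, then deduce a slope-$\tfrac{3}{2}$ vanishing for $E_1$-homology from the slope-$2$ vanishing of $E_\infty$-homology using an $E_\infty$-cell structure with all cells in bidegrees $d\geq 2(n-1)$. You also correctly isolate the obstacle: the rank-$1$ generator $\sigma$ in bidegree $(1,0)$.

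There is a genuine gap in how you propose to handle this obstacle. First, a misstatement: ``by Koszul duality the $E_1$-homology of the free $E_\infty$-algebra on a cell is concentrated along a line'' is not right. One has $B^{E_1}(\gE_\infty^+(X))\simeq \gE_\infty^+(\Sigma X)$, so the $E_1$-homology of a free $E_\infty$-algebra is the full homology of another free $E_\infty$-algebra, which over $\bF_p$ is a free graded-commutative algebra on iterated Dyer--Lashof operations---not a single line. Second, the obstruction is not ``iterated brackets'' in any Lie sense but specific Dyer--Lashof classes on $\bar\sigma$. For $p\geq 5$ (and rationally) one checks directly that $H_{n,d}(\gE_\infty^+(S^{1,1}\bar\sigma);\bF_p)$ already satisfies the required vanishing, but for $p=3$ the classes $(\beta Q^1_3(\bar\sigma))^k$ with $k\geq 2$, and for $p=2$ the class $\bar\sigma^2$, lie strictly below the slope-$\tfrac{3}{2}$ line.

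The paper does \emph{not} fix this by passing to a relative setting over the rank-$1$ subalgebra; indeed the identification with Steinberg homology is absolute, and you give no argument that a relative computation recovers the absolute one in the needed range. Instead, the paper exploits the ring-specific relations $(Q^1_2)_\bZ(\sigma)=\sigma\cdot a$ and $(\beta Q^1_3)_\bZ(\sigma)=\sigma\cdot b$ (Lemma~\ref{lem:SNrelations}), which hold in $\gR_\bZ$ but not in a free $E_\infty$-algebra. These allow one to enlarge the free model by extra cells $\rho_2,\rho_3$; after applying $B^{E_1}$ one shows $\bar\rho_2$ is attached along $\bar\sigma^2$ and $\bar\rho_3$ along $\beta Q^1_3(\bar\sigma)$, and a cell-attachment spectral sequence then kills the offending classes on the $E^2$-page. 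This use of $\gR$-specific Dyer--Lashof relations is the missing ingredient in your proposal, and something like it is essential at the primes $2$ and $3$.
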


Analogous results for fields have been obtained by Ash--Putman--Sam \cite[Theorem 1.1]{ashputmansam} and Miller--Nagpal--Patzt \cite[Theorem 7.1]{millernagpalpatzt}.

\subsection*{Acknowledgements}

We thank John Rognes for comments on an earlier version.

AK and SG were supported by the European Research Council (ERC) under the European Union's Horizon 2020 research and innovation programme (grant agreement No.\ 682922).  SG was also supported by the EliteForsk prize and by the Danish National Research Foundation through the Copenhagen Centre for Geometry and Topology (DNRF151).
Part of this work was carried out while SG held visiting positions at Columbia University, and he thanks the department for its hospitality.
AK was also supported by the Danish National Research Foundation through the Centre for Symmetry and Deformation (DNRF92), by NSF grant DMS-1803766, the Natural Sciences and Engineering Research Council of Canada (NSERC) [funding reference number 512156 and 512250], as well as the Research Competitiveness Fund of the University of Toronto at Scarborough. ORW was partially supported by EPSRC grant EP/M027783/1, by the ERC under the European Union's Horizon 2020 research and innovation programme (grant agreement No.\ 756444), and by a Philip Leverhulme Prize from the Leverhulme Trust.


\section{The Steinberg module and its tensor square} \label{sec:steinberg}

We will first prove Theorem~\ref{thm:C} and some of its consequences, as the methods here are more elementary and do not use $E_\infty$-algebras. We begin by recalling some definitions.

\begin{definition}\label{defn:TitsBldg}
  The \emph{Tits building} $\cT(V)$ is the set of proper nontrivial subspaces $W \subset V$, partially ordered by inclusion.  We let $T_\bullet(V) = N_\bullet(\cT(V),\subset)$ and write $T(V) = |T_\bullet(V)|$ for its thin geometric realisation.
\end{definition}

The following is proven in \cite{solomon} for finite fields, and in \cite[Theorem 2.2]{Garland} for all fields. See \cite[Corollary 1]{KahnSun} for a proof using techniques similar to those in Section \ref{sec:local-semi-local}.

\begin{theorem}[Solomon--Tits]\label{thm:SolomonTits}
  The space $T(V)$ has the homotopy type of a wedge of $(\dim(V)-2)$-spheres.  The \emph{Steinberg module} is the $\bZ[\mr{GL}(V)]$-module
  \begin{equation*}
    \mr{St}(V) \coloneqq \widetilde{H}_{\dim(V)-2}(T(V);\bZ),
  \end{equation*}
  which restricts to a free module of rank 1 over the subring $\bZ[U]$, where $U \subset \mr{GL}(V)$ denotes the subgroup of upper unitriangular matrices with respect to a basis of $V$.\qed
\end{theorem}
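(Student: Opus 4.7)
I would prove both statements simultaneously by induction on $n = \dim V$, using a CW filtration of $T(V)$ indexed by the Bruhat decomposition.

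The base case $n = 2$ is direct: $T(V)$ is a discrete set of lines in natural bijection with $\bP^1(\bF)$; the Steinberg module is the augmentation ideal of $\bZ[\bP^1(\bF)]$, and a $\bZ$-basis is provided by $\{[\ell] - [\gen(e_1)] : \ell \neq \gen(e_1)\}$, corresponding bijectively to $U = \left\{\left(\begin{smallmatrix}1 & * \\ 0 & 1\end{smallmatrix}\right)\right\} \cong \bF$.

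For the inductive step, fix a basis $e_1, \ldots, e_n$ of $V$ compatible with $U$, and let $\Sigma \subset T(V)$ be the fundamental apartment, i.e.\ the full subcomplex on $\{\gen(e_S) : \emptyset \neq S \subsetneq [n]\}$. This is the barycentric subdivision of $\partial \Delta^{n-1}$, hence a triangulated $(n-2)$-sphere. Using the Bruhat decomposition $\mr{GL}(V)/B = \bigsqcup_{w \in \mathfrak{S}_n} UwB/B$ of the set of complete flags, I would order the chambers (top simplices) of $T(V)$ by Bruhat length and attach them one at a time. A careful shelling analysis shows that each chamber is attached either (a) along a contractible subcomplex of its boundary---a deformation retract expansion that does not change homotopy type---or (b) along its full $(n-3)$-sphere boundary, where the attaching map is null-homotopic in the existing subcomplex, so an $(n-2)$-sphere is wedged on. The Bruhat analysis shows that (b) occurs for precisely the $|U|$ chambers in the big cell $Uw_0 B/B$ (with $w_0$ the longest Weyl element), and $U$ acts freely and transitively on these chambers.

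Consequently $T(V) \simeq \bigvee_{u \in U} S^{n-2}$, which establishes the first assertion; moreover the $U$-translates of the fundamental class $[\Sigma] \in \mr{St}(V)$ form a $\bZ$-basis, which establishes the second. The main obstacle is the shelling analysis: one must check that for each non-top Bruhat chamber the already-present portion of its boundary is contractible, and that for each top chamber the attaching map is null-homotopic in the existing subcomplex. These combinatorial facts are proved by inductively applying the theorem to the smaller Tits buildings $T(W)$ and $T(V/W)$ arising as links of non-top simplices, with the join-of-spheres formula providing the required connectivity. Solomon's original argument for finite fields and Garland's extension to arbitrary fields carry out this shelling in detail, and I would follow one of those schemes.
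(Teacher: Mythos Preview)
The paper does not prove this statement: it is stated with a \qed\ and attributed to Solomon (finite fields) and Garland (arbitrary fields), with a further pointer to Kahn--Sun for a proof in the spirit of Section~\ref{sec:local-semi-local}. Your outline is a reasonable sketch of the classical Solomon--Tits shelling argument via the Bruhat decomposition, and indeed you explicitly say you would follow Solomon's or Garland's scheme---so there is no substantive divergence to discuss. One small caution: your description of case (b) (``the attaching map is null-homotopic in the existing subcomplex'') is not quite how the standard argument runs; rather, one shows that each chamber in the big cell meets the subcomplex built from the shorter cells in exactly its boundary sphere, and the wedge decomposition then follows from the fact that all these boundary spheres share the base chamber, not from null-homotopy of attaching maps. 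This is a presentational point rather than a genuine gap.
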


\begin{example}As a special case, $T(\bF^1) = \varnothing$ and $\mr{St}(\bF^1) = \bZ$ with trivial action of $\mr{GL}(\bF^1)$. Similarly, $T(\bF^2)$ is the set of lines in $\bF^2$ and $\mr{St}(\bF^2)$ is the kernel of the augmentation $\bZ\{T(\bF^2)\} \twoheadrightarrow \bZ$ with $\mr{GL}(\bF^2)$ acting by permutation on the set of lines.\end{example}

Let us describe some particular classes in the Steinberg module. For an ordered set $\gL = (L_1, L_2 , \ldots, L_n)$ of 1-dimensional subspaces giving a direct sum decomposition of $V$, there is a map $f_\gL \colon \mr{sd}(\partial \Delta^{n-1}) \to {T}(V)$ given by sending a flag of nonempty proper subsets of $\{1,2,\ldots,n\}$ to the corresponding flag of nonzero proper subspaces of $V$. The image $a_\gL$ of the fundamental class of the barycentric subdivision $\mr{sd}(\partial \Delta^{n-1})$ of the boundary of an $(n-1)$-simplex
under $(f_\gL)_*$ is called the \emph{apartment} associated to $\gL$, and is an element of $\mr{St}(V)$. It is well-known (e.g.~\cite[Theorem 8.5.2]{borelserre}) that the apartments span $\mr{St}(V)$.

\subsection{Pairings on Steinberg modules}
\label{sec:coinvariants-of}

Let us choose bases and identify $V \cong \bF^n$. In the introduction we have described a positive definite symmetric bilinear form
\[\langle -, -\rangle \colon \mr{St}(\bF^n) \otimes \mr{St}(\bF^n) \lra \bZ\]
which is $\mr{GL}_n(\bF)$-invariant, and therefore induces a map
\[(\mr{St}(\bF^n) \otimes \mr{St}(\bF^n))_{\mr{GL}_n(\bF)} \lra \bZ\]
on coinvariants. We must show that this is an isomorphism as long as $n \geq 1$. Let us assume for the moment that the coinvariants are cyclic, and explain how to deduce the rest of the theorem.

\begin{proof}[Proof of Theorem \ref{thm:C} assuming cyclicity]
It remains to show that $\langle -, -\rangle \colon \mr{St}(\bF^n) \otimes \mr{St}(\bF^n) \to \bZ$ is surjective for $n \geq 1$, which we do using the theory of apartments as described above. Consider the decomposition $\gL = (L_1, L_2, \ldots, L_n)$ with $L_i = \mr{span}(e_i)$, and the decomposition $\gL' = (L_1', L_2', \ldots, L_n')$ with $L'_1 = L_1$ and $L'_i = \mr{span}(e_i + e_{i-1})$ for $i=2,3,\ldots,  n$. The corresponding maps $f_\gL, f_{\gL'} \colon \mr{sd}(\partial \Delta^{n-1}) \to T(\bF^n)$ defining the apartments $a_\gL$ and $a_{\gL'}$ share precisely one $(n-2)$-simplex and hence we have $\langle a_\gL, a_{\gL'}\rangle=1$.
\end{proof}

To prove that the coinvariants are indeed cyclic we shall use
the 
Lee--Szczarba presentation (\cite[Section 3]{leeszczarba}, see also Section~\ref{sec:local-semi-local} below):
\begin{equation*}
\bZ X_1 \overset{\partial}\lra \bZ X_0 \lra \mr{St}(\bF^n) \lra 0.
\end{equation*}
Here $X_0$ is the set of bases for $\bF^n$, where we shall identify $\phi = (\phi_1, \ldots, \phi_n)$ with the matrix $\phi \in \mr{GL}_n(\bF)$ whose $i$th column is $\phi_i$. Similarly, $X_1$ is the set of spanning $(n+1)$-tuples of non-zero vectors in $\bF^n$. Thus this presentation says that $\mr{St}(\bF^n)$ is generated over $\bZ$ by symbols $[\phi]$ with $\phi \in X_0$ subject to certain relations arising from $X_1$. The generator $[\phi]$ corresponds to the apartment class from the direct sum decomposition of $\bF^n$ into the spans of the columns of the matrix $\phi$.

In particular, for any $\phi = (\phi_1, \ldots, \phi_n) \in X_0$ we have
elements of $X_1$ given by
\begin{align*}
&(\phi_1, \ldots, \phi_i,\phi_{i+1}, \phi_i, \phi_{i+2}, \ldots,
\phi_n)\\
& (\phi_1, \ldots, \phi_i, \alpha \phi_i, \phi_{i+1}, \ldots, \phi_n)\\
&(\phi_1, \ldots, \phi_i, \phi_i + \phi_{i+1}, \phi_{i+1},\ldots, \phi_n).
\end{align*}
These give the following three relations:
\begin{enumerate}[(I)]
	\item Column permutations: the first of these three elements of $X_1$ gives rise to the relation that we may swap adjacent columns of $\phi$ if we simultaneously change the sign of the generator $[\phi]$.  By induction we get the relation
\begin{equation*}
[\phi_{\sigma(1)}, \ldots, \phi_{\sigma(n)}] =
\mathrm{sign}(\sigma)
[\phi_1, \ldots, \phi_n]
\end{equation*}
for any permutation $\sigma$.  
	\item Column scalings: the second element of $X_1$, in which
$\alpha \in \bF^\times$, allows us to multiply any column of the matrix
$\phi$ by a non-zero element.  
	\item Column addition: the third element of $X_1$ gives the relation
\begin{align*}
[\phi_1, \ldots, \phi_n] = {} & {} [\phi_1, \ldots, \phi_{i-1},
\phi_i+\phi_{i+1} ,\phi_{i+1}, \phi_{i+2}, \ldots, \phi_n]\\
&+ [\phi_1, \ldots,
\phi_{i-1}, \phi_i, \phi_i+\phi_{i+1},
\phi_{i+2}, \ldots, \phi_n].
\end{align*}
\end{enumerate}

Combining these, we see that the symbol $[\phi] \in \mr{St}(\bF^n)$
is subject to the usual column operations from linear algebra applied to
the matrix $\phi \in \mr{GL}_n(\bF)$, except that ``column addition'' between
the $i$th and $j$th column is symmetric in $i$ and $j$: we must add the
$i$th column to the $j$th and \emph{simultaneously} the $j$th to the
$i$th, and then take the formal sum of the two
resulting matrices.

\begin{proof}[Proof of cyclicity]
By tensoring together two copies of the Lee--Szczarba resolution, we obtain an exact sequence
	\begin{equation*}
	\bZ[(X_1 \times X_0) \amalg (X_0 \times X_1)] \xrightarrow{\partial
		\otimes 1 + 1 \otimes \partial} \bZ[X_0 \times X_0]
	\to \mr{St}(\bF^n) \otimes \mr{St}(\bF^n) \to 0
	\end{equation*}
	of left $\bZ[\mr{GL}_n(\bF)]$-modules.  Hence
	$\mr{St}(\bF^n) \otimes \mr{St}(\bF^n)$ is generated over $\bZ$ by
	symbols $[\phi] \otimes [\psi]$ with $\phi,\psi \in \mr{GL}_n(\bF)$, each
	subject to the ``column operation'' relations above.  In the
	coinvariants we additionally have the relation
	\begin{equation*}
	[\phi] \otimes [\psi] = [1] \otimes [\phi^{-1} \psi],
	\end{equation*}
	using $[1]$ as shorthand for $[\mr{id}_n]$, with $\mr{id}_n$ the identity $(n \times n)$-matrix.
	
	We deduce from this that
	$(\mr{St}(\bF^n) \otimes \mr{St}(\bF^n))_{\mr{GL}_n(\bF)}$ is generated by
	symbols $[1] \otimes [\phi]$ for matrices $\phi \in \mr{GL}_n(\bF)$,
	subject to the following two types of relations.  The first is the
	column operations described above.  The second is that we allow
	\emph{row} operations in a similar fashion (where ``row
	addition'' is symmetrised in the same way as before, resulting in
	the formal sum of the results of adding the $i$th row to the
	$j$th and the $j$th to the $i$th).  Actually, column addition in $\phi$ becomes row \emph{subtraction} in $\phi^{-1}\psi$, but row addition may be achieved by combining row subtraction with scaling of rows by $-1$.  The proof of cyclicity of the
	coinvariants is finished in the following two steps.
		
\vspace{1ex}
	
	\noindent\textbf{Claim 1.} $(\mr{St}(\bF^n) \otimes \mr{St}(\bF^n))_{\mr{GL}_n(\bF)}$
	is generated as an abelian group by symbols $[1] \otimes [\phi]$
	where $\phi \in \mr{GL}_n(\bF)$ is a matrix with entries $\phi_{i,i} = 1$
	for all $i = 1, \ldots, n$, $\phi_{i,i+1} \in \{0,1\}$ for all
	$i = 1, \ldots, n-1$, and all other entries zero. That is, it is a matrix in
	Jordan form with 1's on the diagonal, such as
	\[\begin{bmatrix} 1 & 1 & 0 & 0 \\
	0 & 1 & 0 & 0 \\
	0 & 0 & 1 & 1 \\
	0 & 0 & 0 & 1 \end{bmatrix}.\]
	
	\vspace{1ex}

	\noindent\textbf{Claim 2.} If $\phi \in \mr{GL}_n(\bF)$ is a matrix in Jordan form with 1's on the diagonal, then the class $[1] \otimes [\phi] \in (\mr{St}(\bF^n) \otimes \mr{St}(\bF^n))_{\mr{GL}_n(\bF)}$ is an integer multiple of $[1] \otimes [J_n]$,
	where $J_n$ is the matrix with entries $\phi_{i,i} = 1$ for all
	$i = 1, \ldots, n$, $\phi_{i,i+1} = 1$ for all $i = 1, \ldots, n-1$,
	and all other entries zero. That is, $J_n$ is a single Jordan block of
	size $n \times n$, such as
	\[J_4 = \begin{bmatrix} 1 & 1 & 0 & 0 \\
	0 & 1 & 1 & 0 \\
	0 & 0 & 1 & 1 \\
	0 & 0 & 0 & 1 \end{bmatrix}.\]
	
	\vspace{1ex}
	
	These claims together imply cyclicity.  They are proved separately below.
\end{proof}

\begin{proof}[Proof of Claim 1]
	We first explain how rewrite an arbitrary generator
	$[1] \otimes [\phi]$ as an integral linear combination of generators
	in which the last row of $\phi$ is of the form $(0,\ldots, 0,1)$ and
	the last column is of the form $(0,\ldots, 0,\epsilon,1)$ with
	$\epsilon \in \{0,1\}$, such as
	\[\begin{bmatrix} * & * & * & 0 \\
	* & * & * & 0 \\
	* & * & * & \epsilon \\
	0 & 0 & 0 & 1 \end{bmatrix},\]
using all available column operations but only row operations which do not involve the last row.	

If the last row has a single non-zero entry, we may scale its column and then permute columns so that the last row is of the desired form. Otherwise we may use column scalings
	to arrange that the last row has one entry 1 and one entry $-1$, so that the
	sum of the corresponding columns has last entry 0.  Using column
	addition between these two columns we obtain a relation
	$[1] \otimes [\phi] = [1] \otimes [\phi'] + [1] \otimes [\phi'']$
	where $\phi'$ and $\phi''$ have strictly more zeros in the last row
	than $\phi$ did.  By induction we obtain a relation
	$[1] \otimes [\phi] = \sum_i [1] \otimes [\phi_i]$ where the last
	row of each $\phi_i$ has precisely one non-zero entry. Proceeding as in the first sentence of this paragraph 
	we see that the coinvariants are
	generated by those $[1] \otimes [\phi]$ where the last row of $\phi$
	is $(0,\ldots, 0,1)$.  
	
	If $\phi$ is of this form we may apply the
	same argument with the roles of columns and rows swapped: by
	induction on the number of zeros in the last column we may use
	\emph{row} operations among the first $(n-1)$ rows to decrease the
	number of non-zero entries in the last column.  Without using the
	last row we achieve that no more than two entries are non-zero,
	i.e.\ we have a relation
	$[1] \otimes [\phi] = \sum_i [1] \otimes [\phi_i]$ where the last
	column of each $\phi_i$ has at most two non-zero entries.  One of
	these must be the last entry, so (after scaling and swapping among
	the first $(n-1)$ rows) the last column of each $\phi_i$ is either
	$(0,\ldots, 0,1)$ or (after scaling rows) of the form
	$(0,\ldots 0, 1,1)$.
	
	This argument takes care of the last row and last column, using all
	column operations but only row operations not involving the last
	row.  Hence it may be applied to the upper left $(n-1) \times (n-1)$
	block of $\phi \in \mr{GL}_n(\bF)$ without making modifications to the last row and
	column.  By induction we rewrite an arbitrary generator $[1] \otimes
	[\phi]$ as a linear combination of generators where $\phi$ is in the
	desired Jordan block form.
\end{proof}

\begin{proof}[Proof of Claim 2]
	If we write $J_{a,b} = \mathrm{diag}(J_a,J_b)$ with $a+b = n$, it
	suffices to explain how to use the relations to rewrite
	$[1] \otimes [J_{a,b}]$ as a multiple of $[1] \otimes [J_{a+b}]$.
	In the case where $\phi$ has more than two Jordan blocks we just
	repeatedly combine two of them into one.
	
	To this end, let us write $J_{a,b}(i)$ for the matrix obtained from
	$J_{a,b}$ by adding a 1 in the $(a+1)$st column and $i$th row, for
	$i = 1,\ldots, a$, such as
	\[J_{2,2} = \begin{bmatrix} 1 & 1 & 0 & 0 \\
	0 & 1 & 0 & 0 \\
	0 & 0 & 1 & 1 \\
	0 & 0 & 0 & 1 \end{bmatrix},\]
	and 
	\[J_{2,2}(1) = \begin{bmatrix} 1 & 1 & 1 & 0 \\
	0 & 1 & 0 & 0 \\
	0 & 0 & 1 & 1 \\
	0 & 0 & 0 & 1 \end{bmatrix}, \qquad J_{2,2}(2) = \begin{bmatrix} 1 & 1 & 0 & 0 \\
	0 & 1 & 1 & 0 \\
	0 & 0 & 1 & 1 \\
	0 & 0 & 0 & 1 \end{bmatrix} = J_4.\]
	Then clearly $J_{a,b}(a) = J_{a+b}$ and we shall
	also write $J_{a,b}(0) = J_{a,b}$.  
	
	We claim the relation
	\begin{equation}
	\label{eq:8}
	[1] \otimes [J_{a,b}(i)] = [1] \otimes [J_{a,b}(i+1)] +  [1]
	\otimes [J_{b+i,a-i}(i+1)]
	\end{equation}
	holds for all $i = 0, \ldots, a-1$.  To see this, first scale rows
	$i+1, \ldots ,a$ and columns $i+1,\ldots ,a$ of $J_{a,b}(i)$ by $-1$, with
	the effect of changing the sign of the entry in the $i$th row and
	$(i+1)$st column.  Then perform a column addition operation between
	the $(i+1)$st and the $(a+1)$st columns.  The result is two terms, the
	first of which becomes $J_{a,b}(i+1)$ after scaling rows
	$1,\ldots, i$ and columns $1,\ldots, i$ by $-1$, the second of which
	becomes $J_{b+i,a-i}(i+1)$ after conjugating by the matrix
	\begin{equation*}
	\begin{bmatrix}
	 \mr{id}_i & 0 & 0\\
	0 & 0 &  \mr{id}_{a-i}\\
	0 &  \mr{id}_b & 0
	\end{bmatrix},
	\end{equation*}
	where $\mr{id}_k$ denotes the identity $(k \times k)$-matrix. 
	Since this is a permutation matrix the conjugation may be achieved
	by permuting rows and columns.
	
	We leave it to the reader to verify
	that these arguments apply also in the case $i=0$ (for example by
	taking the above proof for $J_{a+1,b}(1)$ and deleting the first row
	and column, which are not moved during the proof).  We have finished
	the proof of the relation~(\ref{eq:8}), which by induction implies
	that $[1] \otimes [J_{a,b}]$ is a multiple of
	$[1] \otimes [J_{a+b}]$.  (In fact the multiple can easily be seen
	to be the binomial coefficient $\binom{a+b}a$.)
\end{proof}

\subsection{Anisotropy and indecomposability}
\label{sec:indec-steinb-modul}

For any commutative ring $\bk$ we have an induced $\bk[\mr{GL}(V)]$-module $\mr{St}_\bk(V) = \bk \otimes_\bZ \mr{St}(V)$, and we extend the pairing above to a $\bk$-bilinear pairing
\[\langle-, -\rangle_\bk\colon \mr{St}_\bk(V) \times \mr{St}_\bk(V) \lra \bk.\]

Let us record some properties of this pairing.
\begin{theorem}\label{thmmaincor:B}\mbox{}
  \begin{enumerate}[(i)]
  \item\label{it:Bi} The pairing induces an isomorphism $(\mr{St}_\bk(V) \otimes_\bk \mr{St}_\bk(V))_{\mr{GL}(V)} \overset{\sim}\to \bk$.
  \item\label{it:Bii} The set of $\mr{GL}(V)$-invariant $\bk$-bilinear pairings $b \colon \mr{St}_\bk(V) \times \mr{St}_\bk(V) \to \bk$ forms a free $\bk$-module of rank 1, with the pairing $\langle-,-\rangle_\bk$ as basis.  In particular all invariant forms are symmetric.
  \item\label{it:Biii} If $\bk$ is an ordered field the pairing is positive definite.  More generally, if $\bk$ is a ring in which 0 is not a non-trivial sum of squares, then the pairing is anisotropic.  In either case, the restriction to any $\bk$-linear submodule $A \subset \mr{St}_\bk(V)$ also induces an injection $A \to A^\vee$.
  \item\label{it:adjInj} If $\bF$ is infinite then the adjoint $\mr{St}_\bk(V) \to \mr{St}_\bk(V)^\vee = \Hom_\bk(\mr{St}_\bk(V),\bk)$ is injective.
  \end{enumerate}
\end{theorem}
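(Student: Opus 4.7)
The plan is to deduce all four parts from Theorem~\ref{thm:A} together with standard homological algebra, with part~(\ref{it:adjInj}) requiring a Zariski density argument to exploit the hypothesis that $\bF$ is infinite.

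For part~(\ref{it:Bi}), I would observe that $\mr{GL}(V)$-coinvariants and the base change $-\otimes_\bZ \bk$ are right-exact and commute, so Theorem~\ref{thm:A} tensored with $\bk$ gives $(\mr{St}_\bk(V) \otimes_\bk \mr{St}_\bk(V))_{\mr{GL}(V)} \cong \bk$. For part~(\ref{it:Bii}), applying $\Hom_\bk(-,\bk)$ identifies invariant $\bk$-bilinear forms on $\mr{St}_\bk(V)$ with $\Hom_\bk(\bk,\bk) = \bk$, so every invariant form is a $\bk$-multiple of $\langle-,-\rangle_\bk$, hence symmetric.

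For part~(\ref{it:Biii}), I would use that $\mr{St}(V)$ is the kernel of the boundary $\partial \colon \widetilde{C}_{\dim V-2}(T(V)) \to \widetilde{C}_{\dim V - 3}(T(V))$, and its image $\mathrm{Im}(\partial)$ is torsion-free as a subgroup of a free abelian group. Thus the inclusion remains injective after tensoring with $\bk$, giving $\mr{St}_\bk(V) \hookrightarrow \widetilde{C}_{\dim V - 2}(T(V); \bk)$. On the ambient module the pairing is the ``sum of squares'' form for the orthonormal basis of complete flags, so the hypothesis on $\bk$ forces $\langle x,x\rangle \neq 0$ for any nonzero $x$ in any submodule $A$; this anisotropy immediately implies injectivity of $A \to A^\vee$.

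Part~(\ref{it:adjInj}) is the main obstacle: when $\bk$ admits $0$ as a nontrivial sum of squares (e.g.\ $\bk = \bF_p$) the anisotropy argument of~(\ref{it:Biii}) fails. The strategy is to detect any nonzero $x \in \mr{St}_\bk(V)$ by pairing against a carefully chosen apartment. Writing $x = \sum_f c_f f$ in $\widetilde{C}_{\dim V-2}(T(V); \bk)$ with nonempty support $S = \{f : c_f \neq 0\}$, fix $f_0 \in S$. Recall that the apartment $a_\gL$ decomposes as $\sum_{\sigma \in \mathfrak{S}_n} \mathrm{sign}(\sigma) f_{\gL,\sigma}$, where $f_{\gL,\sigma}$ is the complete flag with $k$-th term $L_{\sigma(1)} + \cdots + L_{\sigma(k)}$. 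I would find a decomposition $\gL$ refining $f_0$ (so that $f_{\gL,\mathrm{id}} = f_0$) with the property that $f_{\gL,\sigma} \notin S$ for every $\sigma \neq \mathrm{id}$; then $\langle x, a_\gL\rangle_\bk$ collapses to the single term $c_{f_0} \neq 0$. The decompositions refining $f_0$ form an affine space $\bA^{\binom{n}{2}}$ over $\bF$, and for each $g \in S$ and each $\sigma \neq \mathrm{id}$ the condition $f_{\gL,\sigma} = g$ cuts out a proper Zariski-closed subset: letting $k$ be the smallest index with $\sigma(k) \neq k$ and writing $m \coloneqq \sigma(k)$, the $k$-th subspace of $f_{\gL,\sigma}$ equals $V_{k-1} + L_m$, which genuinely depends on $L_m$. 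Since $\bF$ is infinite, the complement of finitely many such proper closed subsets in $\bA^{\binom{n}{2}}$ has an $\bF$-rational point, yielding the required $\gL$.
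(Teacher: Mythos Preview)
Your proof is correct and follows essentially the same approach as the paper: parts (\ref{it:Bi})--(\ref{it:Biii}) are handled identically, and for part (\ref{it:adjInj}) both you and the paper find an apartment meeting the support of $x$ only at the chosen flag $f_0$, so that the pairing picks out the single coefficient $c_{f_0}$. The only difference is packaging: you parametrise refinements of $f_0$ as $\bA^{\binom{n}{2}}$ and invoke Zariski density to avoid the finitely many bad loci, whereas the paper constructs the lines $L_r$ inductively by choosing $v_r \in F_r \setminus (F_{r-1} \cup \bigcup_\alpha F^\alpha_{r-1})$ and then argues directly that $f_{\gL,\sigma} = F^\alpha$ forces $\sigma(r) \leq r$ for all $r$; both rest on the same fact that an infinite field lets one avoid finitely many proper closed conditions.
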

\begin{remark}
In (\ref{it:adjInj}) the assumption that $\bF$ is infinite cannot be removed: one may verify by hand that the conclusion is false for $\bF=\bF_2$, $V=\bF_2^2$, and $\bk=\bZ/3$.
\end{remark}
 
Recall that a symmetric $\bk$-bilinear pairing $b\colon M \times M \to \bk$ on a $\bk$-module $M$ is called non-degenerate if the adjoint $M \to M^\vee = \Hom_\bk(M,\bk)$ is injective.  It is called \emph{anisotropic} if $b(x,x) \neq 0$ for any $x \in M\setminus \{0\}$.  Anisotropy implies non-degeneracy, but the stronger notion has the advantage of passing to submodules: in fact, anisotropy is equivalent to the restrictions $M' \times M' \to \bk$ being non-degenerate for all submodules $M' \subset M$.  If $\bk$ is an ordered field, then any positive definite form is anisotropic (but anisotropy is more general, e.g., the quadratic form $x^2 - 2 y^2$ over the ordered field $\bQ$ is anisotropic but not positive definite).

\begin{proof}[Proof of Theorem~\ref{thmmaincor:B}]
  It is clear that
  \begin{equation*}
    (\mr{St}_\bk(V) \otimes_\bk \mr{St}_\bk(V))_{\mr{GL}(V)} = \bk \otimes_\bZ (\mr{St}(V) \otimes \mr{St}(V))_{\mr{GL}(V)},
  \end{equation*}
  so (\ref{it:Bi}) and (\ref{it:Bii}) follow from Theorem~\ref{thm:C}.

  For (\ref{it:Biii}), recall that $\mr{St}(V) = \mr{ker}[\partial \colon \widetilde{C}_{\dim(V) - 2}(T(V)) \to \widetilde{C}_{\dim(V) - 3}(T(V))]$ and that the pairing on $\mr{St}_\bk(V)$ is the restriction of the chain level pairing in which the set of full flags forms an orthonormal basis.  If $x = \sum_F a_F \cdot F$ is a finite sum of full flags $F$ with coefficients $a_F \in \bk$, then $\langle x, x\rangle_\bk = \sum_F a_F^2$.  If $x \neq 0$ then this self-pairing is positive if $\bk$ is an ordered field, and non-zero if no non-trivial sum of squares is zero in $\bk$.

  For (\ref{it:adjInj}) we first establish the following claim. Recall that top-dimensional simplices of $T(V)$ correspond to full flags in $V$; we write $F$ for a full flag $0 \subset F_1 \subset F_2 \subset \cdots \subset F_n = V$ where $\dim(F_i) = i$. The apartment corresponding to a splitting of $V$ into 1-dimensional subspaces $L_1, L_2, \ldots, L_n$ consists of those full flags which may be
    obtained as partial sums of the $L_i$ in some order.

\vspace{1ex}

\noindent \textbf{Claim.} Assume $\bF$ is infinite. For a full flag $F$ and a finite set of full flags $\{F^\alpha\}_{\alpha \in I}$ distinct from $F$, there is an apartment of $T(V)$ having $F$ as a face and having no $F^\alpha$ as a face.
\begin{proof}[Proof of claim]
For each $r$ consider the set
\[V_r \coloneqq F_r \setminus \left (F_{r-1} \cup \bigcup_{\alpha \in I} F^\alpha_{r-1}\right).\]
This is the complement in $F_r$ of finitely-many proper subspaces, so is non-empty under our assumption that $\bF$ is infinite.

Choose elements $v_r \in V_r$, and let $L_r \coloneqq \mr{span}(v_r)$. As $v_r \nin F_{r-1}$ we have $F_r = L_1 \oplus \cdots \oplus L_r$, so the apartment given by this splitting has the flag $F$ as a face. If this apartment contained $F^\alpha$ as a face then, for some permutation $\sigma$, we would have $F^\alpha_r = L_{\sigma(1)} \oplus L_{\sigma(2)} \oplus \cdots \oplus L_{\sigma(r)}$ for each $r$. Thus $v_{\sigma(r)} \in F_r^\alpha$ for each $r$, so by definition of the sets $V_r$ we have $r \geq \sigma(r)$ for each $r$, but then $\sigma$ must be the identity permutation and so $F^\alpha=F$, a contradiction.
\end{proof}

To finish the proof of (\ref{it:adjInj}) let $x \in \mr{St}_\bk(V)$ be nonzero: it is then a finite non-trivial $\bk$-linear sum of full flags, and we let $F$ be a full flag having non-zero coefficient $k \in \bk$ and $\{F^\alpha\}_{\alpha \in I}$ be the remaining full flags arising in this sum. The claim provides an apartment $a$ containing $F$ but not containing any $F^\alpha$, but then the definition of the pairing shows that $\langle x, a \rangle = k \neq 0$ so $x$ is not in the kernel of the adjoint map.
\end{proof}

Let us explain how to use the bilinear pairing to prove that the $\bk[\mr{GL}(V)]$-module $\mr{St}_\bk(V)$ is indecomposable for any connected commutative ring $\bk$ and any finite-dimensional vector space $V$ over a field $\bF$.    The irreducibility/indecomposability question for Steinberg modules of infinite fields was also asked by A.~Putman \cite{Putman-MO}.  The connection between Theorem~\ref{thm:C} and irreducibility was pointed out to us by A.~Venkatesh. 

\begin{proof}[Proof of Corollary~\ref{thmcor:irred}]
  The $\bk$-linear map $\mr{St}_\bk(V) \to \mr{St}_\bk(V)^\vee = \Hom_\bk(\mr{St}_\bk(V),\bk)$ adjoint to the bilinear form is injective by Theorem \ref{thmmaincor:B} (ii).  Applying the left exact functor $\Hom_\bk(\mr{St}_\bk(V),-)$ gives an injective homomorphism
  \begin{align*}
    \Hom_\bk(\mr{St}_\bk(V),\mr{St}_\bk(V)) \hookrightarrow {} &{} \Hom_\bk(\mr{St}_\bk(V),\mr{St}_\bk(V)^\vee)\\
    & \cong  \Hom_\bk(\mr{St}_\bk(V) \otimes_\bk \mr{St}_\bk(V),\bk),
  \end{align*}
  sending an endomorphism $A$ to the bilinear form $x \otimes y \mapsto \langle Ax,y \rangle$.  This injective homomorphism is $\mr{GL}(V)$-equivariant when that group acts by conjugation in the domain and by dualising the diagonal action on $\mr{St}_\bk(V) \otimes_\bk \mr{St}_\bk(V)$ in the codomain.  Passing to fixed points we get an injective $\bk$-linear homomorphism
  \begin{equation*}
    \End_{\bk}(\mr{St}_\bk(V))^{\mr{GL}(V)} \to \Hom_\bk(\mr{St}_\bk(V) \otimes_\bk \mr{St}_\bk(V),\bk)^{\mr{GL}(V)}.
  \end{equation*}
  We have seen that the codomain is a free $\bk$-module of rank one, and since $1 \in \End_{\bk[\mr{GL}(V)]}(\mr{St}_\bk(V))$ is sent to $\langle-,-\rangle_\bk$, it follows that the identity gives an isomorphism
  \begin{equation*}
    \bk \to \End_\bk(\mr{St}_\bk(V))^{\mr{GL}(V)}.
  \end{equation*}
  In particular if the ring $\bk$ is connected (i.e.\ has no non-trivial idempotents) then there are no non-trivial $\mr{GL}(V)$-equivariant idempotent endomorphisms of $\mr{St}_\bk(V)$.
\end{proof}

When $\bk$ is a field in which no non-trivial sum of squares is zero, then $\mr{St}_\bk(V)$ cannot contain any submodules which are finite-dimensional over $\bk$.  Indeed, suppose for contradiction that $A \subset \mr{St}_\bk(V)$ were such a submodule and consider the composition
\begin{equation*}
  A \hookrightarrow \mr{St}_\bk(V) \to \mr{St}_\bk(V)^\vee \to A^\vee.
\end{equation*}
It is injective because the pairing is anisotropic, but $A$ and $A^\vee$ are vector spaces of the same finite dimension so it is an isomorphism, so $A$ is a $\bk[\mr{GL}(V)]$-linear summand of $\mr{St}_\bk(V)$.

\begin{remark}
  A similar conclusion holds when $\bk$ admits an involution whose fixed field $\bk^+ \subset \bk$ satisfies that no non-trivial sum of squares is zero, e.g.,\ for $\bk = \bC$.  The proof is similar, except the pairing should be extended to a sesquilinear pairing on $\mr{St}_\bk(V)$.
\end{remark}

\section{Overview of $E_k$-cells and $E_k$-homology}\label{sec:overview-e_k-cells} Let us briefly outline the theory developed \cite{e2cellsIv3}, which will play a role in the rest of the paper. We refer there for further details and for proofs and references, and we shall refer to things labelled X in \cite{e2cellsIv3} as $E_k$.X here.

This theory is designed to analyse the homology of (non-unital) $E_k$-algebras like
\begin{equation}\label{eq:DefR}
\coprod_{n \geq 1} B\mr{GL}_n(\bF).
\end{equation}
In fact, as we are only interested in the $\bk$-homology we may as well take the $\bk$-linear singular simplices on this space. Furthermore, as we wish to distinguish the contributions to homology of the different path components, we shall work with an additional $\bN$-grading, called \emph{rank}: we therefore work in the category $\cat{sMod}_\bk^\bN$ of $\bN$-graded simplicial $\bk$-modules, i.e.\ the category of functors $M \colon \bN \to \cat{sMod}_\bk$. This is given a symmetric monoidal structure by Day convolution, with $p$ simplices in rank $n$ given by
\[(M \otimes N)_p(n) = \bigoplus_{a+b=n} M_p(a) \otimes_\bk N_p(b).\]
The homology of a simplicial $\bk$-module means the homology of the associated chain complex, or equivalently the homotopy groups of the underlying simplicial set. An object $X \in \cat{sMod}_\bk^\bN$ hence has bigraded homology groups via $H_{n,d}(X) \coloneqq H_d(X(n))$.

The little $k$-cubes operad has a space of $r$-ary operations given by $r$-tuples of rectilinear embeddings of a $k$-cube into $k$-cube with disjoint interior, except when $r=0$ in which case it is empty. We may import this into $\cat{sMod}_\bk^\bN$ by first taking its $\bk$-linear singular simplicial set, and then placing the result in rank 0: we denote the result by $\cC_k$. 

A (non-unital) \emph{$E_k$-algebra} in $\cat{sMod}_\bk^\bN$ is then an algebra over the operad $\mathcal{C}_k$; we denote these by bold letters such as $\gR$ and write $\Alg_{E_k}(\smash{\cat{sMod}_\bk^\bN})$ for the category of $E_k$-algebras in $\smash{\cat{sMod}_\bk^\bN}$. We denote by $E_k(-)$ the monad on $\smash{\cat{sMod}_\bk^\bN}$ associated to $\cC_k$, and by $\mathbf{E}_k(X)$ the free (non-unital) $E_k$-algebra on $X$. We will mainly be concerned with considering the $\bk$-linearisation of \eqref{eq:DefR} as an $E_1$-, $E_2$-, or $E_\infty$-algebra in the category $\cat{sMod}_\bk^\bN$, and in particular describing cell structures on it.

To explain what we mean by this, let us denote by $\partial D^{n,d} \in \cat{sMod}_\bk^\bN$ the object given at $n \in \bN$ by $\bk[\partial \Delta^d]$, the $\bk$-linearisation of the simplicial set given by the boundary of the $d$-simplex, and by 0 otherwise; we write $D^{n,d}$ for the analogous construction with $\bk[\Delta^d]$. By adjunction, a morphism $\partial D^{n,d} \to \gR$ in $\cat{sMod}_\bk^\bN$ extends to a morphism $\mathbf{E}_k(\partial D^{n,d}) \to \gR$ of $E_k$-algebras, using which we may form the pushout
\begin{equation*}
\begin{tikzcd}\gE_k(\partial D^{n,d}) \rar \dar & \gR \dar \\
\gE_k(D^{n,d}) \rar & \gR \cup^{E_k} \gD^{n,d}.\end{tikzcd}
\end{equation*}
in the category $\Alg_{E_k}(\cat{sMod}_\bk^\bN)$. This is what it means to attach a cell to $\gR$. A \emph{cellular} $E_k$-algebra is one obtained by iterated cell attachments starting with the zero object, and a \emph{cellular approximation} to $\gR$ is a weak equivalence $\gC \overset{\sim}\to \gR$ from a cellular object. 

In order to control cell structures we will use a homology theory for $E_k$-algebras. The \emph{indecomposables} of $\gR \in \Alg_{E_k}(\cat{sMod}_\bk^\bN)$ are defined by the exact sequence
\[\bigoplus_{n \geq 2} \cC_k(n) \otimes \gR^{\otimes n} \lra \gR \lra Q^{E_k}(\gR) \lra 0\]
in $\cat{sMod}_\bk^\bN$, with the leftmost map given by the $E_k$-algebra structure of $\gR$. That is, we collapse all elements of $\gR$ which can be obtained by applying at least $2$-ary operations. This construction is the left adjoint to the functor $Z^{E_k} \colon \cat{sMod}_\bk^\bN \to \Alg_{E_k}(\cat{sMod}_\bk^\bN)$ which considers any object as an $E_k$-algebra in the trivial way.
The construction $Q^{E_k} \colon \Alg_{E_k}(\cat{sMod}_\bk^\bN) \to \cat{sMod}_\bk^\bN$ is not homotopy invariant, but has a left derived functor $\smash{Q^{E_k}_\bL}$ called the \emph{derived indecomposables}. (In the notation of \cite{e2cellsIv3} we have actually defined the relative indecomposables, but because $\cC_\infty(1) \simeq *$ at the level of derived functors there is no difference between this and the absolute indecomposables, cf.\ equation (11.3) in Section $E_k$.11.3.) We then define the \emph{$E_k$-homology} groups of $\gR$ by the formula 
\[H^{E_k}_{n,d}(\gR) \coloneqq H_{n,d}(Q^{E_k}_\bL(\gR)) = H_d(Q^{E_k}_\bL(\gR)(n)).\]

One does not typically try to compute $E_k$-homology directly from the definition. Instead, one uses a description in terms of a $k$-fold bar construction (instances have been given by Getzler--Jones, Basterra--Mandell, Francis, and Fresse). More precisely, if $\gR^+$ denotes the unitalisation of the non-unital $E_k$-algebra $\gR$, and $\epsilon \colon \gR^+ \to \bk$ its corresponding augmentation, then in Section $E_k$.13.1 we have described an equivalence
\[\bk \oplus \Sigma^k Q^{E_k}_\bL(\gR) \simeq B^{E_k}(\gR^+, \epsilon),\]
where the right-hand side denotes a certain model for the $k$-fold bar construction of the augmented $E_k$-algebra $\epsilon \colon \gR^+ \to \bk$. An advantage of this point of view is that the $k$-fold bar construction can be calculated iteratively, which makes it easy to relate information about $E_{k-1}$-homology to information about $E_k$-homology.

Derived indecomposables are related to cellular approximation as follows. Being a left adjoint, $Q^{E_k}$ preserves pushouts, and by direct calculation $Q^{E_k}(\gE_k(X)) \cong X$. A cellular $E_k$-algebra $\gC$ is cofibrant, so we may consider its underived indecomposables $Q^{E_k}(\gC)$: this is then a cellular object in $\cat{sMod}_\bk^\bN$, with one cell for each $E_k$-cell of $\gC$. The groups $H_{*,*}^{E_k}(\gR)$ therefore give a lower bound on the cells needed for any cellular approximation of $\gR$. In fact there is a Hurewicz theorem for $E_k$-homology (Corollary $E_k$.11.12), which means it can be used to construct minimal $E_k$-cell structures. In particular if $\gR(0) \simeq 0$ and $H_{n,d}^{E_k}(\gR)=0$ for $d < f(n)$, then $\gR$ admits a cellular approximation 
\[\gC \overset{\sim}\lra \gR\]
built only using cells of bidegrees $(n,d)$ such that $d \geq f(n)$. This $\gC$ may be chosen to be CW rather than just cellular, meaning it comes with a lift to an $E_k$-algebra in filtered objects of $\cat{sMod}_\bk^\bN$ with good properties (not a filtered object in $E_k$-algebras), see Section $E_k$.6.3.

An overview of how we shall use these ideas in the rest of the paper is as follows. In Section \ref{sec:contructing-r} we will explain how to construct an $E_\infty$-algebra $\gR$ in $\cat{sSet}^\bN$ with
\[\gR(n) \simeq B\mr{GL}_n(\R),\]
which for any commutative ring $\bk$ we may $\bk$-linearise to give $\gR_\bk \in \Alg_{E_\infty}(\cat{sMod}_\bk^\bN)$ having $H_{n,d}(\gR_\bk) = H_d(\mr{GL}_n(\R);\bk)$. Our first goal is to establish vanishing lines for $H^{E_k}_{n,d}(\gR_\bk)$ for $k\in \{1,2,\infty\}$. To do so, in Section \ref{sec:buildings} we will describe the $k$-fold simplicial sets which arise in the $k$-fold bar construction model for derived $E_k$-indecomposables of this $\gR$, as well as Rognes' $k$-fold analogue of the Tits building, and conditions under which these are related. The main goal is to show that if the data $(\R,\bk)$ satisfies the ``Nesterenko--Suslin property" then Rognes' $k$-fold analogue of the Tits building may be used to calculate $H^{E_k}_{n,d}(\gR_\bk)$.

After this general theory, in Section \ref{sec:GLfield} we will specialise to the case that $\R$ is an infinite field (in which case $(\R, \bk)$ does indeed satisfy the Nesterenko--Suslin property for any coefficients $\bk$). In this case we will simply observe that Rognes' $2$-fold analogue of the Tits building is simply the smash-square of the $1$-fold analogue, so that it is twice as highly-connected as the Tits building. This means that there is a much steeper vanishing line for the $E_2$-homology (and hence $E_\infty$-homology) of $\gR_\bk$ than one can formally deduce from the vanishing line for its $E_1$-homology. Finally, we analyse completely the $E_2$- and $E_\infty$-homology along this vanishing line, which in particular will prove Theorem \ref{thm:B}. In Section \ref{sec:local-semi-local} we will explain the analogues of most of these results in the case that $\R$ is a connected semi-local ring with all residue fields infinite.

We then come to applications. Here we shall use most of the technical tools developed in \cite{e2cellsIv3}, and will not try to summarise them all here. 

\subsection{Glossary and some notation from \cite{e2cellsIv3}}
\label{sec:notat-other-recoll}

For the reader's convenience we collect some notation from op.cit., but we refer there for more details.

\begin{enumerate}[$\bullet$]
	\item Many arguments take place in the categories $\cat{sSet}^\bN$, $\cat{sSet}^\bN_*$, and $\cat{sMod}_\bk^\bN$, of functors from $\bN$ (regarded as a category with only identity morphisms) to the category of simplicial sets, pointed simplicial sets, and simplicial $\bk$-modules, respectively.
	\item Given an algebra $\gR \in \Alg_{E_\infty}(\cat{sSet}^\bN)$ and a commutative ring $\bk$, we denote by $\gR_\bk \in \Alg_{E_\infty}(\cat{sMod}_\bk^\bN)$ the $\bk$-linearisation of $\gR$.
	\item Objects $X \in \cat{sMod}_\bk^\bN$ have homology groups which are bigraded $\bk$-modules, defined by $H_{n,d}(X) = \pi_d(X(n))$.  
	\item The object $D^{n,d} \in \cat{sSet}^\bN$ is given at $n \in \bN$ by $\Delta^d$ and $\varnothing$ otherwise. Similarly, $\partial D^{n,d} \in \cat{sSet}^\bN$ is given at $n \in \bN$ by $\partial \Delta^{d}$ and $\varnothing$ otherwise. Finally, $S^{n,d}  = D^{n,d}/\partial D^{n,d} \in \cat{sSet}_*^\bN$. 
	\item On homology, smashing with $S^{n,d}$ has the effect of shifting bidegrees: for $X \in \cat{sMod}_\bk^\bN$, $H_{n+n',d+d'}(S^{n',d'} \wedge X) = H_{n,d}(X)$ if $n \geq 0$ and $d \geq 0$ and otherwise vanishes.
	\item The unitalisation of an $E_k$-algebra $\gR$ is denoted $\gR^+$, see Section $E_k$.4.4. By ``$E_k$-algebra'' we always mean the non-unital type, unless otherwise specified.  
	\item If $\gR$ is an $E_1$-algebra, then $\overline{\gR}$ denotes a unital and associative algebra naturally weakly equivalent to $\gR^+$ as a unital $E_1$-algebra, see Proposition $E_k$.12.9 and the construction preceding it. If $\gR$ is an $E_k$-algebra, we regard it is an $E_1$-algebra before applying these constructions.
	\item Filtered objects in a category $\cat{C}$ are functors $\bZ_{\leq} \to \cat{C}$, where $\bZ_{\leq}$ denotes the category with object set $\bZ$ and morphism set $m \to n$ either a singleton or empty, depending on whether $m \leq n$ or $m > n$.  We often consider filtered objects in functor categories such as $\cat{C} = \cat{sMod}_\bk^\bN$, in which case filtered objects are functors $\bN \times \bZ_{\leq} \to \cat{sMod}_\bk$.
	\item To an unfiltered object $X$ is associated a filtered object $a_* X$ for each $a \in \bZ$, making the functor $X \mapsto a_* X$ left adjoint to evaluation at the object $a \in \bZ_{\leq}$.  Explicitly, $(a_* X)(n)$ is $X$ for $n \geq a$ and is the initial object for $n < a$.  See Section $E_k.5.3.2$.
	\item There is a spectral sequence associated to a cofibrant filtered object $X \in \cat{sMod}_\bk^{\bN \times \bZ_{\leq}}$.  In our grading conventions, see Theorem $E_k$.10.10, it has
	\begin{align*}
	&E^1_{n,p,q} = H_{n,p+q,p}(\mr{gr}(X)) = H_{n,p+q}(X(p),X(p-1)), \\
	&d^r \colon  E^r_{n,p,q} \to E^r_{n,p-r,q+r-1},
	\end{align*}
	and if it conditionally converges, it does so to $H_{n,p+q}(\mr{colim}\,X)$. This does not relate the different $n$ at all (disregarding any multiplicative structures) so may be regarded as one spectral sequence for each $n \in \bN$.  Then $(E^r_{n,*,*},d^r)_{r \geq 2}$ is graded as the usual homological Serre spectral sequence.
\end{enumerate}

\section{General linear groups as an $E_\infty$-algebra} \label{sec:contructing-r}

\begin{convention}All our rings $\R$ are commutative.\end{convention}

For any ring $\R$ there is a groupoid $\cat{P}_\R$ with objects the finitely-generated projective $\R$-modules, and morphisms given by the $\R$-module isomorphisms. We write $\mr{GL}(M)$ for the group of  automorphisms of an $\R$-module $M$. Direct sum $\oplus$ endows $\cat{P}_\R$ with a symmetric monoidal structure.

\begin{lemma}\label{lem:vect-cat-props} The symmetric monoidal groupoid $(\cat{P}_\R, \oplus, 0)$ has the following properties:
	\begin{enumerate}[(i)]
		\item\label{it:axiom171} for the monoidal unit $0$ we have $\mr{GL}(0) = \{e\}$, 
		\item\label{it:axiom172} the homomorphism $ -\oplus - \colon \mr{GL}(M) \times \mr{GL}(N) \to \mr{GL}(M \oplus N)$ induced by the monoidal structure is injective.
	\end{enumerate}
\end{lemma}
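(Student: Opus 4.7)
Both statements are essentially formal, so the plan is simply to unwind the definitions.

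For part~(\ref{it:axiom171}), the point is that the zero module has exactly one $\R$-linear endomorphism, namely the zero map, which is also the identity. So $\End_\R(0) = \{0\} = \{e\}$, and in particular $\mr{GL}(0) = \{e\}$.

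For part~(\ref{it:axiom172}), suppose $(f,g) \in \mr{GL}(M) \times \mr{GL}(N)$ lies in the kernel, i.e.\ $f \oplus g = \id_{M \oplus N}$. Compose with the canonical inclusion $\iota_M \colon M \hookrightarrow M \oplus N$ and projection $\pi_M \colon M \oplus N \twoheadrightarrow M$: since $f \oplus g$ acts as $f$ on the $M$-summand, we have $\pi_M \circ (f \oplus g) \circ \iota_M = f$, and since $f \oplus g$ is the identity, this also equals $\pi_M \circ \iota_M = \id_M$. Hence $f = \id_M$, and symmetrically $g = \id_N$.

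There is no real obstacle here; both properties are immediate from the definition of the direct sum of modules. The lemma is recorded because the listed conditions are the abstract axioms (from the framework of \cite{e2cellsIv2}) that the symmetric monoidal groupoid needs to satisfy in order to feed into the $E_\infty$-algebra machinery developed later, so the proof is just a verification.
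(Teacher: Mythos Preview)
Your proof is correct. The paper actually states this lemma without proof, treating both properties as immediate from the definitions; your argument supplies exactly the routine verification that is implicitly left to the reader.
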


A finitely-generated projective $\R$-module $M$ has a rank $\mr{rk}_\mathfrak{p}(M)$ for each prime ideal $\mathfrak{p} \subset \R$, given by $\mr{rk}_\mathfrak{p}(M) \coloneqq \dim_{\kappa(\mathfrak{p})}(M \otimes_\R \kappa(\mathfrak{p}))$ with $\kappa(\mathfrak{p})$ the field of fractions of the integral domain $\R/\mathfrak{p}$,
and these form a locally constant function on $\mr{Spec}(\R)$ \cite[\href{https://stacks.math.columbia.edu/tag/00NV}{Tag 00NV}]{stacks-project}). We will therefore suppose that $\mr{Spec}(\R)$ is connected (and will usually say that ``$\R$ is connected"), so that there is a unique rank associated to each finitely-generated projective $\R$-module. This gives a functor
\[r \colon \cat{P}_\R \lra \bN.\]

We will use this category to construct a non-unital $E_\infty$-algebra $\gR \in \cat{Alg}_{E_\infty}(\cat{sSet}^\bN)$ with $\gR(0) = \varnothing$ and 
\begin{equation}\label{eq:ValuesOfR}
\gR(n) \simeq \coprod_{\substack{[M]\\r(M)=n}} B\mr{GL}(M),
\end{equation}
where the disjoint union is over isomorphism classes of rank $n$ projective $\R$-modules, following Section $E_k$.17.1.

We now proceed as in Section $E_k$.17.1, using that by Lemma \ref{lem:vect-cat-props} (\ref{it:axiom171}) above the groupoid $\cat{P}_\R$ satisfies Assumption $E_k$.17.1. There is a functor $\underline{\ast}_{>0}$ in $\cat{sSet}^{\cat{P}_\R}$ given by $0 \mapsto \varnothing$ and $M \mapsto \ast$ for $M \neq 0$, which has a unique non-unital commutative algebra structure and hence is in particular a non-unital $E_\infty$-algebra. As described in Section $E_k$.9.2 the category $\cat{Alg}_{E_k}(\cat{sSet}^{\cat{P}_\R})$ of $E_k$-algebras in $\cat{sSet}^{\cat{P}_\R}$ admits the projective model structure. By cofibrantly replacing $\underline{\ast}_{>0}$ we obtain a cofibrant non-unital $E_\infty$-algebra $\gT$ with
\[\gT(P) \simeq \begin{cases}
\varnothing & \text{ if $M=0$,}\\
\ast & \text{ if $M \neq 0$.}
\end{cases}\]

Precomposition by $r$ gives a functor $r^* \colon \cat{sSet}^\bN \to \cat{sSet}^{\cat{P}_\R}$ which admits a left adjoint $r_*$ by left Kan extension, and this is (strong) symmetric monoidal. As in Section $E_k$.4.3 it induces a functor between categories of $E_\infty$-algebras, i.e.\ it may be considered as a functor $r_* \colon \cat{Alg}_{E_\infty}(\cat{sSet}^{\cat{P}_\R}) \to \cat{Alg}_{E_\infty}(\cat{sSet}^\bN)$, which is the left adjoint in a Quillen adjunction. We may thus take its left derived functor $\bL r_*$, and define 
\[\gR \coloneqq r_*(\gT) \simeq \bL r_*(\underline{\ast}_{>0}) \in \cat{Alg}_{E_\infty}(\cat{sSet}^\bN),\]
which satisfies $\gR(0)= \varnothing$ and \eqref{eq:ValuesOfR} above.

As we have mentioned we will often only be interested in homology groups, in which case there is no loss in passing from simplicial sets to simplicial $\bk$-modules and considering instead $\gR_\bk \coloneqq \bk[\gR] \in \cat{Alg}_{E_\infty}(\cat{sMod}_\bk^\bN)$. This satisfies 
\[H_{n,d}(\gR_\bk) = \bigoplus_{\substack{[M]\\r(M)=n}} H_d(\mr{GL}(M);\bk).\]

\begin{remark}In practice we will usually work under ring-theoretic assumptions on $\R$ which imply that every finitely-generated projective $\R$-module is free (namely that $\R$ is semi-local and connected \cite[\href{https://stacks.math.columbia.edu/tag/02M9}{Tag 02M9}]{stacks-project}). In this case we simply have 
\[H_{n,d}(\gR_\bk) = H_d(\mr{GL}_n(\R);\bk).\]
We can mimic this for any ring $\R$ by considering the subcategory of $\cat{P}_\R$ consisting of free modules, which inherits a symmetric monoidality, and which again has a rank functor. Repeating the above construction gives a $\gR \in \cat{Alg}_{E_\infty}(\cat{sSet}^\bN)$ with $\gR(0) = \varnothing$ and $\gR(n) \simeq B\mr{GL}_n(\R)$. However the general constructions in Sections \ref{sec:buildings} and \ref{sec:local-semi-local} are most natural from the point of view of projective modules, and this is the perspective we shall take.\end{remark}

\section{Higher-dimensional buildings and their split analogues}\label{sec:buildings}

In this section we study various buildings of summands or flags of $\R$-modules. In Section \ref{sec:ns-prop} we relate buildings of summands to buildings of flags using the Nesterenko--Suslin property. Both are in Section \ref{sec:buidings-ek} related to $E_k$-homology of $\gR$.

\subsection{The $k$-dimensional building}

As usual we write $[p]$ for the finite linear order $0 < 1 < 2 < \cdots < p$, considered as a category.  Products $[p_1] \times \ldots \times [p_k]$ denote the product category (or equivalently, product poset), and use the shorter notation $[p_1, \ldots, p_k]$.  Given $a, b \in [p]$ with $a \leq b$ we shall write $[a \leq b]$ for the full subcategory on the set $\{a,b\}$, isomorphic as a poset to either $[0]$ or $[1]$.  Similarly, given $a , b \in [p_1, \ldots, p_k]$ with $a \leq b$ we have a subposet
\begin{equation*}
  [a_1 \leq b_1] \times \ldots \times [a_k \leq b_k] \hookrightarrow [p_1, \ldots, p_k]
\end{equation*}
which is a \emph{cube}, i.e.\ isomorphic as a poset to $[1]^{k'}$ for some $k' \leq k$.  We shall need to refer to the subposet
\begin{equation*}
  [a_1 \leq b_1] \times \ldots \times [a_k \leq b_k]\setminus \{b\} \hookrightarrow [p_1, \ldots, p_k],
\end{equation*}
the cube with its terminal element removed, the \emph{punctured cube}.

For a ring $\R$ and an $\R$-module $M$, let $\cat{Sub}(M)$ denote the set of submodules $P \subset M$ which are summands. By definition, $P$ is a summand if it admits a complement, i.e.~there exists a submodule $Q \subset M$ such that the natural map $P \oplus Q \to M$ is an isomorphism.  If $P \subset M$ is a summand in $M$, it is also a summand in any submodule $M' \subset M$ containing $P$; its complement is the kernel of the restriction to $M'$ of the projection $M \to P$.

We shall regard $\cat{Sub}(M)$ as a partially ordered set with respect to inclusion.  

\begin{definition}\label{defn:lattice}
A functor $\phi \colon [p_1, \ldots, p_k] \to \cat{Sub}(M)$ is called a \emph{lattice} if the natural map
\begin{equation}\label{eq:6}
  \colim_{[a_1 \leq b_1] \times \ldots \times [a_k \leq b_k] \setminus \{b\}} \phi \lra \phi(b)
\end{equation}
is a monomorphism onto a summand (in $M$, or equivalently in $\phi(b)$). (Here the colimit denotes colimit in the category of $\R$-modules, not in the poset $\cat{Sub}(M)$.)

A lattice $\phi \colon [p_1,\ldots, p_k] \to \cat{Sub}(M)$ is \emph{full} if it satisfies
\begin{enumerate}[(i)]
\item\label{item:1} $\phi(a_1,\ldots,a_k) = 0$ if $a_i = 0$ for some $i \in \{1, \ldots, k\}$,
\item $\phi(p_1,\ldots, p_k) = M$.
\end{enumerate}
\end{definition}

\begin{remark}
  Our definition differs from Rognes' \cite[Definition 2.3]{rognesrank}, in that he includes condition~(\ref{item:1}) in his notion of ``lattice''.  Nonetheless, our notions of $k$-dimensional building and stable building are isomorphic to Rognes' (compare our Definition~\ref{def:k-dim-building} with \cite[Definition 3.9]{rognesrank} and our Definition~\ref{def:stable-building} with \cite[Definition 10.8]{rognesrank}): they differ only in that Rognes discards those functors not satisfying~(\ref{item:1}) by passing to a pointed subset, while passing to a quotient set seems more natural to us.
\end{remark}

The following two properties are easily verified.

\begin{lemma}\label{lem:lattices}\mbox{}
\begin{enumerate}[(i)]
\item  For any lattice $\phi \colon [p_1,\ldots, p_k] \to \cat{Sub}(M)$ and any morphism $\theta: [q_1,\ldots,q_k] \to [p_1,\ldots,p_k]$ in $\Delta^{\times k}$, the functor
  \begin{equation*}
    \theta^* \phi \colon [q_1,\ldots, q_k] \lra \cat{Sub}(M)
  \end{equation*}
  is again a lattice.

\item  If $\phi$ is not full, then $\theta^*\phi$ is also not full.\qed
\end{enumerate}
\end{lemma}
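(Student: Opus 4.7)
For part (i), fix $a' \leq b'$ in $[q_1,\ldots,q_k]$, put $a = \theta(a')$, $b = \theta(b')$, and compare the source sub-cube $C' = \prod_l [a'_l \leq b'_l]$ with the target sub-cube $C = \prod_l [a_l \leq b_l]$ via the restriction $\pi\colon C' \to C$ of $\theta$. Let $D = \{l : a'_l < b'_l \text{ and } \theta_l(a'_l) = \theta_l(b'_l)\}$ index the degenerate coordinates: $\pi$ is an isomorphism when $D = \varnothing$ and otherwise collapses the $D$-directions. In the non-degenerate case the lattice condition at $(a', b')$ for $\theta^*\phi$ pulls back literally from the lattice condition at $(a, b)$ for $\phi$. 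In the degenerate case the functor $\theta^*\phi|_{C'}$ factors through $\pi$, and I claim $\pi\colon C' \setminus \{b'\} \to C$ is final: for each $c \in C$ the preimage $\{c' \in C' \setminus \{b'\} : c \leq \pi(c')\}$ is a sub-cube of $C'$ of dimension at least $|D| \geq 1$ with at most the vertex $b'$ removed, hence non-empty and connected. It follows that $\colim_{C' \setminus \{b'\}} \theta^*\phi = \colim_C \phi = \phi(b)$ (since $b$ is terminal in $C$), and the canonical map to $\theta^*\phi(b') = \phi(b)$ is the identity, trivially a monomorphism onto a summand.

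For part (ii), suppose $\phi$ is not full. If $\phi(p_1,\ldots,p_k) \neq M$ then $\theta^*\phi(q_1,\ldots,q_k) = \phi(\theta(q)) \subseteq \phi(p) \neq M$, and $\theta^*\phi$ fails condition (ii). Otherwise $\phi(p) = M$ but $\phi(c) \neq 0$ for some $c$ with $c_i = 0$; relabel so that $i = 1$ and, using monotonicity, enlarge $c$ to the maximal element $c = (0, p_2, \ldots, p_k)$ in the face $c_1 = 0$. In the subcase $\theta^*\phi(0, q_2, \ldots, q_k) \neq 0$ condition (i) already fails for $\theta^*\phi$. In the remaining subcase $\phi(\theta_1(0), \theta_2(q_2), \ldots, \theta_k(q_k)) = 0$, I will show $\theta^*\phi(q) \neq M$ using the atomic decomposition of a lattice: iterating the lattice condition yields a direct sum decomposition $M = \bigoplus_\beta M_\beta$ indexed by $\beta \in [p_1,\ldots,p_k]$, compatible with $\phi$ in the sense that $\phi(\alpha) = \bigoplus_{\beta \leq \alpha} M_\beta$. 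The non-vanishing of $\phi(0, p_2, \ldots, p_k)$ produces some $\beta^* \leq (0, p_2, \ldots, p_k)$ with $M_{\beta^*} \neq 0$, hence $\beta^*_1 = 0$; the vanishing of $\phi(\theta_1(0), \theta_2(q_2), \ldots, \theta_k(q_k))$ then forces $\beta^*_l > \theta_l(q_l)$ for some $l \geq 2$. Therefore $\beta^* \not\leq \theta(q)$, so $M_{\beta^*} \not\subseteq \phi(\theta(q))$, precluding $\phi(\theta(q)) = M$.

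The main technical hurdles are the cofinality verification in the degenerate case of part (i) and, in part (ii), arranging the clean reduction to the atomic decomposition argument to handle the remaining subcase; the rest of the proof is immediate from the definitions.
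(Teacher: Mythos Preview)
Your proof is correct. The paper gives no argument (the lemma carries a \qed\ after the statement), so there is nothing to compare against directly.

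Your treatment of (i) is the natural one; note that ``non-empty and connected'' is exactly the condition for finality with respect to ordinary (as opposed to homotopy) colimits, so your justification is complete as stated.

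For (ii), the only case requiring real work is when $\phi(p)=M$ but fullness condition~(i) fails, and there you invoke a splitting decomposition $\phi(\alpha)=\bigoplus_{\beta\le\alpha}M_\beta$ indexed by all $\beta\in[p_1,\ldots,p_k]$. This is a mild extension of Lemma~\ref{lem:splittings-exist} (stated there only for \emph{full} lattices, with index set $\underline{p_1}\times\cdots\times\underline{p_k}$): one obtains it for an arbitrary lattice with $\phi(p)=M$ by shifting each coordinate up by one and extending by zero, which forces fullness condition~(i), and then running the same inductive argument. Since Lemma~\ref{lem:splittings-exist} does not depend on the present lemma, there is no circularity in this forward reference.
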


\begin{definition}\label{def:k-dim-building}
  The \emph{$k$-dimensional building $D^k(M)_{\bullet, \ldots, \bullet}$} is the $k$-fold simplicial pointed set with $(p_1, p_2, \ldots, p_k)$-simplices defined by the pushout
  \begin{equation*}
    \begin{tikzcd}
      \{\text{non-full lattices}\} \rar[hook]\dar & \{\text{lattices $[p_1,\ldots,p_k] \to \cat{Sub}(M)$}\} \dar \\
      \{\ast\} \rar & D^k(M)_{p_1,\ldots,p_k}
    \end{tikzcd}
  \end{equation*}
The simplicial structure is given by the evident functoriality on $\Delta^k$, via Lemma \ref{lem:lattices}.

  We write $D^k(M)$ for the $k$-fold thin geometric realisation of this $k$-fold simplicial pointed set.
\end{definition}

There is a map of $(k+1)$-fold simplicial pointed spaces
\[S^1_\bullet \wedge D^k(M)_{\bullet, \ldots, \bullet} \lra D^{k+1}(M)_{\bullet, \ldots, \bullet}\]
given on the non-degenerate simplex of $S^1_\bullet$ by the map
\[D^k(M)_{p_1, p_2, \ldots, p_k} \lra D^{k+1}(M)_{1, p_1, p_2, \ldots, p_k}\]
induced by sending a lattice $\phi \colon [p_1,\ldots,p_k] \to \cat{Sub}(M)$ to the lattice $\phi' \colon [1, p_1, \ldots, p_k] \to \cat{Sub}(M)$ given by
\begin{align*}
  \phi'(0,a_1, \ldots, a_k) & = 0\\
  \phi'(1, a_1, \ldots, a_k) & = \phi(a_1, \ldots, a_k)
\end{align*}
Upon geometric realisation these provide the structure maps for a symmetric spectrum \cite[Proposition 3.8]{rognesrank}.

\begin{definition}\label{def:stable-building} The \emph{stable building $\mathbf{D}(M)$} is the spectrum $\{ D^k(M) \}_{k \in \bN}$ with structure maps as above. It comes equipped with an action of $\mr{GL}(M)$ by naturality.\end{definition}

We will use the following criterion to recognise full lattices, using the notation $\underline{p} \coloneqq \{1,\ldots,p\}$:

\begin{lemma}\label{lem:splittings-exist}
  Let $\phi \colon [p_1, \ldots, p_k] \to \cat{Sub}(M)$ be a functor. Then the following are equivalent:
  \begin{enumerate}[(i)]
  	\item $\phi$ is a full lattice,
  	\item the map \eqref{eq:6} is a monomorphism onto a summand whenever $a_i = b_i-1$, $\phi(a_1,\ldots,a_k) = 0$ if $a_i = 0$ for some $i \in \{1,\ldots,k\}$ and $\phi(p_1,\ldots,p_k) = M$,
  	\item there exist $M_i \in \cat{Sub}(M)$ for each $i = (i_1, \ldots, i_k) \in \underline{p_1} \times \ldots \times \underline{p_k}$ such that the $M_i$'s span $M$ and such that for all $a \in [p_1, \ldots, p_k]$, the natural map
  \begin{equation*}
    \bigoplus_{i_1 = 1}^{a_1} \ldots \bigoplus_{i_k = 1}^{a_k} M_i \lra M
  \end{equation*}
  is a monomorphism with image $\phi(a)$.
  \end{enumerate}
\end{lemma}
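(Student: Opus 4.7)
Proof plan. The plan is to prove (iii) $\Rightarrow$ (i) $\Rightarrow$ (ii) $\Rightarrow$ (iii); the first two implications are essentially formal and the content lies in the third.

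Implication (i) $\Rightarrow$ (ii) is tautological: (ii) only requests the monomorphism-onto-summand condition of (i) for pairs $a \leq b$ with $b_i - a_i = 1$ for all $i$, plus the two fullness clauses, all of which are part of being a full lattice.

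For (iii) $\Rightarrow$ (i), specialising (iii) at $a = (p_1,\ldots,p_k)$ and combining with the spanning hypothesis forces the internal identification $M = \bigoplus_j M_j$. Since direct sums commute with connected colimits of submodules, and each $\phi(c) = \bigoplus_{j \leq c} M_j$, one checks that for every $a \leq b$
\[\textstyle \colim_{c \in [a \leq b] \setminus \{b\}} \phi(c) \;=\; \bigoplus_{j \in K} M_j,\]
where $K = \{j : j \leq c \text{ for some } c \in [a \leq b] \setminus \{b\}\}$; a short combinatorial check identifies $\{j \leq b\} \setminus K$ explicitly (with $S := \{i : a_i < b_i\}$, these are the $j$'s with $a_i < j_i \leq b_i$ for all $i \in S$ and $j_i = a_i$ for $i \notin S$), and so this colimit maps injectively into $\phi(b) = \bigoplus_{j \leq b} M_j$ with a sub-direct-summand as cokernel. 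Fullness of $\phi$ is immediate from $\phi(a) = \bigoplus_{j \leq a} M_j$.

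The heart of the argument is (ii) $\Rightarrow$ (iii). I would construct the $M_j$'s inductively on $|j| := j_1 + \cdots + j_k$: for each $j \in \underline{p_1} \times \cdots \times \underline{p_k}$ (so $j_i \geq 1$), define $N_j := \colim_{[j-1 \leq j] \setminus \{j\}} \phi$, and apply (ii) to the unit $k$-cube $[j-1 \leq j]$ to conclude that $N_j \to \phi(j)$ is a monomorphism onto a summand; pick a complement $M_j \subset \phi(j)$. I would then prove by induction on $|a|$ that $\bigoplus_{j \leq a} M_j \to M$ is a monomorphism with image $\phi(a)$. The base case (some $a_i = 0$) is trivial from the fullness clause in (ii). For the inductive step with all $a_i \geq 1$, apply the inductive hypothesis to each $c$ in the punctured cube $P := [a-1 \leq a] \setminus \{a\}$ to get $\phi(c) = \bigoplus_{j \leq c} M_j$ as an internal direct sum in $\phi(a)$, and then invoke the commutation of direct sums with colimits to conclude
\[\textstyle N_a \;=\; \colim_P \phi \;=\; \bigoplus_{j \leq a,\; j \neq a} M_j\]
inside $\phi(a)$; combined with $\phi(a) = N_a \oplus M_a$, this yields the desired decomposition. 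Specialising to $a = (p_1,\ldots,p_k)$ and using $\phi(p_1,\ldots,p_k) = M$ gives the spanning claim.

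The main obstacle is justifying the direct-sum/colimit swap in the inductive step of (ii) $\Rightarrow$ (iii): for each $j \leq a$ with $j \neq a$, one must verify that the sub-diagram $P_j := \{c \in P : j \leq c\}$ is nonempty and connected, so that the colimit over $P_j$ of the constant diagram with value $M_j$ is simply $M_j$ rather than a direct sum of several copies. This reduces to the combinatorial observation that, setting $T := \{i : j_i < a_i\}$ (nonempty since $j \neq a$), the sub-diagram $P_j$ is a $T$-coordinate punctured sub-cube of $P$, which is connected because every element is comparable to its bottom corner.
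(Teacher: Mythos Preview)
Your proposal is correct and follows essentially the same cycle (iii) $\Rightarrow$ (i) $\Rightarrow$ (ii) $\Rightarrow$ (iii) as the paper, with the same key idea in (ii) $\Rightarrow$ (iii): choose each $M_j$ as a complement to the unit-punctured-cube colimit inside $\phi(j)$, then induct on $|a|$. Your treatment is slightly more explicit in two places---you handle general cubes $[a \leq b]$ in (iii) $\Rightarrow$ (i) rather than only unit cubes, and you justify the colimit identification $N_a = \bigoplus_{j < a} M_j$ via connectedness of the sub-posets $P_j$---whereas the paper phrases the same step by directly constructing an inverse (for each $i < b$ pick some $j$ in the punctured cube with $i \leq j$, and note the result is independent of the choice), which is exactly your connectedness observation in disguise.
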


\begin{proof}That (i) $\Rightarrow$ (ii) is obvious, as we are requiring condition \eqref{eq:6} for fewer cubes. 
	
	\vspace{.5em}
	
	For (iii) $\Rightarrow$ (i), suppose the functor $\phi$ satisfies the condition about the existence of $M_i$'s as stated. In particular the direct sum of all the $M_i$ maps to $M$ by an isomorphism.  Then it is easily verified that the natural maps
	\begin{equation}\label{eq:SumComparison}
	\colim_{[b_1 - 1 \leq b_1] \times \ldots \times [b_k -1 \leq b_k] \setminus \{b\}} \phi \lra \sum_{i < b} M_i \longleftarrow
	\bigoplus_{i < b} M_i
	\end{equation}
	are both isomorphisms, where $\sum$ denotes sum of subspaces and $\bigoplus$ the abstract direct sum.  In both cases the index ``$i < b$'' denotes the set of $i \in \underline{p_1} \times \ldots \times \underline{p_k}$ with $i \leq b$ in the product order but $i \neq b$.  Firstly, the right-hand map is an isomorphism because it is certainly surjective and is injective as we have assumed that $\bigoplus_i M_i \to M$ is an isomorphism.  Secondly, an inverse map from the abstract direct sum to the colimit may be defined by choosing for $i < b$ a $j \in [b_1 - 1 \leq b_1] \times \ldots \times [b_k -1 \leq b_k] \setminus \{b\}$ with $i \leq j$.  The inclusions $M_i \subset \phi(i) \subset \phi(j)$ give a map from $M_i$ to the colimit, which is independent of choice of $j$.  Then the lattice conditions translate to the natural map
	\begin{equation*}
	\bigoplus_{i < b} M_i \lra \bigoplus_{i \leq b} M_i
	\end{equation*}
	being a monomorphism admitting a complement, which is of course true.\vspace{.5em}
	
	For (ii) $\Rightarrow$ (iii), suppose $\phi \colon [p_1, \ldots, p_k] \to \cat{Sub}(M)$ satisfies the stated condition, and choose for each $i$ a submodule $M_i \subset \phi(i)$ which is a complement to the image of the monomorphism
	\begin{equation*}
	\colim_{[i_1 - 1 \leq i_1] \times \ldots \times [i_k -1 \leq i_k] \setminus \{b\}} \phi \lra \phi(b).        
	\end{equation*}
	Then for all $i \leq j$ we have $M_i \subset \phi(i) \subset \phi(j)$, so the inclusions induce a map
	\begin{equation}\label{eq:2}
	\bigoplus_{i \leq a} M_i \lra \phi(a),
	\end{equation}
	which we claim is an isomorphism.  Writing $|a| = a_1 + \ldots + a_k$, this is obvious for $|a| = 0$.  Assuming by induction that this has been proven for all $|a| < |b|$ and in particular for all $a < b$, we deduce as above that the natural maps \eqref{eq:SumComparison}
	are both isomorphisms.  Adding the complement $M_b$ then proves~(\ref{eq:2}) for $a = b$, providing the induction step.
\end{proof}

  The previous lemma implies that a full lattice $\phi \colon [p_1,\ldots, p_k] \to \cat{Sub}(M)$ is determined by the objects
  \begin{equation*}
    L_i^j = \phi(p_1,\ldots, p_{j-1},i,p_{j+1}, \ldots, p_k) \in \cat{Sub}(M), \quad j = 1, \ldots, k; \,\, i = 1, \ldots, p_j.
  \end{equation*}
  Indeed, if $\phi(a) = \sum_{i \leq a} M_i$, then $L_a^j = \bigoplus_{i \text{ with } i_j \leq a} M_i$. Since the $M_i$ intersect trivially, we have
  \[\phi(a) = L^1_{a_1} \cap \ldots \cap L^k_{a_k}.\]
  For each $j = 1, \ldots, k$, the flag $0 \subset L^j_1 \subset \ldots \subset L^j_{p_j}$ forms an element of $D^1(M)_p$. We conclude that:
  
  \begin{lemma}\label{lem:smash-of-E-one-buildings} There is an injection of pointed sets
  \begin{equation}\label{eq:5}
    D^k(M)_{p_1, \ldots, p_k} \lra D^1(M)_{p_1} \wedge \ldots \wedge D^1(M)_{p_k},
  \end{equation}
  assembling to a levelwise injective map of multisimplicial pointed sets.
	\end{lemma}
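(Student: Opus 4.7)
The plan is to explicitly name the map, confirm it lands away from the basepoint on full lattices, verify injectivity via a recovery formula, and check multisimplicial naturality.

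First, I would define the map by sending a full lattice $\phi \colon [p_1,\ldots,p_k] \to \cat{Sub}(M)$ to the tuple of flags
\[L^j \colon [p_j] \lra \cat{Sub}(M), \qquad i \longmapsto \phi(p_1,\ldots,p_{j-1},i,p_{j+1},\ldots,p_k),\]
for $j = 1,\ldots,k$, and sending every non-full lattice (along with the basepoint of $D^k(M)_{p_1,\ldots,p_k}$) to the smash basepoint. That each $L^j$ is itself a full lattice on $[p_j]$ follows from two observations: applying the lattice condition for $\phi$ to a cube $[a \leq b]$ in $[p_1,\ldots,p_k]$ whose coordinates agree with $(p_1,\ldots,p_k)$ except in the $j$-th slot directly yields the $k=1$ lattice condition that $L^j_a \hookrightarrow L^j_b$ is a monomorphism onto a summand; and fullness conditions~(i) and~(ii) in Definition~\ref{defn:lattice} for $\phi$ give $L^j_0 = 0$ and $L^j_{p_j} = M$. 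Hence each $L^j$ represents a non-basepoint class in $D^1(M)_{p_j}$, so the smash tuple is not forced to the basepoint.

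Next, for injectivity, I would invoke the (i)$\Leftrightarrow$(iii) equivalence in Lemma~\ref{lem:splittings-exist} to obtain, for each full $\phi$, an internal direct-sum decomposition $\{M_\iota\}_{\iota \in \underline{p_1} \times \cdots \times \underline{p_k}}$ of $M$ with $\phi(a) = \bigoplus_{\iota \leq a} M_\iota$ as submodules of $M$. From this presentation one reads off $L^j_i = \bigoplus_{\iota \,:\, \iota_j \leq i} M_\iota$, and a vector $v \in M$ lies in $\bigcap_{j=1}^k L^j_{a_j}$ precisely when all of its $M_\iota$-components vanish for $\iota$ with $\iota_j > a_j$ for some $j$. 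Consequently
\[L^1_{a_1} \cap \cdots \cap L^k_{a_k} \;=\; \bigoplus_{\iota \leq a} M_\iota \;=\; \phi(a),\]
expressing $\phi$ purely in terms of $(L^1,\ldots,L^k)$, so distinct full lattices yield distinct tuples.

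Finally, multisimplicial naturality is essentially automatic: given $\theta = (\theta_1,\ldots,\theta_k) \colon [q_1,\ldots,q_k] \to [p_1,\ldots,p_k]$ in $\Delta^{\times k}$, pulling back $\phi$ and then extracting the $j$-th flag of the pullback agrees with $\theta_j^* L^j$, since both amount to precomposition with $\theta_j$ in the $j$-th coordinate. Hence the construction commutes with the $\Delta^{\times k}$-action on both sides. The only real subtlety is the first step — ruling out that a full $\phi$ maps to the smash basepoint; once this is in place, the intersection formula obtained from Lemma~\ref{lem:splittings-exist} does all the remaining work.
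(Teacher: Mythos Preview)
Your proposal is correct and follows essentially the same approach as the paper: you define the map via the edge flags $L^j_i = \phi(p_1,\ldots,p_{j-1},i,p_{j+1},\ldots,p_k)$, and you prove injectivity via the recovery formula $\phi(a) = L^1_{a_1} \cap \cdots \cap L^k_{a_k}$ obtained from the splitting of Lemma~\ref{lem:splittings-exist}, exactly as the paper does in the paragraph preceding the lemma. You are in fact slightly more explicit than the paper in verifying that each $L^j$ is a full $1$-lattice and in checking multisimplicial naturality, both of which the paper leaves implicit.
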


\subsection{Split buildings}\label{sec:split-building}

The $k$-dimensional \emph{split building} is a $k$-fold simplicial pointed set $\widetilde{D}^k(M)$ where a non-basepoint simplex is a non-basepoint simplex $\sigma$ of $D^k(M)$ together with choices of  submodules $M_i$ as in Lemma~\ref{lem:splittings-exist}.  For $k=1$, a twice desuspended version was first defined by Charney \cite[p.~3]{Charney}. It also appeared in \cite[Section 3.3]{e2cellsIII}. Let us spell out the definition.
  \newcommand{\splitting}{\mathscr{S}}
\begin{definition}\label{def:splitting-segal} 
  Let $X$ be a finite pointed set and $f\colon X \to \cat{Sub}(M)$ a function.  We say that $f$ is a \emph{splitting} if the natural map
  \begin{equation*}
    \bigoplus_{x \in X} f(x) \lra M
  \end{equation*}
  is an isomorphism.

  If $f$ is a splitting and $\theta \colon X \to Y$ is a map of finite pointed sets, then the map
  \begin{align*}
    \theta_* f \colon Y &\lra \cat{Sub}(M)\\
    y &\longmapsto \sum_{x \in \theta^{-1}(x)} f(x)
  \end{align*}
  is also a splitting.  If $f(\ast) \neq 0$, then also $(\theta_* f)(\ast) \neq 0$.  Hence the association
  \begin{equation}\label{eq:3}
    \splitting \colon X \longmapsto \frac{\text{splittings $f \colon X \to \cat{Sub}(M)$}}{\text{splittings with $f(\ast) \neq 0$}}
  \end{equation}
  defines a functor from finite pointed sets to pointed sets, i.e.\ a $\Gamma$-set in the sense of Segal.
\end{definition}

\begin{definition}\label{def:split-building}
  The $k$-dimensional \emph{split building}
   $\widetilde{D}^k(M)_{\bullet, \ldots, \bullet}$ is the $k$-fold simplicial set given as the composition of~(\ref{eq:3}) with the functor
  \begin{align*}
    (\Delta^\mathrm{op})^{\times k} & \lra \text{finite pointed sets}\\
    ([p_1],\ldots, [p_k]) & \longmapsto S^1_{p_1} \wedge \ldots \wedge S^1_{p_k},
  \end{align*}
  where $S^1_\bullet = \Delta[1] / \partial \Delta[1]$ denotes the usual model of the simplicial circle: $S^1_p$ is the quotient of the set of maps $[p] \to [1]$ in $\Delta$ by the subset of constant maps.  Write $\widetilde{D}^k(M)$ for the $k$-fold geometric realisation of this $k$-fold simplicial pointed set.

  A non-basepoint element $\theta \colon [p] \to [1]$ of $S^1_{[p]}$ may be identified with the unique number $i \in \{1, \ldots, p\}$ for which $\theta^{-1}(0) = \{0,\ldots, i-1\}$.  Hence non-basepoint elements of $\widetilde{D}^k(M)_{p_1, \ldots, p_k}$ may be identified with functions (recalling $\underline{p} = \{1,\ldots,p\}$)
  \begin{equation*}
    \mathcal{M} \colon \underline{p_1} \times \ldots \times \underline{p_k} \lra \cat{Sub}(M)
  \end{equation*}
  for which the natural map $\bigoplus_i \mathcal{M}(i) \to M$ is an isomorphism.
\end{definition}

There is a forgetful map of multisimplicial pointed sets
\begin{equation}\label{eq:Comp}
  \begin{aligned}
    \widetilde{D}^k(M)_{\bullet, \ldots, \bullet} &\lra {D}^k(M)_{\bullet, \ldots, \bullet}\\
    \mathcal{M} & \longmapsto \phi_\mathcal{M}
  \end{aligned}
\end{equation}
whose value on a non-basepoint $\mathcal{M}$ is defined by
\begin{equation*}
  \phi_\mathcal{M}(a) = \sum_{i \leq a} \mathcal{M}(i).
\end{equation*}
This is a full flag and, comparing with Lemma~\ref{lem:splittings-exist}, we see that the modules denoted $M_i$ there may be chosen as $\mathcal{M}(i)$.  In this way we have set up a bijection between non-basepoints of $\widetilde{D}^k(M)_{p_1,\ldots, p_k}$ and non-basepoints of $D^k(M)_{p_1,\ldots, p_k}$ together with a choice of splitting submodules as in Lemma~\ref{lem:splittings-exist}.  Under this bijection the map~(\ref{eq:Comp}) amounts to forgetting the splittings.

Just like in Lemma \ref{lem:smash-of-E-one-buildings} we may compare $\widetilde{D}^k$ to a $k$-fold smash product of $\widetilde{D}^1$:

\begin{lemma}
The analogous formula to that of Lemma \ref{lem:smash-of-E-one-buildings} gives maps of pointed sets
  \begin{equation}\label{eq:5tilde}
    \widetilde{D}^k(M)_{p_1, \ldots, p_k} \lra \widetilde{D}^1(M)_{p_1} \wedge \ldots \wedge \widetilde{D}^1(M)_{p_k},
  \end{equation}
  which assemble to a map of multisimplicial pointed sets. These are related to those of Lemma \ref{lem:smash-of-E-one-buildings} by the maps \eqref{eq:Comp}.
\end{lemma}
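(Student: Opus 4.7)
The plan is to construct the map by forming marginal splittings, verify multisimplicial naturality via a Fubini-style rearrangement of sums, and finally check compatibility with~\eqref{eq:Comp} by comparing the two composites directly.

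First, I would define the map on non-basepoint simplices. A non-basepoint element of $\widetilde{D}^k(M)_{p_1, \ldots, p_k}$ is represented by a function $\mathcal{M} \colon \underline{p_1} \times \cdots \times \underline{p_k} \to \cat{Sub}(M)$ with $\bigoplus_{i} \mathcal{M}(i) \to M$ an isomorphism. For each $j \in \{1, \ldots, k\}$ define the \emph{$j$-th marginal splitting} $\mathcal{M}^j \colon \underline{p_j} \to \cat{Sub}(M)$ by
\[ \mathcal{M}^j(i_j) \coloneqq \sum_{\substack{i \in \underline{p_1} \times \cdots \times \underline{p_k}\\ j\text{-th coordinate of } i \text{ equals } i_j}} \mathcal{M}(i), \]
which is again a splitting of $M$ since the $\mathcal{M}(i)$ are in direct sum. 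The assignment $\mathcal{M} \mapsto (\mathcal{M}^1, \ldots, \mathcal{M}^k)$ lands in $\widetilde{D}^1(M)_{p_1} \times \cdots \times \widetilde{D}^1(M)_{p_k}$; setting $\mathcal{M}^j(\ast) = 0$ makes each factor non-basepoint (since $\mathcal{M}^j(i_j) \neq 0$ for all $i_j \in \underline{p_j}$), so the tuple descends to a non-basepoint of the smash product. Extending by basepoint-to-basepoint completes the definition of~\eqref{eq:5tilde}.

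Next, I would check that these maps assemble into a map of multisimplicial pointed sets. By symmetry it suffices to check naturality in the first coordinate. A morphism $\alpha \colon [q_1] \to [p_1]$ in $\Delta$ induces $\alpha^* \colon S^1_{p_1} \to S^1_{q_1}$ of finite pointed sets; via the $\Gamma$-set structure of $\splitting$ together with the smash product construction, the structure map on $\widetilde{D}^k$ is pushforward along $\alpha^* \wedge \mathrm{id} \wedge \cdots \wedge \mathrm{id}$. Computing both composites of the naturality square and swapping the order of summation over non-basepoint $i$ shows that they agree: each sends $\mathcal{M}$ to the tuple whose first factor sends $\ell \in \underline{q_1}$ to $\sum_{i : \alpha^*(i_1) = \ell} \mathcal{M}(i)$ and whose remaining factors are the unchanged marginals $\mathcal{M}^2, \ldots, \mathcal{M}^k$.

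Finally, for compatibility with \eqref{eq:Comp} one traces $\mathcal{M}$ around the square. Going via the split buildings first yields the tuple of full flags whose $j$-th flag has $i$-th term $\sum_{i_j' \leq i} \mathcal{M}^j(i_j') = \sum_{i' : i'_j \leq i} \mathcal{M}(i')$. Going via the non-split buildings first sends $\mathcal{M}$ to the lattice $\phi_\mathcal{M}(a) = \sum_{i \leq a} \mathcal{M}(i)$, and then by Lemma~\ref{lem:smash-of-E-one-buildings} to the tuple whose $j$-th flag has $i$-th term $\phi_\mathcal{M}(p_1, \ldots, p_{j-1}, i, p_{j+1}, \ldots, p_k) = \sum_{i' : i'_j \leq i} \mathcal{M}(i')$. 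These agree. Non-injectivity of \eqref{eq:5tilde} is immediate: already for $k=2$ and $p_1 = p_2 = 2$ one cannot recover four summands arranged in a $2 \times 2$ grid from their row and column sums. There is no real conceptual obstacle; the entire proof is a sequence of sum rearrangements, with the $\Gamma$-set formalism keeping track of the basepoint conditions cleanly.
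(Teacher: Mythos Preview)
The paper does not supply a proof of this lemma (it is stated without proof, evidently regarded as routine), and your argument is the natural one: form the marginals and check everything by unwinding. One small slip: your parenthetical ``(since $\mathcal{M}^j(i_j) \neq 0$ for all $i_j \in \underline{p_j}$)'' is both unnecessary and not generally true---the splitting $\mathcal{M}$ may well have zero summands---but this does not affect the argument, since the correct reason each $\mathcal{M}^j$ is a non-basepoint is simply that $\mathcal{M}^j(\ast) = 0$, which you already stated.
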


The group $\mr{GL}(M) = \Aut_\R(M)$ acts on the set $\cat{Sub}(M)$ by sending a submodule to its image under an automorphism of $M$, and hence acts on the multisimplicial pointed sets $D^k(M)_{\bullet, \ldots, \bullet}$ and $\widetilde{D}^k(M)_{\bullet, \ldots, \bullet}$ and their thin geometric realisations.

\begin{proposition}\label{prop:split-map} 
  Let $\R$ be a commutative ring and $M$ a finitely generated projective module.  Then the map of $\mr{GL}(M)$-orbit sets
  \begin{equation*}
    \widetilde{D}^k(M)_{p_1,\ldots,p_k}/\mr{GL}(M) \lra D^k(M)_{p_1,\ldots,p_k}/\mr{GL}(M)
  \end{equation*}
  induced by the $\mr{GL}(M)$-equivariant map~(\ref{eq:Comp}) is a bijection.
\end{proposition}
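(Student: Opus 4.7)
My plan is to prove surjectivity and injectivity of the induced map separately. Since~(\ref{eq:Comp}) is already $\mr{GL}(M)$-equivariant, surjectivity on orbits is immediate from surjectivity at the level of underlying sets: a non-basepoint of $D^k(M)_{p_1,\ldots,p_k}$ is a full lattice $\phi$, and Lemma~\ref{lem:splittings-exist} (i)$\Rightarrow$(iii) furnishes splitting submodules $\{M_i\}$ so that $\mathcal{M}(i) := M_i$ defines a non-basepoint of $\widetilde{D}^k(M)_{p_1,\ldots,p_k}$ mapping to $\phi$.

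For injectivity on orbit sets, I take $\mathcal{M}, \mathcal{M}'$ with $\phi_{\mathcal{M}}$ and $\phi_{\mathcal{M}'}$ in the same $\mr{GL}(M)$-orbit and use equivariance to translate $\mathcal{M}$ by an appropriate element of $\mr{GL}(M)$, reducing to the case $\phi_\mathcal{M} = \phi_{\mathcal{M}'} =: \phi$. Writing $M_i := \mathcal{M}(i)$ and $M'_i := \mathcal{M}'(i)$, the task becomes to find $h \in \mr{GL}(M)$ with $h(M_i) = M'_i$ for every $i \in \underline{p_1} \times \cdots \times \underline{p_k}$. The key observation, which I expect to be the main technical ingredient, is that the submodule
\[N_i := \mr{image}\Bigl(\colim_{[i_1-1 \leq i_1] \times \cdots \times [i_k-1 \leq i_k] \setminus \{i\}} \phi \lra \phi(i)\Bigr) \subseteq \phi(i)\]
depends only on $\phi$, not on a choice of splitting. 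Rerunning the argument in the proof of (iii)$\Rightarrow$(i) of Lemma~\ref{lem:splittings-exist} with the splitting $\mathcal{M}$, respectively $\mathcal{M}'$, identifies this colimit as a subspace of $\phi(i)$ with $\sum_{j < i} M_j$, respectively $\sum_{j < i} M'_j$. Consequently $\phi(i) = N_i \oplus M_i = N_i \oplus M'_i$, so $M_i \cong \phi(i)/N_i \cong M'_i$ as $\R$-modules.

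Once this intrinsic description of $N_i$ is available, the rest is routine. I pick any $\R$-module isomorphism $h_i : M_i \to M'_i$ for each $i$ and define $h := \bigoplus_i h_i$ using the two splittings $M = \bigoplus_i M_i$ and $M = \bigoplus_i M'_i$. Then $h \in \mr{Aut}_\R(M) = \mr{GL}(M)$ sends $M_i$ to $M'_i$, so $h \cdot \mathcal{M} = \mathcal{M}'$ as required. The only real obstacle is the splitting-independent description of $N_i$; once that is in hand the rest is a short diagram chase, and no special hypothesis on $\R$ beyond those already in the setup is needed.
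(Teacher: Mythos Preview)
Your proof is correct and follows essentially the same approach as the paper. Both arguments obtain surjectivity from Lemma~\ref{lem:splittings-exist}, and for injectivity both identify $\mathcal{M}(i)$ intrinsically as a complement to $N_i = \sum_{j < i} \phi(j)$ in $\phi(i)$ (the paper writes this directly as $\mathcal{M}(a) \cong \phi_{\mathcal{M}}(a)/\sum_{i < a} \phi_{\mathcal{M}}(i)$, while you phrase it via the punctured-cube colimit, but these coincide), and then assemble chosen isomorphisms $M_i \cong M'_i$ into an element of $\mr{GL}(M)$ via the two direct sum decompositions. The only cosmetic difference is that you first reduce to $\phi_{\mathcal{M}} = \phi_{\mathcal{M}'}$ by equivariance, whereas the paper carries the automorphism $f$ along; the content is the same.
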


\begin{proof}
  Lemma~\ref{lem:splittings-exist} implies that the map~(\ref{eq:Comp}) is surjective and hence the induced map of orbit sets is also surjective.

  To prove injectivity, let us first observe that two non-basepoint $(p_1, \ldots, p_k)$-simplices $\mathcal{M}$ and $\mathcal{M}'$ of $\widetilde{D}^k(M)$ are in the same $\mr{GL}(M)$-orbit if and only if $\mathcal{M}(a)$ and $\mathcal{M}'(a)$ are isomorphic as abstract $\R$-modules for each $a \in \underline{p_1} \times \ldots \times \underline{p_k}$.  Indeed, a choice of abstract isomorphisms $f_a \colon \mathcal{M}(a) \to \mathcal{M}'(a)$ may be summed to an automorphism
  \begin{equation*}
    M \xleftarrow{\cong} \bigoplus_a \mathcal{M}(a) \xrightarrow{\oplus_a f_a} \bigoplus_a \mathcal{M}'(a) \xrightarrow{\cong} M
  \end{equation*}
  which will take $\mathcal{M}$ to $\mathcal{M}'$.  But if an automorphism $f \in \mr{GL}(M)$ takes $\phi_\mathcal{M}$ to $\phi_{\mathcal{M}'}$, then it indeed induces isomorphisms
  \begin{equation*}
    \mathcal{M}(a) \cong \frac{\phi_{\mathcal{M}}(i)}{\sum_{i < a} \phi_{\mathcal{M}}(i)} \xrightarrow{\sim}\frac{\phi_{\mathcal{M}'}(i)}{\sum_{i < a} \phi_{\mathcal{M}'}(i)} \cong \mathcal{M}'(a).\qedhere
  \end{equation*}
\end{proof}

\subsection{The Nesterenko--Suslin property}\label{sec:ns-prop}  In this section we explain how to compare $\R$-linear isomorphisms preserving a splitting $\bigoplus_{x \in X} M_x \cong M$ to the $\R$-linear isomorphisms preserving any filtration induced a partial order on the set $X$. We shall use this later to compare the two kinds of buldings we have just introduced.

Suppose we have a ring $\R$ and a filtration 
\[0 \subset P \subset P \oplus Q\]
of $\R$-modules, then we define a subgroup $\mr{GL}(P \oplus Q, \text{fix }P) \leq \mr{GL}(P \oplus Q)$ of automorphisms which fix $P$ pointwise. There are homorphisms
\[\mr{GL}(Q) \lra \mr{GL}(P \oplus Q, \text{ fix }P) \lra \mr{GL}(Q)\]
given respectively by extending an automorphism of $Q$ by the identity on $P$, and taking the induced automorphism of $(P \oplus Q)/P \cong Q$.  These homomorphisms identify $\mr{GL}(P \oplus Q, \text{ fix }P)$ with the semi-direct product $\mr{GL}(Q) \ltimes \Hom_\R(Q,P)$.
Nesterenko and Suslin gave an condition on $\R$ under which these maps are isomorphisms on homology with coefficients in $\bk$, a condition which we axiomatise:

\begin{definition}\label{def:nesterenko-suslin}
	We say that the pair $(\R, \bk)$ satisfies the \emph{Nesterenko--Suslin property} if the maps
\[\mr{GL}(Q) \overset{i}\lra \mr{GL}(P \oplus Q,\text{ fix } P) \overset{\rho}\lra \mr{GL}(Q)\]
	induce isomorphisms on $\bk$-homology for all finitely-generated projective $\R$-modules $P$ and $Q$.
\end{definition}

We shall discuss below some examples where this property holds. If $\mr{GL}(P \oplus Q,\text{ pres } P)$ denotes the subgroup of $\mr{GL}(P \oplus Q)$ of those automorphisms which preserve (setwise) the submodule $P$, then applying the Serre spectral sequence to the map of extensions of groups
\begin{equation*}
\begin{tikzcd}	\mr{GL}(P \oplus Q, \text{ fix } P) \rar & \mr{GL}(P \oplus Q, \text{ pres } P) \rar &  \mr{GL}(P) \\
	\mr{GL}(Q) \rar \uar & \mr{GL}(P) \times \mr{GL}(Q) \rar \uar & \mr{GL}(P) \uar[equals]
\end{tikzcd}
\end{equation*}
shows that the natural inclusion
\[\mr{GL}(P) \times \mr{GL}(Q) \lra \mr{GL}(P \oplus Q, \text{ pres } P)\]
is also a homology isomorphism. In this section we will provide a technical generalisation of this result, where we consider a module with a collection of submodules indexed by an arbitrary finite poset.

Let $M$ be an $\R$-module, $X$ a finite set, and $s \in \splitting(X_+)$ a non-basepoint as in~\eqref{eq:3}. That is, $s$ is the data of a direct sum decomposition $M \cong \bigoplus_{x \in X} M_x$ indexed by the set $X$.  Then any endomorphism $f \in \End_\R(M)$ is given by a matrix with entries $f_{x,y} \in \Hom_\R(M_x,M_y)$.  We say that $f$ is \emph{upper triangular} with respect to a partial order $\preceq$ on $X$ when $f_{x,y} = 0$ unless $y \preceq x$.  Equivalently, $f$ should preserve for all $x \in X$ the submodules 
\[M_{\preceq x} \coloneqq \bigoplus_{y \preceq x} M_y.\]

We let $\mr{GL}(M, \preceq) \subseteq \mr{GL}(M)$
denote the subgroup of those automorphisms which are upper triangular with respect to the partial order $\preceq$. If $\preceq$ and $\leq$ are partial orders on $X$ such that $\preceq$ is contained in $\leq$, when regarding the partial orders as subsets of $X \times X$
(so the containment means $x \preceq y \Rightarrow x \leq y$), then we have an inclusion of subgroups
\begin{equation}\label{eq:4}
  \mr{GL}(M,\preceq) \hookrightarrow \mr{GL}(M,\leq).
\end{equation}
For example, let $M = \bigoplus_{i=1}^n \R$. On the one hand, if $\preceq$ is the identity partial order on $\{1,\ldots,n\}$ (i.e.~$x \preceq y$ if and only if $x = y$) then $\mr{GL}(M,\leq)$ consists of the diagonal matrices. On the other hand, if $\leq$ is the usual linear order (i.e.~$x \leq y$ if and only if $y-x$ is non-negative) then $\mr{GL}(M,\leq)$ consist of the upper-triangular matrices. Thus $\preceq$ is contained in $\leq$, and we indeed have an inclusion as \eqref{eq:4}.

\begin{proposition}\label{propthm:SN}
	If $(\R, \bk)$ satisfies the Nesterenko--Suslin property then the inclusion~\eqref{eq:4} induces an isomorphism in group homology with constant coefficients in $\bk$. In particular this holds for
	\[i \colon \prod_{x\in X} \mr{GL}(M_x) \lra \mr{GL}(M, \leq).\]
\end{proposition}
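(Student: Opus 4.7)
Plan: My plan is to deduce the proposition from the special case that
\[i_\preceq\colon \textstyle\prod_{x \in X} \mr{GL}(M_x) \lra \mr{GL}(M, \preceq)\]
is a $\bk$-homology isomorphism for every finite partial order $\preceq$, and then to prove this special case by induction on $|X|$. The reduction uses two-out-of-three applied to the factorisation $i_\leq = \bigl(\mr{GL}(M,\preceq) \hookrightarrow \mr{GL}(M,\leq)\bigr) \circ i_\preceq$.

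For the induction the base $|X| \leq 1$ is trivial. In the inductive step I would choose a $\preceq$-\emph{maximal} element $x_0 \in X$, set $Y = X \setminus \{x_0\}$, and let $M_Y^{\preceq x_0} = \bigoplus_{y \in Y,\, y \preceq x_0} M_y$. By transitivity of $\preceq$ the subgroup $\mr{GL}(M_Y, \preceq|_Y)$ preserves $M_Y^{\preceq x_0}$, and $\preceq$-maximality of $x_0$ forces the matrix entries $f_{x_0,y}$ to vanish for all $y \in Y$, yielding the semidirect-product description
\[\mr{GL}(M, \preceq) \;\cong\; \bigl(\mr{GL}(M_Y, \preceq|_Y) \times \mr{GL}(M_{x_0})\bigr) \ltimes \Hom(M_{x_0}, M_Y^{\preceq x_0}).\]
Writing $B \coloneqq \mr{GL}(M_Y, \preceq|_Y) \times \mr{GL}(M_{x_0})$, I would factor $i_\preceq$ as $\prod_x \mr{GL}(M_x) \to B \to \mr{GL}(M, \preceq)$; the first map is a $\bk$-homology isomorphism by the inductive hypothesis applied to $(M_Y, \preceq|_Y)$.

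For the second map I would run the Serre spectral sequence of the extension $1 \to V \to \mr{GL}(M,\preceq) \to B \to 1$ with $V = \Hom(M_{x_0}, M_Y^{\preceq x_0})$, reducing the claim to $H_s(B; H_t(V;\bk)) = 0$ for $t > 0$. A Lyndon--Hochschild--Serre spectral sequence for the direct product decomposition of $B$ reduces this further to $H_u(\mr{GL}(M_{x_0}); H_t(V;\bk)) = 0$ for $t > 0$, where $\mr{GL}(M_{x_0})$ acts on $V$ via the restriction of the conjugation action in $\mr{GL}(M,\preceq)$. A direct calculation identifies this action as $b \cdot R = Rb^{-1}$, which matches the canonical $\mr{GL}(M_{x_0})$-action on $V$ in the Nesterenko--Suslin extension $\mr{GL}(M_{x_0}) \ltimes V$. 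Finally, the Nesterenko--Suslin property applied with $P = M_Y^{\preceq x_0}$ and $Q = M_{x_0}$ asserts exactly that $\mr{GL}(M_{x_0}) \hookrightarrow \mr{GL}(M_{x_0}) \ltimes V$ is a $\bk$-homology isomorphism, which via the Serre spectral sequence for that extension is equivalent to the required vanishing.

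The key design choice is that $x_0$ should be $\preceq$-maximal rather than minimal, so that the cross term in the semidirect-product decomposition of $\mr{GL}(M,\preceq)$ has the form $\Hom(M_{x_0}, -)$ and matches the shape $\Hom(Q, P)$ in Definition~\ref{def:nesterenko-suslin} with $Q = M_{x_0}$; once this is arranged the remainder is a routine assembly of spectral sequences.
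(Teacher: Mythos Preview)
Your overall strategy—reducing to the ``in particular'' case and then inducting by peeling off a maximal element $x_0$—is sound and close in spirit to the paper's proof (which instead inducts over a chain of partial orders obtained by successively isolating maximal elements, comparing adjacent orders at each step). Your semidirect-product decomposition of $\mr{GL}(M,\preceq)$ is correct, as is the identification of the $\mr{GL}(M_{x_0})$-action on $V$.

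However, there is a genuine gap in the final step. You assert that the Nesterenko--Suslin property (the inclusion $\mr{GL}(M_{x_0}) \hookrightarrow \mr{GL}(M_{x_0}) \ltimes V$ being a $\bk$-homology isomorphism) is \emph{equivalent}, via the Serre spectral sequence, to the vanishing $H_u(\mr{GL}(M_{x_0}); H_t(V;\bk)) = 0$ for $t > 0$. Only one implication is immediate: the vanishing collapses the spectral sequence onto the bottom row, forcing the inclusion to be an isomorphism. But you need the reverse implication, and that does not follow. A split inclusion being a homology isomorphism forces $E_\infty^{p,q}=0$ for $q>0$ and leaves the bottom row untouched, yet there could still be nonzero $E_2^{p,q}$ with $q>0$ which cancel among themselves via higher differentials. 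The Nesterenko--Suslin property as stated in Definition~\ref{def:nesterenko-suslin} gives you only the homology isomorphism, not the vanishing of these $E_2$-terms.

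The fix is to bypass the vanishing entirely and compare extensions over $G_Y = \mr{GL}(M_Y,\preceq|_Y)$ instead of over $B$. That is, compare
\[
1 \to \mr{GL}(M_{x_0}) \to B \to G_Y \to 1 \quad\text{and}\quad 1 \to \mr{GL}(M_{x_0}) \ltimes V \to B \ltimes V \to G_Y \to 1.
\]
The map of kernels is precisely the Nesterenko--Suslin inclusion, hence a $\bk$-homology isomorphism; being the restriction of a map over $G_Y$, it is $G_Y$-equivariant for the conjugation actions and so induces an isomorphism of $\bk[G_Y]$-modules on homology. The induced map of $E_2$-pages $H_p(G_Y; H_q(-;\bk))$ is then an isomorphism, and therefore so is $H_*(B;\bk) \to H_*(B \ltimes V;\bk)$. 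This is essentially how the paper argues.
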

\begin{proof}
  The map $i$ is the special case of~\eqref{eq:4} where $\preceq$ is the identity relation on $X$.
  Conversely that special case implies the general case, by comparing the identity relation to both $\leq$ and $\preceq$.
  
  If $\leq$ is an arbitrary partial order on $X$, we may obtain a sequence of partial orders, each contained in the previous, by successively declaring a maximal element $t \in X$ to be unrelated to elements of $X \setminus \{t\}$ until we reach the identity relation. We prove the proposition by induction over these partial orders; the initial case of the identity relation was given above.

  For the induction step, we must prove the case where $\preceq$ and $\leq$ agree on $X' = X \setminus \{t\}$ for some $t \in X$, such for any $x \in X'$, neither $t \preceq x$, $x \preceq t$, nor $x \geq t$ holds (but $x \leq t$ is allowed).  Writing $M' = \sum_{x \in X'} M_x \subset M$, we get an isomorphism $M_t \oplus M' \to M$, and any morphism $f \in \mr{GL}(M,\leq)$ will satisfy $f(M_t) \subset M_t$.  Using the isomorphism $M' \to M/M_t$ gives an epimorphism
  \begin{equation*}
    \mr{GL}(M,\leq) \lra \mr{GL}(M',\leq_{\vert M'}),
  \end{equation*}
  whose kernel is the subgroup $\mr{GL}(M,\text{ fix } M') < \mr{GL}(M)$.  Replacing $\leq$ by $\preceq$ instead gives an epimorphism $\mr{GL}(M',\leq_{\vert M'}) \times \mr{GL}(M_t) \cong \mr{GL}(M,\preceq) \to \mr{GL}(M',\leq_{\vert M'})$ with kernel $\mr{GL}(M_t)$.  We have a comparison of group extensions
  \begin{equation*}
    \begin{tikzcd}
      \mr{GL}(M,\text{ fix } M') \rar \dar & \mr{GL}(M, \leq) \rar \dar{\rho}& \mr{GL}(M', \leq _{\vert_{M'}})\dar[equals]\\
      \mr{GL}(M_t) \rar& \mr{GL}(M,\preceq) \rar  & \mr{GL}(M', \leq_{\vert_{M'}}),
    \end{tikzcd}
  \end{equation*}
  where the map
  \begin{equation*}
    \rho: \mr{GL}(M, \leq) \lra \mr{GL}(M,\preceq) \cong \mr{GL}(M',\leq_{\vert M'}) \times \mr{GL}(M_t)
  \end{equation*}
  sends $f \in \mr{GL}(M,\leq)$ to the associated graded with respect to the filtration $0 \subset M_t \subset M$.  The induced map of kernels is
  \begin{equation*}
    \mr{GL}(M,\text{ fix $M'$}) \lra \mr{GL}(M_t),
  \end{equation*}
  an instance of the homomorphisms assumed to induce isomorphisms in group homology with constant coefficients $\bk$ in the Nesterenko--Suslin property, Definition \ref{def:nesterenko-suslin}. Hence a spectral sequence argument implies that $\rho$ also induces an isomorphism in group homology with constant coefficients $\bk$.  Since $\rho$ is a one-sided inverse to the inclusion, the latter has the same property. This concludes the proof of the induction step.
\end{proof}

It remains to give conditions under which the Nesterenko--Suslin property holds. The main condition we will be interested in is that studied in \cite[Section 1]{SN}:

\begin{definition}A ring $\R$ has \emph{many units} if for each $n \in \bN$ there exist elements $a_1, a_2, \ldots, a_n \in \R$ such that
  \begin{equation*}
    \sum_{i \in J} a_i \in \R^\times
  \end{equation*}
  for any non-empty $J \subset \{1, \ldots, n\}$.
\end{definition}

\begin{proposition}[Nesterenko--Suslin]\label{propthm:SN-many-units}
	If $\R$ has many units then $(\R, \mathbb{Z})$ satisfies the Nesterenko--Suslin property.
\end{proposition}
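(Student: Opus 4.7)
The map $\rho \circ i$ equals $\id_{\mr{GL}(Q)}$, so it suffices to show that $\rho$ (equivalently $i$) is a $\bZ$-homology isomorphism. I would first identify $\mr{GL}(P \oplus Q, \text{fix } P)$ with the semidirect product $\mr{GL}(Q) \ltimes \Hom_\R(Q, P)$ and write down the associated Hochschild--Serre spectral sequence
\[E^2_{p,q} = H_p\bigl(\mr{GL}(Q);\, H_q(\Hom_\R(Q, P); \bZ)\bigr) \Longrightarrow H_{p+q}\bigl(\mr{GL}(Q) \ltimes \Hom_\R(Q, P); \bZ\bigr).\]
Since $\rho$ is split by $i$, the map $\rho_*$ is an isomorphism if and only if $E^2_{p,q} = 0$ for every $q > 0$, so the proof reduces to establishing this vanishing.

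For the vanishing I plan to use the classical centre-kills argument. Each unit $t \in \R^\times$ gives a central element $t \cdot \id_Q \in \mr{GL}(Q)$, and a direct calculation in the semidirect product shows that it acts on $\Hom_\R(Q, P)$ by the abelian-group automorphism $\phi \mapsto t^{-1}\phi$. Conjugation by a central element is trivial on group homology yet also agrees with the action induced on coefficients, so the endomorphism $\phi_t$ of $H_q(\Hom_\R(Q, P); \bZ)$ coming from this scalar action must act as the identity on $E^2_{p,q}$. Consequently $(\phi_t - \id)$ annihilates $E^2_{p,q}$ for every $t$, and the problem reduces to producing, for each $q > 0$, an element (or $\bZ$-linear combination) $\xi$ in the image of $\R^\times$ for which $\xi - \id$ acts invertibly on $H_q(\Hom_\R(Q, P); \bZ)$.

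The final step is producing such a $\xi$, and this is where the main difficulty lies. For $q = 1$ one has $H_1(\Hom_\R(Q, P); \bZ) = \Hom_\R(Q, P)$ and $\phi_t - \id$ is simply multiplication by the element $t^{-1} - 1 \in \R$, which is invertible whenever $t - 1$ is a unit; the many-units hypothesis trivially supplies such a $t$. For $q \geq 2$ the action on $H_q$ is no longer a scalar, and one must instead consider $\bZ$-linear combinations of scalar-multiplication actions. This is precisely the content of the Nesterenko--Suslin key lemma in \cite[\S 1]{SN}: the many-units hypothesis is tailored so that a Vandermonde-type determinant in $\bZ[\R^\times]$ becomes a unit, and this determinantal identity produces a combination of $\phi_{t_i}$'s whose difference with the identity acts invertibly on $H_q$ for every $q \geq 1$. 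I would import this lemma directly rather than reprove it; the obstacle to writing a fully self-contained proof is precisely reconstructing the explicit determinantal identity they use.
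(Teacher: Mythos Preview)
Your proposal is correct and follows essentially the same route as the paper: the paper does not give an independent proof but simply cites \cite[Proposition~1.10]{SN}, remarking that the argument there (stated for free modules) goes through with $A^s$ replaced by a general $\R$-module. Your outline of the Hochschild--Serre spectral sequence together with the centre-kills/Vandermonde argument is exactly the content of that proposition in \cite{SN}, so you are reproducing the structure of the cited proof rather than giving a genuinely different argument.
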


Strictly speaking Nesterenko and Suslin only prove the case that $A$ and $B$ are finitely-generated free modules, but the proof of the key result \cite[Proposition 1.10]{SN} goes through with $A^s$ replaced by a general $A$-module.

\begin{example}\label{exam:many-units} 
Every infinite field $\bF$ has many units, and in fact has the stronger property that there exists an infinite list $a_1,a_2,\ldots$ of elements of $\bF$ such that each finite partial sum is a unit: suppose we have found a list $a_1,\ldots,a_{n-1}$ of elements of $\bF$ all of whose partial sums are units, then because $\bF$ is infinite we can find non-zero element $a_n$ of $\bF$ which is not a negative of any of these partial sums and $a_1,\ldots,a_n$ is a longer list all of whose partial sums are units. 
	
  More generally, if $\R$ is a local ring with infinite residue field $\bF$, then any infinite list of elements of $\bF$ as above may be lifted to elements $a_1, a_2, \ldots \in \R$ in which any non-trivial partial sum is a unit (since being a unit may be checked in $\bF$). 
  
  Recall that the Jacobson radical $J(\R)$ is the intersection of all maximal ideals. Then, more generally, if $\R$ is a semi-local ring then the codomain of the surjective homomorphism
  \begin{equation*}
    \R \lra \R/J(\R)
  \end{equation*}
  can be identified with $\prod_{\mathfrak{m}} \R/\mathfrak{m}$ using the Chinese remainder theorem, where the product is indexed by the finite set of maximal ideals. If each residue field $\R/\mathfrak{m}$ is infinite then we may choose an infinite list of elements where all non-trivial partial sums are invertible, independently in each residue field, and lift this to an infinite list of elements of $\R$ where each non-trivial partial sum is a unit.
\end{example}

\begin{remark}\label{rem:ns-other} There are other conditions under which the Nesterenko--Suslin property holds.  In particular, Quillen proved that if $\R$ contains $\frac{1}{p}$ for some $p \in \mathbb{Z}$ then $(\R, \mathbb{Q})$ satisfies the Nesterenko--Suslin property, \cite[p.~203]{QuillenChar}. (He also asked whether the assumption was really necessary, and in particular whether $(\bZ, \bQ)$ satisfies the Nesterenko--Suslin property. In Section \ref{sec:QuillenQuestion} we will answer this question by proving that it does not.)
\end{remark}

The Nesterenko--Suslin property can be used to compare the homology of the $k$-dimensional split building to that of $k$-dimensional building, after taking homotopy orbits by $\mr{GL}(M)$:

\begin{theorem}\label{thmcor:BlockvsFlag}
  Assume that $(\R, \bk)$ satisfies the {Nesterenko--Suslin property}.  Then the map
  \[\tilde{D}^{k}(M) \hcoker \mr{GL}(M) \lra {D}^{k}(M) \hcoker \mr{GL}(M)\]
  of pointed homotopy orbit spaces of~(\ref{eq:Comp}) induces an isomorphism on  $\bk$-homology.
\end{theorem}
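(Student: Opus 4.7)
The plan is to reduce to a pointwise comparison at each multisimplicial level, where Proposition \ref{propthm:SN} applies, and then pass to $k$-fold geometric realization.

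First I would fix $(p_1, \ldots, p_k) \in \bN^k$ and analyze the $\mr{GL}(M)$-equivariant map of pointed sets $\widetilde{D}^k(M)_{p_1,\ldots,p_k} \to D^k(M)_{p_1,\ldots,p_k}$ from \eqref{eq:Comp}. Proposition \ref{prop:split-map} already provides a bijection on $\mr{GL}(M)$-orbits. Choose a non-basepoint orbit, represented by a splitting $\mathcal{M}$ mapping to a full lattice $\phi = \phi_\mathcal{M}$. With $X = \underline{p_1} \times \cdots \times \underline{p_k}$ carrying the product partial order $\leq$ and with $M = \bigoplus_{x \in X} \mathcal{M}(x)$, the two stabilizers are computed as follows. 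The stabilizer of $\mathcal{M}$ is $\mr{GL}(M)_{\mathcal{M}} = \prod_{x \in X} \mr{GL}(\mathcal{M}(x)) = \mr{GL}(M,\mr{id})$. The stabilizer of $\phi$ consists of those automorphisms preserving $\phi(a) = \sum_{i \leq a} \mathcal{M}(i) = M_{\leq a}$ for every $a \in X$, which is exactly $\mr{GL}(M,\leq)$ in the notation of Section \ref{sec:ns-prop}. Proposition \ref{propthm:SN}, applied to the identity relation as a refinement of $\leq$, shows that the inclusion $\mr{GL}(M)_{\mathcal{M}} \hookrightarrow \mr{GL}(M)_\phi$ is an isomorphism on $\bk$-homology.

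For any pointed $\mr{GL}(M)$-set with fixed basepoint, the pointed homotopy orbits decompose (on reduced $\bk$-homology) as a direct sum of $H_\ast(B\mr{GL}(M)_x;\bk)$ ranging over non-basepoint orbit representatives. Combining the orbit bijection of Proposition \ref{prop:split-map} with the stabilizer comparison of the previous paragraph, I conclude that at each multisimplicial level the map
\[
\widetilde{D}^k(M)_{p_1,\ldots,p_k} \hcoker \mr{GL}(M) \lra D^k(M)_{p_1,\ldots,p_k} \hcoker \mr{GL}(M)
\]
is a $\bk$-homology equivalence.

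To finish, I would pass to $k$-fold thin geometric realization, which commutes with pointed homotopy orbits. Applying reduced singular $\bk$-chains yields a map of $k$-fold simplicial chain complexes that is a levelwise quasi-isomorphism; its totalization is a quasi-isomorphism by the standard multisimplicial spectral sequence (or by iterated application of the fact that realization of simplicial pointed spaces preserves levelwise $\bk$-homology equivalences). The only non-routine step is the stabilizer identification in the first paragraph, where the product partial order arises from the structure of a full lattice — the rest is book-keeping on top of Propositions \ref{prop:split-map} and \ref{propthm:SN}.
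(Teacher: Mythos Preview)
Your proof is correct and follows essentially the same approach as the paper: reduce to orbits via Proposition~\ref{prop:split-map}, identify the map on stabilizers with the inclusion $\prod_{x \in X} \mr{GL}(\mathcal{M}(x)) \hookrightarrow \mr{GL}(M,\leq)$ for the product order on $X = \underline{p_1}\times\cdots\times\underline{p_k}$, and apply Proposition~\ref{propthm:SN}. You spell out the passage to geometric realization more explicitly than the paper does, but this is just added detail rather than a different route.
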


\begin{proof}
	By Proposition \ref{prop:split-map} the map \eqref{eq:Comp} induces a bijection on $\mr{GL}(M)$-orbits of simplices, so it suffices to prove that the map
	\begin{equation}\label{eq:CompStab}
	\mr{Stab}_{\mr{GL}(M)}(\mathcal{M}) \lra \mr{Stab}_{\mr{GL}(M)}(\phi_\mathcal{M})
	\end{equation}
	induces an isomorphism on group homology with constant coefficients $\bk$ for all non-basepoint $\mathcal{M} \in \widetilde{D}^k(M)_{p_1,\ldots, p_k}$.
        
	This holds by an application of Proposition~\ref{propthm:SN}. Namely, for the given $\mathcal{M} \colon X \to \cat{Sub}(M)$ with $X = \underline{p_1} \times \ldots \times \underline{p_k}$, we take the order $\leq$ obtained by restricting the product order on $[p_1] \times \cdots \times [p_k]$ to $X$.  Writing $M_x = \mathcal{M}(x)$, the map \eqref{eq:CompStab} is then identified with an instance
	\[i \colon \prod_{x \in X} \mr{GL}(M_x) \lra \mr{GL}(M, \leq)\]
	of the homomorphism in Proposition~\ref{propthm:SN}, so by this proposition it induces an isomorphism in group homology with coefficients in $\bk$ as long as $(\R, \bk)$ satisfies the Nesterenko--Suslin property.
\end{proof}

\subsection{Relationship with $E_k$-homology}\label{sec:buidings-ek}
In this section we explain the relationship between the $k$-dimensional split buildings $\tilde{D}^k(M)$ and the $E_k$-homology groups $H^{E_k}_{*,\ast}(\gR)$ with $\gR$ the $E_\infty$-algebra constructed in Section \ref{sec:contructing-r}. 

To do so, we specialise the results of Section $E_k$.17.3 to the symmetric monoidal groupoid $\cat{P}_A$ with rank functor $r \colon \cat{P}_A \to \bN$, which satisfies the required hypotheses by Lemma \ref{lem:vect-cat-props}. This section explains that the derived indecomposables of $\gR \in \Alg_{E_k}(\cat{sSet}^\bN)$ may be computed rather explicitly in terms of $k$-fold pointed simplicial sets $Z^{E_k}_{\bullet,\ldots,\bullet}(M)$ with $\mr{GL}(M)$-action known as \emph{$E_k$-splitting complexes}.

Considering the finite set $\underline{p} = \{1,\ldots,p\}$ as a discrete category, we let $\cat{P}_A^{p_1 \cdots p_k}$ denote the category of functors $\cat{Fun}(\underline{p_1} \times \cdots \times \underline{p_k},\cat{P}_A)$; an object of this category consists of a collection of objects $M_{i_1,\ldots,i_k}$ for $1 \leq i_j \leq p_j$.

\begin{definition}For $M \in \cat{P}_A$, we define a $k$-fold simplicial pointed set  $Z^{E_k}_{\bullet,\ldots,\bullet}(M)$ with the pointed set of $(p_1,\ldots,p_k)$-simplices given by 
	\[Z^{E_k}_{p_1,\ldots,p_k}(M) \coloneqq \left(\underset{\cat{P}_A^{p_1\cdots p_k}}{\mr{colim}} \,\cat{P}_A(M_{1,\ldots,1} \oplus \cdots \oplus M_{p_1,\ldots,p_k},M) \right)_+.\]

For $1 \leq i \leq k$, the face maps $\smash{d^i_0},\smash{d^i_{p_i}}$ are the constant maps to the basepoint. The face maps $d^i_j$ for $0<j<p_i$ are induced by replacing each pair of objects $(M_{a_1,\ldots,a_{i-1},j,a_j,\ldots,a_p},M_{a_1,\ldots,a_{i-1},j+1,a_j,\ldots,a_p})$ by its direct sum. The degeneracy maps insert $0$'s. It has a remaining $\mr{GL}(M)$-action, and we write $Z^{E_k}(M) \coloneqq |Z_{\bullet,\ldots,\bullet}(M)| \in \cat{sSet}_\ast^{\mr{GL}(M)}$ for its $k$-fold thin geometric realisation.\end{definition}

Corollary $E_k$.17.15 then says that for each finitely-generated projective $\R$-module $M$ there is a weak equivalence
\[S^k \wedge Q^{E_k}_\bL(\gR)(n) \simeq \bigvee_{\substack{[M]\\r(M)=n}}Z^{E_k}(M) \hcoker \mr{GL}(M)\]
This may be related to the split buildings as follows.

\begin{theorem}\label{thm:split-building-ek} 
For any connected ring $\R$ and any finitely-generated projective $\R$-module $M$ there are $\mr{GL}(M)$-equivariant isomorphisms $Z^{E_k}(M) \cong \tilde{D}^k(M)$. Thus if $\R$ is connected and $\gR_\bk$ is the $E_\infty$-algebra constructed in Section \ref{sec:contructing-r}, there are isomorphisms
\[H^{E_k}_{n,d}(\gR_\bk) \cong \bigoplus_{\substack{[M]\\r(M)=n}} \widetilde{H}_{n,d-k}(\tilde{D}^k(M) \hcoker \mr{GL}(M);\bk).\]
\end{theorem}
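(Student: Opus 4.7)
The first claim—that there is a $\mr{GL}(M)$-equivariant isomorphism $Z^{E_k}(M)\cong\widetilde{D}^k(M)$ of thin geometric realisations—is proved by identifying the underlying $k$-fold simplicial pointed sets. A non-basepoint class in the colimit defining $Z^{E_k}_{p_1,\ldots,p_k}(M)$ is represented by a tuple $(\vec{x},\phi)$ consisting of $\vec{x}\in\cat{P}_\R^{p_1\cdots p_k}$ together with an isomorphism $\phi\colon\bigoplus_i x_i\xrightarrow{\sim}M$. Sending $(\vec{x},\phi)$ to the family of submodules $\mathcal{M}(i)\coloneqq\phi(x_i)\subset M$ descends to a bijection from this colimit onto the set of functions $\underline{p_1}\times\cdots\times\underline{p_k}\to\cat{Sub}(M)$ whose direct sum maps isomorphically to $M$, since two representatives are equivalent in the colimit exactly when they produce the same family of submodules. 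By construction of the Segal $\Gamma$-space $\splitting$ applied to $S^1_{p_1}\wedge\cdots\wedge S^1_{p_k}$, these functions are also precisely the non-basepoint $(p_1,\ldots,p_k)$-simplices of $\widetilde{D}^k(M)$, and the bijection is visibly $\mr{GL}(M)$-equivariant.

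\textbf{Checking the simplicial structure.} Under this bijection the degeneracy maps on both sides insert the zero submodule into the appropriate slice, while the inner face maps $d^i_j$ with $0<j<p_i$ on both sides sum adjacent submodules along direction $i$: on the $Z^{E_k}$ side by definition, and on the $\widetilde{D}^k$ side by unfolding the Segal pushforward along the corresponding face map of $S^1_{p_i}$. The outer faces $d^i_0$ and $d^i_{p_i}$ are the delicate case: on a simplex whose boundary slice is nonzero, the Segal pushforward in $\widetilde{D}^k$ accumulates that slice at the basepoint element of $S^1_{p_i-1}$ and therefore lands at the basepoint of $\widetilde{D}^k$, matching the prescription for $Z^{E_k}$ that these faces are constant at the basepoint on non-degenerate simplices. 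On degenerate simplices (where the boundary slice is zero) the value of each outer face is forced by the simplicial identities $d^i_0 s^i_0=\mr{id}=d^i_{p_i}s^i_{p_i-1}$, which hold on both sides and therefore produce the same answer. Hence the two $\mr{GL}(M)$-equivariant $k$-fold simplicial pointed sets are isomorphic, and so are their $k$-fold thin geometric realisations.

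\textbf{The homology formula and main obstacle.} Because $\cat{P}_\R$ satisfies the hypotheses of Section $E_k$.17 by Lemma~\ref{lem:vect-cat-props}, the adaptation of Corollary $E_k$.17.5 recalled just above yields a $\mr{GL}(M)$-equivariant weak equivalence
\[S^k\wedge Q^{E_k}_\bL(\gR_\bk)(n)\simeq\bigvee_{\substack{[M]\\ r(M)=n}} Z^{E_k}(M)_\bk\hcoker\mr{GL}(M).\]
Substituting $Z^{E_k}(M)\cong\widetilde{D}^k(M)$ from the previous steps, passing to reduced $\bk$-homology, and applying the suspension shift $\widetilde{H}_{n,d}(S^k\wedge X)=\widetilde{H}_{n,d-k}(X)$ recorded in Section~\ref{sec:notat-other-recoll} then produces the claimed formula for $H^{E_k}_{n,d}(\gR_\bk)$. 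The principal obstacle is the simplicial bookkeeping in the first two paragraphs: one has to reconcile the explicit Segal $\Gamma$-space face operators of $\widetilde{D}^k$ with the bar-construction face operators of $Z^{E_k}$—in particular handling $d^i_0$ and $d^i_{p_i}$ carefully—while keeping track of the $\mr{GL}(M)$-action throughout; once that identification is in place, the homology statement is essentially formal.
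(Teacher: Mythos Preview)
Your proof is correct and follows essentially the same approach as the paper's own proof: construct the levelwise bijection $(\vec{x},\phi)\mapsto\bigl(i\mapsto\phi(x_i)\bigr)$, observe it descends to the colimit, and then verify compatibility with the $k$-fold simplicial structure. The paper is terser---it writes down the map, asserts it is a bijection, and says it is ``straightforward to check'' that these assemble to a simplicial isomorphism---whereas you spell out the face and degeneracy checks, including the slightly delicate outer faces $d^i_0$ and $d^i_{p_i}$, which the paper does not discuss explicitly.
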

\begin{proof}
For $\vec{M} \in \cat{P}_\R^{p_1\cdots p_k}$, an isomorphism $f \colon M_{1,\ldots,1} \oplus \cdots \oplus M_{p_1,\ldots,p_k} \overset{\sim}\lra M$ determines a function (recall $\underline{p}=\{1,\ldots,p\}$)
\[\mathcal{M}_f \colon \underline{p_1} \times \cdots \times \underline{p_k} \lra \cat{Sub}(M)\]
	by the formula $\mathcal{M}_f(i_1, \ldots, i_k) = f(M_{i_1, \ldots, i_k})$, giving a non-basepoint element of $\tilde{D}^k_{p_1, \ldots, p_k}(M)$. The element obtained is invariant under precomposing $f$ with an isomorphism obtained from $\cat{P}_\R^{p_1\cdots p_k}$, so this extends to a pointed map
	\[Z^{E_k}_{p_1,\ldots,p_k}(M) \coloneqq \left(\underset{\vec{M} \in \cat{P}_\R^{p_1\cdots p_k}}{\mr{colim}} \cat{P}_\R(M_{1,\ldots,1} \oplus \cdots \oplus M_{p_1,\ldots,p_k},M)\right)_+ \lra \tilde{D}^k_{p_1, \ldots, p_k}(M).\]
This is a bijection, and it is straightforward to check that these assemble into an isomorphism of $k$-fold pointed simplicial sets $Z^{E_k}_{\bullet,\ldots,\bullet}(M) \overset{\sim}\to \tilde{D}^k_{\bullet,\ldots,\bullet}(M)$.
\end{proof}

\section{General linear groups over a field}\label{sec:GLfield}

In this section we take $\R=\bF$ a field and return to the $E_\infty$-algebra $\gR$ in $\cat{sSet}^\bN$ which we constructed in Section \ref{sec:contructing-r}. As projective modules are free over $\bF$, this satisfies 
\[\gR(n) \simeq \begin{cases}\varnothing & \text{if $n=0$,} \\
B\mr{GL}_n(\bF) & \text{otherwise.}\end{cases}\]
The associated non-unital $E_\infty$-algebra $\gR_\bk$ in $\cat{sMod}_\bk^\bN$ then satisfies $H_{n,d}(\gR_\bk) = H_d(\mr{GL}_n(\bF);\bk)$ for all $n>0$. 

In this section we shall study its $E_k$-homology groups $\smash{H_{n,d}^{E_k}(}\gR_\bk)$ for $k \in \{1,2,\infty\}$.  Our main result is that these groups vanish for $d < n-1$ when $k=1$, and when $\bF$ is an \emph{infinite} field they vanish for $d < 2n-2$ when $k \in \{2,\infty\}$.  On the boundary line $d = 2n-2$ we will also calculate these groups explicitly for both $k=2$ and $k=\infty$.  

Most of the results in this section hold more generally for local rings, or even connected semi-local rings whose residue fields are infinite, but for the sake of clarity we prefer to first explain the simpler case of fields where most of the work has already been done; we postpone these generalisations to Section \ref{sec:local-semi-local}. Furthermore, as we have discussed finite fields in detail in \cite{e2cellsIII}, here we shall only consider infinite fields. 

An infinite field $\bF$ has many units by Example \ref{exam:many-units}, so $(\bF, \bk)$ satisfies the Nesterenko--Suslin property for any $\bk$, and hence by combining Theorems \ref{thm:split-building-ek} and \ref{thmcor:BlockvsFlag} we have isomorphisms
\begin{equation}\label{eqn:ek-homology-buildings}
  H_{n,d}^{E_k}(\gR_\bk) \cong \widetilde{H}_{d-k}(\widetilde{D}^k(\bF^n)\hcoker \mr{GL}_n(\bF);\bk) \cong \widetilde{H}_{d-k}(D^k(\bF^k) \hcoker \mr{GL}_n(\bF);\bk).
\end{equation}
The results will be established by studying the (equivariant) homotopy type of the pointed spaces $D^1(\bF^n)$ and $D^2(\bF^n)$.  As we shall see, both of these may be described in terms of the classical Tits building for $\mr{GL}_n(\bF)$.

\subsection{$E_1$-homology}
\label{sec:e_1-cells}

Recall from Definition \ref{defn:TitsBldg} that the Tits building $T_\bullet(\bF^n)$ is the nerve of the poset of proper nontrivial subspaces of $\bF^n$. It is easy to relate this to $D^1(\bF^n)_\bullet$ as in Definition \ref{def:k-dim-building} by comparing definitions: the non-degenerate elements of $T_p(\bF^n)$ are in bijection with the non-degenerate elements of $D^1(M)_{p+2}$.  In fact, up to homotopy equivalence (through a zig-zag of based $\mr{GL}(\bF^n)$-equivariant maps) it may be identified with the unreduced double suspension of $T_\bullet(\bF^n)$, as we now explain.  

\begin{lemma}\label{lem:splittingcomplex-vs-building} There is a weak equivalence of $\mr{GL}(\bF^n)$-spaces
	\[S^2 \cT(\bF^n) \simeq D^1(\bF^n),\]
	where $S^2(-)$ denotes the unreduced double suspension.
\end{lemma}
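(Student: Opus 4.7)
The plan is to realise $D^1(\bF^n)$ as the cofiber of a $\mr{GL}(\bF^n)$-equivariant cofibration from a subcomplex equivalent to $S\,T(\bF^n)$ into a contractible $\mr{GL}(\bF^n)$-space; since the cofiber of such a cofibration is the unreduced suspension of the source, this will identify $D^1(\bF^n)$ equivariantly with $S^2\,T(\bF^n)$, matching the lemma statement (where $\cT(\bF^n)$ is read as the realisation $T(\bF^n) = |T_\bullet(\bF^n)|$).

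First, let $\overline{\cT}(\bF^n) \coloneqq \cat{Sub}(\bF^n)$ denote the poset of \emph{all} subspaces of $\bF^n$, including $0$ and $\bF^n$. Because every subspace of $\bF^n$ is a summand, unwinding Definition~\ref{def:k-dim-building} for $k=1$ identifies a non-basepoint $p$-simplex of $D^1(\bF^n)_\bullet$ with a chain $0 = W_0 \leq W_1 \leq \cdots \leq W_p = \bF^n$ of subspaces. Letting $A_\bullet \subseteq N_\bullet \overline{\cT}(\bF^n)$ denote the $\mr{GL}(\bF^n)$-invariant subsimplicial set of chains that either fail to start at $0$ or fail to end at $\bF^n$, one then obtains an isomorphism
\[D^1(\bF^n)_\bullet \;\cong\; N_\bullet\overline{\cT}(\bF^n) \big/ A_\bullet\]
of pointed simplicial $\mr{GL}(\bF^n)$-sets. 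Setting $A^0 \coloneqq N(\overline{\cT}(\bF^n) \setminus \{0\})$ and $A^\infty \coloneqq N(\overline{\cT}(\bF^n) \setminus \{\bF^n\})$, we have $A = A^0 \cup A^\infty$ and $A^0 \cap A^\infty = N\cT(\bF^n) = T_\bullet(\bF^n)$.

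Next, each of the three posets $\overline{\cT}(\bF^n)$, $\overline{\cT}(\bF^n) \setminus \{0\}$, and $\overline{\cT}(\bF^n) \setminus \{\bF^n\}$ has an extremal element ($\bF^n$ or $0$) fixed by $\mr{GL}(\bF^n)$, and is therefore $\mr{GL}(\bF^n)$-equivariantly contractible via the standard cone natural transformation. Because $A = A^0 \cup A^\infty$ is a pushout along a cofibration of subcomplexes and hence a homotopy pushout, the equivariant contractibility of $A^0$ and $A^\infty$ yields an equivariant weak equivalence
\[A \;\simeq\; S\, T(\bF^n),\]
the unreduced suspension of the Tits building (realised as the homotopy pushout $\ast \leftarrow T(\bF^n) \to \ast$).

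Finally, the inclusion $A \hookrightarrow N\overline{\cT}(\bF^n)$ is a $\mr{GL}(\bF^n)$-equivariant cofibration into an equivariantly contractible space, so the quotient $N\overline{\cT}(\bF^n)/A$ is equivariantly weakly equivalent to the unreduced suspension $SA$. Combining with the previous step yields
\[D^1(\bF^n) \;\simeq\; SA \;\simeq\; S\bigl(S\, T(\bF^n)\bigr) \;=\; S^2\, T(\bF^n),\]
as desired. The argument has no serious obstacle; the only point of minor care is confirming $\mr{GL}(\bF^n)$-equivariance of the cone contractions, which is immediate from the $\mr{GL}(\bF^n)$-fixedness of the extremal elements used.
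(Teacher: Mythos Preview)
Your proof is correct and essentially identical to the paper's. Your $\overline{\cT}$, $A^0$, $A^\infty$, and $A$ are exactly the paper's $\cT^\pm$, $\cT^+$, $\cT^-$, and $\cT^- \cup \cT^+$ (up to relabelling), and both arguments proceed by the same two steps: first use the pushout $A^0 \cup_{T} A^\infty$ with contractible pieces to identify $A \simeq S\,T(\bF^n)$, then use contractibility of $N\overline{\cT}$ to identify the quotient $D^1(\bF^n) \cong N\overline{\cT}/A$ with $SA$.
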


\begin{proof}
Let us define
\begin{align*}
\cT^{\pm}_p(\bF^n) &\coloneqq N_p(\{0\} \cup \cT(\bF^n) \cup \{\bF^n\},\subset)\\
\cT^{+}_p(\bF^n) &\coloneqq N_p(\cT(\bF^n) \cup \{\bF^n\},\subset)\\
\cT^{-}_p(\bF^n) &\coloneqq N_p(\{0\} \cup \cT(\bF^n),\subset).
\end{align*}
Then $\cT_p(\bF^n) = \cT^-_p(\bF^n) \cap \cT^+_p(\bF^n)$ and so we have pushout diagrams of sets
\begin{equation*}
  \begin{tikzcd}
    \cT_p(\bF^n) \rar[hook]\dar[hook] & \cT^+_p(\bF^n) \dar &[-25pt] & \cT^-_p(\bF^n) \cup \cT_p^+(\bF^n) \rar[hook] \dar & \cT^{\pm}_p(\bF^n)\dar\\
    \cT^-_p(\bF^n) \rar & \cT^-_p(\bF^n) \cup \cT_p^+(\bF^n) &[-25pt] & \{\ast\} \rar & D^1(\bF^n)_p,
  \end{tikzcd}
\end{equation*}
which---together with the contractibility of the thin geometric realisations $|\cT^\pm_\bullet(\bF^n)|$, $|\cT^+_\bullet(\bF^n)|$, and $|\cT^-_\bullet(\bF^n)|$, as these posets have top or bottom elements---give a zig-zag of weak equivalences between the unreduced double suspension of $T(\bF^n)$ and $D^1(\bF^n)$.\end{proof}

Recall that the Solomon--Tits theorem---here Theorem \ref{thm:SolomonTits}---says that $T(\bF^n)$ has the homotopy type of a wedge of $(n-2)$-spheres, and the Steinberg module, denoted $\mr{St}(\bF^n)$, is by definition its reduced integral homology in this degree; we write \[\mr{St}_\bk(\bF^n) \coloneqq \mr{St}(\bF^n) \otimes _\bZ \bk=\widetilde{H}_{n-2}(\cT(\bF^n);\bk).\] 
Combining the weak equivalence between the unreduced double suspension of $T(\bF^n)$ and $D^1(M)$ with the identification \eqref{eqn:ek-homology-buildings} for $k=1$, we have thus proved the following.

\begin{theorem}\label{thm:E1Homology}
Suppose that $(\bF, \bk)$ satisfies the Nesterenko--Suslin property. Then there is an isomorphism
  \begin{equation*}
    H^{E_1}_{n,d}(\gR_\bk) \cong H_{d-(n-1)}(\mr{GL}_n(\bF);\mr{St}_\bk(\bF^n)).
  \end{equation*}
In particular, $H^{E_1}_{n,d}(\gR_\bk)=0$ for $d < n-1$.
\end{theorem}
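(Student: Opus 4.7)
The plan rests on three ingredients already established earlier in the paper. First, by equation \eqref{eqn:ek-homology-buildings}, the Nesterenko--Suslin hypothesis lets me identify $H^{E_1}_{n,d}(\gR_\bk)$ with the reduced $\bk$-homology of the pointed homotopy orbit space $D^1(\bF^n) \hcoker \mr{GL}_n(\bF)$, up to a fixed degree shift of $k=1$. Second, Lemma \ref{lem:splittingcomplex-vs-building} provides a $\mr{GL}_n(\bF)$-equivariant weak equivalence $D^1(\bF^n) \simeq S^2 T(\bF^n)$, where $S^2$ denotes the unreduced double suspension and $\mr{GL}_n(\bF)$ acts on $S^2 T(\bF^n)$ through its action on the Tits building $T(\bF^n)$ (the two cone points being fixed). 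Third, the Solomon--Tits theorem (Theorem \ref{thm:SolomonTits}) says $T(\bF^n)$ has the homotopy type of a wedge of $(n-2)$-spheres, so $\widetilde{H}_q(T(\bF^n);\bk)$ vanishes for $q \neq n-2$ and equals $\mr{St}_\bk(\bF^n)$ when $q = n-2$.

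Next I would feed these into the equivariant homology spectral sequence for pointed homotopy orbits,
\[
E^2_{p,q} = H_p\bigl(\mr{GL}_n(\bF);\widetilde{H}_q(S^2 T(\bF^n);\bk)\bigr) \Longrightarrow \widetilde{H}_{p+q}\bigl(S^2 T(\bF^n) \hcoker \mr{GL}_n(\bF);\bk\bigr).
\]
Since the double suspension shifts reduced homology by $2$, the coefficients $\widetilde{H}_q(S^2 T(\bF^n);\bk) = \widetilde{H}_{q-2}(T(\bF^n);\bk)$ are concentrated in the single column $q = n$, where they equal $\mr{St}_\bk(\bF^n)$. The spectral sequence therefore collapses on this column, yielding the clean isomorphism
\[
\widetilde{H}_{p+n}\bigl(S^2 T(\bF^n) \hcoker \mr{GL}_n(\bF);\bk\bigr) \cong H_p\bigl(\mr{GL}_n(\bF);\mr{St}_\bk(\bF^n)\bigr).
\]

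Combining this with the degree shift from step~1 yields the desired isomorphism $H^{E_1}_{n,d}(\gR_\bk) \cong H_{d-(n-1)}(\mr{GL}_n(\bF);\mr{St}_\bk(\bF^n))$, and the vanishing $H^{E_1}_{n,d}(\gR_\bk) = 0$ for $d < n-1$ is then immediate from the vanishing of group homology in negative degrees. There is no serious obstacle: the proof is essentially a book-keeping exercise aligning three shifts, namely the shift relating $E_k$-homology to the splitting-complex model, the $S^2$ from Lemma \ref{lem:splittingcomplex-vs-building}, and the grading of the spectral sequence. The only mild care needed is that the weak equivalence of Lemma \ref{lem:splittingcomplex-vs-building} is in fact a zig-zag of equivariant weak equivalences, but this is harmless for the $E^2$-page input.
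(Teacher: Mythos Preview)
Your proof is correct and follows essentially the same approach as the paper: combine the identification \eqref{eqn:ek-homology-buildings}, the equivariant equivalence $D^1(\bF^n)\simeq S^2 T(\bF^n)$ from Lemma~\ref{lem:splittingcomplex-vs-building}, and Solomon--Tits, then read off the answer from the collapsing homotopy-orbit spectral sequence. The paper leaves this last spectral-sequence step implicit for $k=1$ (it spells it out in the parallel $E_2$ argument, Theorem~\ref{thm:E2Homology}), so your write-up is in fact slightly more detailed than the original.
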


In addition, using the Lee--Szczarba resolution \cite[Section 3]{leeszczarba}, which we already used in Section \ref{sec:coinvariants-of}, one may easily compute the coinvariants of Steinberg modules: \cite[Theorem 4.1]{leeszczarba} implies that 
\[H_{0}(\mr{GL}_n(\bF);\mr{St}(\bF^n))=0 \quad \text{for} \quad n \geq 2.\]
Thus $H^{E_1}_{n,n-1}(\gR_\bk)=0$ for $n \geq 2$ too.

As we have explained, our motivation in computing $E_k$-homology is that it allows us to estimate $E_k$-cell structures. The general result in this direction is Theorem $E_k$.11.21. In the current situation it says that under the assumptions of Theorem \ref{thm:E1Homology} there exists a CW-$E_1$-algebra $\gC$ whose underlying $E_1$-algebra is weakly equivalent to $\gR$, and so that the associated graded of the skeletal filtration is a free $E_1$-algebra
\begin{equation*}
\mr{gr}(\gC) \cong \gE_1\left[S_\bk^{1,0,0} \oplus \bigoplus_\alpha S^{n_\alpha,d_\alpha, d_\alpha}_\bk\right]
\end{equation*}
on a generator of tridegree $(1,0,0)$ and further generators $\alpha$ of tridegrees $(n_\alpha,d_\alpha, d_\alpha)$ such that $d_\alpha \geq n_\alpha$.

\subsection{$E_2$-homology}
\label{sec:e_2-cells}

We wish to repeat this analysis for $E_2$-homology. Recall from Lemma \ref{lem:smash-of-E-one-buildings} that the $k$-fold buildings for any module $M$ over any ring $\R$ are simplicial pointed subsets of $k$-fold smash powers of $D^1(M)_\bullet$. In the case where $k=2$ and $\R = \bF$ is a field we have the following at first surprising coincidence.

\begin{proposition}\label{prop:smash-splitting-E2}
For any $\bF$-vector space $M$ the injection
  \begin{equation*}
      D^2(M) \lra D^1(M) \wedge D^1(M).
  \end{equation*}
from~(\ref{eq:5}) is in fact a bijection.
\end{proposition}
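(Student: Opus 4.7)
The plan is to exhibit an explicit preimage of any element on the right, using that over a field every subspace is automatically a summand. Injectivity is already Lemma~\ref{lem:smash-of-E-one-buildings}, so only surjectivity is at issue.

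First I would observe that since the summand clause in Definition~\ref{defn:lattice} is vacuous over a field, non-basepoint elements of $D^1(M)_{p}$ are exactly full flags $0 = L_0 \subsetneq L_1 \subsetneq \cdots \subsetneq L_p = M$. A non-basepoint of $D^1(M)_{p_1} \wedge D^1(M)_{p_2}$ is thus a pair of such flags $(L^1_i)$, $(L^2_j)$, and the natural candidate preimage is the functor
\[\phi \colon [p_1] \times [p_2] \lra \cat{Sub}(M), \qquad \phi(i,j) \coloneqq L^1_i \cap L^2_j,\]
which under~\eqref{eq:5} tautologically recovers the pair of flags (using $L^1_{p_1} = M = L^2_{p_2}$). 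The entire content of the proof is then to check that $\phi$ is a full lattice.

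For this I would invoke Lemma~\ref{lem:splittings-exist}(ii), which reduces the lattice condition to checking the monomorphism-onto-summand property of~\eqref{eq:6} only on unit subcubes $a = (b_1-1,b_2-1) \leq b = (b_1,b_2)$, together with the two boundary conditions $\phi(0,\sdash) = \phi(\sdash,0) = 0$ and $\phi(p_1,p_2) = M$. The boundary conditions are immediate from the definition, and the summand half of the unit-subcube condition is automatic over a field. Thus the only real task is to show the pushout map
\[\phi(b_1-1,b_2) \oplus_{\phi(b_1-1,b_2-1)} \phi(b_1,b_2-1) \lra \phi(b_1,b_2)\]
is injective, which by the standard identification of pushouts of submodules is equivalent to the identity
\[\phi(b_1-1,b_2) \cap \phi(b_1,b_2-1) = \phi(b_1-1,b_2-1) \quad \text{inside } \phi(b_1,b_2).\]
With $\phi(i,j) = L^1_i \cap L^2_j$, both sides reduce to $L^1_{b_1-1} \cap L^2_{b_2-1}$ using $L^1_{b_1-1} \subset L^1_{b_1}$ and $L^2_{b_2-1} \subset L^2_{b_2}$, so the identity is trivial.

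The only subtle point is translating the colimit condition in~\eqref{eq:6} into a concrete intersection identity of subspaces via the pushout interpretation; once this is done, and one trusts Lemma~\ref{lem:splittings-exist} to reduce the verification to unit subcubes, the argument collapses to elementary lattice manipulations and no structure of $\bF$ beyond ``every subspace is a summand'' is needed.
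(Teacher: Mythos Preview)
Your proof is correct and follows essentially the same route as the paper: define the candidate $\phi(i,j)=L^1_i\cap L^2_j$ and verify the lattice condition by reducing it to the trivial intersection identity $(L^1_{b_1-1}\cap L^2_{b_2})\cap(L^1_{b_1}\cap L^2_{b_2-1})=L^1_{b_1-1}\cap L^2_{b_2-1}$. The only cosmetic difference is that you invoke Lemma~\ref{lem:splittings-exist}(ii) to reduce to unit subcubes while the paper checks arbitrary subcubes directly; also note that non-basepoint simplices of $D^1(M)_p$ are chains with not-necessarily-strict inclusions, but your argument works verbatim in that generality.
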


\begin{proof}
  In Lemma~\ref{lem:smash-of-E-one-buildings} we already saw that these maps are always injective. They identify non-degenerate elements of $D^2(M)_{p_1,p_2}$ with pairs of flags
  \begin{align*}
    & (0 = L^1_0 \subsetneq L^1_1 \subsetneq \ldots \subsetneq L^1_{p_1} = M)\\
    & (0 = L^2_0 \subsetneq L^2_1 \subsetneq \ldots \subsetneq L^2_{p_2} = M)
  \end{align*}
  with the property that the functor $\phi: [p_1,p_2] \to \cat{Sub}(M)$ defined by
  \begin{equation*}
    \phi(i_1,i_2) = L^1_{i_1} \cap L^2_{i_2}
  \end{equation*}
  satisfies the lattice condition.  

However, the lattice condition is automatic for $k=2$ when working over a field because any submodule is a summand: for a 1-cube there is nothing to check and for a 2-cube we use that the pushout in abstract vector spaces of a diagram $V' \leftarrow V \to V''$ of inclusions of sub vector spaces of $M$ injects into $M$ precisely when $V = V' \cap V''$.  In our case
  the colimit over $([i_1 < j_1] \times [i_2 < j_2]) \setminus \{(j_1,j_2)\}$ is the pushout of the diagram
  \begin{equation*}
    (L^1_{j_1} \cap L^2_{i_2}) \longleftarrow (L^1_{i_1} \cap L^2_{i_2}) \lra (L^1_{j_1} \cap L^2_{j_2}),
  \end{equation*}
  which indeed satisfies $(L^1_{i_1} \cap L^2_{i_2}) = (L^1_{j_1} \cap L^2_{i_2}) \cap (L^1_{i_1} \cap L^2_{j_2})$.
\end{proof}

\begin{remark}\label{rem:NoGo}
 	It is instructive to see what goes wrong in the above argument when one replaces 2 by a larger number. For example consider the 3-dimensional building, and define flags in $\bF^2$ by
 	\[L_1^1 = \mr{span}(e_1), \quad\quad L^2_1 = \mr{span}(e_2), \quad\quad L^3_1 = \mr{span}(e_1+e_2), \]
 	and $L^1_2=L^2_2=L^3_2 = \bF^2$. Then we see that
 	\[L_1^1 \cap L^2_2 \cap L^3_2 = L_1^1, \quad\quad L^1_2 \cap L^2_1 \cap L^3_2 = L_1^2, \quad \text{and} \quad L^1_2 \cap L^2_2 \cap L^3_1 = L_1^3\]
 	are all 1-dimensional. Furthermore, an intersection is zero whenever more than a single subscript equals 1. 
 	
 	If this pair of flags came from a 3-dimensional lattice $\phi$ then $\phi(2,1,1)$ must be a submodule both of $\phi(2,2,1) = L^3_1 = \mr{span}(e_1+e_2)$ and of $\phi(2,1,2) = L^2_1 = \mr{span}(e_2)$, so must be trivial; similarly $\phi(1,2,1)$ and $\phi(1,1,2)$ must be trivial, as must $\phi(1,1,1)$. But then
 	\[\colim_{[0 \leq 2]^3 \setminus (2,2,2)} \phi = L_1^1 \oplus L_1^2 \oplus L_1^3 \]
 	which does not inject into $\bF^2$; thus such a lattice $\phi$ cannot exist.
\end{remark}

We can now prove the analogue for $E_2$-homology of Theorem \ref{thm:E1Homology}. We shall write 
\[\mr{St}^2(\bF^n)\coloneqq \mr{St}(\bF^n) \otimes \mr{St}(\bF^n) \qquad \text{and} \qquad \mr{St}^2_\bk(\bF^n) = \mr{St}^2(\bF^n) \otimes_\bZ \bk.\]

\begin{theorem}\label{thm:E2Homology}
Suppose that $(\bF, \bk)$ satisfies the Nesterenko--Suslin property. Then there is an isomorphism
  \begin{equation*}
    H^{E_2}_{n,d}(\gR_\bk) \cong H_{d-2(n-1)}(\mr{GL}_n(\bF);\mr{St}_\bk^2(\bF^n)).
  \end{equation*}
In particular $H^{E_2}_{n,d}(\gR_\bk)=0$ if $d < 2(n-1)$, and furthermore $H^{E_2}_{n,2n-2}(\gR) \cong \bZ$ for $n \geq 1$ by Theorem \ref{thm:C}.
\end{theorem}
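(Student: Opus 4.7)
The plan is to assemble the identification of $E_2$-homology from the tools already in place: the Nesterenko--Suslin reduction \eqref{eqn:ek-homology-buildings}, the smash-square description of $D^2(\bF^n)$ from Proposition~\ref{prop:smash-splitting-E2}, the Solomon--Tits theorem via Lemma~\ref{lem:splittingcomplex-vs-building}, and a collapsing Serre spectral sequence for the homotopy orbits. Concretely, I would begin by invoking \eqref{eqn:ek-homology-buildings} to rewrite $H^{E_2}_{n,d}(\gR_\bk)$ as a shift of the reduced $\bk$-homology of $D^2(\bF^n) \hcoker \mr{GL}_n(\bF)$, then use Proposition~\ref{prop:smash-splitting-E2} to identify the pointed $\mr{GL}_n(\bF)$-space $D^2(\bF^n)$ with $D^1(\bF^n) \wedge D^1(\bF^n)$ equipped with the diagonal action.

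Next I would compute the equivariant $\bk$-homology of $D^1(\bF^n)$. Lemma~\ref{lem:splittingcomplex-vs-building} presents it as the unreduced double suspension $S^2 \cT(\bF^n)$, and the Solomon--Tits theorem identifies $\cT(\bF^n)$ as a wedge of $(n-2)$-spheres with $\widetilde H_{n-2}(\cT(\bF^n);\bk) = \mr{St}_\bk(\bF^n)$. Hence $\widetilde H_*(D^1(\bF^n);\bk)$ is concentrated in degree $n$ with value $\mr{St}_\bk(\bF^n)$. Because this homology is concentrated in a single degree, the Künneth formula for the smash product has no Tor contribution and yields
\[\widetilde H_*(D^2(\bF^n); \bk) \;\cong\; \mr{St}_\bk(\bF^n) \otimes_\bk \mr{St}_\bk(\bF^n) \;=\; \mr{St}^2_\bk(\bF^n),\]
concentrated in degree $2n$, carrying the diagonal $\mr{GL}_n(\bF)$-action.

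The final step is to feed this into the Serre spectral sequence for pointed homotopy orbits,
\[E^2_{p,q} = H_p(\mr{GL}_n(\bF); \widetilde H_q(D^2(\bF^n); \bk)) \Longrightarrow \widetilde H_{p+q}(D^2(\bF^n) \hcoker \mr{GL}_n(\bF); \bk),\]
which collapses on $E^2$ since the coefficient system is nonzero only in row $q = 2n$, yielding $\widetilde H_{p+2n}(D^2(\bF^n) \hcoker \mr{GL}_n(\bF); \bk) \cong H_p(\mr{GL}_n(\bF); \mr{St}^2_\bk(\bF^n))$. Combining this with \eqref{eqn:ek-homology-buildings} produces the displayed formula, whence the vanishing in degrees $d < 2(n-1)$ is immediate, and the edge-degree identification $H^{E_2}_{n,2n-2}(\gR) \cong (\mr{St}^2(\bF^n))_{\mr{GL}_n(\bF)} \cong \bZ$ is exactly Theorem~\ref{thm:A}. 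All of the substantive technical content has been assembled in Proposition~\ref{prop:smash-splitting-E2} (the automatic-lattice miracle for $k=2$ over a field) and Lemma~\ref{lem:splittingcomplex-vs-building}, so I do not anticipate a genuine obstacle; the one point requiring vigilance is keeping the homological degree shifts consistent between the spectral sequence, the Künneth computation, and the normalisation used in \eqref{eqn:ek-homology-buildings}.
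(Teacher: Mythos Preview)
Your proposal is correct and follows essentially the same route as the paper: invoke \eqref{eqn:ek-homology-buildings}, use Proposition~\ref{prop:smash-splitting-E2} to replace $D^2(\bF^n)$ by $D^1(\bF^n)\wedge D^1(\bF^n)$, compute the reduced homology of the latter via Solomon--Tits and K\"unneth as $\mr{St}^2_\bk(\bF^n)$ concentrated in degree $2n$, and then collapse the homotopy-orbits spectral sequence. The paper's proof is organised identically, with the only cosmetic difference that it phrases the K\"unneth step integrally (using that $\mr{St}(\bF^n)$ is $\bZ$-free) rather than over $\bk$.
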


\begin{proof}
We have already explained that when $(\bF, \bk)$ satisfies the Nesterenko--Suslin property there is an identification \eqref{eqn:ek-homology-buildings} of $H^{E_2}_{n,d}(\gR_\bk)$ with $\widetilde{H}_{d+2}(D^2(\bF^n)\hcoker \mr{GL}_n(\bF);\bk)$. By Proposition \ref{prop:smash-splitting-E2} we may write the latter as
\[\widetilde{H}_{d+2}\left((D^1(\bF^n) \wedge D^1(\bF^n))\hcoker \mr{GL}_n(\bF);\bk\right).\]
By the discussion in Section \ref{sec:e_1-cells}, $D^1(\bF^n)$ has the homotopy type of a wedge of $n$-spheres, with top integral homology $\mr{St}(\bF^n)$, a free $\bZ$-module. Thus the smash product $D^1(\bF^n) \wedge D^1(\bF^n)$ has the homotopy type of a wedge of $2n$-spheres, and the K\"unneth theorem gives
  \begin{equation*}
    \widetilde{H}_{2n}\left(D^1(\bF^n) \wedge D^1(\bF^n)\right) \cong \mr{St}(\bF^n) \otimes_\bZ \mr{St}(\bF^n) = \mr{St}^2(\bF^n).
  \end{equation*}
Hence the spectral sequence calculating the $\bk$-homology of the homotopy orbits $(D^1(\bF^n) \wedge D^1(\bF^n))\hcoker \mr{GL}_n(\bF)$ degenerates to an isomorphism
  \begin{equation*}
    \widetilde{H}_{d+2}\left((D^1(\bF^n) \wedge D^1(\bF^n))\hcoker \mr{GL}_n(\bF);\bk\right) \cong {H}_{d-2(n-1)}(\mr{GL}_n(\bF);\mr{St}^2_\bk(\bF^n)).\qedhere
  \end{equation*}
\end{proof}

Just as in the case of $E_1$-algebras, this vanishing of $E_2$-homology groups yields, via Theorem $E_k$.11.21, the following conclusion about cellular $E_2$-algebra models for $\gR_\bk$: under the same assumption as Theorem \ref{thm:E1Homology} there exists a CW-$E_2$-algebra $\gC$ whose underlying $E_2$-algebra is $\gR$ and such that the associated graded of the skeletal filtration is a free $E_2$-algebra
\begin{equation*}
\mr{gr}(\gC) \cong \gE_2\left[\bigoplus_\alpha S^{n_\alpha,d_\alpha, d_\alpha}_\bk\right]
\end{equation*}
on generators $\alpha$ of tridegrees $(n_\alpha,d_\alpha, d_\alpha)$ such that $d_\alpha \geq 2(n_\alpha-1)$.

\subsection{Products}
\label{sec:products}

In Section $E_k$.13.6 we have explained that the $k$-fold reduced bar construction of an $E_{n+k}$-algebra is naturally equivalent to an $E_n$-algebra, and moreover the $n$-fold reduced bar construction of this algebra is equivalent to the $(n+k)$-fold reduced bar construction of the $E_{n+k}$-algebra that we started with. (In Theorem $E_k$.13.23 this is stated in terms of the suspended derived indecomposables instead of reduced bar constructions but these are equivalent by Theorem $E_k$.13.7, and in any case the proof of Theorem $E_k$.13.23 goes through the reduced bar construction statement.) In particular the $k$-fold reduced bar construction of an $E_\infty$-algebra is again an $E_\infty$-algebra, up to canonical equivalence.

As we explained in Section~\ref{sec:buidings-ek}, the split building $M \mapsto \widetilde{D}^k(M)$ may be identified with the $k$-fold bar construction of the $E_\infty$-algebra $\gT \in \cat{sSet}^{\cat{P}_\bF}$ introduced in Section~\ref{sec:contructing-r}. We deduce that the split buildings have the structure of $E_\infty$-algebras in $\cat{sSet}_*^{\cat{P}_\bF}$, the category of functors from $\cat{P}_\bF$ to pointed simplicial sets.  The underlying $E_1$-algebra structure corresponds to the product
\begin{align*}
  \widetilde{D}^k(M)_{p_1, \ldots, p_k} \wedge \widetilde{D}^k(N)_{p_1, \ldots, p_k} &\lra \widetilde{D}^k(M \oplus N)_{p_1, \ldots, p_k}\\
\mathcal{M} \wedge \mathcal{N} \quad\quad\quad\quad\,\,\,\,\, &\longmapsto \left[(i_1, \ldots, i_k) \mapsto \mathcal{M}(i_1, \ldots, i_k) \oplus \mathcal{N}(i_1, \ldots, i_k)\right].
\end{align*}
That is, we take a direct sum decomposition of $M$ and one of $N$ indexed on the same set, and send it to the direct sum decomposition of $M\oplus N$ obtained by adding the summands.

This product is strictly associative and strictly commutative, in the sense that it makes $\widetilde{D}^k$ into a non-unital commutative ring object in $\cat{sSet}_*^{\cat{P}_\bF}$, i.e.\ a functor from $\cat{P}_\bF$ to pointed simplicial sets with a non-unitary lax symmetric monoidality. Furthermore, the natural transformations $\widetilde{D}^k(-) \to \widetilde{D}^1(-)^{\wedge k}$ of~(\ref{eq:5tilde}) are maps of non-unital commutative ring objects.

\begin{lemma}
  There are unique natural transformations 
  \[D^k(M) \wedge D^k(N) \lra D^k(M \oplus N)\] 
  making the diagram
  \begin{equation*}
    \begin{tikzcd}
      \widetilde{D}^k(M) \wedge \widetilde{D}^k(N) \rar\dar & \widetilde{D}^k(M \oplus N) \dar\\
      D^k(M) \wedge D^k(N) \rar & D^k(M \oplus N)
    \end{tikzcd}
  \end{equation*}
  commute.  These products on $D^k$ are again associative and commutative in the sense of defining a non-unital commutative ring object. Furthermore, the natural transformations $D^k(-) \to D^1(-)^{\wedge k}$ of~(\ref{eq:5}) are maps of non-unital commutative ring objects.
\end{lemma}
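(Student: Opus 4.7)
The plan is to first establish uniqueness, then construct the product directly at the level of lattices (rather than trying to descend it through the quotient map), verify the required compatibility, and finally deduce the algebraic properties by transfer along the surjection $\tilde{D}^k \twoheadrightarrow D^k$.

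For uniqueness, I would observe that the forgetful map $\tilde{D}^k(M)_{\bullet,\ldots,\bullet} \to D^k(M)_{\bullet,\ldots,\bullet}$ is levelwise surjective (Lemma~\ref{lem:splittings-exist} gives splittings realising every full lattice, and non-full lattices are crushed to the basepoint on both sides), so the left vertical map of the given square is surjective. Any factorisation through $D^k(M) \wedge D^k(N)$ is therefore forced, proving uniqueness of the horizontal arrow we seek.

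For existence, I would construct the product directly on $D^k$ by sending lattices $\phi\colon[p_1,\ldots,p_k]\to\cat{Sub}(M)$ and $\psi\colon[p_1,\ldots,p_k]\to\cat{Sub}(N)$ to the functor $(\phi\oplus\psi)(a) \coloneqq \phi(a)\oplus\psi(a)\subset M\oplus N$. The key point is to check that $\phi\oplus\psi$ is again a full lattice. Since direct sum is a biproduct in the category of $R$-modules, the colimit of the coordinatewise direct sum of diagrams equals the direct sum of their colimits; thus the canonical map $\textstyle\colim_{[a\le b]\setminus\{b\}}(\phi\oplus\psi)\to(\phi\oplus\psi)(b)$ is the direct sum of the analogous maps for $\phi$ and $\psi$, and a direct sum of monomorphisms onto summands is again one such. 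Fullness of $\phi\oplus\psi$ is equivalent to fullness of $\phi$ and $\psi$ separately. To verify compatibility with the square, take splittings $\mathcal M$ of $M$ and $\mathcal N$ of $N$; their product in $\tilde D^k$ is the splitting $(\mathcal M\oplus\mathcal N)(i)=\mathcal M(i)\oplus\mathcal N(i)$ of $M\oplus N$, and unwinding definitions gives
\[
\phi_{\mathcal M\oplus\mathcal N}(a)=\sum_{i\le a}\bigl(\mathcal M(i)\oplus\mathcal N(i)\bigr)=\phi_{\mathcal M}(a)\oplus\phi_{\mathcal N}(a)=(\phi_{\mathcal M}\oplus\phi_{\mathcal N})(a),
\]
which is exactly what the commutativity of the diagram demands.

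The main point to be careful about is ensuring the lattice condition really passes through the direct sum; this is the only non-formal step and it hinges on $\oplus$ being a biproduct, so that colimits of additive diagrams split. Everything else is routine: associativity and commutativity of the product on $D^k$ follow from the corresponding on-the-nose properties of $\oplus$ in $R$-modules (alternatively, they may be pulled back through the surjective map of products from $\tilde D^k$, where they already hold). For the last assertion, the natural transformation $D^k(M)_{\bullet,\ldots,\bullet}\to D^1(M)_{\bullet}^{\wedge k}$ of Lemma~\ref{lem:smash-of-E-one-buildings} sends a lattice $\phi$ to the tuple of flags $L^j_{i}=\phi(p_1,\ldots,i,\ldots,p_k)$; since $(\phi\oplus\psi)(p_1,\ldots,i,\ldots,p_k)=\phi(\cdots)\oplus\psi(\cdots)$, this transformation intertwines the $\oplus$-products on source and target. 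Together with the already-known fact that $\tilde D^k(-)\to \tilde D^1(-)^{\wedge k}$ is a map of non-unital commutative ring objects and the compatibility of the two transformations via (\ref{eq:Comp}), this gives the final claim.
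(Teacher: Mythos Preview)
Your proof is correct and follows essentially the same approach as the paper: uniqueness via surjectivity of $\tilde{D}^k \to D^k$, existence by the explicit formula $(\phi\oplus\psi)(a)=\phi(a)\oplus\psi(a)$, and the algebraic properties deduced from those of the split building. You supply more detail than the paper does (notably the check that $\phi\oplus\psi$ is again a lattice via biproducts commuting with colimits, and the explicit verification of compatibility with the square), but the ideas are identical.
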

\begin{proof}
  Uniqueness is evident from surjectivity of the forgetful map $\widetilde{D}^k(M)_{p_1, \ldots, p_k} \to D^k(M)_{p_1, \ldots, p_k}$, and existence may be seen by writing down a definition: indeed, on non-basepoints we may send full lattices 
	\[\phi \colon [p_1, \ldots, p_k] \lra \cat{Sub}(M) \quad \text{ and } \quad \phi' \colon [p_1, \ldots, p_k] \lra \cat{Sub}(N)\]
	to the lattice
  \begin{align*}
    \phi \oplus \phi' \colon [p_1, \ldots, p_k] & \lra \cat{Sub}(M \oplus N)\\
    i & \longmapsto \phi(i) \oplus \phi'(i).
  \end{align*}
  Commutativity and associativity follows from the corresponding properties for the split building, as does the multiplicativity of $D^k(-) \to D^1(-)^{\wedge k}$.
\end{proof}

These in turn induce pairings
\begin{equation*}
  - \cdot - \colon \widetilde{H}_*(D^k(M)) \otimes \widetilde{H}_*(D^k(N)) \lra \widetilde{H}_*(D^k(M \oplus N)).
\end{equation*}
For $k$ being 1 or 2 there is only one interesting degree to consider. Using Theorem \ref{thm:SolomonTits}, for $k=1$ we have a $\mr{GL}(M) \times \mr{GL}(N)$-equivariant product 
\[\mr{St}(M) \otimes \mr{St}(N) \lra \mr{St}(M \oplus N),\] making $M \mapsto \mr{St}(M)[\dim(M)]$ into a non-unitary lax symmetric monoidal functor from $\cat{F}_\bF$ to graded abelian groups (the symmetry of course involves a Koszul sign rule). Similarly, using Proposition \ref{prop:smash-splitting-E2} for $k=2$ we have a $\mr{GL}(M) \times \mr{GL}(N)$-equivariant product 
\[\mr{St}^2(M) \otimes \mr{St}^2(N) \lra \mr{St}^2(M \oplus N),\]
making $M \mapsto \mr{St}^2(M)[2\dim(M)]$ into a non-unitary lax symmetric monoidal functor from $\cat{F}_\bF$ to graded abelian groups. Furthermore, the $k=2$ products are simply given by the tensor square of the $k=1$ products.

\begin{remark}For $k=1$, this product has also been defined by Miller, Nagpal, and Patzt in \cite[Section 2.3]{millernagpalpatzt}, who furthermore proved that $M \mapsto \mr{St}(M)$ is Koszul \cite[Theorem 1.5]{millernagpalpatzt}. Since the present paper was written, Miller, Patzt, and Wilson have identified its Koszul dual as $M \mapsto \mr{St}(M) \otimes \mr{St}(M)$ \cite[Theorem 6.3]{MillerPatztWilson}.
  \end{remark}

The following lemma therefore lets us calculate with the $k = 1$ or $k=2$ pairings. It uses the apartment classes introduced in Section \ref{sec:steinberg}.

\begin{lemma}\label{lem:product-apts}
Let $\gL = (L_1, L_2, \ldots, L_m)$ be a splitting of the module $M$ into ordered 1-dimensional subspaces, and $\gL' = (L'_1, L'_2, \ldots, L'_n)$ be such a splitting of $N$, defining apartments $a_\gL \in \mr{St}(M)$ and $a_{\gL'} \in \mr{St}(N)$. Then $a_\gL \cdot a_{\gL'} = a_{\gL''}$ where $\gL''$ is the concatenation of $\gL$ with $\gL'$, i.e.\
$\gL'' = (L_1 \oplus 0, \ldots, L_n \oplus 0, 0 \oplus L'_1, \ldots, 0 \oplus L'_{n'})$.
\end{lemma}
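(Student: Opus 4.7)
The strategy is to lift the computation to the split building $\widetilde{D}^1(-)$, where apartment classes admit a transparent chain-level description as signed sums over permutations, and then push the result forward along the morphism of non-unital commutative ring objects $\widetilde{D}^1(-) \to D^1(-)$ of~\eqref{eq:Comp}.

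For the splitting $\gL = (L_1, \ldots, L_m)$ of $M$ and each $\sigma \in \Sigma_m$, let $\mathcal{M}_\sigma \in \widetilde{D}^1(M)_m$ denote the non-basepoint $m$-simplex $i \mapsto L_{\sigma(i)}$, and set
\[\tilde{a}_\gL \coloneqq \sum_{\sigma \in \Sigma_m} \mathrm{sign}(\sigma) \cdot \mathcal{M}_\sigma \in \widetilde{C}_m(\widetilde{D}^1(M)).\]
I would first verify that $\tilde{a}_\gL$ is a cycle: the outer face maps $d_0, d_m$ send each $\mathcal{M}_\sigma$ to the basepoint (they drop a summand of the splitting), and for $0 < j < m$ the face $d_j$ merges the $j$-th and $(j{+}1)$-st pieces of the splitting, so that the contributions of $\sigma$ and $\sigma \circ (j\;j{+}1)$ cancel. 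Under the forgetful map to $D^1(M)$, the simplex $\mathcal{M}_\sigma$ maps to the full flag $L_{\sigma(1)} \subsetneq L_{\sigma(1)} \oplus L_{\sigma(2)} \subsetneq \cdots \subsetneq M$, which via Lemma~\ref{lem:splittingcomplex-vs-building} is exactly the image under $f_\gL \colon \mr{sd}(\partial\Delta^{m-1}) \to T(M)$ of the top-dimensional simplex indexed by $\sigma$. Hence $\tilde{a}_\gL$ is a chain-level representative of $a_\gL$; analogous constructions produce representatives $\tilde{a}_{\gL'}$ and $\tilde{a}_{\gL''}$.

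To compute the product, I would apply the Eilenberg--Zilber shuffle map to $\tilde{a}_\gL \otimes \tilde{a}_{\gL'}$ and then compose with the ring structure map $\widetilde{D}^1(M) \wedge \widetilde{D}^1(N) \to \widetilde{D}^1(M \oplus N)$, which is componentwise direct sum of splittings. For each $(m,n)$-shuffle $(\mu,\nu)$ and each $(\sigma,\tau) \in \Sigma_m \times \Sigma_n$, the degenerations $s_\nu \mathcal{M}_\sigma$ and $s_\mu \mathcal{N}_\tau$ insert zero summands at complementary positions of $\underline{m+n}$, so their componentwise direct sum in $M \oplus N$ fills these zeroes and yields a non-basepoint $(m+n)$-simplex of $\widetilde{D}^1(M \oplus N)$ whose underlying ordered $1$-dimensional splitting is the interleaving of $(L_{\sigma(1)} \oplus 0, \ldots, L_{\sigma(m)} \oplus 0)$ with $(0 \oplus L'_{\tau(1)}, \ldots, 0 \oplus L'_{\tau(n)})$ dictated by $(\mu,\nu)$. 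Using the classical sign-compatible bijection
\[\Sigma_{m+n} \;\cong\; \{(m,n)\text{-shuffles}\} \times \Sigma_m \times \Sigma_n,\]
the resulting signed sum reorganises precisely to $\sum_{\rho \in \Sigma_{m+n}} \mathrm{sign}(\rho)\cdot \mathcal{P}_\rho = \tilde{a}_{\gL''}$. Pushing forward along the morphism~\eqref{eq:Comp} to the unsplit building yields $a_\gL \cdot a_{\gL'} = a_{\gL''}$.

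The main technical obstacle is the sign and degeneracy bookkeeping in the interaction between the Eilenberg--Zilber formula and the Segal-style simplicial structure on $\widetilde{D}^1$: one must check that the degeneracies $s_j$ on the split building really do insert $0$'s at the predicted positions, that componentwise direct sum of two complementarily degenerated splittings produces an honest ordered $1$-dimensional splitting whose permutation type matches the shuffle interleaving, and that the combined signs agree. Beyond this bookkeeping, no genuinely new input is required.
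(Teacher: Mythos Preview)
Your proposal is correct and follows essentially the same approach as the paper: represent the apartment by the signed sum $\sum_\sigma \mr{sign}(\sigma)\cdot(\text{flag/splitting indexed by }\sigma)$, apply the Eilenberg--Zilber shuffle map, and use the sign-compatible bijection $\fS_{m+n}\cong \mathfrak{Sh}_{m,n}\times\fS_m\times\fS_n$ to recognise the result as $a_{\gL''}$. The only difference is that you carry out this computation in the split building $\widetilde{D}^1$ and then push forward along the ring map~\eqref{eq:Comp}, whereas the paper works directly in $D^1$; this detour is valid but inessential, since the identical computation with degenerated flags goes through in $D^1$ without the lift.
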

\begin{proof}
By definition $a_\gL$ is represented by the cycle $\sum_{\sigma \in \fS_m} \mr{sign}(\sigma) \cdot F_\sigma \in \widetilde{C}_{m}(D^1_\bullet(M))$ where $F_\sigma$ is the full flag (= full 1-dimensional lattice)
\[\varnothing \subset L_{\sigma(1)} \subset L_{\sigma(1)} \oplus L_{\sigma(2)} \subset \cdots \subset L_{\sigma(1)} \oplus \cdots \oplus L_{\sigma(m)} = M,\]
and similarly $a_{\gL'}$ is given by the analogous cycle $\sum_{\tau \in \fS_n} \mr{sign}(\tau) \cdot F'_\tau \in \widetilde{C}_{n}(D^1_\bullet(N))$. Applying the Eilenberg--Zilber map
\[\nabla \colon \widetilde{C}_m(D^1_\bullet(M)) \otimes \widetilde{C}_n(D^1_\bullet(N)) \lra \widetilde{C}_{m+n}(D^1_\bullet(M) \wedge D^1_\bullet(N))\]
to $a_\gL \otimes a_{\gL'}$ gives
\[\sum_{(\mu, \nu) \in \mathfrak{Sh}_{m,n}} \sum_{\sigma \in \mathfrak{S}_m} \sum_{\tau \in \mathfrak{S}_n} \mr{sign}(\mu, \nu)\mr{sign}(\sigma)\mr{sign}(\tau) \cdot s_\nu (F_{\sigma}) \otimes s_\mu(F'_\tau),\]
where $\mathfrak{Sh}_{m,n}$ is the set of $(m,n)$-shuffles and $s_\mu$ and $s_\nu$ denote the associated degeneracy maps. But on applying the pairing this is $\sum_{\upsilon \in \mathfrak{S}_{m+n}} \mathrm{sign}(\upsilon) \cdot F''_\upsilon$, where $F''_\upsilon$ is the full flag given by permuting by $\upsilon$ the splitting $(L_1, L_2, \ldots, L_m, L'_1, L'_2, \ldots, L'_n)$ of $M \oplus N$, i.e.\ it is the apartment associated to this ordered splitting.
\end{proof}

These products are natural in $V$ and $V'$, which implies a factorisation over coinvariants
\begin{equation*}
\mr{St}^2_\bk(V)_{\mr{GL}(V)} \otimes \mr{St}^2_\bk(V')_{\mr{GL}(V')} \lra \mr{St}^2_\bk(V\oplus V')_{\mr{GL}(V\oplus V')},
\end{equation*}
making the sum $\bk 1 \oplus \bigoplus_{n = 1}^\infty \mr{St}^2_\bk(\bF^n)_{\mr{GL}_n(\bF)}$
into a commutative $\bk$-algebra. As an application of Lemma \ref{lem:product-apts}, we may prove the following theorem:

\begin{theorem}\label{thm:DivPowAlg}
	Define elements $\gamma_n(x) \in \mr{St}^2(\bF^n)_{\mr{GL}(\bF^n)}$ to correspond to the integer $(-1)^{\frac{n(n-1)}{2}}$ under the isomorphism of Theorem \ref{thm:C}. Then the induced map
	\begin{equation*}
	\Gamma_\bZ[x] = \bigoplus_{n\geq 0} \bZ\{\gamma_n(x)\} \lra \bZ 1 \oplus \bigoplus_{n \geq 1} \mr{St}^2(\bF^n)_{\mr{GL}(\bF^n)},
	\end{equation*}
	from the divided power algebra on a generator $x$, is a ring isomorphism.
	
	More generally these maps give an isomorphism  $\Gamma_\bk[x] \cong \bk 1 \oplus \bigoplus_{n \geq 1} \mr{St}^2_\bk(\bF^n)_{\mr{GL}(\bF^n)}$ of $\bk$-algebras for any commutative ring $\bk$.
\end{theorem}

\begin{proof}
By Theorem \ref{thm:C} the pairing on the Steinberg module gives an isomorphism \[\mr{St}^2(\bF^m)_{\mr{GL}(\bF^m)} \overset{\sim}\lra \bZ,\]
which---for $\gL = (L_1, L_2, \ldots, L_m)$ any splitting of $\bF^m$ into ordered 1-dimensional subspaces---sends $a_\gL \otimes a_\gL$ to $m!$. Recall that we denote by $\gamma_m(x) \in \mr{St}^2(\bF^m)_{\mr{GL}(\bF^m)}$ the class which maps to $(-1)^{\frac{m(m-1)}{2}}$ under this isomorphism, so that \[(-1)^{\frac{m(m-1)}{2}} m! \gamma_m(x) = [a_\gL \otimes a_\gL].\]

If $\gL' = (L'_1, L'_2, \ldots, L'_n)$ is a splitting of $\bF^n$ into ordered 1-dimensional subspaces, the product
\[\mr{St}^2(\bF^m)_{\mr{GL}(\bF^m)} \otimes \mr{St}^2(\bF^n)_{\mr{GL}(\bF^n)} \lra \mr{St}^2(\bF^{m+n})_{\mr{GL}(\bF^{m+n})}\]
sends $[a_{\gL} \otimes a_{\gL}] \otimes  [a_{\gL'} \otimes a_{\gL'}]$ to $(-1)^{mn} [a_{\gL''} \otimes a_{\gL''}]$ by the previous lemma (and graded-commutativity), where $\gL''$ is the concatenation of $\gL$ with $\gL'$. Thus it sends 
\[(-1)^{\frac{m(m-1)}{2}} m! \gamma_m(x) \otimes (-1)^{\frac{n(n-1)}{2}} n! \gamma_n(x)\]
to 
\[(-1)^{mn+ \frac{(m+n)(m+n-1)}{2}} (m+n)! \gamma_{m+n} (x).\] 
The signs cancel out, and as both sides are torsion-free we may divide by $m! n!$, to see that it sends $\gamma_m(x) \otimes \gamma_n(x)$ to $\binom{m+n}{n} \gamma_{m+n}(x)$. This is by definition the multiplication of the divided power algebra $\Gamma_\bZ[x]$. The claim over a general commutative ring $\bk$ follows by tensoring.
\end{proof}

\subsection{$E_\infty$-homology}\label{sec:e-infty-homology}

To understand the consequences for $E_\infty$-homology of the above results about $E_2$-homology, we will consider maps of $E_\infty$-algebras into and out of $\gR$. Firstly, upon picking an element $\sigma \in \gR_\bZ(1)$ we obtain by adjunction a map of non-unital $E_\infty$-algebras
\[\epsilon \colon \gE_{\infty}(S_\bZ^{1,0} \sigma) \lra \gR_\bZ.\]
Here $S_\bZ^{1,0} \sigma$ denotes $\bZ \cdot \sigma$
placed in bidegree $(n,d) = (1,0)$.

On the other hand, we let $\gN$ denote the non-unital commutative algebra given by $\gN(0) = 0$ and $\gN(n) = \bZ$ otherwise, and multiplication maps $\gN(n) \otimes \gN(m) \to \gN(n+m)$ given by the canonical identification $\bZ \otimes_\bZ \bZ \overset{\sim}\lra \bZ$ for $n,m \geq 1$. In other words $\gN = (\underline{\ast}_{>0})_{\bZ}$, where $\underline{\ast}_{>0}$ is the unique non-unital $E_\infty$-algebra having $\underline{\ast}_{>0}(n) = \ast$ for $n > 0$ and $\underline{\ast}_{>0}(0) = \varnothing$.
of non-unital $E_\infty$-algebras
\[\gR_\bZ \lra \gN.\]
In this section we will investigate the effect of these maps on $E_2$- and $E_\infty$-homology. It will be convenient to work in terms of reduced $k$-fold bar constructions $\tilde{B}^{E_k}(-)$ of non-unital $E_k$-algebras. Recall that, by Theorem $E_k$.13.7, for any $E_k$-algebra $\gA$ this computes the $k$-fold reduced suspension of the derived $E_k$-indecomposables
\[\Sigma^k Q^{E_k}_\bL(\gA) \simeq \tilde{B}^{E_k}(\gA).\]

We start with an investigation of the free non-unital $E_\infty$-algebra $\gE_{\infty}(S_\bZ^{1,0} \sigma)$. By Theorem $E_k$.13.8 there is an equivalence
\[\tilde{B}^{E_2}(\gE_{\infty}(S_\bZ^{1,0} \sigma)) \simeq \gE_\infty(S^{1,2}_\bZ\bar{\bar{\sigma}})\]
of $E_\infty$-algebras (as we mentioned at the beginning of the last section, the discussion in Section $E_k$.13.6 endows the left-hand side with an $E_\infty$-structure). Thus the homology groups of the right hand side form a non-unital bigraded ring. Adding a unit and restricting our attention to bidegrees which are multiples of $(n,d) = (1,2)$, this identifies the ring
\[\bZ \oplus \bigoplus_{n \geq 1} H_{n,2n}(\tilde{B}^{E_2}(\gE_{\infty}(S_\bZ^{1,0} \sigma)))\]
with the polynomial ring $\bZ[\bar{\bar{\sigma}}]$.

Now let us consider $\gN$. Since this is weakly equivalent to the free non-unital $E_1$-algebra on a point in rank $1$, $H^{E_1}_{n,d}(\gN) = 0$ unless $(n,d) = (1,0)$ in which case it is $\bZ$. As explained in Lemma's~$E_k$.14.2 and $E_k$.14.3, the homology groups $H_{*,*}(B^{E_1}(\gN^+,\epsilon))$ of the unreduced bar construction of the unitalisation $\gN^+$ form a bigraded augmented commutative algebra, which can only be the exterior algebra $\Lambda_\bZ[\bar{\sigma}]$ with $\bar{\sigma}$ of bidegree $(n,d) = (1,1)$. We conclude that there is a bar spectral sequence of $\bZ$-algebras, given by
	\[E^1_{n,p,q} = \mr{Tor}_{p}^{\Lambda_\bZ[\bar{\sigma}]}(\bZ,\bZ)_{n,q} \Longrightarrow  H_{n,p+q}(B^{E_2}(\gN^+,\epsilon)).\]
	From this we conclude that $H_{*,*}(B^{E_2}(\gN^+,\epsilon))$ vanishes for $d \neq 2n$ and that the $E_2$-homology groups form a divided power algebra:
	\begin{equation}\label{eqn:e2-n} \bigoplus_{n \geq 0} H_{n,2n}(B^{E_2}(\gN^+,\epsilon)) \cong \Gamma_\bZ[\bar{\bar{\sigma}}]\end{equation} for a class $\bar{\bar{\sigma}}$ of bidegree $(1,2)$. This calculation is well-known and essentially goes back to Cartan \cite{CartanII}.

As $\gN$ can be obtained from $\gE_{\infty}(S_\bZ^{1,0} \sigma)$ by attaching $E_\infty$-cells of strictly positive dimensions, it is clear that the map
\[H_{1,2}(\tilde{B}^{E_2}(\gN)) \lra H_{1,2}(\tilde{B}^{E_2}(\gE_{\infty}(S_\bZ^{1,0} \sigma)))\]
is an isomorphism, sending the canonical generator $\bar{\bar{\sigma}}$ to the class of the same name. Therefore the factorisation $\gE_{\infty}(S_\bZ^{1,0} \sigma) \to \gR_\bZ \to \gN$ gives ring maps
\[\bZ[\bar{\bar{\sigma}}] \lra \bZ \oplus \bigoplus_{n \geq 1} H_{n,2n}(\tilde{B}^{E_2}(\gR_\bZ)) \lra \Gamma_\bZ[\bar{\bar{\sigma}}]\]
and by Theorem~\ref{thm:DivPowAlg} the graded ring in the middle is abstractly isomorphic to a divided power algebra on a class of degree $(1,2)$. This is only possible if the right-hand map is an isomorphism, so we have proved:

\begin{lemma}\label{lem:E2RToN} 
The map $\gR_\bZ \to \gN$ induces an isomorphism
\[H^{E_2}_{n,2n-2}(\gR_\bZ) \overset{\sim}\lra H^{E_2}_{n,2n-2}(\gN)\]
for each $n \geq 1$.
\end{lemma}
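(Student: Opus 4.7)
The plan is to apply the reduced $E_2$-bar construction to the factorisation $\gE_\infty(S^{1,0}_\bZ\sigma) \to \gR_\bZ \to \gN$ and exploit the graded-commutative ring structure on $\bZ \oplus \bigoplus_{n \geq 1} H_{n,2n}(\tilde{B}^{E_2}(-))$ coming from the residual $E_\infty$-algebra structure on a two-fold bar construction (Section $E_k$.13.6). Via the equivalence $\Sigma^2 Q^{E_2}_\bL \simeq \tilde{B}^{E_2}$, the desired conclusion is equivalent to the assertion that $H_{n,2n}(\tilde{B}^{E_2}(\gR_\bZ)) \to H_{n,2n}(\tilde{B}^{E_2}(\gN))$ is an isomorphism for every $n \geq 1$.

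First I would identify the three relevant graded rings. Theorem~\ref{thm:C} says that $\bZ \oplus \bigoplus_{n \geq 1} H_{n,2n}(\tilde{B}^{E_2}(\gR_\bZ))$ is abstractly the divided power algebra $\Gamma_\bZ[x]$ on a class in bidegree $(1,2)$, the calculation~\eqref{eqn:e2-n} identifies the corresponding ring for $\gN$ as $\Gamma_\bZ[\bar{\bar{\sigma}}]$, and Theorem~$E_k$.13.8 (together with the standard homology of a free $E_\infty$-algebra along the diagonal $d = 2n$) identifies the one for $\gE_\infty(S^{1,0}_\bZ\sigma)$ as the polynomial ring $\bZ[\bar{\bar{\sigma}}]$. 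The factorisation thus induces graded ring maps
\begin{equation*}
\bZ[\bar{\bar{\sigma}}] \xrightarrow{g} \Gamma_\bZ[x] \xrightarrow{f} \Gamma_\bZ[\bar{\bar{\sigma}}].
\end{equation*}
The rank-$1$ component of the composite $\gE_\infty(S^{1,0}_\bZ\sigma) \to \gN$ induces an isomorphism on $H^{E_2}_{1,0}$ (both computing $\bZ$ trivially), so $f \circ g$ is the identity in bidegree $(1,2)$. Writing $g(\bar{\bar{\sigma}}) = a \cdot x$ and $f(x) = b \cdot \bar{\bar{\sigma}}$ this forces $ab = 1$, whence $a, b \in \{\pm 1\}$; in particular $f(x) = \pm \bar{\bar{\sigma}}$.

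It remains to observe the following purely algebraic fact: any graded ring homomorphism $f\colon \Gamma_\bZ[x] \to \Gamma_\bZ[\bar{\bar{\sigma}}]$ with $f(x) = \pm \bar{\bar{\sigma}}$ is an isomorphism. Indeed, writing $f(\gamma_n(x)) = c_n \gamma_n(\bar{\bar{\sigma}})$ (both sides being $\bZ$-bases of the rank-$n$ summands), the relation $\gamma_m(x)\gamma_n(x) = \binom{m+n}{n}\gamma_{m+n}(x)$ yields $c_m c_n = c_{m+n}$, hence $c_n = c_1^n = \pm 1$ and $f$ is an isomorphism in each bidegree $(n, 2n)$. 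The only non-formal ingredient in the whole argument, established in Section~\ref{sec:products} prior to this lemma, is the compatibility between the $E_\infty$-algebra product on $\tilde{B}^{E_2}(\gR_\bZ)$ and the external product $\mr{St}^2(\bF^m) \otimes \mr{St}^2(\bF^n) \to \mr{St}^2(\bF^{m+n})$ used in Theorem~\ref{thm:C}; granted that, the proof is a routine manipulation of divided power algebras.
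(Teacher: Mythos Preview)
Your proof is correct and follows essentially the same route as the paper: apply $\tilde{B}^{E_2}$ to the factorisation $\gE_\infty(S^{1,0}_\bZ\sigma)\to\gR_\bZ\to\gN$, identify the resulting diagonal rings as $\bZ[\bar{\bar\sigma}]\to\Gamma_\bZ[x]\to\Gamma_\bZ[\bar{\bar\sigma}]$, observe the composite is the standard inclusion in bidegree $(1,2)$, and conclude that the second map is an isomorphism. The paper compresses the final step into the sentence ``This is only possible if the right-hand map is an isomorphism''; your explicit verification that $c_mc_n=c_{m+n}$ and hence $c_n=c_1^n=\pm1$ is exactly what underlies that sentence.
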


Along with the fact that $H^{E_2}_{n,2n-1}(\gN)=0$ by \eqref{eqn:e2-n}, the long exact sequence on $E_2$-homology for a pair shows that $\smash{H^{E_2}_{n,d}}(\gN, \gR_\bZ)=0$ for $d < 2n$.

\begin{corollary}\label{cor:EInfHomNrelR}
We have $H^{E_\infty}_{n,d}(\gN, \gR_\bZ)=0$ for $d < 2n$.
\end{corollary}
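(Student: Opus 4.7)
The proof reduces to upgrading the $E_2$-vanishing statement $H^{E_2}_{n,d}(\gN,\gR_\bZ)=0$ for $d<2n$, which the paragraph immediately preceding the corollary has already established (via the long exact sequence of $E_2$-homology combining Lemma~\ref{lem:E2RToN}, Theorem~\ref{thm:E2Homology}, and~\eqref{eqn:e2-n}), to the analogous $E_\infty$-vanishing. Thus the content of the corollary is a transfer of vanishing lines from $E_2$- to $E_\infty$-relative homology for a map of $E_\infty$-algebras.

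The cleanest way I would carry this out is by CW-approximation. Applying the relative Hurewicz theorem and the CW-approximation machinery for $E_2$-algebras of op.cit.\ (Theorem~$E_k$.11.21) to the map $\gR_\bZ\to\gN$, the $E_2$-vanishing produces a relative CW-$E_2$-algebra $\gC\simeq\gN$ under $\gR_\bZ$ whose $E_2$-cells have bidegrees $(n_\alpha,d_\alpha)$ with $d_\alpha\geq 2n_\alpha$. Via the canonical comparison $\gE_2\to\gE_\infty$ of free functors, this can be upgraded to a relative CW-$E_\infty$-algebra under $\gR_\bZ$ weakly equivalent to $\gN$ whose $E_\infty$-cells lie in the same bidegrees. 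Applying $Q^{E_\infty}_{\bL}$ then yields a cellular object in $\cat{sMod}^\bN_\bZ$ with exactly one simplicial module cell per $E_\infty$-cell, and the desired vanishing $H^{E_\infty}_{n,d}(\gN,\gR_\bZ)=0$ for $d<2n$ is immediate.

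The main obstacle is the upgrade from an $E_2$-CW to an $E_\infty$-CW structure on the same cell data, because attaching an $E_2$-cell and attaching an $E_\infty$-cell produce genuinely different algebras. In the framework of op.cit.\ this is handled by iterating the Hurewicz theorem along $E_2\to E_3\to\cdots\to E_\infty$, using at each step that the natural comparison of derived indecomposables $Q^{E_k}_\bL\to Q^{E_{k+1}}_\bL$ is surjective in the range of bidegrees where $E_k$-homology already vanishes, so that each successive relative Hurewicz argument only attaches cells within the existing range $\{d\geq 2n\}$.
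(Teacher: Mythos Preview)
Your overall strategy --- iterate the relative vanishing from $E_2$ to $E_k$ for all $k$ --- is correct, but the mechanism you give for the inductive step does not work as written. The statement that ``$Q^{E_k}_\bL \to Q^{E_{k+1}}_\bL$ is surjective in the range of bidegrees where $E_k$-homology already vanishes'' is vacuous: surjectivity out of zero is automatic and says nothing about the target. What must actually be shown is that $H^{E_k}_{n,d}(\gN,\gR_\bZ)=0$ for $d<2n$ implies the same for $H^{E_{k+1}}$, and nothing in your CW/Hurewicz sketch establishes this. Nor can the $E_2$-cell data simply be reused as $E_\infty$-cells: a relative $E_2$-cell attachment has no direct interpretation as an $E_\infty$-cell attachment of the same bidegree, which is exactly the obstacle you identify but do not resolve.

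The paper supplies the missing inductive step via a relative bar spectral sequence
\[
E^1_{n,p,q}=H_{n,q}\bigl(B^{E_{k-1}}(\gN^+)^{\otimes p},\,B^{E_{k-1}}(\gR_\bZ^+)^{\otimes p}\bigr)\ \Longrightarrow\ H_{n,p+q}\bigl(B^{E_k}(\gN^+),\,B^{E_k}(\gR_\bZ^+)\bigr).
\]
Two inputs are needed: the inductive hypothesis that the map $B^{E_{k-1}}(\gR_\bZ^+)\to B^{E_{k-1}}(\gN^+)$ is $(2n+k-1)$-connective, \emph{and} the absolute fact that both objects are individually $(2n+k-3)$-connective (coming from the $(2n-2)$-vanishing line for $H^{E_2}$ of $\gR_\bZ$ and $\gN$ separately, transferred upward by Theorem~$E_k$.14.3). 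Corollary~$E_k$.11.5 then bounds the connectivity of the map of $p$th tensor powers, giving $E^1_{n,p,q}=0$ for $p+q<2n+k$. The absolute $(2n-2)$-line is essential here --- without it the tensor-power connectivity cannot be estimated and the induction does not close --- and your sketch makes no use of it.
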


\begin{proof} It is enough to show that the $E_k$-homology of this pair vanishes in this range if the $E_{k-1}$-homology does, for $k-1 \geq 2$. This follows from Proposition $E_k$.14.5 applied to $\gR_\bZ \to \bN$ with $\rho(n) = \sigma(n) = 2n$.\end{proof}

This has an immediate implication for $E_\infty$-homology, which establishes Theorem \ref{thm:A} in the case of infinite fields, and Theorem \ref{thm:B}.

\begin{corollary}\label{cor:einfty-hom-indec} 
For any commutative ring $\bk$ we have $H^{E_\infty}_{n,d}(\gR_\bk)=0$ for $d < 2n-2$, and
	\[H^{E_\infty}_{n,2n-2}(\gR_\bk) = \begin{cases} \bk & \text{if $n=1$,} \\
	\bk/p & \text{if $n=p^k$ with $p$ prime,} \\
	0 & \text{otherwise.}\end{cases}\]
\end{corollary}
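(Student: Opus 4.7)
The plan is to split the statement into two parts: the vanishing below the slope-$2$ line, and the identification of the groups on that line. The bulk of the work for both parts has already been set up: Theorem~\ref{thm:E2Homology} gives the vanishing line for $E_2$-homology, and Corollary~\ref{cor:EInfHomNrelR} compares $\gR_\bZ$ to the model algebra $\gN$.

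For the vanishing $H^{E_\infty}_{n,d}(\gR_\bk)=0$ when $d<2n-2$, I would start from $H^{E_2}_{n,d}(\gR_\bk)=0$ for $d<2(n-1)$ (Theorem~\ref{thm:E2Homology}, valid for any $\bk$ since an infinite $\bF$ satisfies the Nesterenko--Suslin property) and propagate it along the $E_k$-tower using Theorem $E_k$.14.3 inductively. This is exactly the argument used in the proof of Corollary~\ref{cor:EInfHomNrelR}, but applied to $\gR_\bk$ rather than to the pair $(\gN,\gR_\bZ)$: a relative bar spectral sequence shows that, for $k\geq 3$, a slope-$2$ connectivity for $B^{E_{k-1}}(\gR_\bk^+)$ persists through $B^{E_k}(\gR_\bk^+)$, and hence through the colimit in $k$.

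For the boundary line, first take $\bk=\bZ$. Corollary~\ref{cor:EInfHomNrelR} says $H^{E_\infty}_{n,d}(\gN,\gR_\bZ)=0$ for $d<2n$, so the long exact sequence of the pair produces an isomorphism
\[
H^{E_\infty}_{n,2n-2}(\gR_\bZ)\xrightarrow{\ \cong\ } H^{E_\infty}_{n,2n-2}(\gN).
\]
The right-hand side is a purely algebraic computation. The unitalisation $\gN^+$ is the strictly commutative monoid ring $\bZ[\bN]$, i.e.\ the polynomial algebra on a generator $x$ of bidegree $(1,0)$. We already recorded in~\eqref{eqn:e2-n} that $H_{*,*}(B^{E_2}(\gN^+,\epsilon))\cong \Gamma_\bZ[\bar{\bar\sigma}]$ with $\bar{\bar\sigma}$ in $(1,2)$. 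Iterating the bar spectral sequence from $E_2$ through $E_k$ and passing to the colimit, I would argue that on the slope-$2$ line only the $\mr{Tor}_1$ of this divided-power algebra survives, yielding $H^{E_\infty}_{n,2n-2}(\gN)\cong \mr{Tor}_1^{\Gamma_\bZ[x]}(\bZ,\bZ)_n$, whose standard values are $\bZ$ for $n=1$, $\bZ/p$ for $n=p^k$ with $p$ prime, and $0$ otherwise.

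Finally, to pass to general $\bk$, note that the vanishing part yields $H^{E_\infty}_{n,2n-3}(\gR_\bZ)=0$, so the universal coefficient spectral sequence for $E_\infty$-homology collapses on the boundary line into $H^{E_\infty}_{n,2n-2}(\gR_\bk)\cong H^{E_\infty}_{n,2n-2}(\gR_\bZ)\otimes_\bZ \bk$; tensoring the three cases above with $\bk$ produces the formula in the statement. The main obstacle will be the algebraic computation of $H^{E_\infty}_{n,2n-2}(\gN)$: the $E_2$-version is already nonzero for every $n\geq 1$ by~\eqref{eqn:e2-n} (equal to $\bZ\{\gamma_n(x)\}$), so the passage to $E_\infty$ has to kill most of these classes via higher $\mr{Tor}$ corrections, and pinning down exactly which Tor summand survives along the line $d=2n-2$ is the delicate point.
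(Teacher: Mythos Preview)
Your approach is essentially the same as the paper's: reduce to $\gN$ via the pair using Corollary~\ref{cor:EInfHomNrelR}, then compute $H^{E_\infty}_{n,2n-2}(\gN)$ from the divided power algebra~\eqref{eqn:e2-n}. The vanishing argument and the identification with $\mr{Tor}_1^{\Gamma_\bZ[x]}(\bZ,\bZ)_n$ are both correct.

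The point you flag as ``delicate'' is in fact the cleanest part, and the paper handles it more directly than you suggest. Because $H_{*,*}(B^{E_2}(\gN_\bk^+,\epsilon)) = \Gamma_\bk[\bar{\bar\sigma}]$ is concentrated on the single line $d=2n$, the bar spectral sequence computing $E_3$-homology from $E_2$-homology has, on the boundary line $d=2n-2$, \emph{only} the $p=1$ contribution, which is exactly the indecomposables of $\Gamma_\bk[\bar{\bar\sigma}]$ in rank $n$ (equivalently $\mr{Tor}_1$). So the answer already stabilises at $k=3$: for $k\geq 3$ each successive bar spectral sequence again has only the $p=1$ term on the boundary line, because the $p\geq 2$ terms land strictly above it. There are no ``higher Tor corrections'' to worry about. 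The paper then computes these indecomposables explicitly as $\bk\{\gamma_n\}/(\gamma_a\gamma_b : a+b=n,\,a,b>0)$, using $\gamma_a\gamma_b=\binom{n}{a}\gamma_n$ and the standard fact that $\gcd\{\binom{n}{a}:0<a<n\}$ is $p$ when $n=p^k$ and $1$ otherwise.

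A minor stylistic difference: the paper applies the universal coefficient theorem at the level of the pair $(\gN,\gR_\bZ)$ and then works with $\gN_\bk$ throughout, whereas you compute over $\bZ$ and tensor at the end. Both are fine, but the paper's route avoids having to argue separately that $H^{E_\infty}_{n,2n-3}(\gR_\bZ)=0$ for the UCT step.
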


\begin{proof}
Let $\gN_\bk \coloneqq \gN \otimes_\bZ \bk$. By Corollary \ref{cor:EInfHomNrelR} and the universal coefficient theorem we have $H^{E_\infty}_{n,d}(\gN_\bk, \gR_\bk)=0$ for $d < 2n$, so by the long exact sequence for relative $E_\infty$-homology it is sufficient to calculate $\smash{H^{E_\infty}_{n,d}}(\gN_\bk)$ for $d \leq 2n-2$.

We consider the bar spectral sequence of Theorem $E_k$.14.1, computing the $E_{k+1}$-homology in terms of the $E_k$-homology. It is immediate from this that $H^{E_k}_{n,d}(\gN_\bk)=0$ for $d < 2n-2$ for all $k \geq 2$, including $k=\infty$. For the line $d=2n-2$, the groups $H^{E_3}_{n,2n-2}(\gN_\bk)$ are given by the rank $n$ piece of the indecomposables in the augmented $\bk$-algebra
\[\bigoplus_{n \geq 0} H_{n,2n}(B^{E_2}(\gN_\bk^+,\epsilon)) = \Gamma_\bk[\bar{\bar{\sigma}}].\] This is $\bk\{\gamma_n(\bar{\bar{\sigma}})\}/(\gamma_a(\bar{\bar{\sigma}})\cdot \gamma_b(\bar{\bar{\sigma}}) \text{ for } a+b=n \text{ with } a,b>0)$. Now $\gamma_a(\bar{\bar{\sigma}})\cdot \gamma_b(\bar{\bar{\sigma}}) = \binom{n}{a}\gamma_n(\bar{\bar{\sigma}})$, and
\[\gcd\left\{\textstyle \binom{n}{a} \, \middle\vert \, 0 < a < n\right\} = \begin{cases} 
	1 & \text{if $n=1$,} \\
	p & \text{if $n=p^k$ with $p$ prime,} \\
	1 & \text{otherwise,}\end{cases}\]
which shows that $H^{E_3}_{n,2n-2}(\gN_\bk)$ is given by the formula in the statement of the corollary. Taking further bar constructions does not change the groups along this line.
\end{proof}

\begin{remark}\label{rem:EInfHomOfN}
Continuing to write $\gN_\bk \coloneqq \gN \otimes_\bZ \bk$, it will be helpful to have a classical interpretation of the groups $H^{E_\infty}_{p,d}(\gN_{\bF_p})$ when $p$ is prime. Using the equivalences for $n \geq 1$
\[\gE_\infty(S^{1,0}_{\bF_p} \sigma)(n) \simeq \bF_p[B\fS_n], \qquad \gN_{\bF_p}(n) \simeq \bF_p,\]
the fact that $B\fS_n$ has trivial $\bF_p$-homology for $n<p$ means that \[H_{n,d}(\gN_{\bF_p},\gE_\infty(S^{1,0}_{\bF_p} \sigma)) = 0 \quad \text{ for $n<p$}.\]
The objects $\gN_{\bF_p}$ and $\gE_\infty(S^{1,0}_{\bF_p} \sigma)$ are $(\infty,0,0,\ldots)$-connective, and the map between them is $(\infty, \infty, \ldots, \infty, 0, 0, \ldots)$-connective, with $(p-1)$ infinities. By Proposition $E_k$.11.9 the Hurewicz maps
\[H_{p,d}(\gN_{\bF_p},\gE_\infty(S_{\bF_p}^{1,0} \sigma)) \lra H_{p,d}^{E_\infty}(\gN_{\bF_p},\gE_\infty(S_{\bF_p}^{1,0} \sigma))\]
are therefore isomorphisms. The connecting map of the long exact sequence on homology identifies the domain of this map with $\tilde{H}_{d-1}(B\fS_p;\bF_p)$. Similarly, the connecting map of the long exact sequence on $E_\infty$-homology identifies its target with $H^{E_\infty}_{p,d}(\gN_{\bF_p})$.

In particular the first non-trivial such reduced homology group of $\fS_p$ is is $H_{2p-3}(B\fS_p;\bF_p) \cong \bF_p$ generated by $\beta Q^1_p(\sigma)$ if $p$ is odd (or $Q^1_2(\sigma)$ if $p=2$). Thus $H_{p,2p-2}^{E_\infty}(\gN_{\bF_p}) = \bF_p$, encoding a reason that the operation $\beta Q^1_p(\sigma)$ if $p$ is odd (or $Q^1_2(\sigma)$ if $p=2$) vanishes in the homology of $\gN_{\bF_p}$.
\end{remark}

\section{Local and semi-local rings}
\label{sec:local-semi-local}

In this section we will extend the results of Section \ref{sec:GLfield} from the case of infinite fields to connected semi-local rings with infinite residue fields.  Recall that a \emph{semi-local ring} is a commutative ring with
finitely many maximal ideals. The most important examples are the \emph{local rings}, which are the commutative rings with a unique maximal ideal.
As we have mentioned earlier, if $\R$ is a connected semi-local ring then every finitely-generated projective $\R$-module is free \cite[\href{https://stacks.math.columbia.edu/tag/02M9}{Tag 02M9}]{stacks-project}. 

Let us summarise what we know and what remains to be proved.  Firstly, the isomorphism
\begin{equation*}
  H^{E_k}_{n,d}(\gR_\bk) \cong \bigoplus_{\substack{[M]\\r(M)=n}} \widetilde{H}_{n,d-k}(\tilde{D}^k(M) \hcoker \mr{GL}(M);\bk)
\end{equation*}
from Theorem \ref{thm:split-building-ek} holds for any $\R$ and any coefficient ring $\bk$, and when $\R$ is semi-local with infinite residue fields, the Nesterenko--Suslin comparison map
\begin{equation*}
  \widetilde{H}_*(\tilde{D}^{k}(M) \hcoker \mr{GL}(M);\bk)  \lra \widetilde{H}_*({D}^{k}(M) \hcoker \mr{GL}(M))
\end{equation*}
is an isomorphism for any $\bk$, by Theorem~\ref{thmcor:BlockvsFlag}.

In the case where $\R = \bF$ is an infinite field, we then identified $D^1(M)$ with the double suspension of the Tits building and used the Lee--Szczarba resolution to give a presentation of its homology, and for $k=2$ we used that a certain map $D^2(M) \to D^1(M) \wedge D^1(M)$ is an isomorphism.  The goal of this section is to provide adequate substitutes for these ingredients in the case $\R$ is a connected semi-local ring with infinite residue fields.

In Section \ref{sec:defin-stat} we state analogues of all these results when the infinite field $\bF$ is replaced by a connected semi-local ring $\R$ with infinite residue fields.  In Sections~\ref{sec:gener-posit-lemma}--\ref{sec:proofs} we then prove these statements, with the exception of the analogue of calculating of coinvariants of the tensor square of the Steinberg module (we formulate a conjecture about this in Section~\ref{sec:coinv-e_2-steinb}, which we prove in ranks $\leq 3$).

\subsection{Definitions and statements}
\label{sec:defin-stat}

We first establish the proper generalisations of the main definitions and the theorems about them from the infinite field case to the case of connected semi-local rings with infinite residue fields, before getting to the proofs of these generalisations in later subsections.

\subsubsection{Connectivity theorems}
\label{sec:defin-conn-theor}

The first order of business is to establish connectivity.  Let us immediately remark that if a $M$ is a projective $\R$-module of rank $d$ then $D^1(M)$ is the realisation of a simplicial set whose non-degenerate simplices all have dimension $\leq d$, so it is a $d$-dimensional CW-complex, and by a similar argument $D^2(M)$ is a $2d$-dimensional CW-complex.  The content of the following theorem is therefore the connectivity of these spaces.
\begin{theorem}\label{thm:Solomon--Tits-local}
  Let $\R$ be a connected semi-local ring with infinite residue fields, and let $M$ be a projective $\R$-module of rank $d$.
  \begin{enumerate}[(i)]
  \item\label{item:7} $D^1(M)$ is homotopy equivalent to a wedge of spheres of dimension $d$
  \item\label{item:8} $D^2(M)$ is homotopy equivalent to a wedge of spheres of dimension $2d$.
  \end{enumerate}
\end{theorem}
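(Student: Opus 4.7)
The plan is to reduce the theorem to a Solomon--Tits-type result for the poset $\cT(M)$ of proper non-trivial summands of $M$: the nerve of $\cT(M)$ should have the homotopy type of a wedge of $(d-2)$-spheres. This is the main obstacle. The strategy is induction on $d$, using a general position argument enabled by the many-units property (Example~\ref{exam:many-units}). Given any finite subcomplex $K \subset |\cT(M)|$, one uses many-units---by reducing modulo the Jacobson radical and working residue field by residue field, which are all infinite---to find a rank-$1$ summand $L \subset M$ such that $L + P$ is a proper summand of strictly larger rank than $P$ for each summand $P$ appearing as a vertex of $K$. A standard link/star reduction then contracts $K$ into a subcomplex equivalent to the nerve of $\cT(M')$ for a summand $M' \subset M$ of rank $d-1$, and induction concludes. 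This is presumably the content of Section~\ref{sec:gener-posit-lemma}.

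Given the Solomon--Tits-type theorem for $\cT(M)$, part~(\ref{item:7}) follows by the same argument as Lemma~\ref{lem:splittingcomplex-vs-building}: that proof uses only formal poset-theoretic input (contractibility of $|\cT(M) \cup \{0\}|$ and $|\cT(M) \cup \{M\}|$, since these posets have initial or terminal objects) to identify $D^1(M)$ with the unreduced double suspension of $|\cT(M)|$, and this identification implies $D^1(M)$ is a wedge of $d$-spheres.

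For part~(\ref{item:8}), the identification of Proposition~\ref{prop:smash-splitting-E2} genuinely fails over semi-local rings---for instance, with $k$ an infinite field, over $\R = k[x]/(x^2)$ the rank-one summands $\R\cdot(1,0)$ and $\R\cdot(1,x)$ of $\R^2$ intersect in $\R\cdot(x,0)$, which is not a summand---so a generic pair of flags of summands need not come from a $2$-lattice. However, $D^2(M)$ is realized as a CW complex of dimension at most $2d$, since by Lemma~\ref{lem:splittings-exist} any non-degenerate $(p_1,p_2)$-bisimplex forces $p_1,p_2 \leq d$. Hence by the Hurewicz theorem it suffices to establish that $D^2(M)$ is $(2d-1)$-connected; any such space of that dimension is automatically a wedge of $2d$-spheres. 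I would prove the connectivity by the same general position technique used in part~(\ref{item:7}): given any cycle in $D^2(M)$, use many-units to choose a rank-$1$ summand $L$ behaving generically with respect to the (finitely many) summands appearing in the cycle, and use $L$ to null-homotope it, inducting on $d$. The main technical obstacle is packaging the general position lemma flexibly enough to simultaneously handle the one-dimensional lattice condition and the two-dimensional lattice condition, the latter being more restrictive precisely because intersections of summands need not be summands.
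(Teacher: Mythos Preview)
Your approach to part~(\ref{item:7}) is plausible---a direct Solomon--Tits argument for the poset $\cT(M)$ of proper non-trivial summands via general position and induction on rank can be made to work---but it is not what the paper does. Instead the paper introduces an auxiliary simplicial complex $E(M)$ whose vertices are surjections $f \colon M \to \R$ and whose simplices are finite sets of such functionals all of whose subsets ``intersect cleanly'' (have projective cokernel under the evaluation map). A general position lemma (Lemma~\ref{lemprop:choose-enough-elements}) shows $E(M)$ is contractible, and Proposition~\ref{propcor:resolve} establishes a homotopy colimit decomposition
\[
D^k(M) \simeq \hocolim_{\sigma \in \mr{Simp}(E(M))} D^k(M)^\sigma
\]
over this contractible indexing category. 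Each piece $D^1(M)^\sigma$ is then identified (Lemma~\ref{lemma:hocolim-formula-D-one}) with the suspension of a finite quotient complex whose $(d-2)$-connectivity is immediate from a skeleton count. The payoff of this more elaborate route is that the same decomposition also handles part~(\ref{item:8}).

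For part~(\ref{item:8}) your proposal has a genuine gap. You correctly observe that the global injection $D^2(M) \hookrightarrow D^1(M) \wedge D^1(M)$ is not surjective over non-fields, and then propose a direct coning argument on cycles using a generic rank-$1$ summand $L$---but you do not say what the coning mechanism is. In the Solomon--Tits argument one cones toward the star of $L$; for a $2$-dimensional lattice there are two independent flag directions and no evident analogue, and you yourself concede this is ``the main technical obstacle'' without resolving it. The paper's key insight is that the failure of the smash comparison is repaired \emph{locally}: for each simplex $\sigma < E(M)$ the restricted map
\[
D^2(M)^\sigma \lra D^1(M)^\sigma \wedge D^1(M)^\sigma
\]
\emph{is} a bijection (Proposition~\ref{prop:D-two-sigma-vs-D-one-smash-D-one}), because the submodules appearing in $\sigma$-compatible lattices are intersections of kernels of vertices of $\sigma$ and therefore intersect cleanly by construction (Lemma~\ref{P-and-Q-intersect-cleanly}). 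Each $D^2(M)^\sigma$ is thus a smash of two $(d-1)$-connected spaces, hence $(2d-1)$-connected, and the homotopy colimit over the contractible index inherits this. The missing idea is not a new null-homotopy mechanism for $D^2$ but rather a decomposition into pieces on which the field-case argument of Proposition~\ref{prop:smash-splitting-E2} applies verbatim.
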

Just as in the field case there is a natural map $D^2(M) \to D^1(M) \wedge D^1(M)$, but it is not an isomorphism when $\R$ is not a field.  Therefore connectivity of $D^2(M)$ cannot be immediately deduced from connectivity of $D^1(M)$, but nevertheless we prove that it is  as highly connected as $D^1 \wedge D^1$.

\begin{definition}
  For $\R$ and $M$ as in Theorem~\ref{thm:Solomon--Tits-local}, and $\bk$ a commutative ring, define $\bk[\mr{GL}(M)]$-modules
  \[\mr{St}_\bk(M) \coloneqq \widetilde{H}_d(D^1(M);\bk) \quad \text{ and } \quad
    \mr{St}^2_\bk(M) \coloneqq \widetilde{H}_{2d}(D^2(M);\bk).\]
  For $\bk = \bZ$ we denote these simply by $\mr{St}(M)$ and $\mr{St}^2(M)$.
\end{definition}
Note that $\mr{St}_\bk(M) = \bk \otimes \mr{St}(M)$ and $\mr{St}^2_\bk(M) = \bk \otimes \mr{St}^2(M)$. The natural map $D^2(M) \to D^1(M) \wedge D^1(M)$ is injective, and also induces an injection $\mr{St}^2(M) \to \mr{St}(M) \otimes \mr{St}(M)$.  When $\R$ is not a field, it seems $\mr{St}^2(M)$ is the better object, and we shall work with that when generalising arguments from the field case.

Using these definitions and results, we now have
\begin{align*}
  H^{E_1}_{n,d}(\gR_{\bk}) &\cong \widetilde{H}_{d-(n-1)}\left(\mr{GL}_n(\R);\mr{St}_\bk(\R^n)\right)\\
  H^{E_2}_{n,d}(\gR_{\bk}) &\cong \widetilde{H}_{d-2(n-1)}\left(\mr{GL}_n(\R);\mr{St}^2_\bk(\R^n)\right),
\end{align*}
and in particular we obtain the vanishing of $E_1$-homology for $d < n-1$ and the vanishing of $E_2$-homology for $d < 2(n-1)$, just as we did in the case where $\R$ is an infinite field (cf.~Theorems \ref{thm:E1Homology} and \ref{thm:E2Homology}). Proceeding as in Section \ref{sec:e-infty-homology}, we deduce Theorem \ref{thm:A}:

\begin{corollary}\label{cor:thmA}
 For $\R$ as in Theorem~\ref{thm:Solomon--Tits-local}, and $\bk$ a commutative ring, we have $H^{E_\infty}_{n,d}(\gR_{\bk}) = 0$ for $d < 2n-2$.
\end{corollary}

However, we do not yet have a formula for $H^{E_\infty}_{n,2n-2}(\gR_\bk)$ analogous to the second part of Corollary~\ref{cor:einfty-hom-indec}.  To derive that formula by the same method we would need to prove $H^{E_2}_{n,2n-2}(\gR_{\bk}) \cong \bZ$ and that the product between different $n$ formed a divided power algebra.  We do not currently know how to prove this, but see Section~\ref{sec:coinvariants-mrst2} for partial results and a conjecture.

\subsubsection{Resolutions of Steinberg modules}
\label{sec:resol-steinb-modul}

Our strategy for constructing the resolutions can be paraphrased as follows.  Suppose we are given a pair of CW complexes $X \subset Y$ such that $X$ contains the $(m-1)$-skeleton of $Y$, $Y$ is contractible, and $X$ is homotopy equivalent to a wedge of $(m-1)$-spheres.  Then the relative cellular chain complex $C_*(Y,X)$ is concentrated in degrees $\ast \geq m$ and the relative homology $H_*(Y,X)$ is concentrated in degree $m$, so that there is an induced exact sequence
\begin{equation*}
  \dots \lra C_{m+1}(Y,X) \overset{\partial}\lra C_m(Y,X) \lra H_m(Y,X) \lra 0.
\end{equation*}
Lee and Szczarba's resolution (\cite[Section 3]{leeszczarba}), as used in Section \ref{sec:coinvariants-of}, was constructed from a $\mr{GL}_n(\bF)$-equivariant such pair  (numbers align as $m = n-2$) together with an equivariant homotopy equivalence between ``$X$'' and the Tits building $T(\bF^{n})$. We mimick this strategy to construct resolutions of $\mr{St}(M)$ and $\mr{St}^2(M)$ for modules over connected semi-local rings with infinite residue fields. It is also used in \cite{KahnSun}.

\begin{definition}
  Let $M$ be a finitely generated projective $\R$-module and $\sigma \subset M^\vee$ a finite set of elements of $M^\vee = \mr{Hom}_\R(M,\R)$.  Let us say that $\sigma$ \emph{intersects cleanly} if the cokernel of the evaluation map $M \to \R^\sigma$ is projective.
\end{definition}

\newcommand{\pphi}{f} 
\newcommand{\ppsi}{f'}

\begin{definition}\
  \begin{enumerate}[(i)]
  \item Let $E(M)$ be the simplicial complex whose vertices are surjections $\pphi \colon M \to \R$ and where $\sigma = \{\pphi_0, \ldots \pphi_p\}$ is a  simplex if every subset $\tau \subset \sigma$ intersects cleanly. Let $E(M)_\bullet$ be the corresponding simplicial set, i.e.\ $E(M)_p$ is the set of ordered $(p+1)$-tuples $(\pphi_0, \ldots, \pphi_p) \in (M^\vee)^{p+1}$ for which $\{\pphi_0, \ldots, \pphi_p\} < E(M)$.
  \item Let $E_0(M) < E(M)$ be the subcomplex whose $p$-simplices are those $\sigma < E(M)$ for which the evaluation map $M \to \R^\sigma$ is not injective, and let $E_0(M)_\bullet$ be the corresponding simplicial set.
  \item   Let $E^2(M)_{p,q} \subset E(M)_p \times E(M)_q$ be the subset consisting of those pairs $((\pphi_0, \ldots, \pphi_p),(\ppsi_0,\ldots, \ppsi_q))$ for which $\{\pphi_0, \ldots, \pphi_p,\ppsi_0,\ldots, \ppsi_q\} < E(M)$.
  \item Let $E^2_0(M)_{p,q} \subset E^2(M)_{p,q}$ be the subset for which either $\{\pphi_0, \ldots, \pphi_p\} < E_0(M)$ or $\{\ppsi_0, \ldots, \ppsi_q\} < E_0(M)$.
  \end{enumerate}
\end{definition}
The resolutions now come from the following space-level result.
\begin{theorem}\label{thm:Lee--Szczarba-local-1-and-2}
    There are weak equivalences
    \begin{align}
      \label{eq:16a} D^1(M) &\simeq \Sigma \frac{|E(M)_\bullet|}{|E_0(M)_\bullet|}\\
      \label{eq:16b} D^2(M) &\simeq \Sigma^2 \frac{|E^2(M)_{\bullet,\bullet}|}{|E^2_0(M)_{\bullet,\bullet}|}
    \end{align}
    through zig-zags of $\mr{GL}(M)$-equivariant based maps.
\end{theorem}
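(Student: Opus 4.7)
The strategy is to adapt the Lee--Szczarba approach from the infinite field case to the semi-local setting. The geometric content is that a clean tuple $(f_0, \ldots, f_p)$ with injective joint evaluation $M \hookrightarrow \R^{p+1}$ encodes, via the kernels of its partial evaluations, a canonical filtration of $M$ by summands---essentially a full flag in $M$ with an additional bottom summand of rank $p + 1 - \mr{rk}(M)$. This filtration is precisely the data of a full lattice in the sense of Definition~\ref{defn:lattice}, so one obtains a $\mr{GL}(M)$-equivariant natural transformation between the ``spanning tuple'' complex $E(M)_\bullet/E_0(M)_\bullet$ and (up to a dimension shift) the building $D^1(M)$. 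Face maps do not match on the nose, but standard simplicial devices (e.g.\ an auxiliary bi-simplicial set indexing both a clean tuple and a chain of its partial kernels) assemble these into a zig-zag realising~\eqref{eq:16a}; the single suspension in the statement reflects the grading mismatch between a $p$-simplex of $E$ and the $(p+1)$-simplex of $D^1$ it produces.

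The bulk of the work then goes into showing this zig-zag consists of weak equivalences. My plan is to filter both sides by the rank of the cokernel of the joint evaluation (equivalently, by how far the tuple exceeds a basis) and induct on $\mr{rk}(M)$. On associated graded pieces the comparison reduces to the same statement for projective $\R$-modules of smaller rank, together with the connectivity assertion Theorem~\ref{thm:Solomon--Tits-local}\eqref{item:7}. This step relies on the ``general position'' input for connected semi-local rings with infinite residue fields developed in Section~\ref{sec:gener-posit-lemma}: namely, that every proper non-trivial summand $P \subset M$ can be realised as $\ker(M \to \R^\sigma)$ by sufficiently many clean tuples $\sigma$ to provide a rich enough supply of simplices.

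For~\eqref{eq:16b} I would repeat the same two-step strategy bi-simplicially. A non-degenerate bi-simplex of $E^2(M)_{\bullet,\bullet}/E^2_0(M)_{\bullet,\bullet}$ records two clean tuples whose union is also clean and each of whose joint evaluations is injective; applying the partial-kernel construction to each, and intersecting the two resulting filtrations, yields a full $2$-dimensional lattice in the sense of Lemma~\ref{lem:splittings-exist}. This produces an equivariant bi-simplicial zig-zag to $D^2(M)$ after double suspension, and the weak-equivalence step is handled by a double filtration by the two cokernel ranks, reducing to Theorem~\ref{thm:Solomon--Tits-local}\eqref{item:8} and the same general-position input. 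The principal obstacle in both parts is proving the weak equivalence rather than merely constructing the zig-zag: the filtration argument requires a sufficient supply of clean tuples adapted to any given summand or pair of summands, which is exactly where the infinite-residue-field hypothesis is essential, as the supply of clean tuples is otherwise too thin to cover the needed combinatorics.
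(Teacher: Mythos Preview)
Your intuition about the underlying map---that a clean spanning tuple produces a chain of summands via kernels of partial evaluations---is correct and matches the paper (see~\eqref{eq:13}--\eqref{eq:14}), and you rightly flag general position as the key input. But the paper's organising principle is quite different from yours. It does not filter by cokernel rank or induct on $\mr{rk}(M)$, nor does it invoke Theorem~\ref{thm:Solomon--Tits-local}. Instead it establishes a homotopy colimit decomposition $\hocolim_{\sigma \in \mr{Simp}(E(M))} D^k(M)^\sigma \overset{\simeq}\to D^k(M)$ (Proposition~\ref{propcor:resolve}), where $D^k(M)^\sigma$ is the subspace of lattices compatible with $\sigma$ (Definition~\ref{def:compatibility-consolidated}); general position (Lemma~\ref{lemprop:choose-enough-elements}) enters by showing that the poset of $\sigma$ compatible with a given lattice is contractible (Lemma~\ref{lemprop:contractible-space-of-sigmas}). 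Each piece $D^1(M)^\sigma$ is then identified with $\Sigma(|Q(\sigma)|/|Q_0(\sigma)|)$ for finite complexes built from $\sigma$ alone (Lemma~\ref{lemma:hocolim-formula-D-one}), and a matching hocolim of the $Q$'s reassembles into $|E(M)_\bullet|/|E_0(M)_\bullet|$. For~\eqref{eq:16b} the crucial extra fact is Proposition~\ref{prop:D-two-sigma-vs-D-one-smash-D-one}: restricted to a fixed $\sigma$ the inclusion $D^2(M)^\sigma \hookrightarrow D^1(M)^\sigma \wedge D^1(M)^\sigma$ is a bijection, because sums and intersections of kernels of faces of one clean simplex are automatically summands (Lemma~\ref{P-and-Q-intersect-cleanly}). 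Theorems~\ref{thm:Solomon--Tits-local} and~\ref{thm:Lee--Szczarba-local-1-and-2} are then parallel consequences of this decomposition, not inputs to one another.

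The gap in your sketch is the step ``on associated graded pieces the comparison reduces to the same statement for smaller rank''. Filtering $|E(M)_\bullet|/|E_0(M)_\bullet|$ by cokernel rank is the skeletal filtration shifted by $d-1$, and there is no evident matching filtration on $D^1(M)$ to compare it with: a clean spanning $(p+1)$-tuple with excess $r$ does not single out a canonical submodule of smaller rank but rather a whole poset of them, one for each face. A scalar filtration cannot keep track of this, which is precisely why the paper indexes over the full poset $\mr{Simp}(E(M))$ rather than over a single integer. Your sketch for~\eqref{eq:16b} also elides the point that intersecting two arbitrary filtrations of $M$ by summands need not produce a $2$-dimensional lattice; this holds only because the union of the two tuples is clean, and checking the lattice condition in that situation is exactly the content of Proposition~\ref{prop:D-two-sigma-vs-D-one-smash-D-one}, which you would need to isolate and prove.
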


If $M$ has rank $d$, the complex of simplicial chains of $E(M)_\bullet$ relative to $E_0(M)_\bullet$ is concentrated in degrees $\geq d -1$, and gives rise to a resolution of $\mr{St}(M)$ by free $\bZ[\mr{GL}(M)]$-modules, generalising (up to transposing some matrices) the Lee--Szczarba resolution from the field case.  Similarly, the bisimplicial set $E^2(M)_{\bullet,\bullet}$ relative to $E^2_0(M)_{\bullet,\bullet}$ gives a bicomplex concentrated in bidegrees $(p,q)$ with $p,q \geq d -1$.  The associated total complex then gives a resolution of $\mr{St}^2(M)$ by free $\bZ[\mr{GL}(M)]$-modules, generalising the tensor product of the Lee--Szczarba resolution with itself.  In Section~\ref{sec:coinv-e_2-steinb} we spell this out in more detail.

\subsubsection{Coinvariants of $\mr{St}^2(M)$}
\label{sec:coinvariants-mrst2}

Let $\R$ be a semi-local ring and let $\bF = \R/\mathfrak{m}$ be one of its residue fields. For a finitely generated projective $\R$-module $M$, tensoring with $\bF$ defines a map
\begin{equation*}
  D^2(M) \lra  D^2(M \otimes_\R \bF)
\end{equation*}
and in turn a homomorphism
\begin{equation*}
  \mr{St}^2(M) \lra \mr{St}^2(M \otimes_\R \bF).
\end{equation*}
Both are equivariant with respect to the group homomorphism $\mr{GL}_\R(M) \to \mr{GL}_\bF(M \otimes_\R \bF)$, and we get an induced homomorphism of coinvariants
\begin{equation}\label{eq:25}
  (\mr{St}^2(M))_{\mr{GL}(M)} \lra (\mr{St}^2(M \otimes_\R \bF))_{\mr{GL}(M \otimes_\R \bF)} \overset{\cong}\lra \bZ,
\end{equation}
where the last isomorphism is the one from Theorem~\ref{thm:C}.  The composition is easily seen to be surjective: indeed, a pair of apartments in $M \otimes_\R \bF$ with precisely one flag in common may be chosen to lift to $\R$. 

\begin{theorem}\label{thm:proof-of-conjecture-rank-leq-3}
  The surjection~(\ref{eq:25}) is an isomorphism for any connected semi-local ring $\R$ and any projective $\R$-module $M$ of rank $\leq 3$.
\end{theorem}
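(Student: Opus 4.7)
The plan is to adapt the cyclicity argument in the proof of Theorem~\ref{thm:A} to the semi-local setting, using the bicomplex resolution of $\mr{St}^2(M)$ provided by Theorem~\ref{thm:Lee--Szczarba-local-1-and-2}. After fixing a basis identifying $M \cong \R^d$ with $d = \mr{rk}(M)$, the lowest-bidegree generators (in bidegree $(d-1,d-1)$) are ordered pairs $(\phi,\psi) \in \mr{GL}_d(\R) \times \mr{GL}_d(\R)$: a clean $d$-tuple of covectors with injective evaluation is automatically a basis, and for any two such bases the combined $2d$-tuple is clean because its evaluation map $M \to \R^{2d}$ is the graph of the $\R$-linear map $\psi\phi^{-1}$. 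Relations come from simplices in bidegrees $(d,d-1)$ and $(d-1,d)$. In $\mr{GL}(M)$-coinvariants we normalise $[\phi]\otimes[\psi]$ to $[1]\otimes[\phi^{-1}\psi]$ and recast the relations as column operations (from the right factor) and row operations (from the left factor) on a single matrix $A \in \mr{GL}_d(\R)$, exactly as in Section~\ref{sec:coinvariants-of}.

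The first step is to verify that the three basic families of elementary operations used in the field case -- scaling a column by a unit, swapping two columns with a sign, and symmetrised column addition -- are actually realised by relations in $E^2(M)_{\bullet,\bullet}$. In each case the auxiliary $(2d+1)$-tuple augmenting $(\phi,\psi)$ by one additional covector, which is a unit $\R$-linear combination of existing ones, has an evaluation map $M \to \R^{2d+1}$ factoring through the graph of an invertible $\R$-linear map; in particular its cokernel is free, and so clean intersection holds. The assumption that $\R$ is semi-local with infinite residue fields supplies, via Example~\ref{exam:many-units}, the units needed for these scalings and for the ``kill a row/column'' reductions in the analogue of Claim~1 in Section~\ref{sec:coinvariants-of} (where one also uses that $\mr{SL}_d(\R)$ is generated by elementary matrices over such a ring).

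The second step is to re-run the arguments of Claim~1 and Claim~2 of Section~\ref{sec:coinvariants-of} case by case. For $d=1$ the statement is immediate. For $d=2$, Claim~1 reduces any generator $[A]$ to an integer combination of $[J_{1,1}]$ and $[J_2]$, and~\eqref{eq:8} with $(a,b)=(1,1)$ rewrites $[J_{1,1}]$ as a multiple of $[J_2]$. For $d=3$, Claim~1 produces $[J_{1,1,1}]$, $[J_{2,1}]$, $[J_{1,2}]$ and $[J_3]$, and iterated application of~\eqref{eq:8} for $(a,b) \in \{(1,1),(1,2),(2,1)\}$ rewrites each as a multiple of $[J_3]$. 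Combined with the surjectivity noted after~\eqref{eq:25}, this proves cyclicity. The main obstacle, and the reason for the restriction $d \leq 3$, lies in the very last sentence: verifying that every intermediate matrix $J_{a,b}(i)$ appearing in the proof of~\eqref{eq:8} yields a $(2d+1)$-tuple lying in $E^2(M)$ can be done by hand for the values of $(a,b)$ relevant in ranks $\leq 3$ using the graph-of-a-linear-map structure, but we do not at present see how to carry out the required bookkeeping uniformly in $(a,b)$; hence the general statement is left as Conjecture~\ref{conj:double-steinberg-local}.
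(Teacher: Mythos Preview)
Your proposal contains a fundamental misunderstanding of the definition of $E^2(M)$ that undermines the entire argument. You claim that for any two bases $\phi,\psi$ of $(\R^d)^\vee$ the combined $2d$-tuple is automatically ``clean'' because the evaluation map $M \to \R^{2d}$ is the graph of $\psi\phi^{-1}$ and hence has free cokernel. But the condition for $(\phi,\psi)$ to lie in $E^2(M)_{d-1,d-1}$ is that \emph{every subset} $\sigma \subset \{\phi_1,\ldots,\phi_d,\psi_1,\ldots,\psi_d\}$ has projective cokernel $\Cok(M \to \R^\sigma)$, not merely the full set. The same error recurs in your treatment of the $(2d+1)$-tuples for relations. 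Concretely, if $\epsilon \in \R$ is a nonzero nonunit and $\psi = \begin{bsmallmatrix}1 & \epsilon\\ 0 & 1\end{bsmallmatrix}$, then the subset $\{\iota_1,\psi_1\}$ gives cokernel $\R/\epsilon$, which is not projective; so $[1]\otimes[\psi]$ is not even a generator. This is precisely the content of Lemma~\ref{lem:entries-zero-or-units}: if $[\iota]\otimes[\psi\iota]$ is defined then every entry of $\psi$ lies in $\{0\}\cup\R^\times$. In particular row and column additions are \emph{not} automatically admissible---they may produce undefined terms---and the paper's Example following Lemma~\ref{lem:entries-zero-or-units} exhibits exactly such a failure.

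You have also misidentified where the obstruction to general $d$ lies. The paper proves the Claim~2 analogue (Lemma~\ref{lem:claim-two}) for \emph{all} $d$ over any commutative ring: the Jordan-block recursion~\eqref{eq:8} goes through because, by Lemma~\ref{lem:Jordan-blocks-in-St-two}, all the intermediate $J_{a,b}(i)$ are defined, and Lemma~\ref{lemma:fortunately} then guarantees the relation holds. The restriction to $d \leq 3$ is entirely due to Claim~1: reducing an arbitrary defined generator $[\iota]\otimes[\psi\iota]$ to Jordan form requires row and column additions whose admissibility must be verified case by case, and for $d=3$ this occupies a careful three-step argument (Section~\ref{sec:jord-blocks-gener}) exploiting auxiliary constraints such as $\psi_{3,2}-\psi_{2,2}\in\{0\}\cup\R^\times$ deduced from cleanness of specific subsets. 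Your proposal skips over exactly this, which is the heart of the proof.
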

The case where $M$ has rank 1 is obvious, but rank 2 and especially 3 require more work.  Motivated by these low-rank results and the case where $\R$ is a field, we formulate the following expectation.

\begin{conjecture}\label{conj:double-steinberg-local} 
  The surjection~(\ref{eq:25}) is an isomorphism for any connected semi-local ring $\R$ and any finitely generated projective $\R$-module $M$.
\end{conjecture}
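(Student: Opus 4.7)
The plan is to mimic the proof of the field case in Section~\ref{sec:coinvariants-of} using the generalized Lee--Szczarba-type resolution of $\mr{St}^2(M)$ provided by Theorem~\ref{thm:Lee--Szczarba-local-1-and-2}. More specifically, I would first use the bisimplicial set $E^2(M)_{\bullet,\bullet}/E^2_0(M)_{\bullet,\bullet}$ to obtain a presentation of $\mr{St}^2(M)$ as a $\bZ[\mr{GL}(M)]$-module. The generators are the top-dimensional $(n-1,n-1)$-bisimplices $[\phi] \otimes [\psi]$ with $\phi, \psi \in \mr{GL}_n(\R)$ subject to the clean-intersection condition on the combined $2n$-tuple of covectors, and the relations come from $(n, n-1)$- and $(n-1, n)$-bisimplices.

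In the coinvariants, using the $\mr{GL}_n(\R)$-action by $g \cdot ([\phi] \otimes [\psi]) = [g\phi] \otimes [g\psi]$, I would reduce every generator to $[I_n] \otimes [\phi]$ with $\phi \in \mr{GL}_n(\R)$ satisfying the cleanliness condition against the identity basis. Repeating the analysis of the three basic relation types from Section~\ref{sec:coinvariants-of}, I would verify that column permutations, column scalings by units, and the symmetric (splitting) column addition relation, as well as their row analogues, all continue to arise from valid bisimplices of $E^2(M)_{\bullet,\bullet}$; the cleanliness holds by a straightforward retraction argument, since every $n$-subset of the $(n+1)$-tuples which appear is already a basis, so the evaluation map $\R^n \to \R^{n+1}$ splits and has free cokernel of rank one.

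The principal obstacle is the analogue of Claim 1 in Section~\ref{sec:coinvariants-of}: reducing $\phi \in \mr{GL}_n(\R)$ to Jordan form using only the allowed ``simultaneous'' column and row operations. Over a field one freely uses that nonzero entries are units, but over a semi-local ring an arbitrary row of an invertible matrix need not contain any unit entry (even though the row generates $\R$ as an ideal). My proposed fix exploits the many-units property guaranteed by Example~\ref{exam:many-units}: given a unimodular row $(a_0,\ldots,a_{n-1})$, one can find $c_0,\ldots,c_{n-1}\in\R$ making $\sum c_i a_i$ a unit, and one can further arrange that all relevant partial sums of such combinations are units. By iteratively applying the splitting column-addition relations---each single application only involves adjacent columns but produces a \emph{formal sum} of two matrices---one should be able to express $[I_n] \otimes [\phi]$ as a sum of terms in which a unit entry has been installed in the desired slot. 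From there the remainder of the field-case Claim 1 argument (clearing the last row and column) and Claim 2 argument (merging Jordan blocks) should go through verbatim. The delicate bookkeeping, necessary to ensure termination of this iterative reduction against a suitable complexity measure on $\phi$, is where I expect the real difficulty to lie.

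Should this direct approach prove too combinatorially intricate, a fallback strategy is to use the multiplicative structure on $A(\R) \coloneqq \bZ \cdot 1 \oplus \bigoplus_{n \geq 1} \mr{St}^2(\R^n)_{\mr{GL}_n(\R)}$ analogous to that of Section~\ref{sec:products}. The surjection $A(\R) \to A(\bF) \cong \Gamma_\bZ[x]$ obtained by reduction modulo a maximal ideal is a map of graded rings, and since it is surjective in each degree it suffices to prove injectivity. One could attempt to do this by showing inductively in $n$ that $A(\R)_n$ is generated by a single class lifted from $\gamma_n(x)$, or alternatively by constructing an explicit invariant pairing $(\mr{St}^2(\R^n))_{\mr{GL}_n(\R)} \to \bZ$ as the restriction of the flag-delta pairing on $\mr{St}(\R^n) \otimes \mr{St}(\R^n)$ and proving its injectivity on coinvariants by an anisotropy-type argument as in Theorem~\ref{thmmaincor:B}\eqref{it:Biii}.
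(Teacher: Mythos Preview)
This statement is a \emph{conjecture} in the paper, not a theorem; the paper does not prove it and explicitly says ``We do not currently know how to prove this'' (Section~\ref{sec:coinvariants-mrst2}). Only the cases $\mr{rk}(M) \leq 3$ are established, in Section~\ref{sec:coinv-e_2-steinb}. Your proposal is therefore an attempted proof of an open problem, and the gap lies precisely where the paper's argument also breaks down.

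The key error is your claim that the row/column addition relation is always available because ``every $n$-subset of the $(n+1)$-tuples which appear is already a basis, so the evaluation map $\R^n \to \R^{n+1}$ splits.'' This is wrong on two counts. First, the cleanliness condition defining a simplex of $E(M)$ requires \emph{every} subset $\tau$ of the relevant covectors to have $\Cok(M \to \R^\tau)$ projective, not just the $n$-element subsets. Second, and more fundamentally, the relation you need involves the full $(2d{+}1)$-element set $\{\iiota_1,\dots,\iiota_d,\pppsi_0\iiota,\dots,\pppsi_d\iiota\}$ (or its row analogue), and the paper's Lemma~\ref{lem:entries-zero-or-units} shows that for $[\iiota]\otimes[\pppsi\iiota]$ even to be \emph{defined}, every entry of $\pppsi$ must lie in $\{0\}\cup\R^\times$. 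The example immediately following that lemma gives an explicit $2\times 2$ matrix over a non-field for which row addition produces an entry in $\mathfrak{m}\setminus\{0\}$, so the resulting term is undefined. Thus row/column addition is only \emph{conditionally} admissible (this is relation (III) in the paper, with its ``provided all three terms are defined'' caveat), and the analogue of Claim~1 cannot be carried out freely.

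Your proposed fix via the many-units property does not close the gap: installing a unit in a chosen slot via iterated splitting additions requires each intermediate matrix to satisfy the cleanliness condition, and there is no obvious complexity measure guaranteeing this. The paper's proof for rank~$\leq 3$ (Section~\ref{sec:jord-blocks-gener}) proceeds by an ad hoc case analysis that carefully checks admissibility at each step; extending this to arbitrary rank is exactly the missing ingredient. Your fallback via the ring structure or an anisotropy argument is also not obviously easier: the paper's pairing is defined on $\mr{St}(M)\otimes\mr{St}(M)$, not on the proper submodule $\mr{St}^2(M)$, and injectivity of the ring map $A(\R)\to\Gamma_\bZ[x]$ is essentially a restatement of the conjecture.
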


If this conjecture is true, we can make all the same conclusions about $E_\infty$-homology when $\R$ is a connected semi-local ring with infinite residue fields as we did when it was an infinite field.  

\subsection{A general position lemma and the contractibility of $E(M)$}
\label{sec:gener-posit-lemma}

In the case of an infinite field $\bF$, some arguments used the infinitude of $\bF$ to make ``general position'' type argument, typically using that the complement of a finite union of proper subspaces of a vector space is non-empty. In the case of connected semi-local rings with infinite residue fields, we shall make similar arguments by base changing to each of the finitely many residue fields, but more care is needed.  In Lemma~\ref{lemprop:choose-enough-elements} below we present a generic such general position statement which will play an important role in establishing a homotopy colimit decomposition of $D^k(M)$ in the following subsection.

\begin{definition}
  Let $M$ be a finitely generated projective $\R$-module and let $P, Q \in \cat{Sub}(M)$.  We say that $P$ and $Q$ \emph{intersect cleanly} if the cokernel of the canonical map $M \to (M/P) \oplus (M/Q)$ is projective.
\end{definition}

A diagram chase in
\begin{equation*}
  \begin{tikzcd}
    0 \rar & P \cap Q \rar \dar[hook] & P \oplus Q \rar \dar[hook] & P + Q \rar \dar[hook] & 0\\
    0 \rar & M \rar{(\mathrm{id},-\mathrm{id})} & M \oplus M \rar{+} & M \rar & 0
  \end{tikzcd}
\end{equation*}
identifies the cokernel of the map $M \to (M/P) \oplus (M/Q)$ with $M/(P + Q)$, so the condition is equivalent to asking that the submodule $P+Q \subset M$ be a summand, i.e.\ that $P + Q \in \cat{Sub}(M)$.  The induced short exact sequence of cokernels shows that if $P$ and $Q$ intersect cleanly then $M/(P \cap Q)$ is a projective $\R$-module and hence $P \cap Q \in \cat{Sub}(M)$.

\begin{lemma}\label{P-and-Q-intersect-cleanly}
  Let $\tau < E(M)$ and let $\tau',\tau'' < \tau$ be two faces.  Let $P = \Ker(M \to \R^{\tau'})$ and $Q = \Ker(M \to \R^{\tau''})$ be the kernels of the evaluation maps.  Then $P$ and $Q$ intersect cleanly.
\end{lemma}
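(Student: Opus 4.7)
The plan is to show that $P + Q$ is a summand of $M$ (equivalently, that $M/(P+Q)$ is projective) by exhibiting $M/(P+Q)$ as a direct summand of an auxiliary module which can be shown projective by an independent argument. First note that the clean intersection condition passes to every subset of a simplex of $E(M)$, so the subsets $\tau'$, $\tau''$, and $\tau'\cup\tau''$ of $\tau$ all intersect cleanly; that is, each of $M \to \R^{\tau'}$, $M \to \R^{\tau''}$, and $M \to \R^{\tau'\cup\tau''}$ has projective cokernel.

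The key object is $X \coloneqq \Coker(M \to \R^{\tau'} \oplus \R^{\tau''})$, which I analyse via two different factorisations of the composite $M \to \R^{\tau'} \oplus \R^{\tau''}$. The first factorisation is
\begin{equation*}
M \lra \R^{\tau' \cup \tau''} \hookrightarrow \R^{\tau'} \oplus \R^{\tau''},
\end{equation*}
where the second map is the restriction, injective with cokernel canonically isomorphic to $\R^{\tau' \cap \tau''}$. Applying the elementary cokernel exact sequence for a composite whose second factor is injective yields
\begin{equation*}
0 \lra \Coker(M \to \R^{\tau' \cup \tau''}) \lra X \lra \R^{\tau' \cap \tau''} \lra 0,
\end{equation*}
both outer terms of which are projective. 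Hence $X$ is projective.

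The second factorisation is
\begin{equation*}
M \lra (M/P) \oplus (M/Q) \hookrightarrow \R^{\tau'} \oplus \R^{\tau''},
\end{equation*}
where the second map is the direct sum of the injections $M/P \hookrightarrow \R^{\tau'}$ and $M/Q \hookrightarrow \R^{\tau''}$; it has cokernel $\Coker(M/P \to \R^{\tau'}) \oplus \Coker(M/Q \to \R^{\tau''})$, which is projective by clean intersection of $\tau'$ and $\tau''$. Using that $\Coker(M \to (M/P)\oplus(M/Q)) = M/(P+Q)$, the same cokernel sequence gives
\begin{equation*}
0 \lra M/(P+Q) \lra X \lra \Coker(M/P \to \R^{\tau'}) \oplus \Coker(M/Q \to \R^{\tau''}) \lra 0.
\end{equation*}
Since the rightmost term is projective, this sequence splits, so $M/(P+Q)$ is a direct summand of the projective module $X$, hence projective.

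I do not expect any serious obstacle to this plan. The only care needed is in verifying the cokernel long exact sequence for a composite $A \to B \to C$ with injective second factor, which is a routine diagram chase. The main conceptual point is to recognise that the clean intersection of $\tau' \cup \tau''$ is precisely what makes the common comparison object $X$ projective, via the elementary short exact sequence $0 \to \R^{\tau'\cup\tau''} \to \R^{\tau'}\oplus\R^{\tau''} \to \R^{\tau'\cap\tau''} \to 0$.
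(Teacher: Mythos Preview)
Your proof is correct. It differs in organisation from the paper's, though both rest on the same inputs (clean intersection of $\tau'$, $\tau''$, and crucially $\tau'\cup\tau''$) and the same short exact sequence $0\to\R^{\tau'\cup\tau''}\to\R^{\tau'}\oplus\R^{\tau''}\to\R^{\tau'\cap\tau''}\to 0$. The paper applies the snake lemma to the diagram with rows $0\to M\xrightarrow{(\mathrm{id},-\mathrm{id})} M\oplus M\xrightarrow{+} M\to 0$ and $0\to\R^{\tau'\cup\tau''}\to\R^{\tau'}\oplus\R^{\tau''}\to\R^{\tau'\cap\tau''}\to 0$, obtains a six-term exact sequence, and then argues that the short exact sequences into which it splices all split (working from right to left using projectivity of the cokernel terms), concluding that $P+Q$ is a summand of $\Ker(M\to\R^{\tau'\cap\tau''})$ and hence of $M$. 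You instead isolate the single module $X=\Coker(M\to\R^{\tau'}\oplus\R^{\tau''})$, show it projective via one factorisation, and exhibit $M/(P+Q)$ as a summand of it via the other. Your route is a little more direct and avoids tracking the chain of splittings; the paper's snake-lemma route simultaneously yields more (for instance that $P\cap Q$ is a summand), though that extra information is not needed here and is already covered by the paper's opening remark.
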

\begin{proof}
  Let us first remark that the defining assumption that $\Cok(M \to \R^\tau)$ be projective implies that the image of $M \to \R^\tau$ is a summand and hence projective.  This in turn implies that the kernel of $M \to \R^\tau$ is a summand of $M$.  Similarly when $\tau$ is replaced by $\tau'$, $\tau''$, $\tau' \cup \tau''$, and $\tau' \cap \tau''$.
  
  Now consider the commutative diagram
  \begin{equation*}
    \begin{tikzcd}
      0 \rar & M \dar{\pphi_{\tau' \cup \tau''}} \rar{(\mathrm{id},-\mathrm{id})} & M \oplus M \dar{\pphi_{\tau'} \oplus \pphi_{\tau''}} \rar{+} & M \dar{\pphi_{\tau' \cap \tau''}} \rar & 0\\
      0 \rar & \R^{\tau' \cup \tau''} \rar & \R^{\tau'} \oplus \R^{\tau''} \rar & \R^{\tau' \cap \tau''} \rar & 0
    \end{tikzcd}
  \end{equation*}
  where the two rows are exact and all three vertical maps are the evaluation maps. Observe that $\Ker(\pphi_{\tau'}) = P$, $\Ker(\pphi_{\tau''}) = Q$, and $\Ker(\pphi_{\tau' \cup \tau''}) = P \cap Q$.  The snake lemma gives an exact sequence of the form
  \begin{equation*}
  \begin{tikzcd} 0 \rar & P \cap Q \rar & P \oplus Q \rar
  \ar[draw=none]{d}[name=X, anchor=center]{}
  & \Ker(\pphi_{\tau' \cap \tau''}) \ar[rounded corners,
  to path={ -- ([xshift=2ex]\tikztostart.east)
  	|- (X.center) \tikztonodes
  	-| ([xshift=-2ex]\tikztotarget.west)
  	-- (\tikztotarget)}]{dll}[at end]{} & \\      
  & \Cok(\pphi_{\tau' \cup \tau''}) \rar & \Cok(\pphi_{\tau'}) \oplus \Cok(\pphi_{\tau''}) \rar & \Cok(\pphi_{\tau' \cap \tau''}) \rar & 0.\end{tikzcd}
  \end{equation*}
  where the cokernel terms are by assumption projective.  We have also explained why the kernel terms $P \cap Q$, $P \oplus Q$, $\Ker(\pphi_{\tau' \cap \tau''})$ are projective, so the short exact sequences from which the snake is spliced all split.  It follows that the image of $+\colon P \oplus Q \to M$ is a summand of $\Ker(\pphi_{\tau' \cap \tau''})$, but since the latter is a summand of $M$ we deduce that $P+Q \subset M$ is also a summand.
\end{proof}

\newcommand{\XA}{Z}

\begin{lemma}\label{lemprop:choose-enough-elements}
  Let $\R$ be a connected semi-local ring, $M$ a finitely generated projective $\R$-module and $V \subset M^\vee$ a submodule.  Let $\tau < E(M)$ be a simplex and assume there exists a simplex $\tau' < E(M)$ with $\tau < \tau'$ and $V = \mr{span}(V \cap \tau')$. Then for each maximal ideal $\mathfrak{m} \subset \R$ there exists a proper linear subspace $\XA_\mathfrak{m} \subset V/\mathfrak{m} V$ such that whenever $f \in V$ is an element satisfying $f + \mathfrak{m} V \not \in \XA_\mathfrak{m}$ for all maximal $\mathfrak{m}$, then the map
  \begin{equation*}
    M \xrightarrow{(f,\mathrm{ev})} \R \times \R^{\tau}
  \end{equation*}
  has projective cokernel.
\end{lemma}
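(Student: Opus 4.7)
My plan is to reduce projectivity of $\Cok((f,\mathrm{ev}_\tau))$ to a unimodularity condition on the class $[f] \in K^\vee \coloneqq M^\vee / \mr{span}_\R(\tau)$. Since $\tau < E(M)$ the cokernel of $\mathrm{ev}_\tau$ is projective, so $\mathrm{im}(\mathrm{ev}_\tau)$ is a summand of $\R^\tau$, $\ker(\mathrm{ev}_\tau)$ is a summand of $M$, and dually $\mr{span}_\R(\tau)$ is a summand of $M^\vee$; hence $K^\vee$ is finitely generated projective, and in fact free as $\R$ is connected semi-local. The same duality applied to $(f,\mathrm{ev}_\tau)$ shows that its cokernel is projective iff the image of its transpose, namely $\mr{span}_\R(\tau) + \R f$, is a summand of $M^\vee$. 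Because $\mr{span}_\R(\tau)$ is already a summand, the short exact sequence $0 \to \mr{span}_\R(\tau) \to \mr{span}_\R(\tau) + \R f \to \R \cdot [f] \to 0$ then shows that this holds iff $\R \cdot [f]$ is a summand of $K^\vee$. Over a connected semi-local ring, which has no nontrivial idempotents, a cyclic submodule of a free module is a summand iff it is zero or generated by a unimodular element, i.e.\ one not lying in $\mathfrak{m} K^\vee$ for any maximal $\mathfrak{m}$.

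Motivated by this, I set
\[
\XA_\mathfrak{m} \coloneqq \ker\!\bigl(V/\mathfrak{m} V \lra K^\vee/\mathfrak{m} K^\vee\bigr).
\]
By construction, the condition $f + \mathfrak{m} V \notin \XA_\mathfrak{m}$ for every maximal $\mathfrak{m}$ is exactly the statement that $[f]$ is unimodular in $K^\vee$, which by the previous paragraph ensures projective cokernel. It remains to show $\XA_\mathfrak{m}$ is a proper subspace, i.e.\ that $V/\mathfrak{m} V \to K^\vee/\mathfrak{m} K^\vee$ is nonzero at each $\mathfrak{m}$. If $V \subset \mr{span}_\R(\tau)$, then every $f \in V$ already satisfies $[f] = 0$ and hence has projective cokernel, and any proper subspace can serve as $\XA_\mathfrak{m}$. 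Otherwise, since $V = \mr{span}(V \cap \tau')$ and elements of $V \cap \tau$ map to zero in $K^\vee$, there must exist $\varphi \in V \cap (\tau' \setminus \tau)$ with $[\varphi] \neq 0$ in $K^\vee$. The key input is that $\tau \cup \{\varphi\}$ is a face of $\tau' < E(M)$ and hence itself lies in $E(M)$; repeating the summand argument with $\tau$ replaced by $\tau \cup \{\varphi\}$ then shows that $\mr{span}_\R(\tau) + \R \varphi$ is a summand of $M^\vee$, so $\R \cdot [\varphi]$ is a nonzero cyclic summand of $K^\vee$, and therefore $[\varphi]$ is unimodular. This produces a nonzero element in the image of $V/\mathfrak{m} V \to K^\vee/\mathfrak{m} K^\vee$ at every $\mathfrak{m}$, as desired.

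The main subtlety to watch for lies in the last unimodularity conclusion. Over a non-local semi-local ring, the condition that $[\varphi]$ is nonzero modulo each maximal ideal separately is strictly weaker than being unimodular: a cyclic submodule generated by an element of the Jacobson radical can easily fail to be a summand. The hypothesis $\tau' \in E(M)$, through the resulting cleanness of $\tau \cup \{\varphi\}$, is precisely what bridges the gap between pointwise non-vanishing and being a summand; without it, the argument would not promote a residue-field-by-residue-field non-vanishing to the global unimodularity that the criterion requires.
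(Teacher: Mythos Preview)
Your proof is correct and takes a cleaner route than the paper's. The paper argues via Fitting ideals: writing $d = \mr{rk}(\Cok(\mr{ev}_\tau))$, one always has $\mr{Fit}_{d-1}(\Cok((f,\mr{ev}_\tau))) = 0$ and $\mr{Fit}_{d+1}(\Cok((f,\mr{ev}_\tau))) = \R$, so projectivity of the cokernel comes down to whether $\mr{Fit}_d(\Cok((f,\mr{ev}_\tau)))$ equals $0$ or $\R$; the condition $\mr{Fit}_d \subset \mathfrak{m}$ is linear in $f \bmod \mathfrak{m}$ and cuts out $Z_\mathfrak{m}$. You bypass this by dualising: projectivity of the cokernel is equivalent to $\mr{span}_\R(\tau) + \R f$ being a summand of $M^\vee$, which reduces to $[f] \in K^\vee = M^\vee/\mr{span}_\R(\tau)$ being zero or unimodular. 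The two constructions yield the same $Z_\mathfrak{m}$, but yours exhibits it directly as $\ker(V/\mathfrak{m} V \to K^\vee/\mathfrak{m} K^\vee)$ without any determinantal bookkeeping, and the role of the witness $\varphi \in \tau' \setminus \tau$ is the same in both.

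One remark on your closing paragraph: the sentence ``the condition that $[\varphi]$ is nonzero modulo each maximal ideal separately is strictly weaker than being unimodular'' is not right --- for an element of a finitely generated free module these conditions are equivalent over any commutative ring (the ideal generated by the coordinates equals $\R$ iff it lies in no maximal ideal). What you presumably mean, and what your proof correctly uses, is that the bare fact $[\varphi] \neq 0$ in $K^\vee$ does \emph{not} imply unimodularity; it is the hypothesis $\tau \cup \{\varphi\} < \tau' \in E(M)$ that upgrades mere non-vanishing of $[\varphi]$ to $\R\cdot[\varphi]$ being a summand. This is only a phrasing issue in the commentary and does not affect the argument.
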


\begin{proof}
  Recall that if an $\R$-module $X$ is presented by an exact sequence
  \begin{equation*}
    \R^{\oplus J} \xrightarrow{\phi} \R^t \to X \to 0,
  \end{equation*}
  with $n \in \bN$ and $J$ a (possibly infinite) set, then for each $s \in \bZ$ the \emph{Fitting ideal} $\mr{Fit}_s(X)\subset \R$ is defined as the ideal generated by the minors of size $(t-s)$ in the matrix for $\phi$, and that this ideal is independent of choice of presentation.  Then $X$ is projective of rank $d$ if and only if $\mr{Fit}_d(X) = \R$ and $\mr{Fit}_{d-1}(X) = 0$, see for instance \cite[\href{https://stacks.math.columbia.edu/tag/07Z6}{Tag 07Z6}]{stacks-project}.

  Let us define $t,d \in \N$ by
  \[t \coloneqq \mr{rk}(\R^\tau), \qquad d \coloneqq \mr{rk}(\Cok(\mr{ev} \colon M \to \R^\tau)).\]
  The number $d$ is well-defined because of the assumptions that the homomorphism $\mr{ev} \colon M \to \R^\tau$ has projective cokernel and that $\R$ is connected \cite[\href{https://stacks.math.columbia.edu/tag/00NV}{Tag 00NV}]{stacks-project}.  Let us also choose a basis (or just a generating set) for $M$ so that $\mr{ev}$ is represented by a matrix with $t$ rows.  Since $\Cok(\mr{ev} \colon M \to \R^\tau)$ is projective of rank $d$, we have $\mr{Fit}_d(\mr{Cok}(\mr{ev})) = \R$ and $\mr{Fit}_{d-1}(\mr{Cok}(\mr{ev})) = 0$.  In other words, the minors of size $(t-d)\times (t-d)$ generate $\R$ while those of size $(t-d+1) \times (t-d+1)$ all vanish.

  The matrix for $(f,\mr{ev}) \colon M \to \R \times \R^\tau$ is obtained from that of $\mr{ev}$ by adding one row, so in its matrix the minors of size $(t-d+2) \times (t-d+2)$ must all vanish, since determinants may be computed by expanding along a column, while those of size $(t-d) \times (t-d)$ must still generate $\R$, since they include the old generating set.  In other words,
  \begin{align*}
    \mr{Fit}_{d-1}(\Cok((f,\mr{ev}) \colon M \to \R \times \R^\tau)) & = 0\\
    \mr{Fit}_{d+1}(\Cok((f,\mr{ev}) \colon M \to \R \times \R^\tau)) & = \R,
  \end{align*}
  regardless of $f \in V$.  The assumptions on $\mr{ev}$ do not imply anything about the ideal generated by the minors of size $t - d + 1$, which by definition is the Fitting ideal
  \begin{equation*}
    \mr{Fit}_{d}(\Cok((f,\mr{ev}) \colon M \to \R \times \R^\tau)) \subset \R.
  \end{equation*}
  If this ideal is trivial then the cokernel of $(f,\mr{ev})$ is projective of rank $d+1$, and if it is all of $\R$ then that cokernel is projective of rank $d$.  If it is a proper non-zero ideal then the cokernel is not projective: this is what we must avoid.

  There are now two cases to consider.  The easy case is when $V \subset \mr{span}(\tau)$, since any $f \in V$ is then a linear combination of the remaining coordinates, from which one easily deduces $\Cok((f,\mr{ev})) \cong \R \oplus \Cok(\mr{ev} \colon M \to \R^\tau)$ which is then projective of rank $d + 1$ regardless of $f$.  We may then take $\XA_\mathfrak{m} = 0$ for all $\mathfrak{m}$.  In the rest of the argument we shall therefore assume $V \setminus \mr{span}(\tau) \neq \varnothing$.

  Since $V$ is spanned by $V \cap \tau'$, this assumption is equivalent to the existence of an $f_0 \in V \cap \tau'$ which is not in the span of $\tau$, and for such an $f_0$ the cokernel of $(f_0,\mr{ev}) \colon M \to \R \times \R^\tau$ is assumed projective.  Since $\mr{ev}$ and $(f_0,\mr{ev})$ both have projective cokernels, they also have projective images and these are summands.  The fact that $f_0$ is not in the span of $\tau$ implies that the obvious surjection from the image of $(f_0,\mr{ev})$ to the image of $\mr{ev}$ is not an isomorphism, so the rank of the image of $(f_0,\mr{ev})$ must be one larger than the rank of the image of $\mr{ev}$ and hence their cokernels have the same rank $d$.  Therefore $\mr{Fit}_d(\Cok((f_0,\mr{ev}))) = \R$.

  For a maximal ideal $\mathfrak{m} \subset \R$ the condition that $\mr{Fit}_d(\Cok((f,\mr{ev}))) \subset \mathfrak{m}$ is the condition that all minors of size $t-d+1$ vanish when reduced modulo $\mathfrak{m}$.  By definition of $d$, this is automatically true for the minors not involving $f$, while for the minors that do involve $f$ it is an $(\R/\mathfrak{m})$-linear condition on the matrix entries of $f$'s reduction to a linear map $M/\mathfrak{m} M \to \R/\mathfrak{m}$.  We now let $\XA_\mathfrak{m} \subset V /\mathfrak{m} V \subset (M/\mathfrak{m} M)^\vee$ be the subspace defined by these linear conditions.  It is a proper subspace because $f_0 + \mathfrak{m} V$ is not in it.

  Finally, if $f \in V$ satisfies $f \not \in \XA_\mathfrak{m}$ for all $\mathfrak{m}$, then the ideal $\mr{Fit}_d(\Cok((f,\mr{ev})))$ is not contained in any maximal ideal, and therefore must equal $\R$.  Together with $\mr{Fit}_{d-1}(\Cok((f,\mr{ev}))) = 0$, which holds for any $f \in V$, these conditions therefore imply that the cokernel of $(f,\mr{ev}) \colon M \to \R \times \R^\tau$ is projective of rank $d$, by op.cit.
\end{proof}

It is convenient to combine the conclusion of Lemma~\ref{lemprop:choose-enough-elements} with surjectivity of the reduction map
\begin{equation}\label{eq:22}
  V \lra \prod_{\mathfrak{m}} V/\mathfrak{m} V,
\end{equation}
where the product is over the finitely many maximal ideals (surjectivity follows surjectivity of $\R \to \prod_{\mathfrak{m}} \R/\mathfrak{m}$, which is an easy consequence of the Chinese remainder theorem).  Since $\XA_\mathfrak{m} \subset V/\mathfrak{m}V$ is a proper subspace, its complement is non-empty and therefore there exist $f$ such that $(f,\mr{ev}) \colon M \to \R \times \R^\tau$ has projective cokernel.  Using that the residue fields of $\R$ are all infinite, we can achieve this simultaneously for a finite collection of $\tau$'s.  This idea is used in the proofs of Proposition~\ref{prop:E-of-M-is-contractible} and Lemma~\ref{lemprop:contractible-space-of-sigmas} below.

\begin{lemma}\label{lem:coning-vertex}
  For any $\tau < E(M)$ the conclusion of Proposition \ref{lemprop:choose-enough-elements} holds for $V = M^\vee$.
\end{lemma}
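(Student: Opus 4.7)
The plan is to prove Lemma \ref{lem:coning-vertex} by constructing the required proper subspace $\XA_\mathfrak{m} \subseteq M^\vee/\mathfrak{m} M^\vee$ directly, rather than by appealing to Proposition \ref{lemprop:choose-enough-elements}; for $V = M^\vee$ the hypothesis of that proposition asks for a simplex $\tau' \supseteq \tau$ in $E(M)$ with $\mr{span}(\tau') = M^\vee$, which is essentially the difficulty we want to avoid. The Fitting ideal computation from the proof of Proposition \ref{lemprop:choose-enough-elements} still applies once $\XA_\mathfrak{m}$ has been produced.

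First I would split into two cases depending on whether $\mr{span}_\R(\tau) = M^\vee$. In the spanning case, every $f \in M^\vee$ is an $\R$-linear combination of the $\pphi_j$, so a change of basis on $\R \times \R^\tau$ reduces $(f, \mr{ev}_\tau)$ to $(0, \mr{ev}_\tau)$, whose cokernel $\R \oplus \Cok(\mr{ev}_\tau)$ is projective; the conclusion then holds vacuously for $\XA_\mathfrak{m} = 0$. In the non-spanning case I would set $\XA_\mathfrak{m}$ to be the $\R/\mathfrak{m}$-span of $\{\pphi_j \bmod \mathfrak{m} : j \in \tau\}$ inside $(M/\mathfrak{m} M)^\vee \cong M^\vee/\mathfrak{m} M^\vee$. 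The Fitting ideal analysis from Proposition \ref{lemprop:choose-enough-elements} then shows that if $f \bmod \mathfrak{m} \notin \XA_\mathfrak{m}$ for every maximal $\mathfrak{m}$, then some $(|\tau| - d_\tau + 1)$-minor of $(f, \mr{ev}_\tau)$ involving the $f$-row is a unit modulo $\mathfrak{m}$, so that $\mr{Fit}_{d_\tau}(\Cok((f, \mr{ev}_\tau))) = \R$ and the cokernel is projective.

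The main obstacle is verifying that $\XA_\mathfrak{m}$ is proper in the non-spanning case. For this I would observe that the short exact sequence $0 \to \mr{im}(\mr{ev}_\tau) \to \R^\tau \to \Cok(\mr{ev}_\tau) \to 0$ splits, because $\Cok(\mr{ev}_\tau)$ is projective, and dualising exhibits $\mr{span}_\R(\tau) = \mr{im}(\mr{ev}_\tau)^\vee$ as a summand of $M^\vee$ of rank $r_\tau \coloneqq |\tau| - d_\tau$. If $r_\tau$ equalled $n = \mr{rk}(M)$ then $\mr{span}_\R(\tau)$ would be a rank-$n$ summand of the rank-$n$ projective module $M^\vee$ and hence all of $M^\vee$, contradicting the case assumption. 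Thus $\dim \XA_\mathfrak{m} = r_\tau < n$ and $\XA_\mathfrak{m}$ is a proper subspace.
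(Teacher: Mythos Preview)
Your proof is correct and is essentially the paper's own argument. Your case split ``$\mr{span}_\R(\tau) = M^\vee$'' versus not is equivalent to the paper's split ``$\mr{ev}\colon M\to\R^\tau$ injective'' versus not (since $\Cok(\mr{ev}_\tau)$ projective forces $\mr{span}_\R(\tau)$ to be a rank $|\tau|-d_\tau$ summand of $M^\vee$), and your explicit choice $\XA_\mathfrak{m} = \mr{span}_{\R/\mathfrak{m}}(\tau \bmod \mathfrak{m})$ coincides with the subspace the paper cuts out by the vanishing of the $(|\tau|-d_\tau+1)$-minors involving the $f$-row; your properness argument via the rank of the summand $\mr{span}_\R(\tau)$ is just a cleaner phrasing of the paper's.
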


\begin{proof}
  This is similar to Lemma~\ref{lemprop:choose-enough-elements}.  In the easy case where $\mr{ev} \colon M \to \R^\tau$ is injective, we can again set $\XA_\mathfrak{m} = 0$ for all $\mathfrak{m}$.  If not, we define $d \in \bN$ as before, and again have $\mr{Fit}_{d-1}(\Cok((f,\mr{ev}))) = 0$ and $\mr{Fit}_{d+1}(\Cok((f,\mr{ev}))) = \R$, regardless of $f \in V = M^\vee$.  The condition that $\mr{Fit}_d(\Cok((f,\mr{ev}))) \subset \mathfrak{m}$ defines a linear subspace $\XA_\mathfrak{m} \subset M^\vee/\mathfrak{m} M^\vee$, which is a proper subspace because it is contained in the image of $\mr{ev}/\mathfrak{m} \colon M^\vee/\mathfrak{m} M^\vee \to (\R/\mathfrak{m})^\tau$ whose cokernel has dimension $d >0$.  If the reduction of $f \in M^\vee$ avoids $\XA_\mathfrak{m}$ for all $\mathfrak{m}$, then $\mr{Fit}_d(\Cok((f,\mr{ev}))) = \R$, which implies that the cokernel of $(f,\mr{ev}) \colon M \to \R \times \R^\tau$ is projective of rank $d$.
\end{proof}

\begin{proposition}\label{prop:E-of-M-is-contractible}
  The simplicial complex $E(M)$ has contractible realisation when $M \neq 0$.
\end{proposition}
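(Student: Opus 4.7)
The plan is to show that every finite subcomplex $K \subseteq E(M)$ is contained in a contractible subcomplex of $E(M)$, from which it follows that all homotopy groups of $|E(M)|$ vanish and hence by Whitehead's theorem that $|E(M)|$ is contractible. The contractible enlargement will be a combinatorial cone obtained by adjoining a single new vertex $f \in E(M)_0$ in sufficiently general position.

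\textbf{Finding the apex.} Let $K \subseteq E(M)$ be a finite subcomplex, and enumerate the (finitely many) faces $\sigma' < \sigma$ of simplices $\sigma$ of $K$. For each such $\sigma'$, Lemma~\ref{lem:coning-vertex} (applied with $\tau = \sigma'$ and $V = M^\vee$) produces for every maximal ideal $\mathfrak{m} \subset \R$ a proper subspace $\XA^{\sigma'}_\mathfrak{m} \subset M^\vee/\mathfrak{m}M^\vee$ such that any $f \in M^\vee$ whose reduction modulo $\mathfrak{m}$ avoids $\XA^{\sigma'}_\mathfrak{m}$ for every $\mathfrak{m}$ has the property that $(f,\mr{ev})\colon M \to \R \times \R^{\sigma'}$ has projective cokernel. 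For each $\mathfrak{m}$, let $\XA_\mathfrak{m} \subset M^\vee/\mathfrak{m}M^\vee$ be the union of the finitely many subspaces $\XA^{\sigma'}_\mathfrak{m}$ together with the zero subspace $\{0\}$ (this is proper since $M \neq 0$ and $M/\mathfrak{m}M$ is a nonzero vector space over the residue field). Because $\R/\mathfrak{m}$ is infinite, the complement of a finite union of proper subspaces in $M^\vee/\mathfrak{m}M^\vee$ is non-empty. Using surjectivity of the reduction map $M^\vee \to \prod_\mathfrak{m} M^\vee/\mathfrak{m}M^\vee$ from~\eqref{eq:22}, we may choose $f \in M^\vee$ whose image in $M^\vee/\mathfrak{m}M^\vee$ lies outside $\XA_\mathfrak{m}$ for every maximal ideal $\mathfrak{m}$. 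Since $f$ is nonzero modulo each $\mathfrak{m}$, it is surjective as a map $M \to \R$, so $f$ is a vertex of $E(M)$.

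\textbf{Coning off $K$.} By construction, for every simplex $\sigma$ of $K$ and every face $\sigma' \subseteq \sigma$, the evaluation $(f,\mr{ev})\colon M \to \R \times \R^{\sigma'}$ has projective cokernel. Since every subset of $\{f\}\cup\sigma$ is either a subset of $\sigma$ (already intersecting cleanly because $\sigma < E(M)$) or of the form $\{f\}\cup\sigma'$, this shows $\{f\}\cup\sigma$ is a simplex of $E(M)$. Hence the subcomplex
\[
  L \coloneqq K \cup \{\,\{f\}\cup\sigma \mid \sigma \in K \cup \{\varnothing\}\,\} \subseteq E(M)
\]
contains $K$ and is a combinatorial cone with apex $f$, so $|L|$ is contractible.

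\textbf{Conclusion.} Any map of a sphere into $|E(M)|$ factors through the geometric realisation of some finite subcomplex $K$, hence through the contractible $|L|$, and is therefore nullhomotopic. Thus $\pi_n(|E(M)|) = 0$ for all $n$, and Whitehead's theorem yields that $|E(M)|$ is contractible.

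The main technical content is already packaged into Lemma~\ref{lem:coning-vertex}; the remaining obstacle is purely a ``general position'' matter, handled uniformly across the finitely many residue fields by combining the Chinese-remainder surjectivity~\eqref{eq:22} with the fact that an infinite field is not a finite union of proper subspaces of any vector space over it.
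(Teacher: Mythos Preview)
Your proof is correct and follows essentially the same approach as the paper's own proof: take a finite subcomplex $K$, apply Lemma~\ref{lem:coning-vertex} to each simplex of $K$, and use the infinitude of the residue fields together with the surjection~\eqref{eq:22} to produce a single vertex $f$ whose star contains $K$. Your version is somewhat more explicit than the paper's---in particular you take care to include $\{0\}$ among the excluded subspaces so that $f$ is genuinely a vertex, and you spell out the cone construction and the appeal to Whitehead's theorem---but the argument is the same.
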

\begin{proof}
  Let $K < E(M)$ be a finite subcomplex, and apply Lemma~\ref{lemprop:choose-enough-elements}
    for each $\tau < K$ with $V = M^\vee$ (see Lemma~\ref{lem:coning-vertex}).  Over each maximal ideal, each such $\tau$ excludes a linear subspace of positive codimension, so it is possible to choose an element in the complement of their unions, and lift these choices to one element $f \in M^\vee$.  This element $f$ is a vertex of $E(M)$ whose star contains $K$.
\end{proof}

\subsection{A homotopy colimit decomposition of $D^k(M)$}

As a final preparation to proving the results stated in Section~\ref{sec:defin-stat} we shall establish a homotopy colimit decomposition
\begin{equation}\label{eq:10}
  \hocolim_{\sigma \in \mr{Simp}(E(M))} D^k(M)^\sigma \stackrel\simeq\lra D^k(M),
\end{equation}
where $\mr{Simp}(E(M))$ is the poset of simplices in the simplicial complex $E(M)$, and $D^k(M)^\sigma \subset D^k(M)$ is a certain subspace, see Definition~\ref{def:compatibility-consolidated} (\ref{item:6}) below.  By Proposition~\ref{prop:E-of-M-is-contractible}
it does not matter whether we interpret the domain as pointed or unpointed homotopy colimit.

The map~\eqref{eq:10} is the canonical map induced by the inclusions.  For each $\sigma$ the space $D^k(M)^\sigma$ is a finite CW complex, and is easier to analyse than the full $D^k(M)$; in particular, the delicacy of when sums and intersections of summands are again summands will have gone away.

The decomposition~\eqref{eq:10} holds for all $k \geq 1$, but it seems most useful when $k \leq 2$ and we shall only make use of it in those two cases.

\begin{definition}
  For a simplex $\sigma < E(M)$, let $\cat{Sub}(M)_\sigma \subset \cat{Sub}(M)$ be the subposet consisting of those $V \in \cat{Sub}(M)$ which may be written as the intersection of kernels of some of the vertices in $\sigma$.  That is, $\cat{Sub}(M)_\sigma$ consists of those $V$ satisfying
  \begin{equation*}
    V = \bigcap_{\substack{\phi \in \sigma\\ \phi \vert_{V} = 0}} \mr{Ker}(\phi).
  \end{equation*}
\end{definition}

Letting $(-)^\perp$ denote the annihilator, each subset $\sigma' \subset \sigma$ gives an element $(\mr{span}(\sigma'))^\perp \in \cat{Sub}(M)_\sigma$, and all elements are of this form.

\begin{definition}\
  \label{def:compatibility-consolidated}
  \begin{enumerate}[(i)]
  \item \label{item:4}
    For a full lattice $\phi \colon [p_1, \ldots, p_k] \to \cat{Sub}(M)$, we say that a simplex $\sigma < E(M)$ is \emph{compatible with $\phi$} if
    \begin{equation}
      \label{eq:23}
      \sum_{j = 1}^k \sum_{\substack{x: \\ x_j \leq a_j}}   \phi(x) \in \cat{Sub}(M)_\sigma
    \end{equation}
    for all $a \in [p_1,\ldots, p_k]$.
  \item \label{item:5}
    For a lattice $\phi\colon [p_1, \ldots, p_k] \to \cat{Sub}(M)$, let 
    \[\mr{Simp}(E(M))_\phi \subset \mr{Simp}(E(M))\]
    be the full subposet on the simplices $\sigma \in \mr{Simp}(E(M))$ which are compatible with $\phi$.
  \item \label{item:6}
    For a simplex $\sigma < E(M)$, let
    \begin{equation*}
      D^k(M)^\sigma_{p_1, \ldots, p_k} \subset D^k(M)_{p_1, \ldots, p_k}
    \end{equation*}
    be the based subset whose non-basepoint elements are the full lattices with which $\sigma$ is compatible.  Let $D^k(M)^\sigma$ be the $k$-fold geometric realisation of $D^k(M)^\sigma_{\bullet,\ldots, \bullet}$.
  \end{enumerate}
\end{definition}

The relation between $\sigma$ and $\phi$ expressed in~(\ref{item:4}) turns out to be equivalent to an a priori stronger looking compatibility condition, as we shall now show in Lemma~\ref{lem:compatible-strong}.  Taking $X = \{x \mid x \leq a\}$ in that lemma, we see in particular that $\phi(a) \in \cat{Sub}(M)_\sigma$ for all $a \in [p_1, \ldots, p_k]$.  Since $\cat{Sub}(M)_\sigma$ is likely not closed under taking sum of subspaces, condition~(\ref{eq:23}) is strictly stronger (for $k > 1$) than merely requiring $\phi(a) \in \cat{Sub}(M)_\sigma$ for all $a$.
\begin{lemma}\label{lem:compatible-strong} 
  A simplex $\sigma < E(M)$ is compatible with $\phi\colon [p_1, \ldots, p_k] \to \cat{Sub}(M)$ if and only if
  \begin{equation}\label{eq:24}
    \sum_{x \in X} \phi(x) \in \cat{Sub}(M)_\sigma
  \end{equation}
  for any subset $X \subset [p_1, \ldots, p_k]$.
\end{lemma}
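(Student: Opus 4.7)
The plan is to prove both implications by leveraging the direct sum decomposition of $M$ guaranteed for a full lattice by Lemma~\ref{lem:splittings-exist}: there exist summands $M_i \in \cat{Sub}(M)$ indexed by $i \in I \coloneqq \underline{p_1} \times \ldots \times \underline{p_k}$ such that $\phi(a) = \bigoplus_{i \leq a} M_i$ and $M = \bigoplus_{i \in I} M_i$. For any $X \subset [p_1, \ldots, p_k]$ we then have $\sum_{x \in X} \phi(x) = \bigoplus_{i \in D} M_i$, where $D = \bigcup_{x \in X} \{i \in I : i \leq x\}$ is a down-closed subset of $I$; write this sum as $M_D$. Since $M_D$ is automatically a summand of $M$ (as a sub-direct-sum), the content of the lemma is whether $M_D$ can be expressed as the intersection of kernels of some subfamily of $\sigma$.

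The ``if'' direction (\eqref{eq:24} implies \eqref{eq:23}) is immediate by specialisation. Given $a \in [p_1, \ldots, p_k]$, set $X_a \coloneqq \{(p_1, \ldots, p_{j-1}, a_j, p_{j+1}, \ldots, p_k) : 1 \leq j \leq k\}$. Because $\phi$ is order-preserving and $(p_1, \ldots, a_j, \ldots, p_k)$ is the maximum of $\{x : x_j \leq a_j\}$, each $\phi(p_1, \ldots, a_j, \ldots, p_k)$ equals $\sum_{x : x_j \leq a_j} \phi(x)$; summing over $j$ shows that $\sum_{x \in X_a} \phi(x)$ matches the left-hand side of~\eqref{eq:23}, so~\eqref{eq:24} applied to each $X_a$ yields compatibility.

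The ``only if'' direction (\eqref{eq:23} implies \eqref{eq:24}) rests on the combinatorial identity that for any down-closed $D \subsetneq I$,
\begin{equation*}
D = \bigcap_{b \in I \setminus D} \{i \in I : i \not\geq b\} = \bigcap_{b \in I \setminus D} D_{b-1},
\end{equation*}
where $D_a \coloneqq \{i \in I : \exists j, \, i_j \leq a_j\}$. Indeed, $\supseteq$ uses down-closure of $D$ (if $i \geq b$ with $i \in D$ then $b \in D$), while $\subseteq$ follows by setting $b = i$ for $i \notin D$. Since $M_{D_a}$ is the $M$-subspace $\sum_j L^j_{a_j}$ appearing on the left of~\eqref{eq:23}, the compatibility hypothesis furnishes for each $b$ a subset $S_b \subset \sigma$ with $M_{D_{b-1}} = \bigcap_{\psi \in S_b} \ker \psi$, and intersecting over $b$ yields $M_D = \bigcap_{\psi \in \bigcup_b S_b} \ker \psi \in \cat{Sub}(M)_\sigma$.

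The only subtlety is the combinatorial identity above and the degenerate cases: $D = I$ gives an empty intersection equal to $M \in \cat{Sub}(M)_\sigma$, and $D = \varnothing$ makes the intersection run over all $b \in I$, producing $0$, which is in $\cat{Sub}(M)_\sigma$ because compatibility at $a = (0, \ldots, 0)$ together with fullness (which gives $L^j_0 = 0$) implies $0 = \sum_j L^j_0 \in \cat{Sub}(M)_\sigma$. Everything else is direct bookkeeping.
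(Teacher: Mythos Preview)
Your proof is correct and follows essentially the same route as the paper: both arguments reduce to down-closed index sets via the direct sum decomposition from Lemma~\ref{lem:splittings-exist}, and then express an arbitrary down-closed set as an intersection of the special sets $D_a$ (your $D_{b-1}$ with $b \notin D$ is exactly the paper's $X_a$ with $(a_1+1,\ldots,a_k+1) \notin X$), using that $D \mapsto M_D$ takes intersections to intersections. One small slip: in your justification of the combinatorial identity you have the labels $\subseteq$ and $\supseteq$ swapped---down-closure is what gives $D \subseteq \bigcap_b D_{b-1}$, while setting $b=i$ gives the reverse inclusion---but the content is right.
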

\begin{proof}
  For the non-trivial direction let us first remark that we may replace $X$ by the larger set $\{x \mid \exists a \in X \text{ such that } x \leq a\}$ without changing the left hand side of~(\ref{eq:24}).  It therefore suffices to prove the claim when $X$ is downwards closed.  Choose a decomposition $M = \bigoplus_x M_x$ as in Lemma~\ref{lem:splittings-exist}, i.e.,\ so that $\phi(a) = \sum_{x \leq a} M_x \in\cat{Sub}(M)$ for all $a$.  Then $\sum_{x \in X} \phi(x) = \sum_{x \in X} M_x$ when $X$ is downwards closed, so we must show that $\sum_{x \in X} M_x \in \cat{Sub}(M)_\sigma$ for such $X$.  By assumption this holds when $X = X_a$, defined for $a \in [p_1, \ldots, p_k]$ as
  \begin{equation*}
    X_a = \{x \mid \exists i \text{ such that } x_i \leq a_i\}.
  \end{equation*}
  Since $X \mapsto \sum_{x \in X} M_x$, regarded as a function from subsets of $\{1, \ldots, p_1\} \times \ldots \times \{1, \ldots, p_k\}$ to submodules of $M$, preserves intersection, the claim holds for any $X$ which may be written as an intersection of $X_a$'s.  But any downwards closed $X$ is the intersection of $X_a$ over those $a$ with $(a_1+1,\ldots, a_k+1) \not\in X$.
\end{proof}

The condition of a full lattice defining a multi-simplex in $D^k(M)^\sigma_{p_1, \ldots, p_k}$ may be illuminated by ``dualising'' the lattice $\phi$ in the following way.
\begin{definition}
  Suppose $\phi\colon [p_1, \ldots, p_k] \to \cat{Sub}(M)$ be a full lattice, and choose a decomposition $M = \oplus_x M_x$ with $\phi(a) = \sum_{x \leq a} M_x$ as in Lemma~\ref{lem:splittings-exist}. Then $M^\vee \cong \oplus_x M^\vee_x$ is the induced decomposition of the linear dual, and we set $\phi^\vee(a) \coloneqq \sum_{x \geq a} M_x^\vee \subset M^\vee$.
\end{definition}
To see that the resulting functor $[p_1,\ldots, p_k] \to \cat{Sub}(M^\vee)^\mr{op}$
is independent of the chosen decomposition, note that $\phi^\vee(a_1 + 1,\ldots, a_k + 1)$ is the annihilator of the submodule~(\ref{eq:23}).  The following characterisation will be used in the proof of Lemma~\ref{lemprop:contractible-space-of-sigmas}, which is itself the key step in the proof of Proposition~\ref{propcor:resolve}.  
\begin{lemma}
  The condition that $\sigma < E(M)$ be compatible with a lattice $\phi$ is equivalent to requiring that for all $a$ the submodule $\phi^\vee(a) \subset M^\vee$ be spanned by $\sigma \cap \phi^\vee(a)$.
\end{lemma}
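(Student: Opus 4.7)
The plan is to dualise the compatibility condition and observe that it becomes exactly the assertion in the lemma, up to an easy reindexing.

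First I would reformulate what it means for a summand $V$ of $M$ to lie in $\cat{Sub}(M)_\sigma$. By construction every element of $\cat{Sub}(M)_\sigma$ has the form $(\mr{span}(\sigma'))^\perp$ for some subset $\sigma' \subset \sigma$. Since $M$ is finitely generated projective, taking annihilators is an involution on summands: if $V = (\mr{span}(\sigma'))^\perp$ then $V^\perp = \mr{span}(\sigma')$, hence $\sigma' \subset \sigma \cap V^\perp$ and this intersection spans $V^\perp$. Conversely, if $V^\perp$ is spanned by $\sigma \cap V^\perp$, then taking $\sigma' = \sigma \cap V^\perp$ exhibits $V$ as $(\mr{span}(\sigma'))^\perp$. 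Thus membership in $\cat{Sub}(M)_\sigma$ is equivalent to the statement that $V^\perp$ is spanned by $\sigma \cap V^\perp$.

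Next I would apply this reformulation to the summand $V_a \coloneqq \sum_{j=1}^k \sum_{x:\, x_j \leq a_j} \phi(x)$ appearing in Definition~\ref{def:compatibility-consolidated}(\ref{item:4}). The remark recorded in the paragraph immediately preceding the lemma identifies $V_a^\perp$ with $\phi^\vee(a_1+1,\ldots,a_k+1)$. Consequently, $\sigma$ is compatible with $\phi$ if and only if for every $a \in [p_1,\ldots,p_k]$ the submodule $\phi^\vee(a+(1,\ldots,1))$ is spanned by its intersection with $\sigma$.

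Finally I would reconcile the two indexing sets. The shifted index $a + (1,\ldots,1)$ runs over $[1,p_1+1] \times \cdots \times [1,p_k+1]$ rather than $[p_1,\ldots,p_k]$, but the discrepancy is immaterial: inspection of $\phi^\vee(b) = \sum_{x \geq b} M_x^\vee$ (with $x$ ranging over $\underline{p_1}\times\cdots\times\underline{p_k}$) shows that $\phi^\vee$ is unchanged when any coordinate $b_i = 0$ is replaced by $b_i = 1$, and equals $0$ whenever some $b_i = p_i+1$; the spanning condition is vacuous in the latter case. Hence the collection of non-trivial conditions is identical whether one quantifies over $a$ in the shifted set or in $[p_1,\ldots,p_k]$, completing the equivalence. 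There is no real obstacle here — the argument is pure bookkeeping once the dual description of $\cat{Sub}(M)_\sigma$ and the annihilator formula for $V_a^\perp$ are in place; the only subtlety worth flagging is the use of double annihilation, which requires $V$ to be a summand of a finitely generated projective module, a hypothesis already in force.
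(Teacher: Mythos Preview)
Your proof is correct and follows essentially the same approach as the paper's. The paper's proof is a terse two sentences---recording the annihilator relation between $V_a$ and $\phi^\vee(a_1+1,\ldots,a_k+1)$ and invoking that taking annihilators of summands interchanges intersection and sum---while you spell out the characterisation of $\cat{Sub}(M)_\sigma$ via annihilators and handle the reindexing explicitly; but the underlying argument is identical.
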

\begin{proof}
  As noted above, the summands
  \begin{align*}
    \sum_{j = 1}^k \sum_{x \text{ with } x_j \leq a_i}   \phi(x) & \in \cat{Sub}(M)\\
    \phi^\vee(a_1 + 1,\ldots, a_k + 1) & \in \cat{Sub}(M^\vee)
  \end{align*}
  are annihilators of each other.  Taking annihilator of summands interchanges intersection and sum.
\end{proof}

\begin{lemma}\label{lemprop:contractible-space-of-sigmas}
  For any full lattice $\phi\colon [p_1, \ldots, p_k] \to \cat{Sub}(M)$ the poset $\mr{Simp}(E(M))_\phi$ has contractible realisation.
\end{lemma}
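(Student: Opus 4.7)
The plan is to prove contractibility by showing that any finite subdiagram of $\mr{Simp}(E(M))_\phi$ is null-homotopic in its realisation, using a coning construction based on Proposition~\ref{lemprop:choose-enough-elements}. Given finitely many compatible simplices $\sigma_1, \ldots, \sigma_n \in \mr{Simp}(E(M))_\phi$ (together with their order relations), I would construct a single compatible simplex $\sigma^*$ and compatible extensions $\tau_i = \sigma_i \cup \sigma^* \in \mr{Simp}(E(M))_\phi$. Since $\sigma_i \subseteq \sigma_j$ implies $\tau_i \subseteq \tau_j$, the assignment $\sigma_i \mapsto \tau_i$ is order-preserving and satisfies $\sigma_i \leq \tau_i$, so the realisation of the finite subdiagram deformation retracts onto the realisation of the $\tau_i$'s, which in turn retracts onto $\sigma^*$ via $\sigma^* \leq \tau_i$. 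Passing to the colimit over finite subdiagrams, and using that $|\mr{Simp}(E(M))_\phi|$ is a CW complex, yields contractibility.

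The core construction is building $\sigma^*$ vertex by vertex. Starting from $\sigma^*_0 = \varnothing$, I would maintain the inductive hypothesis that $\sigma_i \cup \sigma^*_m$ is a simplex of $E(M)$ for every $i$ (hence automatically compatible with $\phi$ since it contains the compatible $\sigma_i$). To adjoin a new vertex $f$ lying in a prescribed non-zero subspace $V = \phi^\vee(a)$, I would apply Proposition~\ref{lemprop:choose-enough-elements} to each sub-simplex $\tau$ of each $\sigma_i \cup \sigma^*_m$; the hypothesis $V = \mr{span}(V \cap \tau')$ with $\tau' = \sigma_i \cup \sigma^*_m$ holds because $\sigma_i$ is compatible with $\phi$, so $V \cap \sigma_i$ already spans $V$. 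For each triple $(i, \tau, \mathfrak{m})$ the proposition yields a proper linear subspace of $V/\mathfrak{m} V$ that $f$ must avoid; together with the exclusion of $\{0\}$ (needed so $f$ defines a vertex of $E(M)$, noting that a non-zero non-surjective $f$ would produce a non-projective cokernel since $\R$ is connected), this gives finitely many proper subspaces of $V/\mathfrak{m} V$ for each maximal $\mathfrak{m}$. Using that each residue field of $\R$ is infinite, together with surjectivity of $V \to \prod_\mathfrak{m} V/\mathfrak{m} V$, I can find $f \in V$ satisfying all these constraints simultaneously. Cycling through $V = \phi^\vee(a)$ for each non-zero value and adding enough generic vertices (governed by the rank) guarantees that the final $\sigma^*$ spans every $\phi^\vee(a)$, hence is compatible with $\phi$.

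The main obstacle is bookkeeping in the iterative argument: verifying at each stage that the hypothesis of Proposition~\ref{lemprop:choose-enough-elements} is met, that the union of bad subspaces of $V/\mathfrak{m} V$ is proper (so a generic $f$ exists), and that the extensions $\sigma_i \cup \sigma^*_m \cup \{f\}$ remain simplices of $E(M)$ for every $i$ (which requires the lemma's conclusion to be applied to every subset of $\sigma_i \cup \sigma^*_m$, not just the top simplex). All of this ultimately rests on two ingredients: the initial compatibility of the $\sigma_i$'s with $\phi$, which persists through the construction; and the infinite residue field hypothesis, which lets finitely many proper linear constraints be avoided simultaneously.
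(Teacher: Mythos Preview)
Your proposal is correct and follows essentially the same approach as the paper: reduce contractibility to the existence of a single $\sigma^* \in \mr{Simp}(E(M))_\phi$ over which any finite collection of compatible simplices can be coned (via the zig-zag $\sigma_i \leq \sigma_i \cup \sigma^* \geq \sigma^*$), and build $\sigma^*$ vertex by vertex inside the submodules $\phi^\vee(a)$ using Lemma~\ref{lemprop:choose-enough-elements}. The paper organises the iteration by downward induction on $|a|$, but as you implicitly note this ordering is inessential, since the hypothesis $V = \mr{span}(V \cap \tau')$ is always witnessed by the original compatible $\sigma_i \subset \tau'$.
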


\begin{proof}
  It suffices to prove that for any finite non-empty sub-poset $P \subset \mr{Simp}(E(M))_\phi$ there exists a $\sigma \in \mr{Simp}(E(M))_\phi$ such that $\tau \cup \sigma \in \mr{Simp}(E(M))_\phi$ for all $\tau \in P$.  Indeed, the inclusion $P \hookrightarrow \mr{Simp}(E(M))_\phi$ then induces a null homotopic map of geometric realisations since the face relations $\tau < \tau \cup \sigma$ and $\sigma < \tau \cup \sigma$ may be regarded as natural transformations.
    
  To produce such a $\sigma$ we choose its vertices inside $\phi^\vee(a) \subset M^\vee$, downwards inductively on $|a| \coloneqq a_1 + \ldots + a_k$.  Suppose $\sigma' < E(M)$ has been chosen such $\tau \cup \sigma'$ is a simplex in $E(M)$ for all $\tau \in P$ and that 
  \begin{equation*}
    \mr{span}(\sigma' \cap \phi^\vee(a)) = \phi^\vee(a)
  \end{equation*}
  whenever $|a| > n$. To make this hold also when $|a| = n$, we add to $\sigma'$ enough elements of $\phi^\vee(a) \in \cat{Sub}(M)$ to span that submodule.  We choose these new elements in a way that $(\text{new $\sigma'$}) \cup \tau$ is a simplex of $E(M)$ for all $\tau \in P$, which is possible by Lemma~\ref{lemprop:choose-enough-elements} since $\phi^\vee(a)$ is spanned by some elements of $\tau$ (and each face of each $\tau$ rules out only a proper linear subspace of $\phi^\vee(a) \otimes_\R A/\mathfrak{m}$ for each maximal ideal $\mathfrak{m}$.

  Doing this for each $a \in [p_1, \ldots, p_k]$ with $|a| = n$ results in a set $\sigma''$ of vertices of $E(M)$ containing $\sigma'$ as a subset, so that $\tau \cup \sigma''$ is a simplex of $E(M)$ for all $\tau \in P$, and that
  \begin{equation*}
    \mr{span}(\sigma'' \cap \phi^\vee(a)) = \phi^\vee(a)
  \end{equation*}
  whenever $|a| \geq n$.

  By induction we end up with a simplex $\sigma < E(M)$ such that $\phi^\vee(a) \subset M^\vee$ is spanned by the vertices of $\sigma$ that it contains, for each $a \in [p_1, \ldots, p_k]$. By the above characterisation, this means $\sigma$ is compatible with $\phi$.
\end{proof}

We are now finally ready for the main result of this subsection.
\begin{proposition}\label{propcor:resolve}
  The natural map~\eqref{eq:10} is a weak equivalence.
\end{proposition}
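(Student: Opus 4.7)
The plan is to apply the standard criterion that for a diagram $F \colon I \to \mathrm{Top}$ of subcomplexes of a CW-complex $X$ indexed by a poset $I$, the natural map $\hocolim_{i \in I} F(i) \to X$ is a weak equivalence provided every point of $X$ lies in some $F(i)$ and, for each cell $e$ of $X$, the subposet $\{i \in I : e \subseteq F(i)\}$ has contractible nerve. This is a standard consequence of modelling the hocolim as the two-sided bar construction: the resulting bisimplicial object has two projections, one to $X$ and one to the nerve of $I$, and the fiber of the first projection over the interior of a cell $e$ is the nerve of the subposet above. Contractibility of all these fibers then implies that the projection is a weak equivalence.

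In our setting I would take $X = D^k(M)$, $I = \mr{Simp}(E(M))$, and $F(\sigma) = D^k(M)^\sigma$. Viewed as the (thin) realization of a $k$-fold simplicial pointed set, the cells of $D^k(M)$ consist of the basepoint together with one cell for each non-degenerate full lattice $\phi \colon [p_1, \ldots, p_k] \to \cat{Sub}(M)$. For a non-basepoint cell $\phi$, the subposet of $\sigma$'s for which $D^k(M)^\sigma$ contains this cell is by Definition~\ref{def:compatibility-consolidated}(\ref{item:6}) precisely $\mr{Simp}(E(M))_\phi$, whose realization is contractible by Lemma~\ref{lemprop:contractible-space-of-sigmas}. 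For the basepoint, the subposet is all of $\mr{Simp}(E(M))$, whose realization is contractible by Proposition~\ref{prop:E-of-M-is-contractible}.

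Before invoking the criterion I would check the one remaining hypothesis, that every point of $D^k(M)$ lies in some $D^k(M)^\sigma$. This amounts to showing $\mr{Simp}(E(M))_\phi$ is non-empty for every full lattice $\phi$, which is implied by (or provable by the same method as) Lemma~\ref{lemprop:contractible-space-of-sigmas}: namely, dualise a splitting of $M$ as in Lemma~\ref{lem:splittings-exist} and choose vertices in each $\phi^\vee(a)$ using Lemma~\ref{lemprop:choose-enough-elements}.

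The main obstacle has already been overcome in the two inputs, Lemma~\ref{lemprop:contractible-space-of-sigmas} and Proposition~\ref{prop:E-of-M-is-contractible}. The remaining work is bookkeeping: one realizes the hocolim as a $(k{+}1)$-fold simplicial set whose $n$-direction consists of chains $\sigma_0 < \cdots < \sigma_n$ in $\mr{Simp}(E(M))$ decorated by a non-basepoint multisimplex of $D^k(M)^{\sigma_0}$ (plus a global basepoint), and identifies the fiber of the projection to $D^k(M)$ over each multisimplex $\phi$ with $|\mr{Simp}(E(M))_\phi|$. A mild care is needed to ensure that the identification of the ``subposet containing cell $e$'' is correct in the $k$-fold simplicial setting (one must check it only at non-degenerate simplices, but the compatibility condition is preserved by degeneracies, so no issue arises). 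Given this, the cell-by-cell comparison is immediate.
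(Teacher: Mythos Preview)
Your proposal is correct and takes essentially the same approach as the paper. The paper's proof is a one-liner: it observes that the homotopy colimit may be computed levelwise in the $k$-fold simplicial structure of $D^k(M)_{\bullet,\ldots,\bullet}$, and then invokes Lemma~\ref{lemprop:contractible-space-of-sigmas} (together with Proposition~\ref{prop:E-of-M-is-contractible} for the basepoint). Your cell-by-cell argument on the realized CW-complex is the same idea spelled out after geometric realisation rather than before, with the same two inputs doing the work.
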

\begin{proof}
  The homotopy colimit may be calculated level-wise in the $k$-fold simplicial structure of $D^k(M)_{\bullet,\ldots, \bullet}$, so it suffices to prove that $\mr{hocolim}_\sigma\, D^k(M)^\sigma_{p_1,\ldots,p_k} \to D^k(M)_{p_1,\ldots,p_k}$ is a weak equivalence. The target is discrete with non-basepoint elements given by full lattices $\phi \colon [p_1,\ldots,p_k] \to \cat{Sub}(M)$. The preimage of $\phi$ in $D^k(M)^\sigma_{p_1,\ldots,p_k}$ is either empty if $\sigma$ is not compatible with $\phi$, or a single element otherwise, and hence the preimage of $\phi$ in $\mr{hocolim}_\sigma\, D^k(M)^\sigma_{p_1,\ldots,p_k}$ is the realisation of $\Simp(E(M))_\phi$. The claim then follows from Proposition~\ref{lemprop:contractible-space-of-sigmas}.
  \end{proof}

\subsection{Proofs of connectivity and resolution}
\label{sec:proofs}

We are finally ready to prove the results stated in Section~\ref{sec:defin-stat}.  The strategy is easy to explain: for the connectivity statements we prove that each of the spaces in the homotopy colimit decomposition is highly connected (and that the indexing category has contractible realisation), while for the weak equivalences we find a matching homotopy colimit decomposition of the right hand sides.

\subsubsection{The case $k=1$}

In this section we use the homotopy colimit decomposition~\eqref{eq:10} of $D^1(M)$ to prove the first half of Theorem~\ref{thm:Solomon--Tits-local}, as well as the first half of Theorem~\ref{thm:Lee--Szczarba-local-1-and-2}. These theorems concern the homotopy type of the spaces $D^1(M)$.  Their proofs start by studying the homotopy type of each $D^1(M)^\sigma$.
\begin{definition}\label{def:Q-of-sigma}
  Let $\sigma = \{f_0, \ldots, f_p\} < E(M)$ be a $p$-simplex.
  \begin{enumerate}[(i)]
  \item  Let $Q(\sigma)$ denote the simplex $\sigma$ regarded as a simplicial complex in its own right.
  \item Let $Q_0(\sigma) < Q(\sigma)$ be the subcomplex whose simplices are the non-empty $\tau < \sigma$ for which $\mr{span}(\tau) \neq M^\vee$, or equivalently that the canonical map $M \to \R^\tau$ is not injective.
  \end{enumerate}
\end{definition}

\begin{lemma}\label{lemma:hocolim-formula-D-one}
  There is a weak equivalence
  \begin{equation}\label{eq:11}
    D^1(M)^\sigma \simeq \Sigma \bigg( \frac{|Q(\sigma)|}{|Q_0(\sigma)|} \bigg)
  \end{equation}
  for each $\sigma < E(M)$.  This weak equivalence is induced by a zig-zag of weak equivalences which are natural in $\sigma$ and induces a weak equivalence
  \begin{equation}\label{eq:26}
    D^1(M) \simeq \Sigma \bigg( \hocolim_{\sigma \in \mr{Simp}(E(M))} \frac{|Q(\sigma)|}{|Q_0(\sigma)|} \bigg),
  \end{equation}
  which in turn is induced by a zig-zag of $\mr{GL}(M)$-equivariant maps.
\end{lemma}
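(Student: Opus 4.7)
The plan is to exhibit a natural, $\mr{GL}(M)$-equivariant zig-zag of weak equivalences between $D^1(M)^\sigma$ and $\Sigma(|Q(\sigma)|/|Q_0(\sigma)|)$ by factoring through the nerve of the poset $P^\circ \coloneqq \cat{Sub}(M)_\sigma \setminus \{0, M\}$, and then combining with Proposition~\ref{propcor:resolve} to deduce~(\ref{eq:26}).

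First I would reproduce the argument of Lemma~\ref{lem:splittingcomplex-vs-building} in this generality. The non-basepoint $p$-simplices of $D^1(M)^\sigma_\bullet$ are precisely chains $0 = \phi(0) \leq \phi(1) \leq \cdots \leq \phi(p) = M$ in $P \coloneqq \cat{Sub}(M)_\sigma$. Introducing the auxiliary subposets $P^+ \coloneqq P \setminus \{0\}$ and $P^- \coloneqq P \setminus \{M\}$ (which have bottom and top elements, respectively, hence contractible nerves, as does $P$ itself), the same pushout diagrams of simplicial sets as in Lemma~\ref{lem:splittingcomplex-vs-building} produce a natural $\mr{GL}(M)$-equivariant zig-zag of weak equivalences
\[
D^1(M)^\sigma \simeq S^2 |N(P^\circ)|,
\]
where $S^2$ denotes unreduced double suspension.

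Second I would identify $|N(P^\circ)| \simeq |Q_0(\sigma)|$ via Quillen's Theorem~A. Let $R_0$ be the poset of non-empty subsets $\tau \subseteq \sigma$ with $\mr{span}(\tau) \neq M^\vee$; its nerve is the barycentric subdivision of $Q_0(\sigma)$, homeomorphic to $|Q_0(\sigma)|$. Taking perpendiculars defines an order-reversing map $R_0 \to P^\circ$: the image lies in $P^\circ$ because $\tau \neq \varnothing$ forces $\mr{span}(\tau)^\perp \neq M$, while $\mr{span}(\tau) \neq M^\vee$ forces $\mr{span}(\tau)^\perp \neq 0$; and $\mr{span}(\tau)^\perp$ is a summand of $M$ by Lemma~\ref{P-and-Q-intersect-cleanly}. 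Viewing this as $f \colon R_0 \to (P^\circ)^{\mr{op}}$, for each $V \in P^\circ$ the over-category $f/V$ is the poset of non-empty subsets of $V^\perp \cap \sigma$; this set is non-empty (since $V \in \cat{Sub}(M)_\sigma$ and $V \neq M$ force $V^\perp = \mr{span}(V^\perp \cap \sigma) \neq 0$) and has a maximum element, hence contractible nerve. Theorem~A then gives the required natural weak equivalence.

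Since $|Q(\sigma)| \cong \Delta^{|\sigma|-1}$ is contractible, the cofibre sequence yields a natural weak equivalence $|Q(\sigma)|/|Q_0(\sigma)| \simeq S|Q_0(\sigma)|$. Assembling and using $\Sigma S X \simeq S^2 X$ gives
\[
D^1(M)^\sigma \simeq S^2 |N(P^\circ)| \simeq S^2 |Q_0(\sigma)| \simeq \Sigma(|Q(\sigma)|/|Q_0(\sigma)|),
\]
which is~(\ref{eq:11}). Equation~(\ref{eq:26}) then follows from Proposition~\ref{propcor:resolve} by applying $\Sigma$ to both sides, since reduced suspension commutes with pointed homotopy colimits. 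The main technical point is ensuring each step in the zig-zag is natural in $\sigma$ under face inclusions $\sigma \hookrightarrow \sigma'$, which reduces to the easily verified functorialities $R_0(\sigma) \hookrightarrow R_0(\sigma')$ and $\cat{Sub}(M)_\sigma \hookrightarrow \cat{Sub}(M)_{\sigma'}$ (the latter preserving top and bottom elements).
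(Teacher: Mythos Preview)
Your proof is correct and follows essentially the same outline as the paper, but is packaged more economically. The paper introduces auxiliary posets $P(\sigma) \subset \mr{Simp}(Q(\sigma))^{\mr{op}}$ consisting of those $\tau$ with $\sigma \cap \mr{span}(\tau) = \tau$, together with variants $P_0(\sigma)$, $P_\varnothing(\sigma)$, $P_{0\varnothing}(\sigma)$; under the order-reversing bijection $\tau \mapsto \bigcap_{f \in \tau}\Ker(f)$ these are precisely your $P^{\circ}$, $P^{-}$, $P^{+}$, and $P$, so the paper's explicit identification of $D^1(M)^\sigma$ as an iterated quotient of these nerves is the same computation you extract by invoking the argument of Lemma~\ref{lem:splittingcomplex-vs-building}. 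Likewise, where the paper uses the explicit retraction $\tau \mapsto \sigma \cap \mr{span}(\tau)$ (a left adjoint to the inclusion $P(\sigma) \hookrightarrow \mr{Simp}(Q(\sigma))^{\mr{op}}$) to compare $|Q_0(\sigma)|$ with $|P_0(\sigma)|$, you apply Quillen's Theorem~A to the map $R_0 \to (P^\circ)^{\mr{op}}$; your over-categories having a maximum is exactly the adjunction the paper writes down.

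One small point to add: your first step tacitly assumes $0 \in \cat{Sub}(M)_\sigma$, which holds only when $\sigma$ spans $M^\vee$. When it does not, $D^1(M)^\sigma$ has no non-basepoint simplices (no full lattice can have $\phi(0)=0 \in \cat{Sub}(M)_\sigma$), and simultaneously $Q_0(\sigma)=Q(\sigma)$, so both sides of~(\ref{eq:11}) are a point; you should note this degenerate case explicitly.
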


The following auxiliary definitions are used in the proof of Lemma~\ref{lemma:hocolim-formula-D-one}.
\begin{definition}\label{def:P-of-sigma}
  Let $\sigma = \{f_0, \ldots, f_p\} < E(M)$ be a $p$-simplex.
  \begin{enumerate}[(i)]
  \item   Let $\mr{Simp}(Q(\sigma))^{\mr{op}}$ denote the poset of simplices of $Q(\sigma)$, i.e.\ the set of non-empty subsets of $\{f_0, \ldots, f_p\}$, ordered by reverse inclusion.  
  \item Let $\mr{Simp}(Q_0(\sigma))^\mr{op}$ be the subposet consisting of the non-spanning subsets.
  \item Let $P(\sigma) \subset \mr{Simp}(Q(\sigma))^\mr{op}$ denote the subposet consisting of those $\tau < \sigma$ for which $\sigma \cap \mr{span}(\tau) = \tau$.
  \item Let $P_0(\sigma) = P(\sigma) \cap \mr{Simp}(Q_0(\sigma))^\mr{op}$.
  \item Let $P_\varnothing(\sigma) = \{\varnothing\} \cup P(\sigma)$ and $P_{0\varnothing} = \{\varnothing\} \cup P_0(\sigma)$, regarded as subposets of the set of all subsets of $\{f_0, \ldots, f_p\}$, ordered by reverse inclusion.
  \end{enumerate}
\end{definition}

In the rest of this section we shall for readability use the same notation for geometric realisation of a simplicial complex, the thin geometric realisation of a simplicial set, and the geometric realisation of the nerve of a poset (e.g.\ we write $|P_0(\sigma)|$ instead of $|N(P_0(\sigma))|$).  The inclusions between the posets give a diagram
\begin{equation*}
  \begin{tikzcd}
    {|P_0(\sigma)|} \arrow[hookrightarrow, r] \arrow[hookrightarrow]{d}  & {|P_{0\varnothing}(\sigma)|} \arrow[hookrightarrow, d]\\
    {|P(\sigma)|} \arrow[hookrightarrow, r] & {|P_\varnothing(\sigma)|}.
  \end{tikzcd}
\end{equation*}

\begin{proof}[Proof of Lemma~\ref{lemma:hocolim-formula-D-one}]
  Let us consider the map
  \begin{equation}\label{eq:13}
    \begin{aligned}
      P_\varnothing(\sigma) & \lra \cat{Sub}(M)\\
      \tau & \longmapsto \bigcap_{f \in \tau} \mr{Ker}(f),
    \end{aligned}
  \end{equation}
  which is a poset map when $\cat{Sub}(M)$ is ordered by inclusion,
  inducing a map of simplicial sets, given in degree $p$ by
  \begin{equation}\label{eq:14}
    \begin{aligned}
      N_p(P_{\varnothing}(\sigma)) & \lra D^1(M)^\sigma_p\\
      (\tau_0 \supset \ldots \supset \tau_p) & \longmapsto \bigg( \bigcap_{f \in \tau_0} \mr{Ker}(f) \subset \ldots \subset \bigcap_{f \in \tau_p} \mr{Ker}(f) \bigg),
    \end{aligned}
  \end{equation}
  which is implicitly identified with the basepoint when that lattice isn't full, i.e.\ when either $\tau_p$ is non-empty or $\tau_0\subset M^\vee$ does not span.
  By definition of $D^1(M)^\sigma_p \subset D^1(M)_p$ this map~\eqref{eq:14} is surjective. 
  
  An element $\tau \in P_\varnothing(\sigma)$ may be recovered from the linear subspace $\mr{span}(\tau) = (\bigcap_{f \in \tau} \Ker(f))^\perp \subset M^\vee$ by intersecting with the finite subset $\sigma\subset M^\vee$, and hence the map~(\ref{eq:13}) is injective, from which it follows that~(\ref{eq:14}) is injective away from the inverse image of the basepoint.

 By definition of what it means for a lattice $[p] \to \cat{Sub}(M)$ to be non-full, we see that the inverse image of the basepoint is the subset
  \begin{equation*}
    N_p(P(\sigma)) \cup_{N_p(P_0(\sigma))} N_p(P_{0\varnothing}(\sigma)) \subset N_p(P_\varnothing(\sigma)), 
  \end{equation*}
  so we obtain homeomorphisms of based spaces
  \begin{equation*}
    \frac{|P_\varnothing(\sigma)|/|P_{0\varnothing}(\sigma)|}{|P(\sigma)|/|P_0(\sigma)|} \overset{\cong}\longleftarrow \frac{|P_\varnothing(\sigma)|}{|P_{0\varnothing}(\sigma)| \cup_{|P_0(\sigma)|} |P(\sigma)|} \overset{\cong}\lra |D^1(M)^\sigma_\bullet| = D^1(M)^\sigma.
  \end{equation*}
  The posets $P_0(\sigma)$ and $P_{0\varnothing}(\sigma)$ have contractible realisations because they have terminal object $\varnothing$, so we get a zig-zag
  \begin{equation*}
  \frac{|P_\varnothing(\sigma)|/|P_{0\varnothing}(\sigma)|}{|P(\sigma)|/|P_0(\sigma)|}
  \overset{\simeq}\longleftarrow
  C\left(\frac{|P(\sigma)|}{|P_0(\sigma)|} \hookrightarrow \frac{|P_\varnothing(\sigma)|}{|P_{0\varnothing}(\sigma)|}\right)
  \overset{\simeq}\lra
  C\left(\frac{|P(\sigma)|}{|P_0(\sigma)|} \to \ast\right) = \Sigma \frac{|P(\sigma)|}{|P_0(\sigma)|},
  \end{equation*}
  where $C$ denotes mapping cone and $\Sigma$ denotes suspension.

  To identify the right hand side, we observe that the poset map
  \begin{align*}
    \mr{Simp}(Q(\sigma))^\mr{op} & \lra P(\sigma)\\
    \tau & \longmapsto \sigma \cap \mr{span}(\tau)
  \end{align*}
  is left inverse to the inclusion of posets $P(\sigma) \hookrightarrow \mr{Simp}(Q(\sigma))^\mr{op}$.  It might not be right inverse, but the inclusion $\tau \subset \sigma \cap \mr{span}(\tau)$ may be regarded as a natural transformation from the identity, so the induced map $|Q(\sigma)| \to |P(\sigma)|$ is in fact a deformation retraction.  The same argument applies to $|Q_0(\sigma)| \to |P_0(\sigma)|$, so we get
  \begin{equation*}
    \frac{|Q(\sigma)|}{|Q_0(\sigma)|} \overset{\simeq}\lra \frac{|P(\sigma)|}{|P_0(\sigma)|},
  \end{equation*}
  naturally in $\sigma \in \mr{Simp}(E(M))$.  Combining with the other homotopy equivalences and homeomorphisms, we deduce~\eqref{eq:11}.

  Using that homotopy colimits commute with suspension (since $|E(M)|$ is contractible), we get
  \begin{align*}
    \Sigma \bigg( \hocolim_{\sigma \in \mr{Simp}(E(M))} \frac{|Q(\sigma)|}{|Q_0(\sigma)|} \bigg) & \simeq
                                                                                                        \hocolim_{\sigma \in \mr{Simp}(E(M))} \Sigma \frac{|Q(\sigma)|}{|Q_0(\sigma)|} \\
                                                                                                      & \simeq \hocolim_{\sigma \in \mr{Simp}(E(M))} \Sigma \frac{|P(\sigma)|}{|P_0(\sigma)|}\\
    & \simeq \hocolim_{\sigma \in \mr{Simp}(E(M))} \frac{|P_\varnothing(\sigma)|/|P_{0\varnothing}(\sigma)|}{|P(\sigma)|/|P_0(\sigma)|}\\
    & \cong \hocolim_{\sigma \in \mr{Simp}(E(M))} |D^1(M)^\sigma_\bullet|
      = \hocolim_{\sigma \in \mr{Simp}(E(M))} D^1(M)^\sigma \\
    & \simeq D^1(M),
  \end{align*}
  where in the last step we used Proposition~\ref{propcor:resolve}.  It is clear that spelling out all the maps involved in this string of equivalences will lead to a zig-zag of equivalences which are $\mr{GL}(M)$-equivariant.
\end{proof}

\begin{proof}[Proof of Theorem~\ref{thm:Solomon--Tits-local}.(\ref{item:7})]
  Since $\R$ is connected, any flag $0 = V_0 \subset \ldots \subset V_p = M$ with $p \geq d+1$ must involve a non-proper inclusion, so any element of $D^1(M)_p$ with $p > d$ must be degenerate.  Therefore $D^1(M)$ is a $d$-dimensional CW complex and it suffices to see that it is $(d-1)$-connected.

  For any $\sigma < E(M)$ the simplicial subcomplex $Q_0(\sigma) < Q(\sigma)$ contains the entire $(d-2)$-skeleton, since a spanning set must contain at least $d$ vectors.  Therefore the quotient $|Q(\sigma)|/|Q_0(\sigma)|$ is $(d-2)$-connected and hence the pointed homotopy colimit over all $\sigma < E(M)$ is also $(d-2)$-connected.  The suspension is therefore $(d-1)$-connected.
\end{proof}

\begin{proof}[Proof of Theorem~\ref{thm:Lee--Szczarba-local-1-and-2}, \eqref{eq:16a}]
  Recall that to a simplicial complex $X$ there is an associated simplicial set $X_\bullet$ where $X_p$ is the set of maps of simplicial complexes $\Delta^p \to X$, i.e.\ ordered tuples $(v_0, \ldots, v_p)$ of (not necessarily distinct)
    vertices of $X$ such that $\{v_0, \ldots, v_p\}$ is a simplex of $X$.  The natural map $|X_\bullet| \to |X|$ is a weak equivalence for any simplicial complex $X$; this can be proven using \cite[(1.1) (1.2)]{SuslinEquivalence}.
  
  For a simplex $\sigma < E(M)$, let $Q(\sigma)_\bullet$ and $Q_0(\sigma)_\bullet$ be the simplicial sets corresponding in this way to the simplicial complexes $Q(\sigma)$ and $Q_0(\sigma)$.  Then $D^1(M)$ is homotopy equivalent to the suspension of the based space
  \begin{equation*}
    \hocolim_{\sigma \in \mr{Simp}(E(M))} \frac{|Q(\sigma)_\bullet|}{|Q_0(\sigma)_\bullet|}.
  \end{equation*}
  Now the evident injections $Q(\sigma)_p \hookrightarrow E(M)_p$ and $Q_0(\sigma)_p \hookrightarrow E_0(M)_p$ induce maps
  \begin{equation}\label{eq:27}
    \begin{aligned}
    \hocolim_{\sigma \in \mr{Simp}(E(M))} Q(\sigma)_p & \lra E(M)_p\\
    \hocolim_{\sigma \in \mr{Simp}(E(M))} Q_0(\sigma)_p & \lra E_0(M)_p,
  \end{aligned}
  \end{equation}
  which are weak equivalences: the fiber over a $p$-simplex $(f_0, \ldots, f_p) \in E(M)_p$ may be identified with the star of $\{f_0, \ldots, f_p\} < E(M)$.  Using that geometric realisation commutes with homotopy colimits, we deduce~(\ref{eq:16a}).
\end{proof}

\subsubsection{The case $k=2$}
\label{sec:k-equals-two}

Recall from Lemma~\ref{lem:smash-of-E-one-buildings} that we have an injection
\begin{equation}\label{eq:18}
  D^2(M)_{p_1,p_2} \hookrightarrow D^1(M)_{p_1} \wedge D^1(M)_{p_2},
\end{equation}
induced by sending a full 2-dimensional lattice $\phi\colon [p_1,p_2] \to \cat{Sub}(M)$ to the pair of full 1-dimensional lattices
\begin{align*}
  & (0 = \phi(0,p_2) \subset \ldots \subset \phi(p_1,p_2) = M),\\
  & (0 = \phi(p_1,0) \subset \ldots \subset \phi(p_1,p_2) = M).
\end{align*}
This injection is likely not surjective: given a pair of 1-dimensional full lattices $\phi_1\colon [p_1] \to \cat{Sub}(M)$ and $\phi_2\colon [p_2] \to \cat{Sub}(M)$, the only possibility for a full lattice $\phi\colon [p_1, p_2] \to \cat{Sub}(M)$ corresponding to $(\phi_1,\phi_2)$ under~(\ref{eq:18}) would be $\phi(a_1,a_2) = \phi_1(a_1) \cap \phi_2(a_2)$, but that formula need not define a lattice $\phi\colon [p_1,p_2] \to \cat{Sub}(M)$ when $\R$ is not a field.  On closer inspection there are two phenomena that may prevent this: either that the submodule $\phi_1(a_1) \cap \phi(a_2) \subset M$ is not a summand for some $(a_1,a_2) \in [p_1,p_2]$, or that the submodule
\begin{equation*}
  (\phi_1(a_1) \cap \phi_2(b_2)) +   (\phi_1(b_1) \cap \phi_2(a_2)) \subset (\phi(b_1,b_2))
\end{equation*}
is not a summand for some $(a_1,a_2) \leq (b_1,b_2) \in [p_1,p_2]$.  As we will see next, both issues will be resolved if we restrict attention to lattices consisting of submodules made out of kernels of evaluation maps $M \to \R^\tau$ as $\tau$ runs through faces of a fixed simplex of $E(M)$.

\begin{proposition}\label{prop:D-two-sigma-vs-D-one-smash-D-one}
  For any $\sigma < E(M)$ the injection~(\ref{eq:18}) restricts to a bijection
  \begin{equation}
    D^2(M)^\sigma_{p_1,p_2} \hookrightarrow D^1(M)^\sigma_{p_1} \wedge D^1(M)^\sigma_{p_2},
  \end{equation}
\end{proposition}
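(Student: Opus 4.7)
The strategy is to show the map is a bijection by establishing surjectivity (injectivity is already given by Lemma \ref{lem:smash-of-E-one-buildings}). So given a non-basepoint pair $(\phi_1, \phi_2) \in D^1(M)^\sigma_{p_1} \wedge D^1(M)^\sigma_{p_2}$, I need to exhibit a unique non-basepoint preimage in $D^2(M)^\sigma_{p_1, p_2}$. By Lemma \ref{lem:splittings-exist}(iii) any preimage $\phi$ is forced to satisfy
\[\phi(a_1, a_2) := \phi_1(a_1) \cap \phi_2(a_2),\]
so the substance is verifying that this formula defines a full $2$-dimensional lattice which is itself compatible with $\sigma$.

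For point-wise membership in $\cat{Sub}(M)_\sigma$, the key observation is that $\sigma$-compatibility of each $\phi_i$ allows us to write $\phi_i(a_i) = \bigcap_{f \in T_i(a_i)} \ker f$ for a subset $T_i(a_i) \subset \sigma$. The intersection formula then rewrites as
\[\phi(a_1, a_2) = \bigcap_{f \in T_1(a_1) \cup T_2(a_2)} \ker f,\]
with $T_1(a_1) \cup T_2(a_2) \subset \sigma$, so $\phi(a_1,a_2) \in \cat{Sub}(M)_\sigma$ and in particular is a summand of $M$. This resolves the first of the two ``summand issues'' flagged in the paragraph preceding the proposition, and fullness of $\phi$ is immediate from fullness of $\phi_1, \phi_2$.

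Next I would verify the 2-dimensional lattice condition: on each square $[a_1 \leq b_1] \times [a_2 \leq b_2]$ the inclusion $\phi(a_1, b_2) + \phi(b_1, a_2) \hookrightarrow \phi(b_1, b_2)$ must have image a summand. Both addends are kernels of evaluation maps on faces of the single simplex $\sigma$, so Lemma \ref{P-and-Q-intersect-cleanly} applies directly to conclude they intersect cleanly in $M$, which is the statement that their sum is a summand of $M$ (hence also of $\phi(b_1, b_2)$). This resolves the second summand issue, so $\phi$ is a full 2-dimensional lattice.

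Finally, I would check $\sigma$-compatibility of $\phi$ using the equivalent characterisation of Lemma \ref{lem:compatible-strong}, namely that every partial sum $\sum_{x \in X} \phi(x)$ lies in $\cat{Sub}(M)_\sigma$ for arbitrary $X \subset [p_1, p_2]$. The pointwise statement follows from the intersection formula above. The main obstacle will be the partial sums combining contributions across both coordinate directions, for which one must exhibit a subset $T \subset \sigma$ whose associated kernel-intersection coincides with the sum; this is exactly the step where the structure of $\sigma$ as a simplex of $E(M)$ (rather than an arbitrary collection of functionals) becomes essential, via a clean-intersection argument applied to unions of the various $T_i(a_i)$.
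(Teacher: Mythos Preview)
Your treatment of the lattice condition is correct and matches the paper exactly: each $\phi(a_1,a_2)=\phi_1(a_1)\cap\phi_2(a_2)$ lies in $\cat{Sub}(M)_\sigma$ as an intersection of kernels, and Lemma~\ref{P-and-Q-intersect-cleanly} handles the square condition. The paper's own proof stops there and does not address $\sigma$-compatibility of the resulting $\phi$ at all.

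You are right that compatibility must still be checked, but the approach you sketch in your final paragraph does not work. For $k=2$ the compatibility condition~\eqref{eq:23} unwinds to $\phi_1(a_1)+\phi_2(a_2)\in\cat{Sub}(M)_\sigma$ for all $(a_1,a_2)$, and $\cat{Sub}(M)_\sigma$ is closed under intersections but \emph{not} under sums. Concretely, take $M=\R^3$, $\sigma=\{e_1^*,e_2^*,e_3^*,e_1^*{+}e_2^*{+}e_3^*\}$ (one checks this is a simplex of $E(M)$), and the $\sigma$-compatible full flags
\[
\phi_1\colon 0\subset \ker e_1^*\cap\ker e_2^*\subset\ker e_1^*\subset M,\qquad
\phi_2\colon 0\subset \ker e_3^*\cap\ker(e_1^*{+}e_2^*{+}e_3^*)\subset\ker e_3^*\subset M.
\]
Then $\phi_1(1)+\phi_2(1)=\ker(e_1^*{+}e_2^*)$, which is rank~$2$ but equals no intersection of kernels from $\sigma$, so the constructed $\phi$ fails compatibility in the sense of Definition~\ref{def:compatibility-consolidated}. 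The ``clean-intersection argument applied to unions of the $T_i(a_i)$'' you propose cannot produce the needed $T\subset\sigma$ here. So the step you leave as an obstacle is a genuine gap: either the proposition requires only the weaker condition that each $\phi(a)\in\cat{Sub}(M)_\sigma$ (which your first two paragraphs do establish, and which suffices for the applications in the section), or something further is needed that neither your proposal nor the paper's proof supplies.
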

\begin{proof}
  We must prove surjectivity.  Given two 1-dimensional lattices $\phi_1\colon [p_1] \to \cat{Sub}(M)$ and $\phi_2\colon [p_1] \to \cat{Sub}(M)$ we set $\phi(a_1,a_2) = \phi_1(a_1) \cap \phi_2(a_2)$.  Since $\phi(a_1,a_2) = \Ker(M \to \R^\tau)$ for some $\tau < \sigma$, this indeed defines a summand $\phi(a_1,a_2)$.  For $(a_1,a_2) \leq (b_1,b_2)$ we may write $P = \phi(a_1,b_2) = \Ker(M \to \R^{\tau'})$ and $Q = \phi(b_1,a_2) = \Ker(M \to \R^{\tau''})$ for some $\tau',\tau'' < \sigma$.  The fact that $P + Q \subset M$ is a summand then follows from Lemma~\ref{P-and-Q-intersect-cleanly}.  Therefore it is also a summand of the summand $\phi(b_1,b_2)$, so that $P+ Q \hookrightarrow \phi(b_1,b_2)$ is the inclusion of a summand.  This proves that $\phi\colon [p_1,p_2] \to \cat{Sub}(M)$ satisfies the lattice condition.
\end{proof}

\begin{proof}[Proof of Theorem~\ref{thm:Solomon--Tits-local}.(\ref{item:8})]
  We have 
  \begin{equation*}
    D^2(M)^\sigma \overset{\cong}\lra D^1(M)^\sigma \wedge D^1(M)^\sigma \simeq \left(\Sigma \frac{|Q(\sigma)|}{|Q_0(\sigma)|}\right) \wedge \left(\Sigma \frac{|Q(\sigma)|}{|Q_0(\sigma)|}\right),
  \end{equation*}
  and we already observed that $\Sigma\frac{|Q(\sigma)|}{|Q_0(\sigma)|}$ is $(d-1)$-connected.  The smash product is therefore $(2d-1)$-connected and hence by taking homotopy colimit over all $\sigma$ we deduce from Proposition~\ref{propcor:resolve} that $D^2(M)$ is also $(2d-1)$-connected.  
\end{proof}

\begin{proof}[Proof of Theorem~\ref{thm:Lee--Szczarba-local-1-and-2}, \eqref{eq:16b}]
  Recall that $E^2(M)_{p,q} \subset E(M)_p \times E(M)_q$ is the subset consisting of those pairs of ordered tuples whose union (of underlying unordered sets of vertices) is a simplex of $E(M)$.  In other words, it is the union over all $\sigma < E(M)$ of the image of the inclusion map
  \begin{equation*}
    Q(\sigma)_p \times Q(\sigma)_q \hookrightarrow E^2(M)_{p,q} \subset E(M)_p \times E(M)_q.
  \end{equation*}
  These maps induce a weak equivalence
  \begin{equation*}
    \hocolim_{\sigma \in \mr{Simp}(E(M))} Q(\sigma)_p \times Q(\sigma)_q \overset{\simeq}\lra E^2(M)_{p,q},
  \end{equation*}
  for each $p$ and $q$ because the fibers are contractible, as in the proof of~\eqref{eq:27}, and this weak equivalence restricts to a weak equivalence
  \begin{equation*}
    \hocolim_{\sigma \in \mr{Simp}(E(M))} \left(Q_0(\sigma)_p \times Q(\sigma)_q \right) \cup \left(Q(\sigma)_p \times Q_0(\sigma)_q \right) \overset{\simeq}\lra E^2_0(M)_{p,q}.
  \end{equation*}
  Using once more that $|E(M)|$ is contractible, taking homotopy colimits of the quotients leads to a weak equivalence of pointed sets
  \begin{equation*}
    \hocolim_{\sigma \in \mr{Simp}(E(M))} \frac{Q(\sigma)_p \times Q(\sigma)_q}{(Q_0(\sigma)_p \times Q(\sigma)_q) \cup (Q(\sigma)_p \times Q_0(\sigma)_q)} \overset{\simeq}\lra \frac{E^2(M)_{p,q}}{E^2_0(M)_{p,q}}.
  \end{equation*}
  
  For each $\sigma$ and each $p$, we have an isomorphism of based sets
  \begin{equation*}
    \frac{Q(\sigma)_p}{Q_0(\sigma)_p} \wedge \frac{Q(\sigma)_q}{Q_0(\sigma)_q} = \frac{Q(\sigma)_p \times Q(\sigma)_q}{(Q(\sigma)_p \times Q_0(\sigma)_q) \cup (Q_0(\sigma)_p \times Q(\sigma)_q)}.
  \end{equation*}
  Using that geometric realisation commutes with homotopy colimit, we then get
  \begin{equation*}
    \hocolim_{\sigma \in \mr{Simp}(E(M))} \frac{|Q(\sigma)_\bullet|}{|Q_0(\sigma)_\bullet|} \wedge \frac{|Q(\sigma)_\bullet|}{|Q_0(\sigma)_\bullet|} \simeq \frac{|E^2(M)_{\bullet,\bullet}|}{|E^2_0(M)_{\bullet,\bullet}|}.
  \end{equation*}
  On the other hand we may argue similarly to the case $k=1$ that
  \begin{align*}
    D^2(M) & \simeq \hocolim_{\sigma \in \mr{Simp}(E(M))} D^2(M)^\sigma \cong \hocolim_{\sigma \in \mr{Simp}(E(M))} D^1(M)^\sigma \wedge D^1(M)^\sigma \\
           & \simeq
             \hocolim_{\sigma \in \mr{Simp}(E(M))} \left(\Sigma \frac{|Q(\sigma)_\bullet|}{|Q_0(\sigma)_\bullet|} \right) \wedge \left(\Sigma \frac{|Q(\sigma)_\bullet|}{|Q_0(\sigma)_\bullet|}\right)
            \simeq
             \Sigma^2 \frac{|E^2(M)_{\bullet,\bullet}|}{|E^2_0(M)_{\bullet,\bullet}|},
  \end{align*}
  where the last step used that homotopy colimit commutes with suspension.
\end{proof}

\subsection{Coinvariants of the $E_2$-Steinberg module}
\label{sec:coinv-e_2-steinb}

This section is the analogue for connected semi-local rings with infinite residue fields of Section \ref{sec:coinvariants-of}. Let $M$ be a projective $\R$-module of rank $d$.  Theorems~\ref{thm:Solomon--Tits-local} and~\ref{thm:Lee--Szczarba-local-1-and-2} provide a presentation of the $\mr{GL}(M)$-module $\mr{St}^2(M)$:
\begin{equation*}
  \Z\left[\frac{E^2(M)_{d,d-1}}{E_0^2(M)_{d,d-1}}\right] \oplus
  \Z\left[\frac{E^2(M)_{d-1,d}}{E_0^2(M)_{d-1,d}}\right] \overset\partial\lra
  \Z\left[\frac{E^2(M)_{d-1,d-1}}{E_0^2(M)_{d-1,d-1}}\right] \lra \mr{St}^2(M) \to 0.
\end{equation*}
We shall use this to compute the $\mr{GL}(M)$-coinvariants of $\mr{St}^2(M)$ when $d \leq 3$.

Given a homomorphism $f = (f_1, \dots, f_d) \colon M \to \R^d$ and another homomorphism $f' = (f'_1, \dots, f'_d) \colon M \to \R^d$ we obtain an element \[(f',f) \in \Z\left[\frac{E^2(M)_{d-1,d-1}}{E_0^2(M)_{d-1,d-1}}\right],\] provided that $\{f'_1, \dots, f'_d, f_1, \dots, f_d\} < E(M)$, i.e.\ that each subset
\begin{equation*}
  \sigma \subset \{f'_1, \dots, f'_d, f_1, \dots, f_d\} \subset M^\vee
\end{equation*}
intersects cleanly.  Let us recall that this means by definition that $\mr{ev}: M \to \R^\sigma$ has projective cokernel, and is equivalent to $\mr{span}(\sigma) \subset M^\vee$ being a summand.  For each such $(f',f)$ we obtain an element of $\mr{St}^2(M)$ which we shall denote $[f'] \otimes [f]$.  By a slight abuse of language we shall in this section say that ``$[f'] \otimes [f]$ is defined'' for the condition that $\{f'_1, \dots, f'_d, f_1, \dots, f_d\} < E(M)$. 

These elements $[f'] \otimes [f] \in \mr{St}^2(M)$ generate as an abelian group, but are not linearly independent.  Firstly, they satisfy $[f'] \otimes [f] = 0$ unless both $f'$ and $f$ are isomorphisms.  Secondly, they satisfy
\begin{equation}\label{eqn:rel-semi-local-first}
  \begin{aligned}
    & \sum_{i = 0}^d (-1)^i [(f'_0, \dots, \widehat{f'_i}, \dots, f'_d)] \otimes [f] = 0,\\
    & \text{provided $\{f'_0, \dots, f'_d, f_1, \dots, f_d\} < E(M)$,}
  \end{aligned}
\end{equation}
and thirdly they satisfy
\begin{equation}\label{eqn:rel-semi-local-second}
  \begin{aligned}
    & \sum_{i = 0}^d (-1)^{i} [f'] \otimes [(f_0, \dots, \widehat{f_i}, \dots, f_d)] = 0,\\
    & \text{provided $\{f'_1, \dots, f'_d, f_0, \dots, f_d\} < E(M)$.}
\end{aligned}
\end{equation}
Much of the difficulty when $\R$ is not a field arises from controlling whether the conditions (``provided \dots'') apply.  Neglecting this may lead to expressions with undefined terms, but fortunately will not lead a meaningful but untrue equation:
\begin{lemma}\label{lemma:fortunately}
  Let $f' \colon  M \to \R^d$ and $f = (f_1, \dots, f_d) \colon M \to \R^d$, and let $f_0 = a_1 f_1 + \dots +a_k f_k$ for some $k \in \{1, \dots, d\}$ and $a_1, \dots, a_k \in \R$.  Then the relation
  \begin{equation*}
    \sum_{i = 0}^k (-1)^i [f'] \otimes [(f_0, \dots, \widehat{f_i}, \dots, f_d)] = 0
  \end{equation*}
  holds, provided all its terms are defined.  Similarly when $f'_0 = a'_1 f'_1 + \dots + a_k'f'_k$.
\end{lemma}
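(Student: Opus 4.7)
The plan is to reduce the lemma to a full instance of relation~\eqref{eqn:rel-semi-local-second} applied to the augmented set $S=\{f'_1,\ldots,f'_d,f_0,f_1,\ldots,f_d\}$, and then show that the terms beyond index $k$ vanish because the corresponding tuples lie in $E_0(M)_{d-1}$. A priori one cannot apply \eqref{eqn:rel-semi-local-second} directly because the hypothesis only gives definedness of the $k+1$ partial-sum terms, not that $S$ itself is a simplex of $E(M)$; the main obstacle is therefore the first step, verifying that $S<E(M)$ under the given definedness hypothesis.

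First I would prove $S<E(M)$ by examining an arbitrary subset $\tau\subseteq S$. If some $f_j$ with $1\le j\le k$ is missing from $\tau$, then $\tau$ is contained in the simplex $\{f'_1,\ldots,f'_d,f_0,f_1,\ldots,\widehat{f_j},\ldots,f_d\}$ coming from the $j$-th defined term. If instead $\{f_1,\ldots,f_k\}\subseteq\tau$ but $f_0\notin\tau$, then $\tau\subseteq\{f'_1,\ldots,f'_d,f_1,\ldots,f_d\}$, which is the $i=0$ term. Finally, if $\{f_0,f_1,\ldots,f_k\}\subseteq\tau$, set $\tau'=\tau\setminus\{f_0\}$, which is clean by the preceding case. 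Since $f_0=\sum_{j=1}^k a_j f_j$ with $f_j\in\tau'$, the evaluation map $M\to\R^\tau$ factors as $M\xrightarrow{\mr{ev}}\R^{\tau'}\hookrightarrow\R^\tau$, where the second map sends $(x_f)_{f\in\tau'}$ to the tuple extending it by $x_{f_0}=\sum_{j=1}^k a_j x_{f_j}$. This inclusion is the inclusion of a summand (complement $\R\cdot e_{f_0}$), so $\Cok(M\to\R^\tau)\cong\Cok(M\to\R^{\tau'})\oplus\R$ is projective, and $\tau$ intersects cleanly.

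With $S<E(M)$ established, relation~\eqref{eqn:rel-semi-local-second} gives the full identity $\sum_{i=0}^d(-1)^i[f']\otimes[(f_0,\ldots,\widehat{f_i},\ldots,f_d)]=0$ in $\mr{St}^2(M)$. It remains to see that each term with $k<i\le d$ vanishes. For such an $i$ the $d$-tuple $(f_0,f_1,\ldots,\widehat{f_i},\ldots,f_d)$ still contains every element of $\{f_0,f_1,\ldots,f_k\}$, and the relation $f_0-\sum_{j=1}^k a_j f_j=0$ shows that the corresponding evaluation map factors through the rank-$(d-1)$ summand $\{x_0=\sum_{j=1}^k a_j x_j\}\subset\R^d$. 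Since $M$ has rank $d$ over the connected ring $\R$, this factorisation cannot be injective, so the tuple lies in $E_0(M)_{d-1}$. Hence the pair $(f',(f_0,\ldots,\widehat{f_i},\ldots,f_d))$ lies in $E_0^2(M)_{d-1,d-1}$ and the associated element of $\mr{St}^2(M)$ vanishes. The symmetric statement, with $f'_0=\sum_{j=1}^k a'_j f'_j$, follows by the same argument after swapping the two factors of $E^2(M)_{\bullet,\bullet}$.
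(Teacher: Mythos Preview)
Your proof is correct and follows essentially the same approach as the paper: first establish $S<E(M)$ by a case analysis on subsets (reducing the case $\{f_0,\ldots,f_k\}\subset\tau$ to $\tau\setminus\{f_0\}$ via the linear dependence), then apply the full relation~\eqref{eqn:rel-semi-local-second} and observe that the terms with $i>k$ vanish because the corresponding $d$-tuple is not injective. The paper's version of the reduction step is slightly shorter---it simply notes $\mr{span}(\tau)=\mr{span}(\tau\setminus\{f_0\})$ rather than computing the cokernel splitting---and leaves the vanishing of the $i>k$ terms implicit (it follows from the earlier remark that $[f']\otimes[f]=0$ unless $f$ is an isomorphism), but your more explicit treatment of both points is perfectly fine.
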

\begin{proof}
  We must verify that any subset $\sigma \subset \{f'_1, \dots, f'_d, f_0, \dots, f_d\}$ spans a summand of $M^\vee$.  If $\{f_0, \dots, f_k\} \subset \sigma$ then $\mr{span}(\sigma) = \mr{span}(\sigma \setminus \{f_0\})$, so we are reduced to considering the case where $f_i \not\in \sigma$ for some $i \in \{0, \dots, k\}$.  That case follows from what it means for $[f'] \otimes [(f_0, \dots, \widehat{f_i}, \dots, f_d)]$ to be defined.
\end{proof}

\newcommand{\iiota}{\iota}
\newcommand{\pppsi}{\psi}

In this section we are interested in the coinvariants, for which we obtain the presentation
\[
  \parbox{3.5cm}{\centering $\Z\left[\tfrac{E^2(M)_{d,d-1}}{E_0^2(M)_{d,d-1}}\right]_{\mr{GL}(M)}$ \\ $\oplus$ \\
  $\Z\left[\tfrac{E^2(M)_{d-1,d}}{E_0^2(M)_{d-1,d}}\right]_{\mr{GL}(M)}$} \overset\partial\lra
  \Z\left[\tfrac{E^2(M)_{d-1,d-1}}{E_0^2(M)_{d-1,d-1}}\right]_{\mr{GL}(M)} \lra
  \mr{St}^2(M)_{\mr{GL}(M)} \to 0.
\]
If we fix a particular isomorphism $\iiota = (\iiota_1, \dots, \iiota_d) \colon M \xrightarrow{\sim} \R^d$, this amounts to imposing the extra relations
\begin{equation*}
  [f'] \otimes [f] = [\iiota] \otimes [f \circ (f')^{-1} \circ \iiota].
\end{equation*}
Notice that one side of this relation is defined if and only if the other is.  Writing $\pppsi = f \circ (f')^{-1} \in \mr{GL}_d(\R)$, we see that the coinvariants are generated as an abelian group by the elements
\begin{equation*}
  [\iiota] \otimes [\pppsi \iiota] \in \mr{St}^2(M)_{\mr{GL}(M)},
\end{equation*}
as $\pppsi \in \mr{GL}_d(\R)$ ranges over those matrices for which $[\iiota] \otimes [\pppsi \iiota]$ is defined, i.e.\ those for which the union of any subset of the standard basis and any subset of the set of rows of $\pppsi$ spans a summand.  Following conventions in this section, we shall write $\pppsi = (\pppsi_1, \dots, \pppsi_d) \colon \R^d \to \R^d$ with $\pppsi_i \colon \R^d \to \R$ the $i$th coordinate of $\pppsi$.  With respect to the standard basis, $\pppsi_i$ is the $i$th row of $\pppsi$.  (Thus $\pppsi$ is similar to the matrix $\phi$ appearing in Section~\ref{sec:coinvariants-of}, but the direct comparison is between  $\pppsi$ and the transpose of the $\phi$.)

We have analogues of the relations from Section~\ref{sec:coinvariants-of}.
\begin{lemma}
  Let $\iiota: M \to \R^d$ be a fixed isomorphism, as above.  Then the following relations hold among the elements $[\iiota] \otimes [\pppsi \iiota] \in \mr{St}^2(M)$.
  \begin{enumerate}[(I)]
  \item Permuting rows: if $\pppsi \in \mr{GL}_d(\R)$ and $[\iiota] \otimes [\pppsi  \iiota]$ is defined, and $\pppsi'  = \sigma \pppsi \in \mr{GL}_d(\R)$ is defined by left multiplication with a permutation matrix $\sigma \in \fS_d \subset \mr{GL}_d(\R)$, then $[\iiota] \otimes [\pppsi' \iiota]$ is also defined, and 
    \begin{equation*}
      [\iiota] \otimes [\pppsi' \iiota] = \mr{sign}(\sigma) [\iiota] \otimes [\pppsi \iiota].
    \end{equation*}
  \item Scaling rows: if $\pppsi \in \mr{GL}_d(\R)$ and $[\iiota] \otimes [\pppsi  \iiota]$ is defined, and $\pppsi' \in \mr{GL}_d(\R)$ is defined by $\pppsi' = (\alpha_1 \pppsi_1, \dots, \alpha_d \pppsi_d)$ for some $\alpha_i \in \R^\times$, then $[\iiota] \otimes [\pppsi' \iiota]$ is also defined, and 
    \begin{equation*}
      [\iiota] \otimes [\pppsi' \iiota] =  [\iiota] \otimes [\pppsi \iiota].
    \end{equation*}
  \item Adding rows: if $\pppsi \in \mr{GL}_d(\R)$, $1 \leq s < t \leq d$ and $\pppsi', \pppsi'' \in \mr{GL}_d(\R)$ are defined by
    \begin{align*}
      \pppsi'_s &= \pppsi_s + \pppsi_t = \pppsi''_t,\\
      \pppsi'_i & = \pppsi_i \quad\text{for $i \neq s$},\\
      \pppsi''_i & = \pppsi_i \quad\text{for $i \neq t$},
    \end{align*}
    then 
    \begin{equation*}
      [\iiota] \otimes [\pppsi \iiota] = [\iiota] \otimes [\pppsi' \iiota] + [\iiota] \otimes [\pppsi'' \iiota],
    \end{equation*}
    \emph{provided} the terms $[\iiota] \otimes [\pppsi \iiota]$, $[\iiota] \otimes [\pppsi' \iiota]$, and $[\iiota] \otimes [\pppsi'' \iiota]$ are all defined.
  \end{enumerate}
  In the coinvariants $\mr{St}^2(M)_{\mr{GL}(M)}$ the analogous relations hold about permuting, scaling, and adding columns.
\end{lemma}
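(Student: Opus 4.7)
The plan is to model the proof on Section~\ref{sec:coinvariants-of}, where in the field case the three column operations were extracted from three distinguished types of elements of $X_1$. Here those elements are replaced by suitable $(d+1)$-tuples viewed as elements of $E^2(M)_{d-1,d}$ with first factor $\iiota$, and the relation~(\ref{eqn:rel-semi-local-second}) is applied to each. Writing $g_j = \pppsi_j \circ \iiota \in M^\vee$, the common pattern is that among the $d+1$ faces of the resulting boundary, only two or three correspond to $d$-tuples with distinct linearly independent entries; the remaining faces are either simplicially degenerate or have underlying set lying in $E_0(M)$, and so vanish in $\mr{St}^2(M)$.

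For (I), it suffices to handle adjacent transpositions. Apply $\partial$ to the $(d+1)$-tuple
\[(g_1, \dots, g_{s-1}, g_s, g_{s+1}, g_s, g_{s+2}, \dots, g_d),\]
whose underlying set is $\{g_1, \dots, g_d\}$ and is therefore defined whenever $[\iiota] \otimes [\pppsi\iiota]$ is. Only the two faces dropping one of the two copies of $g_s$ are non-trivial modulo $E_0$: dropping the first copy yields the swapped tuple, dropping the second yields $\pppsi\iiota$, and every other face retains both copies of $g_s$, leaving only $d-1$ linearly independent functionals and hence having underlying set in $E_0(M)$. Sign-chasing yields $[\iiota]\otimes[\text{swapped}] = -[\iiota]\otimes[\pppsi\iiota]$, and arbitrary permutations follow by composing adjacent transpositions, every intermediate tuple having the same underlying set and therefore remaining defined. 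For (II), replace the inserted copy of $g_s$ by $\alpha_s g_s$, obtaining the tuple $(g_1, \dots, g_s, \alpha_s g_s, g_{s+1}, \dots, g_d)$; this is defined because $\alpha_s \in \R^\times$ means any subset involving $\alpha_s g_s$ has the same span as the subset with $\alpha_s g_s$ replaced by $g_s$. The non-trivial faces drop either $g_s$ (yielding the scaled tuple) or $\alpha_s g_s$ (yielding $\pppsi\iiota$); every other face retains both, which are linearly dependent, so again lies in $E_0(M)$.

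For (III), assume $s < t$ and apply~(\ref{eqn:rel-semi-local-second}) to the $(d+1)$-tuple $(g_s + g_t, g_1, \dots, g_d)$. The main obstacle in the entire argument is verifying the global ``defined'' condition $\{\iiota_1, \dots, \iiota_d, g_s + g_t, g_1, \dots, g_d\} \in E(M)$; this must be checked before we can even apply~(\ref{eqn:rel-semi-local-second}) directly. We do so by case analysis on a subset: a subset not containing $g_s + g_t$ reduces to the set realising the hypothesis that $[\iiota]\otimes[\pppsi\iiota]$ is defined; a subset containing $g_s + g_t$ together with one of $g_s, g_t$ has the same span as the corresponding subset with $g_s + g_t$ replaced by the other generator, again reducing to the $\pppsi$ case; and a subset containing $g_s + g_t$ but neither $g_s$ nor $g_t$ is a subset of the set realising the hypothesis that $[\iiota]\otimes[\pppsi'\iiota]$ is defined. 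Granting this verification, the only non-trivial faces are those dropping $g_s + g_t$, $g_s$, or $g_t$; all other faces contain the linearly dependent triple $\{g_s + g_t, g_s, g_t\}$ and so have underlying set in $E_0(M)$. Using (I) to move $g_s + g_t$ from position $0$ into positions $s-1$ and $t-1$ respectively, and combining simplicial boundary signs with permutation signs, these faces give precisely $[\iiota]\otimes[\pppsi\iiota] = [\iiota]\otimes[\pppsi'\iiota] + [\iiota]\otimes[\pppsi''\iiota]$.

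The column-operation relations in the coinvariants are obtained by running parallel arguments using~(\ref{eqn:rel-semi-local-first}) on the first factor, producing first-factor permutation, scaling, and addition relations on $[\iiota]\otimes[f]$. In the coinvariants any first-factor transformation $\iiota \mapsto A\iiota$ is absorbed by the element $\iiota^{-1}A\iiota \in \mr{GL}(M)$, which simultaneously sends $\pppsi\iiota \mapsto \pppsi A^{-1}\iiota$; since $A^{-1}$ is of the same elementary type as $A$ for permutations, unit scalings, and shears, the first-factor relations translate directly into the asserted column-operation identities on $\pppsi$.
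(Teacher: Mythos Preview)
Your proof is correct and follows essentially the same approach as the paper: you apply the boundary relations~(\ref{eqn:rel-semi-local-first}) and~(\ref{eqn:rel-semi-local-second}) to the same three types of $(d+1)$-tuples as in Section~\ref{sec:coinvariants-of}, and for (III) you verify the global simplex condition by exactly the case analysis that constitutes the proof of Lemma~\ref{lemma:fortunately}. The paper simply packages that case analysis as a separate lemma and quotes it, whereas you inline it.

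One small point worth making explicit in your last paragraph: when the first-factor shear $A$ is absorbed into the second factor via coinvariants, $\pppsi \mapsto \pppsi A^{-1}$ produces column \emph{subtraction} rather than addition (the inverse of an elementary shear flips the sign of the off-diagonal entry), so one needs a column scaling by $-1$ to convert it to the stated form; the paper spells this out. Also, the definedness proviso for column addition must be re-verified on the first-factor side, which goes through by the same case analysis you gave for (III) with the roles of $\iiota$ and $\pppsi\iiota$ swapped.
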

We emphasise that in (I) and (II), one side of the equation is defined if and only if the other side is, while to apply (III) one must know a priori that all three terms are defined.  Let us say that the row or column addition is \emph{admissible} when this is the case.

\begin{proof}[Proof sketch]
  The row operations are all special cases of~\eqref{eqn:rel-semi-local-second}, and are proved by the same method as in Section~\ref{sec:coinvariants-of}.  For row addition  one considers~\eqref{eqn:rel-semi-local-second} with $f' = \iiota \colon M \to \R^d$, $f_i = \pppsi_i \iiota \colon M \to \R$, and $f_0 = f_s + f_t$.  The fact that the relation holds when all three terms are defined follows from Lemma~\ref{lemma:fortunately}.

  For the column operations one applies instead~\eqref{eqn:rel-semi-local-first}.  For column addition with $s = 1$ and $t = 2$ one sets $f = \pppsi \iiota$, $f'_0 = \iiota_1$, $f'_1 = \iiota_1 + \iiota_2$, and $f'_i = \iiota_i$ for $i \in \{2, \dots, d\}$.
  In the coinvariants we have
  \begin{align*}
    [(\iiota_1 + \iiota_2, \iiota_2, \iiota_3, \dots, \iiota_d)] \otimes [\pppsi \iiota]
    & = [\iiota] \otimes [\tilde\pppsi' \iiota]\\
    [(\iiota_1, \iiota_1 + \iiota_2, \iiota_3, \dots, f_d)] \otimes [\pppsi \iiota]
    & = [\iiota] \otimes [\tilde\pppsi''\iiota],
  \end{align*}
  where $\tilde\pppsi'$ is obtained from $\pppsi$ by subtracting the first column from the second column and $\tilde\pppsi''$ is obtained from $\pppsi$ by subtracting the second column from the first.  Combining with column scalings by $-1$ we obtain addition instead of subtraction, and combining with column permutations we obtain column addition for any $s, t$.  As in Lemma~\ref{lemma:fortunately} the row addition is a valid relation provided all three terms are defined, which is unaffected by the remaining operations.
\end{proof}

\begin{remark}
  The presentation of $\mr{St}^2(\R^d)_{\mr{GL}_d(\R)}$ described here bears a curious resemblance to the double scissors congruence groups of Be\u{\i}linson--Goncharov--Schechtmann--Varchenko \cite[Section 2.1]{BGSV}. However, their admissibility condition is stronger than ours.
\end{remark}

\begin{lemma}\label{lem:entries-zero-or-units}
  If $[\iiota] \otimes [\pppsi \iiota]$ is defined, then all entries of $\pppsi \in \mr{GL}_d(\R)$ are in $\{0 \} \cup \R^\times$.
\end{lemma}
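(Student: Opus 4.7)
The plan is to exploit the ``clean intersection'' hypothesis applied to a carefully chosen $d$-element subset of $\{\iiota_1,\ldots,\iiota_d,\pppsi_1\iiota,\ldots,\pppsi_d\iiota\} \subset M^\vee$, and then use the assumption that $\R$ is connected (so its only idempotents are $0$ and $1$) to see that the resulting condition on a principal ideal forces it to be either $0$ or all of $\R$.

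More concretely, fix an entry $\pppsi_{ij} \in \R$ of the matrix $\pppsi$, and consider the subset
\[ \sigma \coloneqq \{\iiota_1,\ldots,\widehat{\iiota_j},\ldots,\iiota_d,\ \pppsi_i\iiota\} \subset M^\vee. \]
By definition of $[\iiota]\otimes[\pppsi\iiota]$ being defined, $\sigma$ intersects cleanly, i.e., $\mr{span}(\sigma) \subset M^\vee$ is a summand. Writing $\pppsi_i\iiota = \sum_\ell \pppsi_{i\ell}\iiota_\ell$ and letting $W \coloneqq \mr{span}(\iiota_\ell : \ell \neq j) \subset M^\vee$, one computes
\[ \mr{span}(\sigma) \;=\; W + \R\cdot\pppsi_i\iiota \;=\; W + \pppsi_{ij}\R\cdot\iiota_j, \]
and moreover this sum is direct since $W \cap \R \iiota_j = 0$ inside $M^\vee = W \oplus \R\iiota_j$. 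Thus $\mr{span}(\sigma)$ is a summand of $M^\vee$ precisely when $\pppsi_{ij}\R\iiota_j$ is a summand of $\R\iiota_j \cong \R$.

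To finish, I would invoke the fact that for a connected commutative ring $\R$, the only direct summands of $\R$ as an $\R$-module are $0$ and $\R$ (since they correspond to idempotents of $\R$, and a connected ring has no nontrivial idempotents). Hence either $\pppsi_{ij}\R = 0$, which forces $\pppsi_{ij} = 0$, or $\pppsi_{ij}\R = \R$, which forces $\pppsi_{ij} \in \R^\times$. As $(i,j)$ was arbitrary, every entry of $\pppsi$ lies in $\{0\} \cup \R^\times$.

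The plan is straightforward and I do not expect a genuine obstacle: the main thing to double-check is the claim that $W + \pppsi_{ij}\R\iiota_j = W \oplus \pppsi_{ij}\R\iiota_j$ as a submodule of $M^\vee$, and hence that being a summand of $M^\vee$ is equivalent to $\pppsi_{ij}\R$ being a summand of $\R\iiota_j$. This follows because $M^\vee = W \oplus \R\iiota_j$, and intersecting the first summand with $W + \pppsi_{ij}\R\iiota_j$ picks out $W$ on the nose. The appeal to connectedness of $\R$ is standard and has already been used implicitly throughout the paper.
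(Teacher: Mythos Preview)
Your proof is correct and takes essentially the same approach as the paper: both choose the same subset $\sigma = \{\iiota_1,\ldots,\widehat{\iiota_j},\ldots,\iiota_d,\pppsi_i\iiota\}$ and reduce to connectedness of $\R$. The only cosmetic difference is that the paper computes the cokernel of $M \to \R^\sigma$ directly as $\R/(\pppsi_{ij})$ and asks when this is projective, whereas you dualise and ask when $\mr{span}(\sigma) = W \oplus \pppsi_{ij}\R\iiota_j$ is a summand of $M^\vee$; since $M^\vee/\mr{span}(\sigma) \cong \R/\pppsi_{ij}\R$ anyway, the two computations are the same.
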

\begin{proof}
  Consider the subset $\sigma = \{\iiota_1, \dots, \widehat{\iiota_j}, \dots, \iiota_d, \pppsi_i \iiota\}$.  Then the cokernel of $M \to \R^\sigma \cong \R^d$ is isomorphic to $\R/(\pppsi_i(e_j))$, which for connected $\R$ is projective if and only if either $\pppsi_i(e_j)$ is either zero or a unit.
\end{proof}

\begin{example}  Assume $\R$ is not a field, and choose $\epsilon \in \mathfrak{m} \setminus \{0\}$ for some maximal ideal $\mathfrak{m} \subset \R$, and consider $[\iiota] \otimes [\pppsi\iiota]$ where $\pppsi$ has the matrix
  	\[\begin{bmatrix} 1 & \epsilon-1 \\
  	0 & 1 \end{bmatrix}.\]
    Then $[\iiota] \otimes [\pppsi]$ is defined, but adding the first to second row gives rise to an entry which is neither a unit nor zero, so would involve a term which is not defined.
  \end{example}

\subsubsection{Jordan block elements of $\mr{St}^2(M)$}
\label{sec:jord-block-elem}

In the case where $\R$ is a field, we have seen that matrices of Jordan block form play a special role.  We start by showing that the corresponding elements here are at least always defined. In fact, we will do so for a larger class of elements, which will be used in Lemma \ref{lem:claim-two}.

\begin{lemma}\label{lem:Jordan-blocks-in-St-two}
  For an isomorphism $\iiota = (\iiota_1, \dots, \iiota_d) \colon M \to \R^d$ as above, let us  consider another isomorphism $f = (f_1, \dots, f_d) \colon M \to \R^d$, of the form
  \begin{equation}\label{eq:12}
    f_i = \iiota_i + \epsilon_i \iiota_{i-1}
  \end{equation}
  for some $\epsilon_i \in \{0,1\}$, and  using the notational convention that $\iiota_0 = 0 \in M^\vee$, i.e.\ $f_1 = \iiota_1$.  Then $[\iiota] \otimes [f] \in \mr{St}^2(M)$ is defined.
  
  More generally, $[\iiota] \otimes [f]$ is defined when $f_i = \iiota_i + \epsilon_i \iiota_{j_i}$ for some $j_i \in \{0, \dots, i-1\}$.
\end{lemma}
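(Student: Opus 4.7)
The plan is to show that every subset $\sigma$ of the ambient set $\{\iiota_1, \ldots, \iiota_d, f_1, \ldots, f_d\}$ intersects cleanly, which amounts to showing that the evaluation map $M \to \R^\sigma$ has projective cokernel. I proceed by induction on $d$, the base case $d=0$ being trivial. Using the basis $e_1, \ldots, e_d$ of $M$ dual to $\iiota_1, \ldots, \iiota_d$, I represent this evaluation by a matrix $\Phi$ whose rows are: a single $1$ in column $i$ for each $\iiota_i \in \sigma$, and a $1$ in column $i$ together with an $\epsilon_i$ in column $j_i$ for each $f_i \in \sigma$ (with the $\iiota_0$-contribution read as $0$). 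The key structural observation is that, since $j_i < i$ for every $i$, only the rows labeled $\iiota_d$ and $f_d$ can possibly contain a nonzero entry in column $d$.

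I would split the inductive step by which of $\iiota_d, f_d$ belong to $\sigma$. When neither does, column $d$ of $\Phi$ is zero and $\Phi$ factors through $M \to \R^{d-1}$, so the induction hypothesis applied to $\sigma \subseteq \{\iiota_1, \ldots, \iiota_{d-1}, f_1, \ldots, f_{d-1}\}$ closes this case. When only $\iiota_d$ is present, row $\iiota_d$ and column $d$ form an isolated $1\times 1$ identity block, and $\Phi$ block-decomposes as this identity summand plus the inductive matrix for $\sigma \setminus \{\iiota_d\}$. When only $f_d$ is present (and $j_d > 0$), the invertible change of $M$-basis sending $e_{j_d}$ to $e_{j_d} - \epsilon_d e_d$ fixes every row with $i<d$ (each such row vanishes on $e_d$) and converts the $f_d$-row into a single $1$ in column $d$, reducing to the previous situation; the case $j_d = 0$ is already in this reduced form.

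The crux is when both $\iiota_d$ and $f_d$ lie in $\sigma$. Here the invertible row operation subtracting row $\iiota_d$ from row $f_d$ turns the $f_d$-row into $\epsilon_d$ in column $j_d$ and zeros elsewhere, so column $d$ then contains a single nonzero entry in row $\iiota_d$. Block-decomposing off this row and column, I am left with the evaluation matrix on $\R^{d-1}$ for an altered row-set. If $\epsilon_d = 0$, or if $\epsilon_d = 1$ and $\iiota_{j_d} \in \sigma$, a further row subtraction produces a zero row, splitting off a free rank-one summand of the cokernel and leaving behind the inductive matrix for $\sigma \setminus \{\iiota_d, f_d\}$. The main obstacle, and the only place the strengthened inductive hypothesis is really needed, is the subcase $\epsilon_d = 1$ with $\iiota_{j_d} \notin \sigma$: the modified $f_d$-row is then identical to what the $\iiota_{j_d}$-row would have been, so the reduced matrix is precisely the evaluation matrix for $(\sigma \setminus \{\iiota_d, f_d\}) \cup \{\iiota_{j_d}\} \subseteq \{\iiota_1, \ldots, \iiota_{d-1}, f_1, \ldots, f_{d-1}\}$. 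Because the inductive hypothesis is formulated for \emph{every} subset of the ambient set rather than just the full set itself, it applies to this altered subset and delivers the required projectivity, closing the induction.
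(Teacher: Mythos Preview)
Your proof is correct, but it takes a different route from the paper's. The paper avoids induction on $d$ entirely: it works directly with spans in $M^\vee$, noting that whenever $\sigma$ contains both $\iota_i$ and $\iota_i + \iota_{j_i}$ (for some $i$ with $j_i \geq 1$), one may replace $\iota_i + \iota_{j_i}$ by $\iota_{j_i}$ without changing $\mr{span}(\sigma)$. Iterating this reduction leaves a $\sigma$ containing at most one of $\{\iota_i, \iota_i + \iota_{j_i}\}$ for each $i$; such a set extends to a basis of $M^\vee$ by adjoining those $\iota_i$ for which neither element appears (the resulting set is unitriangular). The span of a subset of a basis is a summand, and the proof is done in a few lines. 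Your induction on $d$ with an explicit case analysis on the last column achieves the same thing via matrix manipulations; it is longer but entirely self-contained and requires no cleverness about recognizing subsets of bases. The one place your approach genuinely earns its keep is the subcase $\epsilon_d = 1$, $\iota_{j_d} \notin \sigma$: there you correctly observe that the reduced matrix is exactly the evaluation matrix for a \emph{different} subset of the rank-$(d-1)$ ambient set, which is precisely why the induction must be over all subsets.
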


\begin{proof}
  Clearly it suffices to show that for any subset
  \begin{equation*}
    \sigma \subset \{\iiota_1, \dots, \iiota_d, \iiota_2 + \iiota_{j_2}, \dots,  \iiota_d + \iiota_{j_d}\}
  \end{equation*}
  the evaluation homomorphism $M \to R^\sigma$ has projective cokernel.  Equivalently, the span of $\sigma \subset M^\vee$ should be a summand.  If both $\iiota_i$ and $\iiota_i + \iiota_{j_i}$ are in $\sigma$ for some $i > j_i \geq 1$, then the set $\sigma' = (\sigma \setminus \{\iiota_i + \iiota_{j_i}\}) \cup \{\iiota_{j_i}\}$ satisfies $\mr{span}(\sigma) = \mr{span}(\sigma')$, so it suffices to show that the span of $\sigma'$ is a summand.  By induction we reduce to considering those $\sigma$ which don't contain $\{\iiota_i, \iiota_i + \iiota_{j_i}\}$ as a subset for any $i$ with $i > j_i \geq 1$.  But any such $\sigma$ is a subset of a basis for $M^\vee$, namely
  \begin{equation*}
    \sigma \cup \{\iiota_i \mid \text{neither $\iiota_i \in \sigma$ nor $\iiota_i + \iiota_{j_i} \in \sigma$}\} \subset M^\vee.
  \end{equation*}
  The span of a subset of a basis is a summand.
\end{proof}

Let us adopt the notation 
\begin{equation*}
  J_d = (\iiota_1, \iiota_2 + \iiota_1, \dots, \iiota_d + \iiota_{d-1}) \colon M \lra \R^d
\end{equation*}
for the special case $\epsilon_1 = \dots = \epsilon_{d-1} = 1$ of the Lemma above.  We then have an element $[\iiota] \otimes [J_d] \in \mr{St}^2(M)$, and our conjecture is that its image in $\mr{St}^2(M)_{\mr{GL}(M)}$ is a generator for that group.

Looking back at the proof of this statement in the case $\R$ is a field and attempting the same proof here (appropriately transposing matrices), we see there are two steps: in ``Claim 1'' we should prove that the elements $[\iiota] \otimes [f]$ generate when $f$ ranges over the $2^{d-1}$ elements of the form~\eqref{eq:12}, and in ``Claim 2'' we should prove that these $2^{d-1}$ elements are all multiples of $[\iiota] \otimes [J_d]$.  It turns out that the second step may be carried out in a very similar way over any commutative ring.
\begin{lemma}\label{lem:claim-two}
  Let $\iiota \colon M \to \R^d$ be an isomorphism, and let $f \colon M \to \R^d$ be given by~\eqref{eq:12} for some $\epsilon_i \in \{0,1\}$.  Then
  \begin{equation*}
    [\iiota] \otimes [f] \in \Z \cdot ([\iiota] \otimes [J_d]) \subset \mr{St}^2(M)_{\mr{GL}(M)}.
  \end{equation*}
\end{lemma}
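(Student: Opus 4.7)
The plan is to mimic the proof of Claim 2 from Section~\ref{sec:coinvariants-of}, with the main novelty being the verification of admissibility for all row/column operations used. First, let me recast the hypothesis. If $f$ is given by~\eqref{eq:12} with $\epsilon_i \in \{0,1\}$, then $f$ is block diagonal of the form $J_{a_1, \dots, a_k}$, where the block boundaries occur precisely at the indices $i$ with $\epsilon_i = 0$ and $a_1 + \dots + a_k = d$. By induction on $k$, it suffices to show that for any $a, b \geq 1$ with $a + b = d$ we have $[\iiota] \otimes [J_{a,b}] \in \Z \cdot ([\iiota] \otimes [J_{a+b}])$.

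To prove this, I would follow the field-case strategy by introducing an interpolating family. For $0 \leq i \leq a$, define $J_{a,b}(i) \colon M \to \R^d$ to be the matrix obtained from $J_{a,b}$ by replacing its $(a+1)$-st row $\iiota_{a+1}$ with $\iiota_{a+1} + \iiota_i$ (using $\iiota_0 \coloneqq 0$, so $J_{a,b}(0) = J_{a,b}$ and $J_{a,b}(a) = J_{a+b}$). Every row of each $J_{a,b}(i)$ has the form $\iiota_j + \iiota_{k_j}$ for some $k_j \in \{0, \dots, j-1\}$, so the generalised version of Lemma~\ref{lem:Jordan-blocks-in-St-two} shows that $[\iiota] \otimes [J_{a,b}(i)]$ is defined for every $i \in \{0, \dots, a\}$. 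The central relation to establish in the coinvariants is
\[
  [\iiota] \otimes [J_{a,b}(i)] = [\iiota] \otimes [J_{a,b}(i+1)] + [\iiota] \otimes [J_{b+i, a-i}(i+1)]
\]
for $0 \leq i < a$, where the last term is identified after applying a permutation of rows and columns, which is harmless in $\mr{St}^2(M)_{\mr{GL}(M)}$.

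The verification proceeds by (admissibly) scaling certain rows and columns by $-1$ and then applying the row addition (III) to rows $i+1$ and $a+1$, transposed from the column manipulations in the field-case proof of~(\ref{eq:8}). The main obstacle, and the only real departure from the field case, is admissibility: the row addition relation (III) holds only when all three matrices involved are defined. Here this can be checked directly, because each of the three intermediate matrices again has the form that every row is $\iiota_j$ or $\iiota_j \pm \iiota_{k_j}$ for some $k_j < j$. The argument in the proof of Lemma~\ref{lem:Jordan-blocks-in-St-two} goes through verbatim once one allows coefficients in $\{-1, 0, +1\}$: a subset of $\{\iiota_1, \dots, \iiota_d, \iiota_2 \pm \iiota_{k_2}, \dots, \iiota_d \pm \iiota_{k_d}\}$ still reduces by the same replacement step to a subset of a basis of $M^\vee$, whose span is a summand. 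Hence every row/column operation used is admissible.

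Iterating the key relation as in the field-case proof expresses $[\iiota] \otimes [J_{a,b}(0)]$ as an integer combination of terms of the form $[\iiota] \otimes [J_{c,d'}(c)] = [\iiota] \otimes [J_{c+d'}]$ with $c + d' = a + b$, and by the induction on $k$ this completes the proof. (As in the field case, the resulting integer can be identified with $\binom{a+b}{a}$, though for our purposes only its existence is needed.)
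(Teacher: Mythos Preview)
Your proposal follows the paper's approach closely: reduce to merging two Jordan blocks, define the interpolating family $J_{a,b}(i)$, establish the recursion~\eqref{eq:9}, and iterate. The strategy and use of Lemma~\ref{lem:Jordan-blocks-in-St-two} to see that each $[\iiota]\otimes[J_{a,b}(i)]$ is defined are exactly as in the paper.

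There is one imprecision in your admissibility check. After the scaling and the row addition between rows $i+1$ and $a+1$, one of the two resulting matrices (the one that will eventually become $J_{b+i,a-i}(i+1)$) has both $\iiota_{a+1}+\iiota_{i+1}$ and $\iiota_{a+1}+\iiota_i$ among its rows, i.e.\ two rows with the same leading term $\iiota_{a+1}$ and none with leading term $\iiota_{i+1}$. Consequently the set of rows does not have the shape $\{\iiota_2\pm\iiota_{k_2},\dots,\iiota_d\pm\iiota_{k_d}\}$ you wrote down, and the basis-completion step in the proof of Lemma~\ref{lem:Jordan-blocks-in-St-two} does not literally apply. (For instance, with $\sigma = \{\iiota_{a+1}+\iiota_{i+1},\iiota_{a+1}+\iiota_i\}$ the completion rule from that proof yields $d+1$ vectors.) One can repair this by a further replacement step, but it is not ``verbatim''.

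The paper avoids this by a cleaner route: the three terms of the row addition, after (definedness-preserving) row/column scalings and permutations, become precisely $J_{a,b}(i)$, $J_{a,b}(i+1)$, and $J_{b+i,a-i}(i+1)$, all of which are covered by Lemma~\ref{lem:Jordan-blocks-in-St-two} as stated. Then Lemma~\ref{lemma:fortunately} yields the relation. This is the simplest fix for your argument as well.
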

\begin{proof}
  As in Section~\ref{sec:coinvariants-of} it suffices to explain how to merge two Jordan blocks into one, that is, to consider the special case where $f = J_{a,b}$, where $a + b = d$ and $J_{a,b}$ is defined as
  \begin{align*}
    J_{a,b} = (& \iiota_1, \iiota_2 + \iiota_1, \dots, \iiota_a + \iiota_{a-1},\\
               & \iiota_{a+1}, \iiota_{a+2} + \iiota_{a+1}, \dots, \iiota_{a+b} + \iiota_{a+b-1}).
  \end{align*}
  Mimicking the proof there, we also define
  \begin{align*}
    J_{a,b}(i)  = (&\iiota_1, \iiota_2 + \iiota_1, \dots, \iiota_a + \iiota_{a-1},\\
    &\iiota_{a+1} + \iiota_i, \iiota_{a+2} + \iiota_{a+1}, \dots, \iiota_{a+b} + \iiota_{a+b-1})
  \end{align*}
  for $i = 0, \dots, a$.  In particular $J_{a,b}(0) = J_{a,b}$ and $J_{a,b}(a) = J_d$.  By Lemma~\ref{lem:Jordan-blocks-in-St-two} we have defined generators $[\iiota] \otimes [J_{a,b}(i)] \in \mr{St}^2(M)_{\mr{GL}(M)}$, and as in Section~\ref{sec:coinvariants-of} we claim that
  \begin{equation}\label{eq:9}
    [\iiota] \otimes [J_{a,b}(i)] = [\iiota] \otimes [J_{a,b}(i+1)] + [\iiota] \otimes [J_{b+i,a-i}(i+1)].
  \end{equation}
  Indeed, transposing the proof there, this is proved by first scaling some rows and columns by $-1$, then applying one row addition operation which replaces $J_{a,b}(i)$ by a sum of two terms, and then finally some row and column permutations to arrive at~\eqref{eq:9}.  The row operation relation holds because all three terms are defined, again by Lemma~\ref{lemma:fortunately}.

  Applying~\eqref{eq:9} repeatedly, we arrive at a formula for $[\iiota] \otimes [J_{a,b}]$ as a multiple of $[\iiota] \otimes [J_{a+b}]$.
\end{proof}

\subsubsection{Jordan blocks generate for $d \leq 3$}
\label{sec:jord-blocks-gener}

To prove our conjecture it remains to establish an analogue of ``Claim 1'' from Section~\ref{sec:coinvariants-of}, i.e.\ to rewrite an arbitrary $[\iiota] \otimes [\pppsi \iiota] \in \mr{St}^2(M)_{\mr{GL}(M)}$ as a linear combination of Jordan block elements.  For $d = 1$ there is nothing to prove, and we now consider $d = 2$ and $d=3$.

This part of the argument involves more substantial manipulations of matrices, and for notational convenience we shall assume $M = \R^d$ and that $\iiota = 1$ is the identity.  Then $\iiota_i \colon \R^d \to \R$ denotes projection to the $i$th factor, and the $i$th row of a matrix $\pppsi \in \mr{GL}_d(\R)$ is denoted $\pppsi_i \colon \R^d \to \R$.  The matrix entries $\pppsi_{i,j} \in \R$ are characterised by
\begin{equation*}
  \pppsi_i = \sum_{j = 1}^d \pppsi_{i,j} \iiota_j.
\end{equation*}

\begin{lemma}\label{lem:iota-one}
  For any $d \geq 1$ the abelian group $\mr{St}^2(\R^d)$ is generated by elements of the form $[\iiota] \otimes [\pppsi]$, where $\pppsi \colon \R^d \to \R^d$ satisfies $\pppsi_1 = \iiota_1$.
\end{lemma}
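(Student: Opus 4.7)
The plan is to derive the claim as a one-step consequence of the Lee--Szczarba relation \eqref{eqn:rel-semi-local-second}, by prepending $\iiota_1$ as an extra vertex to the second factor.

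Concretely, starting with a generator $[\iiota]\otimes[\pppsi]$ with $\pppsi \in \mr{GL}_d(\R)$ and the pair $(\iiota,\pppsi)$ admissible, I would invoke \eqref{eqn:rel-semi-local-second} taking $f' = \iiota = (\iiota_1,\ldots,\iiota_d)$ and the $(d+1)$-tuple $(f_0,f_1,\ldots,f_d) = (\iiota_1,\pppsi_1,\ldots,\pppsi_d)$. The relation's admissibility hypothesis $\{\iiota_1,\ldots,\iiota_d,\iiota_1,\pppsi_1,\ldots,\pppsi_d\} < E(\R^d)$ depends only on the underlying vertex set, so it collapses to $\{\iiota_1,\ldots,\iiota_d,\pppsi_1,\ldots,\pppsi_d\} < E(\R^d)$, which is exactly the standing hypothesis that $[\iiota]\otimes[\pppsi]$ is defined. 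Prepending an already-present vertex is the key design choice here: it bypasses any general-position argument.

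Isolating the $i=0$ term of the resulting identity then gives
\[
[\iiota]\otimes[\pppsi] \;=\; \sum_{j=1}^{d}(-1)^{j+1}\,[\iiota]\otimes[(\iiota_1,\pppsi_1,\ldots,\widehat{\pppsi_j},\ldots,\pppsi_d)].
\]
Each matrix $\pppsi^{(j)} \coloneqq (\iiota_1,\pppsi_1,\ldots,\widehat{\pppsi_j},\ldots,\pppsi_d)$ on the right has first row $\iiota_1$. If $\pppsi^{(j)}$ is not invertible, its tuple lies in $E_0(\R^d)_{d-1}$ and the corresponding term vanishes in $\mr{St}^2(\R^d)$; if $\pppsi^{(j)}$ is invertible, then $[\iiota]\otimes[\pppsi^{(j)}]$ is a generator of precisely the form demanded by the lemma, and its admissibility is inherited since its vertex set is a subset of $\{\iiota_1,\ldots,\iiota_d,\pppsi_1,\ldots,\pppsi_d\}$. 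Combining this with the fact (recalled just before the lemma) that $\mr{St}^2(\R^d)$ is already spanned by elements of the form $[\iiota]\otimes[\pppsi]$ gives the claim.

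I do not expect any substantial obstacle at this step: the entire argument is a single well-chosen instance of \eqref{eqn:rel-semi-local-second}, and the only thing to check---that the relation's admissibility hypothesis is automatic under ours---is immediate from the choice $f_0 = \iiota_1$. The genuinely delicate combinatorics involving repeated admissibility checks and general-position manoeuvres over the residue fields will only enter in the subsequent lemmas that rewrite $[\iiota]\otimes[\pppsi]$ with $\pppsi_1 = \iiota_1$ in terms of Jordan-block elements for $d = 2, 3$.
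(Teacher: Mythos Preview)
Your proposal is correct and essentially identical to the paper's proof: both apply relation~\eqref{eqn:rel-semi-local-second} with $f' = \iiota$ and $f_0 = \iiota_1$, noting that admissibility is automatic because $\iiota_1$ already belongs to the vertex set $\{\iiota_1,\dots,\iiota_d,\pppsi_1,\dots,\pppsi_d\}$. Your version spells out a bit more detail about the vanishing of non-invertible terms, but the argument is the same.
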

\begin{proof}
  Given an arbitrary element of the form $[\iiota] \otimes [\pppsi] \in \mr{St}^2(\R^d)$, we know that $\{\iiota_1, \dots, \iiota_d, \pppsi_1, \dots, \pppsi_d\} < E(\R^d)$, and hence we may use the relation~\eqref{eqn:rel-semi-local-second} with $f_0 = \iiota_1$.  This gives the relation
  \begin{equation*}
    [\iiota] \otimes [\pppsi] = \sum_{i = 1}^d (-1)^{i-1}[\iiota] \otimes [(\iiota_1, \pppsi_1, \dots, \widehat{\pppsi_i}, \dots, \pppsi_d)].\qedhere
  \end{equation*}
\end{proof}

\begin{corollary}
  The abelian group $\mr{St}^2(\R^2)_{\mr{GL}_2(\R)}$ is generated by $[\iiota] \otimes [J_2]$.
\end{corollary}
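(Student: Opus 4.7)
The plan is to invoke Lemma~\ref{lem:iota-one} to reduce to generators $[\iiota] \otimes [\pppsi\iiota]$ with $\pppsi_1 = \iiota_1$, and then exhaust the remaining possibilities by direct matrix manipulation. For $d=2$, such a $\pppsi$ takes the form $\pppsi = \begin{bmatrix} 1 & 0 \\ a & b \end{bmatrix}$, and Lemma~\ref{lem:entries-zero-or-units} combined with $\det \pppsi = b \in \R^\times$ forces $a \in \{0\} \cup \R^\times$ and $b \in \R^\times$.

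When $a \in \R^\times$, I would scale row $2$ by $a^{-1}$ using relation~(II) (always admissible), and then scale column $2$ by $a/b$ in the coinvariants, obtaining $J_2$; so $[\iiota] \otimes [\pppsi\iiota] = [\iiota] \otimes [J_2]$. When $a = 0$, scaling column $2$ by $b^{-1}$ reduces $\pppsi$ to the identity matrix, so it remains to analyse $[\iiota]\otimes [\iiota]$.

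For $[\iiota] \otimes [\iiota]$, I would apply row addition between rows $1$ and $2$ of the identity. This should yield the relation
\[ [\iiota] \otimes [\iiota] \;=\; [\iiota] \otimes [\pppsi'\iiota] \;+\; [\iiota] \otimes [J_2], \]
with $\pppsi' = \begin{bmatrix} 1 & 1 \\ 0 & 1 \end{bmatrix}$. One must first verify that this row addition is admissible, but this is easy: the functionals appearing anywhere in the three matrices are among $\iiota_1, \iiota_2, \iiota_1+\iiota_2 \in (\R^2)^\vee$, and every subset of these three visibly spans a summand of $(\R^2)^\vee$, so all three terms are defined. Finally, $\pppsi'$ can be brought to $J_2$ by one row swap and one column swap (permissible in the coinvariants), each contributing sign $-1$, which cancel; thus $[\iiota] \otimes [\pppsi'\iiota] = [\iiota]\otimes [J_2]$, and we conclude $[\iiota]\otimes [\iiota] = 2[\iiota] \otimes [J_2]$.

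The only obstacle is admissibility of the elementary operations used. For $d=2$ this is essentially automatic because the linear functionals occurring in all our intermediate matrices lie, up to units, in the three-element set $\{\iiota_1, \iiota_2, \iiota_1+\iiota_2\}$, and over a connected ring any subset of these spans a summand of $(\R^2)^\vee$. It is exactly this admissibility check which becomes considerably more delicate in rank three and beyond, so no substantial new idea is needed here.
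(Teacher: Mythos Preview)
Your proof is correct and follows essentially the same route as the paper: reduce via Lemma~\ref{lem:iota-one}, use Lemma~\ref{lem:entries-zero-or-units} to force the entries into $\{0\}\cup\R^\times$, scale to reach $J_2$ when the lower-left entry is a unit, and handle $\pppsi=\iiota$ separately. The only difference is that where the paper dispatches the case $\pppsi=\iiota$ by invoking the general Lemma~\ref{lem:claim-two}, you carry out that lemma's $d=2$ instance by hand (one admissible row addition plus a row/column swap), arriving at the same conclusion $[\iiota]\otimes[\iiota]=2[\iiota]\otimes[J_2]$.
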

\begin{proof}
  By Lemma~\ref{lem:iota-one}, elements of the form $[\iiota] \otimes [\pppsi]$ with $\pppsi_1 = \iiota_1$ generate.  We then have $\pppsi_{2,1}, \pppsi_{2,2} \in \R^\times\cup\{0\}$, by Lemma~\ref{lem:entries-zero-or-units}.  If $\pppsi_{2,2} = 0$ then $[\iiota] \otimes [\pppsi] = 0$, and otherwise we may scale the second row to achieve $\pppsi_{2,2} = 1$.

  If $\pppsi_{2,1} \in \R^\times$ we may scale the second column by $\pppsi_{2,1}^{-1}$ and the first row by $\pppsi_{2,1}$, after which $[\iiota] \otimes [\pppsi] = [\iiota] \otimes [J_2]$.  If $\pppsi_{2,1} = 0$ then $\pppsi = \iiota$, and $[\iiota] \otimes [\iiota]$ is a multiple of $[\iiota] \otimes [J_2] \in \mr{St}^2(\R^2)_{\mr{GL}(\R^2)}$, by Lemma~\ref{lem:claim-two}.
\end{proof}

\begin{proposition}
  The abelian group $\mr{St}^2(\R^3)_{\mr{GL}(\R^3)}$ is generated by $[\iiota] \otimes [J_3]$.
\end{proposition}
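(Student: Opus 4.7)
My plan is to mirror the ``Claim 1 + Claim 2'' structure of the field-case proof in Section~\ref{sec:coinvariants-of}, transposed to operate on rows rather than columns as in the present section. Lemma~\ref{lem:claim-two} already provides the analog of Claim 2 for arbitrary $d$, so I focus on the analog of Claim 1: rewriting any generator $[\iiota] \otimes [\pppsi]$ as a $\bZ$-linear combination of terms $[\iiota] \otimes [f]$ with $f$ of the form~\eqref{eq:12}. Starting from Lemma~\ref{lem:iota-one}, I may assume $\pppsi_1 = \iiota_1$, so that
\[\pppsi = \begin{pmatrix} 1 & 0 & 0 \\ a & b & c \\ d & e & f \end{pmatrix},\]
with every entry in $\{0\}\cup\R^\times$ by Lemma~\ref{lem:entries-zero-or-units} and with $bf - ce \in \R^\times$.

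The reduction proceeds in three iterative steps, each an application of \eqref{eqn:rel-semi-local-second} or \eqref{eqn:rel-semi-local-first} with a carefully chosen auxiliary vector $f_0$. First, apply \eqref{eqn:rel-semi-local-second} with $f_0$ chosen to be a linear combination of $\pppsi_2$ and $\pppsi_3$ with one extra zero coordinate, analogously to the role of $\iiota_1$ in the proof of Lemma~\ref{lem:iota-one}; the resulting identity expresses $[\iiota] \otimes [\pppsi]$ as a signed sum of terms whose last row has strictly more zeros, and by iteration the last row is brought into the form $\iiota_3 + \epsilon_3\iiota_2$ with $\epsilon_3 \in \{0,1\}$. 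Second, apply the column analog via \eqref{eqn:rel-semi-local-first} to normalize the last column to the shape $(0,\epsilon_3',1)^\top$, using that this step does not disturb the shape of the last row (since the last row has at most one nonzero entry outside the last position). Third, with the last row and column now in standard form, appeal to the already-proven $d=2$ case on the upper-left $2 \times 2$ block to reduce the row $\pppsi_2$ to $\iiota_2 + \epsilon_2\iiota_1$. All surviving terms are then of the form~\eqref{eq:12}, and Lemma~\ref{lem:claim-two} identifies each as a $\bZ$-multiple of $[\iiota] \otimes [J_3]$.

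The main obstacle is admissibility: each application of \eqref{eqn:rel-semi-local-first}, \eqref{eqn:rel-semi-local-second}, or Lemma~\ref{lemma:fortunately} requires that the union of $\iiota_1,\iiota_2,\iiota_3$, the rows of $\pppsi$, and the chosen $f_0$ form a simplex of $E(\R^3)$---equivalently, every subset must span a summand of $(\R^3)^\vee$. A naive choice of $f_0$ (e.g.\ a difference of two rows with unit entries) can produce an intermediate matrix entry which is neither zero nor a unit, breaking Lemma~\ref{lem:entries-zero-or-units} and blocking further iteration. The resolution, following Lemma~\ref{lemprop:choose-enough-elements}, is to pick the scalars defining $f_0$ generically: using the surjection $\R \to \prod_\mathfrak{m} \R/\mathfrak{m}$ from the Chinese remainder theorem together with the infinitude of each residue field, one can lift genericity conditions imposed residue-field-by-residue-field to a single element of $\R$ avoiding a prescribed finite list of proper subspaces in each residue field. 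For $d=3$ the relevant admissibility conditions at each step form a manageable finite list, so the argument closes; the reason the same approach does not immediately extend is that for $d \geq 4$ the simultaneous row and column reductions produce admissibility constraints whose consistent resolution is not obvious, which is why the general case remains only Conjecture~\ref{conj:double-steinberg-local}.
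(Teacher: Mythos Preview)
Your overall plan (row/column reductions followed by Lemma~\ref{lem:claim-two}) is the right shape, and you have correctly identified admissibility as the crux. But the resolution you propose --- picking the coefficients of $f_0$ \emph{generically} via Lemma~\ref{lemprop:choose-enough-elements} --- does not work once you also impose that $f_0$ have a prescribed zero coordinate. In the 2-dimensional submodule $V=\mr{span}(\pppsi_2,\pppsi_3)$ the condition ``$i$th coordinate vanishes'' cuts out a rank-$1$ submodule, so $f_0$ is determined up to a unit; there is no room left for a generic choice. Lemma~\ref{lemprop:choose-enough-elements} only tells you that a \emph{proper} subspace of $V/\mathfrak{m}V$ is excluded for each maximal ideal, and nothing prevents that excluded line from being exactly your constraint line. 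So the step ``lift genericity conditions \ldots\ to a single element of $\R$'' is vacuous here.

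What actually makes Step~1 go through is a direct check. After scaling rows so that $\pppsi_{2,1}=\pppsi_{3,1}=1$, the \emph{specific} element $f_0=\pppsi_3-\pppsi_2$ is forced, and one must verify by hand that $\{\iiota_1,\iiota_2,\iiota_3,\pppsi_1,\pppsi_2,\pppsi_3,f_0\}<E(\R^3)$. The key observation is that applying the simplex condition to the faces $\{\iiota_2,\pppsi_2,\pppsi_3\}$ and $\{\iiota_3,\pppsi_2,\pppsi_3\}$ of the original simplex forces $\pppsi_{3,2}-\pppsi_{2,2}$ and $\pppsi_{3,3}-\pppsi_{2,3}$ into $\{0\}\cup\R^\times$; this is exactly what is needed to run the case-by-case argument (as in Lemma~\ref{lem:Jordan-blocks-in-St-two}) showing every subset containing $f_0$ spans a summand. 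The paper then proceeds not by reducing the last row to $(0,\epsilon_3,1)$ and recursing on a $2\times 2$ block, but by first achieving $\pppsi_{3,1}=0$ (row subtraction), then $\pppsi_{3,2}=0$ (column subtraction, admissible because after normalising one finds $\pppsi_3=\iiota_2+\iiota_3$ is already in the simplex), and finally disposing of the middle row by an explicit finite case analysis. Your inductive Step~3 would also require checking that $2\times 2$ admissibility lifts to the ambient $3\times 3$ simplex, which is not automatic.
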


\begin{proof}
  By Lemma~\ref{lem:iota-one}, the elements of the form
  \begin{equation}
    \label{eq:28}
    [\iiota] \otimes
    \begin{bmatrix}
      1 & 0 & 0\\
      \pppsi_{2,1} & \pppsi_{2,2} & \pppsi_{2,3}\\
      \pppsi_{3,1} & \pppsi_{3,2} & \pppsi_{3,3}
    \end{bmatrix}
  \end{equation}
  generate.  We now reduce this set of generators to a set covered by Lemma~\ref{lem:claim-two}, in the following steps.

\vspace{.5em}

\noindent \emph{Step 1.} We first show that the elements~\eqref{eq:28} with $\pppsi_{3,1} = 0$ generate.

Starting from a generator of the form~\eqref{eq:28}, we achieve this immediately if $\pppsi_{2,1} = 0$ by permuting the first two rows, so we can from now on assume that $\pppsi_{2,1}$ and $\pppsi_{3,1}$ are both units, by Lemma~\ref{lem:entries-zero-or-units}. Hence by scaling rows we may assume $\pppsi_{2,1} = \pppsi_{3,1} = 1$.  We are reduced to considering $[\iiota] \otimes [\pppsi']$, where $\pppsi'$ has matrix
\begin{equation}\label{eq:29}
  \pppsi' = \begin{bmatrix}
    1 & 0  & 0\\
    1 & \pppsi_{2,2} & \pppsi_{2,3}\\
    1 & \pppsi_{3,2} & \pppsi_{3,3}
  \end{bmatrix}.
\end{equation}
We know the cokernel of $(\iiota_3,\pppsi_2, \pppsi_3): \R^3 \to \R^3$ is projective,
and the matrix for this homomorphism is
\begin{equation*}
  \begin{bmatrix}
    0 & 0 & 1 \\
    1 & \pppsi_{2,2} & \pppsi_{2,3}\\
    1 & \pppsi_{3,2} & \pppsi_{3,3}
  \end{bmatrix}.
\end{equation*}
For connected $\R$ this can only happen if the determinant (which generates the zeroth Fitting ideal of the cokernel) is either zero or a unit.  A similar argument applies starting from projectivity of the cokernel of $(\iiota_2, \pppsi_2, \pppsi_3)$, and we deduce
\begin{align*}
  \pppsi_{3,2} - \pppsi_{2,2} & \in \{0\} \cup \R^\times,\\
  \pppsi_{3,3} - \pppsi_{2,3} & \in \{0\} \cup \R^\times.
\end{align*}

We would now like to subtract the second and the third rows of~\eqref{eq:29}, that is, to apply~\eqref{eqn:rel-semi-local-second} with $f_0 = \pppsi_3 - \pppsi_2$.  To prove that operation is admissible we must prove that the span of any subset
\begin{equation*}
  \sigma \subset \{ \iiota_1, \iiota_2, \iiota_3, \pppsi_3 - \pppsi_2, \iiota_1, \pppsi_2, \pppsi_3\}
\end{equation*}
is a summand of $(\R^3)^\vee$.  We know this is the case for subsets not containing $\pppsi_3 - \pppsi_2$, and for subsets containing both $\pppsi_3 - \pppsi_2$ and either $\pppsi_2$ or $\pppsi_3$ we have
\begin{equation*}
  \mr{span}(\sigma) = \mr{span}((\sigma \setminus \{\pppsi_3 - \pppsi_2\}) \cup \{\pppsi_2, \pppsi_3\}),
\end{equation*}
which is a summand.  This leaves the case where
\begin{equation*}
  \sigma \subset \{ \iiota_1, \iiota_2, \iiota_3, \pppsi_3 - \pppsi_2\}
\end{equation*}
which is shown by an elementary case-by-case argument similar to Lemma~\ref{lem:Jordan-blocks-in-St-two}, using that $\pppsi_3 - \pppsi_2$ is a linear combination of $\iiota_2$ and $\iiota_3$ with coefficients in $\{0\} \cup \R^\times$.

For $\pppsi'$ of the form~\eqref{eq:29}, the row subtraction gives
\begin{align*}
  [\iiota] \otimes [\pppsi'] & = [\iiota] \otimes [(\iiota_1, \pppsi_2 - \pppsi_3, \pppsi_3)] + [\iiota] \otimes [(\iiota_1, \pppsi_2, \pppsi_3 - \pppsi_2)]\\
  & = [\iiota] \otimes [(\iiota_1, \pppsi_2, \pppsi_2 - \pppsi_3)] - [\iiota] \otimes [(\iiota_1, \pppsi_3, \pppsi_2 - \pppsi_3)],
\end{align*}
both terms of which are of the form~\eqref{eq:28} but with $\pppsi_{3,1} = 0$.

 \vspace{.5em}
 
 \noindent \emph{Step 2.}  We have shown that $\mr{St}^2(\R^3)_{\mr{GL}_3(\R)}$ is generated by elements of the form
 \begin{equation}\label{eq:7}
   [\iiota] \otimes
   \begin{bmatrix}
      1 & 0 & 0\\
      \pppsi_{2,1} & \pppsi_{2,2} & \pppsi_{2,3}\\
      0 & \pppsi_{3,2} & \pppsi_{3,3}
    \end{bmatrix}.
  \end{equation}
  We now claim that $\mr{St}^2(\R^3)_{\mr{GL}_3(\R)}$  is generated by elements of this form which also satisfy $\pppsi_{3,2} = 0$.

 If $\pppsi_{3,3} = 0$ we can obtain this by permuting the last two columns, so from now on we will assume both $\pppsi_{3,2}$ and $\pppsi_{3,3}$ are both units.  By scaling columns we can then assume $\pppsi_{3,2} = \pppsi_{3,3} = 1$.

 We would now like to subtract the last two columns, i.e.\ to apply~\eqref{eqn:rel-semi-local-first} with $f'_0 = \iiota_2 + \iiota_3$.  We must first prove that this column operation is admissible, i.e.\ that
 \begin{equation*}
   \{\iiota_2 + \iiota_3, \iiota_1, \iiota_2, \iiota_3, \iiota_1, \pppsi_2, \pppsi_3\} < E(\R^3).
 \end{equation*}
 But this is obvious, since $\pppsi_3 = \iiota_2 + \iiota_3$.   After applying this column subtraction and in one of the two resulting terms permuting the last two columns,  we see that~\eqref{eq:7} is a linear combination of elements of the form
 \begin{equation}\label{eq:16}
   [\iiota] \otimes
   \begin{bmatrix}
     1 & 0 & 0 \\
     \pppsi_{2,1} & \pppsi_{2,2} & \pppsi_{2,3}\\
     0 & 0 & 1
   \end{bmatrix}
 \end{equation}

 \vspace{.5em}
 
 \noindent \emph{Step 3.} Finally, starting from a generator of the form~\eqref{eq:16} we may, after row and column scalings, assume $\pppsi_{2,1}$, $\pppsi_{2,2}$, and $\pppsi_{2,3}$ are all in $\{0,1\}$.  If $\pppsi_{2,2} = 0$ then the rows do not form a basis, so the element~\eqref{eq:16} represents $0 \in \mr{St}^2(\R^3)$.  The case where $\pppsi_{2,3} = 0$ is covered by Lemma~\ref{lem:claim-two}, and the case $\pppsi_{2,1} = 0$ is covered by Lemma~\ref{lem:claim-two} after permuting the last two rows and the last two columns.

 This leaves the case where $\pppsi_{2,1} = \pppsi_{2,2} = \pppsi_{2,3} = 1$, in which we will subtract the last two columns once more.  To see that this column operation is admissible, we must as above show that the set
 \begin{equation*}
   \{\iiota_2 + \iiota_3, \iiota_1, \iiota_2, \iiota_3, \iiota_1, \iiota_1 + \iiota_2 + \iiota_3, \iiota_3\}  = \{\iiota_1, \iiota_2, \iiota_3, \iiota_1 + \iiota_2 + \iiota_3, \iiota_2 + \iiota_3\}
 \end{equation*}
 is a simplex of $E(\R^3)$, which is done by an elementary case-by-case argument similar to Lemma~\ref{lem:Jordan-blocks-in-St-two}.
  We may therefore subtract the last two columns, showing
 \begin{align*}
   [\iiota] \otimes
   \begin{bmatrix}
     1 & 0 & 0 \\
     1 & 1 & 1 \\
     0 & 0 & 1
   \end{bmatrix}
   & =
   [\iiota] \otimes
   \begin{bmatrix}
     1 & 0 & 0 \\
     1 & 0 & 1 \\
     0 & -1 & 1
   \end{bmatrix}
   + [\iiota] \otimes
   \begin{bmatrix}
     1 & 0 & 0 \\
     1 & 1 & 0 \\
     0 & 0 & 1
   \end{bmatrix}
   \\ & =
   [\iiota] \otimes
   \begin{bmatrix}
     1 & 0 & 0 \\
     1 & 1 & 0 \\
     0 & 1 & 1
   \end{bmatrix}
   + [\iiota] \otimes
   \begin{bmatrix}
     1 & 0 & 0 \\
     1 & 1 & 0 \\
     0 & 0 & 1
   \end{bmatrix},
 \end{align*}
both terms of which are covered by Lemma~\ref{lem:claim-two}.
\end{proof}

This finishes the proof of Theorem~\ref{thm:proof-of-conjecture-rank-leq-3}, asserting that Conjecture \ref{conj:double-steinberg-local} is true for connected semi-local rings $\R$ with infinite residue fields, and projective $\R$-modules $M$ of rank $d\leq 3$.

\section{Rognes' conjecture} \label{sec:rognes-conj}

Some of the results developed so far can be interpreted as proving a version of a conjecture of Rognes. The free algebraic $K$-theory spectrum $\mathbf{K}^f(\R)$ of $\R$ is obtained by restricting to those components of the algebraic $K$-theory spectrum $\mathbf{K}(\R)$ which correspond to free finitely-generated modules. In \cite{rognesrank}, when $\R$ has the invariant dimension property Rognes provides a filtration $\{F_n \mathbf{K}^f(\R)\}$ of $\mathbf{K}^f(\R)$ and equivalences
\[F_n \mathbf{K}^f(\R)/F_{n-1} \mathbf{K}^f(\R) \simeq \mathbf{D}^f(\R^n) \hcoker \mr{GL}_n(\R),\]
where $\mathbf{D}^f(\R^n)$ is the subspectrum of the stable building of Definition \ref{def:stable-building} where all summands are free. The connectivity of these homotopy orbit spectra therefore control the speed of convergence of this filtration. 

If $\R$ is a connected semi-local ring then projective modules are free, so we have $\gD^f(\R^n) = \gD(\R^n)$. We then have the following. 

\begin{theorem}\label{thm:RognesMain}
	If $\R$ is a connected semi-local ring with all residue fields infinite, then $H_d(\mathbf{D}(\R^n) \hcoker \mr{GL}_n(\R))=0$ for $d < 2n-2$. Furthermore, if $\R$ is an infinite field then $H_{2n-2}(\mathbf{D}(\R^n) \hcoker \mr{GL}_n(\R))$ is torsion for $n \geq 2$.
\end{theorem}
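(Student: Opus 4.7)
My plan is to derive Theorem~\ref{thm:RognesMain} as a direct consequence of the $E_\infty$-homology computations already carried out in Sections~\ref{sec:GLfield} and~\ref{sec:local-semi-local}, via the identification $H^{E_\infty}_{n,d}(\gR_\bZ) \cong H_d(\gD(\R^n)\hcoker\mr{GL}_n(\R))$ advertised in the introduction. Since $\R$ is connected and semi-local, every finitely generated projective $\R$-module is free, so $\gR(n) \simeq B\mr{GL}_n(\R)$, and Theorem~\ref{thm:split-building-ek} gives $H^{E_k}_{n,d}(\gR_\bZ) \cong \widetilde{H}_{d-k}(\tilde{D}^k(\R^n)\hcoker\mr{GL}_n(\R);\bZ)$. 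Because the infinite residue fields ensure that $\R$ has many units (Example~\ref{exam:many-units}), the pair $(\R,\bZ)$ satisfies the Nesterenko--Suslin property (Proposition~\ref{propthm:SN-many-units}), so Theorem~\ref{thmcor:BlockvsFlag} lets us replace the split building by the ordinary building in this identification; passing to the colimit $k\to\infty$ along the structure maps $S^1\wedge D^k(\R^n) \to D^{k+1}(\R^n)$ defining the spectrum $\gD(\R^n)$ then yields the promised isomorphism.

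For the first (connectivity) claim I will apply the semi-local analogue of the $E_2$-vanishing line. The $(2n-1)$-connectivity of $D^2(\R^n)$ proved in Theorem~\ref{thm:Solomon--Tits-local}(\ref{item:8}) yields, exactly as in the proof of Theorem~\ref{thm:E2Homology}, the vanishing $H^{E_2}_{n,d}(\gR_\bZ) = 0$ for $d < 2(n-1)$. The bar spectral sequence argument from the proof of Corollary~\ref{cor:einfty-hom-indec} (which invokes Theorem $E_k$.14.3) then propagates this vanishing to $H^{E_\infty}_{n,d}(\gR_\bZ) = 0$ for $d < 2n-2$, and the identification above translates this into the desired vanishing of $H_i(\gD(\R^n)\hcoker\mr{GL}_n(\R))$ for $i<2n-2$.

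For the second (torsion) claim, where $\R=\bF$ is an infinite field, I will invoke Corollary~\ref{cor:einfty-hom-indec} with $\bk=\bZ$, which computes $H^{E_\infty}_{n,2n-2}(\gR_\bZ)$ to be $\bZ/p$ when $n=p^k$ is a nontrivial prime power and $0$ otherwise; both of these are torsion for $n\geq 2$, so the identification yields the claim. The substantive input is Corollary~\ref{cor:einfty-hom-indec}, itself resting on the divided power algebra identification of Theorem~\ref{thm:C}. The only genuinely new task in the present proof is the bookkeeping that confirms the identification $H^{E_\infty}_{n,d}(\gR_\bZ) \cong H_d(\gD(\R^n)\hcoker\mr{GL}_n(\R))$ is compatible with stabilisation in $k$, and I expect this to be routine given that $\gD(\R^n)$ is by construction the spectrum with the building $D^k(\R^n)$ as its $k$th space.
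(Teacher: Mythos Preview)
Your approach is correct and ultimately rests on the same mechanism as the paper's proof, but the packaging differs in an instructive way. You route the argument through the identification $H^{E_\infty}_{n,d}(\gR_\bZ)\cong H_d(\gD(\R^n)\hcoker\mr{GL}_n(\R))$ and then cite the $E_\infty$-vanishing already established in Section~\ref{sec:defin-conn-theor} (and Corollary~\ref{cor:einfty-hom-indec} for the torsion part). The paper instead argues directly on the buildings: it starts from the $(2n-1)$-connectivity of $D^2(\R^n)\hcoker\mr{GL}_n(\R)$, passes to the split buildings via the Nesterenko--Suslin property, and then uses the filtration of $\tilde{D}^{k+1}(\R^n)$ by its last simplicial direction to propagate connectivity inductively from $k$ to $k+1$, gaining one degree at each step. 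This inductive step is nothing other than the bar spectral sequence relating $E_k$- to $E_{k+1}$-homology (Theorem~$E_k$.14.3) translated through Theorem~\ref{thm:split-building-ek}, so the two arguments are the same computation unwound differently.

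The one place you underestimate the work is the step you call ``routine bookkeeping'': the identification of $H^{E_\infty}_{n,*}(\gR_\bZ)$ with the homology of the spectrum $\gD(\R^n)\hcoker\mr{GL}_n(\R)$. In the paper's logical order this is not established before Theorem~\ref{thm:RognesMain}; it is discussed afterwards in Remark~\ref{rem:RkFiltComparison} (invoking Section~$E_k$.13.8 for the spectrum structure on iterated bar constructions). It amounts to checking that the structure maps of the spectrum $\{\tilde{D}^k(\R^n)\hcoker\mr{GL}_n(\R)\}_k$ coincide with those of $\{\tilde{B}^{E_k}(\gR)(n)\}_k$, and that the latter computes $E_\infty$-homology in the colimit. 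This is indeed doable, but it is the substantive content of the proof, not an afterthought. The paper's direct argument avoids needing this identification as a standalone fact; your route has the virtue of making the connection to $E_\infty$-homology explicit, which is what the paper eventually needs anyway for the torsion statement.
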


Rognes conjectured the stronger statement that for $\R$ a Euclidean domain or local ring the homology of $\gD(\R^n)$ is concentrated in degree $2n-2$ \cite[Conjecture 12.3]{rognesrank}, and he proved that it vanishes in degrees $>2n-2$ \cite[Theorem 12.1]{rognesrank}.
Theorem \ref{thm:RognesMain} for the case of local rings would be a consequence of that conjecture.

\begin{proof}[Proof of Theorem \ref{thm:RognesMain}]
By Example \ref{exam:many-units} the pair $(\R, \bZ)$ satisfies the Nesterenko--Suslin property, and so by Theorem \ref{thmcor:BlockvsFlag} the maps on pointed homotopy orbits
	\[\tilde{D}^{k}(\R^n)\hcoker \mr{GL}_n(\R) \lra {D}^{k}(\R^n)\hcoker \mr{GL}_n(\R)\]
are integral homology equivalences. By Theorem \ref{thm:split-building-ek} we have
	\[H^{E_k}_{n,d}(\gR_\bZ) \cong \widetilde{H}_{d-k}(\tilde{D}^{k}(\R^n)\hcoker \mr{GL}_n(\R);\bZ),\]
which combined with the above gives $H^{E_\infty}_{n,d}(\gR_\bZ) \cong H_{d}({\mathbf{D}}(\R^n) \hcoker \mr{GL}_n(\R);\bZ)$ in the limit $k \to \infty$. The first part of the Theorem then follows from Corollary \ref{cor:thmA}, and the second part follows from Corollary \ref{cor:einfty-hom-indec}.
\end{proof}

\begin{remark}
By the results of Section \ref{sec:coinv-e_2-steinb}, the second part is also true for $n= 2,3$ when $A$ is a connected semi-local $A$ with infinite residue fields. If Conjecture \ref{conj:double-steinberg-local} is true, it would follow for all $n \geq 2$.
\end{remark}

\begin{remark}
The argument of Theorem \ref{thm:RognesMain} applies with $\R$ any field $\bF$, as long as we use $\bk$-coefficients where $(\bF, \bk)$ satisfies the Nesterenko--Suslin property. If $\bF$ is a finite field this means that $\bk$ must have a different characteristic. But in this case Quillen \cite{quillenfinite} has completely calculated the $\bk$-homology of all $\mr{GL}_n(\bF)$'s, and in \cite[Section 7]{e2cellsIII} we used this to completely compute the corresponding $E_1$-homology. By iterating the bar spectral sequence of Theorem $E_k$.14.1 one could obtain far more information about $H_*(\gD(\bF^n) \hcoker \mr{GL}_n(\bF);\bk)$ than a mere vanishing range. The same applies when $\bF$ is a number field and $\bk$ has characteristic zero, using \cite{borelyang}.
\end{remark}

\begin{remark}\label{rem:RkFiltComparison}
Let us briefly explain how Rognes' filtration of $\gK(\R)$, called the ``stable rank filtration'', relates to our setup (see in particular Section $E_k$.13.8).  By the same method as in Section~\ref{sec:contructing-r} we may construct an object $\gR' \in \cat{Alg}_{E_\infty}(\cat{sSet}^{\bZ_{\leq}})$ with
\begin{equation*}
	\gR'(n) \simeq \coprod_{\substack{[M]\\1 \leq r(M) \leq n}} B\mr{GL}(M)
\end{equation*}
for $n \geq 1$ and $\gR'(n) = \varnothing$ for $n \leq 0$,  a filtered variant of~\eqref{eq:ValuesOfR}.  The reduced bar construction $\tilde{B}^{E_k}(-)$ may be carried out in the filtered category $\cat{sSet}^{\bZ_{\leq}}$, using the augmentation $\epsilon \colon (\gR')^+ \to 0_*(*)$ to the filtered object given by a point in filtration $\geq 0$ and the empty set otherwise (called the ``group-completion augmentation'' in Section~$E_k$.13.8.1).
This gives an object $\tilde{B}^{E_k}(\gR') \in \cat{sSet}_*^{\Z_{\leq}}$ which has
\begin{align*}
	\mr{gr}\big(\tilde{B}^{E_k}(\gR')\big) \cong \tilde{B}^{E_k}(\gR) 
\end{align*}
As explained in Section~$E_k$.13.8, the object $\colim \tilde{B}^{E_k}(\gR') \in \cat{sSet}_*$ is a $k$-fold delooping of the group-completion of $\colim \gR$, that is,
\begin{equation*}
	\colim \tilde{B}^{E_k}(\gR') \simeq \Omega^\infty \Sigma^k \gK(\R).
\end{equation*}

As $k$ varies, these objects $\tilde{B}^{E_k}(\gR')$ fit together into a filtered spectrum, which we shall denote $r \mapsto F_r \gK(A)$.  It is a mild variant of Rognes' stable rank filtration on $\gK^f(\R)$.  A minor difference is that $\gK(\R)$ may differ from $\gK^f(\R)$ when $\R$ admits non-free projective modules.  A more significant difference is that we have a filtration on \emph{direct sum} $K$-theory and that correspondingly we have
\begin{equation*}
	(\mr{gr}(\gK(\R)))(n) \simeq \bigvee_{\substack{[M]\\ r(M) = n}} \tilde{\gD}(M)\hcoker \mr{GL}(M),
\end{equation*}
with stable buildings replaced by their split versions, specifically $\tilde{\gD}(M)$ is the spectrum associated to the $\Gamma$-set~\eqref{eq:3}.  Forgetting splittings leads to a filtered map from this filtered spectrum to the non-split version, which on associated gradeds comes from the maps~\eqref{eq:Comp}.  When $(\R,\bZ)$ has the Nesterenko--Suslin property this map of associated gradeds is then a homology equivalence, by Theorem~\ref{thmcor:BlockvsFlag} and since the spectra in the associated gradeds are connective it must be a weak equivalence.  This identifies our ``split stable rank filtration'' on $\gK(\R)$ with Rognes', when all projective $\R$-modules are free and $(\R,\bZ)$ has the Nesterenko--Suslin property.

Defining the (split) stable rank filtration on $K_*(\R)$ as $F_r K_*(\R) = \mr{im} \left[ \pi_*(F_r \gK(\R)) \to \pi_*(\gK(\R))\right]$ we obtain for a connected semi-local ring with infinite residue fields that the inclusion $F_r K_d(\R) \hookrightarrow K_d(\R)$ is an equality for $d \leq 2r-1$.  When $d = 2r$ the quotient $K_{2r}(\R)/F_r K_{2r}(\R)$ is $\pi_{2r}(\tilde{\mathbf{D}}(\R^{r+1})\hcoker \mr{GL}_{r+1}(\R))$, which by Theorem~\ref{thm:split-building-ek} and the Hurewicz theorem may be identified with $H^{E_\infty}_{r+1,2r}(\gR_\bZ)$.  In particular if $\R$ is an infinite field (or if Conjecture~\ref{conj:double-steinberg-local} is true) then $K_{2r}(\R)/F_r K_{2r}(\R)$ is finite for $r > 0$, given by Corollary~\ref{cor:einfty-hom-indec}.
\end{remark}

\begin{remark}
Arone and Lesh \cite{AroneLesh2} have proved Rognes' conjecture for $\R$ given by $\bC$ or $\bR$ \emph{considered as topological rings}. Let us explain how their method relates to ours. For a general $\R$ they construct a ``modified stable rank filtration'' of $\gK(\R)$ in the framework of $\Gamma$-spaces: accounting for the differences of framework, it is conceptually the same as the ``split stable rank filtration'' described in Remark \ref{rem:RkFiltComparison}. They then show that for $\bC$ or $\bR$ this agrees with Rognes' filtration: from our point of view this is because when taken with their Euclidean topology the maps $\mr{GL}(Q) \overset{i}\to \mr{GL}(P \oplus Q,\text{ fix } P) \overset{\rho}\to \mr{GL}(Q)$ are homotopy equivalences, which is much stronger than the Nesterenko--Suslin property and ensures that the map from the split to the ordinary building is an equivalence. Finally, they identify the filtration quotients of their filtration in terms of certain spectra they had studied earlier \cite{AroneLesh} and had shown to be highly-connected: from our point of view the extremely high-connectivity of the topologised form of the Tits building \cite[p.~134]{Mitchell} gives a slope 2 vanishing line for $E_1$-homology, and taking iterated bar spectral sequences (using Proposition $E_k$.14.5) this persists to $E_\infty$-homology.
\end{remark}

\section{Applications to homological stability}\label{sec:AppHomStab}
In this section we shall apply the results obtained so far, along with the general machinery of \cite{e2cellsIv3}, to analyse homological stability properties of the groups $\mr{GL}_n(\R)$ when $\R$ is a connected semi-local ring with all reside fields infinite. We therefore let $\gR \in \cat{Alg}_{E_\infty}(\cat{sSet}^\bN)$ be the $E_\infty$-algebra constructed in Section \ref{sec:contructing-r}, having $\gR(n) \simeq B\mr{GL}_n(\R)$, and denote by $\gR_\bk \in \cat{Alg}_{E_\infty}(\cat{sMod}_\bk^\bN)$ its $\bk$-linearisation.

\subsection{$E_\infty$-homology in low degrees}\label{sec:EinfHomologyCalc}
In this section we shall describe the $E_\infty$-homology of $\gR_{\bZ}$ in low degrees, relying on calculations of the group homology of $\mr{GL}_2(\R)$. The result is summarised in the chart given in Figure \ref{fig:eInfHomology} of the $E_\infty$-homology of $\gR_\bZ$, though we shall also explain some further features related to the classes called $\rho_2$ and $\rho_3$.

\begin{figure}[ht]
	\begin{tikzpicture}
	\begin{scope}
	\clip (-2,-1) rectangle ({2.5*4+2},6.5);
	\draw (0,0)--(10.5,0);
	\draw (0,0) -- (0,6.5);
	\foreach \s in {0,...,6}
	{
		\draw [dotted] (-1.5,\s)--(10.5,\s);
		\node [fill=white] at (-1.5,\s) [left] {\tiny $\s$};
	}
	\foreach \s in {0,...,4}
	{
		\draw [dotted] ({2.5*\s},-0.5)--({2.5*\s},6.5);
		\node [fill=white] at ({2.5*\s},-.5) {\tiny $\s$};
	}
	\draw [very thick,Mahogany,densely dotted] (2.5,0) -- (10,6);

	\node [fill=white] at (2.5,0) {$\bZ$};
	\node [fill=white] at (2.5,1) {$H_1(\R^\times;\bZ)$};
	\node [fill=white] at (2.5,2) {$H_2(\R^\times;\bZ)$};
	\node [fill=white] at (2.5,3) {$H_3(\R^\times;\bZ)$};
	\node [fill=white] at (2.5,4) {$H_4(\R^\times;\bZ)$};
	\node [fill=white] at (2.5,5) {$H_5(\R^\times;\bZ)$};
	\node [fill=white] at (2.5,6) {$H_6(\R^\times;\bZ)$};

	\node [fill=white] at (5,2) {$\bZ/2\{\rho_2\}$};
	\node [fill=white] at (5,3) {$\mathfrak{p}(\R)$};
	\node [fill=white] at (5,4) {?};
	\node [fill=white] at (5,5) {?};
	\node [fill=white] at (5,6) {?};

	\node [fill=white] at (7.5,4) {$\bZ/3\{\rho_3\}$};
	\node [fill=white] at (7.5,5) {?};
	\node [fill=white] at (7.5,6) {?};

	\node [fill=white] at (10,6) {?};
	
	\node at (-.5,-.5) {$\nicefrac{d}{n}$};
	\end{scope}
	\end{tikzpicture}
	\caption{The $E_\infty$-homology of $\gR_\bZ$, which vanishes for $d < 2n-2$.}
	\label{fig:eInfHomology}
\end{figure}

\subsubsection{The pre-Bloch group}\label{sec:PreBloch}

The entry $\mathfrak{p}(\R)$ in position $(2,3)$ of Figure \ref{fig:eInfHomology} denotes the \emph{pre-Bloch group}, which is the abelian group generated by symbols $[x]$ for $x \in \R^\times \setminus \{1\}$ subject to the relations
	\[[x]-[y]+\left[\frac{y}{x}\right]-\left[\frac{1-x^{-1}}{1-y^{-1}}\right]+\left[\frac{1-x}{1-y}\right]=0\]
whenever $x$, $y$, $1-x$, $1-y$, and $x-y \in \R^\times$. 

When $\R=\bF$ is an infinite field, this group arises in Suslin's analysis \cite{SuslinK3} of the relation between $K_3(\bF)$ and the Bloch group of $\bF$. A similar analysis has been done for $\R$ a ring with many units---which includes semi-local rings with all residue fields infinite---by Mirzaii \cite{MirzaiiBW, MirzaiiBWErratum}. Let $\mr{GM}_n(\R)$ denote the subgroup of $\mr{GL}_n(\R)$ consisting of \emph{monomial matrices}, i.e.\ invertible matrices having a single non-zero entry in each row and column. 

\begin{theorem}[Suslin, Mirzaii]\label{thm:SuslinCalc}
Let $\R$ be a ring with many units. Then we have 
\begin{enumerate}[(i)]
	\item $H_1(\mr{GL}_2(\R), \mr{GM}_2(\R)) = 0$.
	\item $H_2(\mr{GL}_2(\R), \mr{GM}_2(\R)) \cong \bZ/2$.
	\item $H_3(\mr{GL}_2(\R), \mr{GM}_2(\R)) \cong \mathfrak{p}(\R)$.
\end{enumerate}
\end{theorem}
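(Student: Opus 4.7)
The plan is to follow Suslin's strategy from \cite{SuslinK3} for the case of an infinite field, generalised by Mirzaii \cite{MirzaiiBW, MirzaiiBWErratum} to the case of rings with many units; since any connected semi-local ring with infinite residue fields has many units by Example \ref{exam:many-units}, this applies in our setting. First I would set up the action of $\mr{GL}_2(\R)$ on a ``configuration complex'' $X_\bullet$ whose $p$-simplices are ordered $(p+1)$-tuples of points in $\bP^1(\R)$ in general position (concretely, each pair lifts to a splitting of $\R^2$). The key technical input, analogous in spirit to our Proposition \ref{prop:E-of-M-is-contractible}, is that $|X_\bullet|$ is highly-connected when $\R$ has many units; this was established by Mirzaii.

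Next I would consider the equivariant homology spectral sequence for this action,
\[E^1_{p,q} = \bigoplus_{[\sigma]} H_q(\mr{Stab}_{\mr{GL}_2(\R)}(\sigma); \bZ) \Longrightarrow H_{p+q}(\mr{GL}_2(\R); \bZ),\]
where $[\sigma]$ ranges over $\mr{GL}_2(\R)$-orbits of $p$-simplices. The stabilisers are explicit: the stabiliser of an ordered pair of distinct points in $\bP^1(\R)$ is conjugate to the diagonal torus $\R^\times \times \R^\times$, while $\mr{GL}_2(\R)$ acts transitively on ordered triples in general position with trivial stabiliser. Passing to unordered simplices, the stabiliser of an unordered pair is precisely $\mr{GM}_2(\R) = (\R^\times \times \R^\times) \rtimes \fS_2$, so the relative homology $H_*(\mr{GL}_2(\R), \mr{GM}_2(\R))$ is naturally identified with the contribution to the spectral sequence coming from simplices of dimension $\geq 2$. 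Parts (i) and (ii) then follow by inspection of the $E^2$ page in low total degrees: the contribution in total degree $1$ vanishes because triples of points in general position have trivial stabiliser (killing $H_0$ of the stratum in the relative complex), and the contribution in total degree $2$ reduces to $\bZ/2$ arising from the $\fS_2$-sign action on the pair of coordinate axes.

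For (iii), the plan is to trace contributions in total degree $3$ and identify the resulting group with the pre-Bloch group $\mathfrak{p}(\R)$ via the cross-ratio map $(0, \infty, 1, x) \mapsto [x]$; the five-term pre-Bloch relation then corresponds precisely to the boundary in the configuration complex of a generic $5$-tuple, once one quotients by the $\fS_5$-action. The main obstacle is handling this last identification: establishing that the cross-ratio induces an isomorphism requires controlling the higher differentials $d^r$ for $r \geq 2$ and the possible extensions on the $E^\infty$-page, which in turn requires auxiliary computations of $H_*(\mr{GM}_2(\R))$ in low degrees (in particular an analysis of the $\fS_2$-action on $H_*(\R^\times \times \R^\times)$) together with a Bloch--Wigner-type exact sequence, as in Mirzaii's approach, to pin down the natural surjection from the relative homology to $\mathfrak{p}(\R)$ and verify that its kernel vanishes.
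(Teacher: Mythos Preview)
Your plan follows the same Suslin--Mirzaii strategy as the paper, using the complex of tuples of lines in $\R^2$ in general position and its acyclicity. One correction: the stabiliser of an ordered triple in general position is not trivial but the centre $\R^\times \subset \mr{GL}_2(\R)$; this matters because $\R^\times$ can have nontrivial homology, so your claim that ``$H_0$ of the stratum vanishes'' needs more care.

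The paper's execution differs in packaging. Rather than analysing the equivariant spectral sequence for $\mr{GL}_2(\R)$ acting on $X_\bullet$ directly, it writes down an explicit $\mr{GM}_2(\R)\to\mr{GL}_2(\R)$-equivariant map of short exact chain complexes
\[
\begin{tikzcd}
\bZ \arrow[equals]{d} & \bZ[\fS_2] \lar \dar & \bZ[\fS_2] \lar \dar & \bZ \lar \dar\\
\bZ & C_0(\R^2) \lar & C_1(\R^2) \lar & Z_1 \lar
\end{tikzcd}
\]
and uses Shapiro's lemma plus the Nesterenko--Suslin property to show the middle two vertical maps are homology isomorphisms. This yields directly an isomorphism
\[
H_{d-2}\bigl((\mr{GL}_2(\R);Z_1),(\mr{GM}_2(\R);\bZ)\bigr)\xrightarrow{\ \cong\ } H_d(\mr{GL}_2(\R),\mr{GM}_2(\R)),
\]
reducing the problem to computing $H_0$ and $H_1$ of $\mr{GL}_2(\R)$ with coefficients in the cycle module $Z_1$. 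The latter is then handled via the short exact sequence $Z_2\to C_2(\R^2)\to Z_1$, together with Mirzaii's identification $\mathfrak{p}(\R)\cong H_0(\mr{GL}_2(\R);Z_2)$ and an explicit splitting of $H_i(k)$ for $i=0,1$. This avoids having to chase higher differentials and extensions in your spectral sequence, which is precisely the ``main obstacle'' you identify in part (iii); the paper's comparison-of-complexes trick sidesteps it.
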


As we will need to use some details of this calculation, we provide an outline of the proof, following \cite{SuslinK3, MirzaiiBW, MirzaiiBWErratum}.

\begin{proof}[Proof sketch] We consider the augmented chain complex $C_\bullet(\R^2) \to \bZ$ with $C_p(\R^2)$ being the free abelian group with basis the ordered $(p+1)$-tuples $(L_0, L_1, \ldots, L_p)$ of free $\R$-submodules $L_i \leq \R^2$ of rank 1 which are direct summands, such that each pair gives a direct-sum decomposition of $\R^2$. The boundary is given by the alternating sum over omitting the $L_i$. This augmented chain complex is acyclic, by the argument of Proposition~\ref{prop:E-of-M-is-contractible}. Let us write $Z_i \coloneqq \mr{Ker}[\partial \colon C_i(\R^2) \to C_{i-1}(\R^2)]$.
  
There is a map of exact chain complexes
\begin{equation}\label{eq:17}
  \begin{tikzcd} 
    \bZ \arrow[equals]{d}& \bZ[\fS_2] \lar[swap]{\epsilon} \dar& \bZ[\fS_2] \lar[swap]{t-1} \dar & \bZ \dar \lar[swap]{t+1}\\
    \bZ & C_0(\R^2) \lar[swap]{\epsilon} & C_1(\R^2) \lar[swap]{\partial} & \lar Z_1 
  \end{tikzcd}
\end{equation}
equivariant for $\mr{GM}_2(\R) \to \mr{GL}_2(\R)$, where $t$ is the non-trivial element in the symmetric group $\fS_2$.  Here the second vertical map is $a+bt \mapsto a \langle e_1 \rangle + b \langle e_2 \rangle$, and the third vertical map is $a+bt \mapsto a(\langle e_1 \rangle,\langle e_2 \rangle) + b(\langle e_2 \rangle,\langle e_1 \rangle)$; the fourth vertical map is induced by the third.

Now the maps
	\[H_*(\mr{GM}_2(\R) ; \bZ[\fS_2])  \lra H_*(\mr{GL}_2(\R); C_i(\R^2))\]
for $i=0,1$ are isomorphisms: by Shapiro's lemma they are identified with
\[
	H_*(\begin{bsmallmatrix} * & 0 \\0 & * \end{bsmallmatrix} ; \bZ)  \lra H_*(\begin{bsmallmatrix} * & * \\0 & * \end{bsmallmatrix}; \bZ) \quad\text{ and }\quad
	H_*(\begin{bsmallmatrix} * & 0 \\0 & * \end{bsmallmatrix} ; \bZ)  \lra H_*(\begin{bsmallmatrix} * & 0 \\0 & * \end{bsmallmatrix} ; \bZ)
\]
respectively, where the second is the identity map and the first is an isomorphism by the Nesterenko--Suslin property for $(\R, \bZ)$. The $d^3$-differential in the associated hyperhomology spectral sequence must then be an isomorphism
\begin{equation}\label{eq:30}  
    H_{d-2}((\mr{GL}_2(\R);Z_1), (\mr{GM}_2(\R);\bZ)) \xrightarrow{\cong} H_d(\mr{GL}_2(\R), \mr{GM}_2(\R);\bZ)
\end{equation}
from relative group homology with coefficients.

As in \cite[p.~332 eq.~(1)]{MirzaiiBW}, which is not affected by the error described in \cite{MirzaiiBWErratum}, an explicit isomorphism
\begin{equation*}
	\mathfrak{p}(\R) \xrightarrow{\cong} H_0(\mr{GL}_2(\R);Z_2)
\end{equation*}
is constructed.  The short exact sequence $Z_2 \to  C_2(\R^2) \xrightarrow{k} Z_1$ induces a long exact sequence of $\mr{GL}_2(\R)$-homology, part of which is
\begin{equation*}
  \begin{tikzcd}[column sep=1.4em]  \cdots \rar &[-3pt] H_1(\mr{GL}_2(\R);C_2(\R^2)) \rar{H_1(k)}
  \ar[draw=none]{d}[name=X, anchor=center]{}
  &[3pt] H_1(\mr{GL}_2(\R);Z_1) \ar[rounded corners,
  to path={ -- ([xshift=2ex]\tikztostart.east)
  	|- (X.center) \tikztonodes
  	-| ([xshift=-2ex]\tikztotarget.west)
  	-- (\tikztotarget)}]{dll}[at end]{} & \\      
   H_0(\mr{GL}_2(\R);Z_2) \rar & H_0(\mr{GL}_2(\R);C_2(\R^2)) \rar{H_0(k)} &  H_0(\mr{GL}_2(\R);Z_1) \rar & 0.\end{tikzcd}
\end{equation*}
If we write $j\colon Z_1 \to C_1(\R^2)$ for the inclusion, then Shapiro's lemma identifies the composition $H_i(j \circ k)\colon H_i(\mr{GL}_2(\R);C_2(\R^2)) \to H_i(\mr{GL}_2(\R);C_1(\R^2))$ with $\mr{diag}\colon H_i(\R^\times) \to H_i(\R^\times \times \R^\times)$, which in turn may be identified with $\mr{id} \colon \bZ \to \bZ$ for $i = 0$ and with $\mr{diag}\colon A^\times \to A^\times \times A^\times$ for $i = 1$.  These are both split injective and so provide splittings of $H_0(k)$ and $H_1(k)$, so that the portion of the long exact sequence above gives isomorphisms
\begin{align*}
	H_0(\mr{GL}_2(\R);Z_1) & \cong H_0(\mr{GL}_2(\R);C_2(\R^2)) \cong H_0(\mr{GL}_2(\R);C_1(A^2)) \cong \bZ,\\
	H_1(\mr{GL}_2(\R);Z_1) & \cong  H_1(\mr{GL}_2(\R);C_2(\R^2)) \oplus H_0(\mr{GL}_2(\R);Z_2) \cong A^\times \oplus \mathfrak{p}(\R).
\end{align*}

By tracing through isomorphisms, it is now easy to identify $H_0(\mr{GM}_2(\R);\bZ) \to H_0(\mr{GL}_2(\R);Z_1)$ with $2 \colon \bZ \to \bZ$, and $H_1(\mr{GM}_2(\R);\bZ) \to H_1(\mr{GL}_2(\R);Z_1)$ with $\mr{id} \oplus 0 \colon \R^\times \oplus \bZ/2 \to \R^\times \oplus \mathfrak{p}(\R)$.  This shows that the groups~\eqref{eq:30} are as stated for $d = 2$ and $d = 3$, and $d=1$ is trivial.
\end{proof}

We must also recall a detail from this calculation regarding the connecting homomorphisms $\partial \colon H_{d+1}(\mr{GL}_2(\R), \mr{GM}_2(\R)) \to H_d(\mr{GM}_2(\R))$. The group $\mr{GM}_2(\R)$ is the semidirect product $(\R^\times)^2 \rtimes \fS_2$, and we first describe its homology.

Let us write $\tilde{\wedge}^n M \coloneqq [M^{\otimes n} \otimes \bZ^{-}]_{\fS_n}$. The exterior product $\wedge^2 M$ is the quotient of $M \otimes M$ by the subgroup generated by all elements of the form $m \otimes m$, so there is a surjective map $\tilde{\wedge}^2 M \to \wedge^2 M$ whose kernel is 2-torsion. As for any abelian group, we have
	\[H_1(\R^\times) \cong \R^\times, \quad\quad H_2(\R^\times) \cong \wedge^2 \R^\times\]
and the Serre spectral sequence for the semidirect product $(\R^\times)^2 \rtimes \fS_2$ provides a split short exact sequence
\begin{align*}
	0 \lra \R^\times = H_1(\R^\times \times \R^\times)_{\fS_2} \lra H_1(\mr{GM}_2(\R)) \lra H_1(\fS_2) = \bZ/2 \lra 0
\end{align*}
and, via $H_2(\R^\times \times \R^\times) \cong (\wedge^2 \R^\times \oplus \wedge^2 \R^\times) \oplus (\R^\times \otimes \R^\times)$ from the K{\"u}nneth theorem, an isomorphism $H_2(\mr{GM}_2(\R)) \cong  \wedge^2 \R^\times \oplus \tilde{\wedge}^2 \R^\times$. These give
	\[H_1(\mr{GM}_2(\R), \mr{GM}_1(\R)) \cong \bZ/2, \quad\quad H_2(\mr{GM}_2(\R), \mr{GM}_1(\R)) \cong \tilde{\wedge}^2 \R^\times.\]

The map
	\[\mathfrak{p}(\R) \cong H_3(\mr{GL}_2(\R), \mr{GM}_2(\R)) \overset{\partial}\lra H_2(\mr{GM}_2(\R)) \cong \wedge^2 \R^\times \oplus \tilde{\wedge}^2 \R^\times\]
is then identified with 
\begin{equation}\label{eq:AttMap}
	\mathfrak{p}(\R) \ni [x] \longmapsto  (x \wedge (1-x), -x \tilde{\wedge} (1-x))
\end{equation}
Indeed, the connecting homomorphism may be identified with the $d^3$-differential in the spectral sequence arising by replacing $\bZ$ by $0$ in the lower left corner of~\eqref{eq:17}.  This differential may in turn be identified with the $d^2$-differential calculated in \cite[Lemma 4.1]{MirzaiiBW} or the $d^3$-differential in \cite[Lemma 2.4]{SuslinK3}. 

What we shall need is the composition
\[
	\mathfrak{p}(\R) \cong H_3(\mr{GL}_2(\R), \mr{GM}_2(\R)) \overset{\partial}\lra H_2(\mr{GM}_2(\R)) \to H_2(\mr{GM}_2(\R), \mr{GM}_1(\R)) \cong \tilde{\wedge}^2 \R^\times,
\]
which is then $[x] \mapsto -x \tilde{\wedge} (1-x)$.

\subsubsection{Determining Figure~\ref{fig:eInfHomology}}
\label{sec:determ-figure-reff}

We will calculate the indicated entries in Figure \ref{fig:eInfHomology} by using Theorem \ref{thm:SuslinCalc} and a Hurewicz theorem. First use the identification $B\R^\times = B\mr{GL}_1(\R) = \gR(1)$, and the fact that $\gR$ is an $E_\infty$-algebra, to obtain by adjunction a map of $E_\infty$-algebras
\begin{equation}\label{eqn:first-approx} 
	i\colon \gA \coloneqq \gE_{\infty}(S^{1,0} \otimes \bZ[B\R^\times]) \lra \gR_{\bZ}\end{equation}
in $\cat{sMod}_\bZ^\bN$. Explicitly we have
\[\gA(n) \simeq \bZ[B((\R^\times)^n \rtimes \fS_n)]\]
and we think of $(\R^\times)^n \rtimes \fS_n$ as the subgroup $\mr{GM}_n(\R) \leq \mr{GL}_n(\R)$ of {monomial matrices}. Thus we may interpret Theorem \ref{thm:SuslinCalc} as a calculation of $H_{2,d}(\gR_\bZ, \gA)$ for $d \leq 3$. 

We now apply Proposition $E_k$.11.9, a Hurewicz theorem. The objects $\gA$ and $\gR_\bZ$ are $(\infty,0,0,\ldots)$-connective, and the map $i$ is $(\infty, \infty, 0, 0, \ldots)$-connective, so that proposition implies that the Hurewicz map
\begin{equation}\label{eq:HurewiczInGrading2}
H_{2,*}(\gR_\bZ, \gA) \lra H_{2,*}^{E_\infty}(\gR_\bZ, \gA)
\end{equation}
is an isomorphism. On the other hand as $\gA$ only has $E_\infty$-cells of rank 1, the map $H_{2,*}^{E_\infty}(\gR_\bZ) \to H_{2,*}^{E_\infty}(\gR_\bZ, \gA)$ is an isomorphism too, together giving an identification
	\[H_{2,d}^{E_\infty}(\gR_\bZ) \cong H_d(\mr{GL}_2(\R), \mr{GM}_2(\R);\bZ).\]
With Theorem \ref{thm:SuslinCalc} this determines the $n=2$ column of Figure~\ref{fig:eInfHomology}.

The $n=3$ column of Figure~\ref{fig:eInfHomology} follows by combining Theorem \ref{thm:proof-of-conjecture-rank-leq-3} with the argument of Corollaries \ref{cor:EInfHomNrelR} and \ref{cor:einfty-hom-indec} for $n \leq 3$, which gives that $H_{3,4}^{E_\infty}(\gR_\bZ) \cong \bZ/3$. The vanishing line follows from Theorem \ref{thm:Solomon--Tits-local}, as described in Section \ref{sec:defin-conn-theor}. That finishes our determination of Figure~\ref{fig:eInfHomology}.

\subsubsection{Some homology operations}

As we have discussed in \cite[p.\ 199]{e2cellsIv3}, if $\gX$ is an $E_\infty$-algebra in $\cat{sMod}_\bZ^\bN$ and $\gX_{\bF_p} \coloneqq \gX \otimes_\bZ \bF_p$ is its reduction modulo $p$, then the mod 2 Dyer--Lashof operation $Q^1_2 \colon H_{1,0}(\gX_{\bF_2}) \to H_{2,1}(\gX_{\bF_2})$ has a refinement to an operation
\[(Q^1_2)_\bZ \colon H_{1,0}(\gX) \lra H_{2,1}(\gX)\]
defined on integral homology, satisfying $(Q^1_2)_\bZ(x) \,\mr{mod}\, 2 = Q^1_2(x\, \mr{mod}\,2)$. 

By universality, to define this operation it is enough to define a class 
\[(Q^1_2)_\bZ(\sigma) \in H_{2,1}(\gE_\infty(S^{1,0}_\bZ\sigma)),\]
but this group is $H_1(\fS_2;\bZ)=\bZ/2$ and we take $(Q^1_2)_\bZ(\sigma)$ to be the unique non-zero class, which indeed reduces to $Q^1_2(\sigma)$ modulo 2. We may similarly define an integral refinement $(\beta Q^1_3)_\bZ$ of the mod 3 Dyer--Lashof operation $\beta Q^1_3 \colon H_{1,0}(\gX_{\bF_3}) \to H_{3,3}(\gX_{\bF_3})$ by choosing the unique element
\[(\beta Q^1_3)_\bZ(\sigma) \in H_{3,3}(\gE_\infty(S^{1,0}_\bZ\sigma)) = H_3(\fS_3 ; \bZ) \cong \bZ/2 \oplus \bZ/3,\]
of order 3 which reduces modulo 3 to $\beta Q^1_3(\sigma)$. Both of these constructions give chain-level representatives for these operations, by choosing chains in $\gE_\infty(S^{1,0}_\bZ\sigma)$ once and for all. They induce analogous operations with coefficients in any commutative ring $\bk$.

\begin{lemma}\label{lem:SNrelations}
Let $(\R, \bk)$ satisfy the Nesterenko--Suslin property.
\begin{enumerate}[(i)]
\item There is an element $a \in H_{1,1}(\gR_\bk)$ such that $(Q^1_2)_\bk(\sigma) = \sigma \cdot a$, and $2a=0$.

\item There is an element $b \in H_{2,3}(\gR_\bk)$ such that $(\beta Q^1_3)_\bk(\sigma) = \sigma \cdot b$, and $3b=0$.
\end{enumerate}
\end{lemma}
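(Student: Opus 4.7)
The plan is to prove both parts by identifying the operations with explicit homology classes of matrices in $\mr{GL}_2(\R)$ and $\mr{GL}_3(\R)$, then exhibiting the required products $\sigma \cdot a$ and $\sigma \cdot b$ by matrix manipulation. In both cases, the chain-level representative of $(Q^1_p)_\bk(\sigma)$ or $(\beta Q^1_p)_\bk(\sigma)$ is the image under the permutation inclusion $B\fS_p \to B\mr{GL}_p(\R)$ of a specific $p$-torsion generator of $H_\ast(\fS_p; \bk)$.

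For part~(i), $(Q^1_2)_\bk(\sigma)$ is represented by the class $[w]$ of the swap matrix $w = \left( \begin{smallmatrix} 0 & 1 \\ 1 & 0 \end{smallmatrix} \right)$ in $H_1(\mr{GL}_2(\R); \bk) = H_{2,1}(\gR_\bk)$. I would take $a \coloneqq [-1] \in \R^\times \otimes \bk = H_{1,1}(\gR_\bk)$, which satisfies $2a = 0$ since $(-1)^2 = 1$; then $\sigma \cdot a$ is represented by $\mr{diag}(-1, 1)$. From the matrix identity $w = r \cdot \mr{diag}(-1, 1)$ with $r = \left( \begin{smallmatrix} 0 & 1 \\ -1 & 0 \end{smallmatrix} \right)$, the equality $[w] = [\mr{diag}(-1, 1)]$ reduces to $[r] = 0$ in $H_1(\mr{GL}_2(\R); \bk)$. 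Writing $r = e_{12}(1) \cdot e_{21}(-1) \cdot e_{12}(1)$ as a product of elementary matrices, it suffices to show $[e_{ij}(x)] = 0$, which follows from Proposition~\ref{propthm:SN}: the Nesterenko--Suslin property implies that the diagonal inclusion $\R^\times \times \R^\times \hookrightarrow \mr{GL}(\R^2, \leq)$ into upper (or lower) triangular matrices is a $\bk$-homology isomorphism, forcing the class of any $e_{ij}(x)$, whose diagonal part is the identity, to be trivial in $H_1$.

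For part~(ii), $(\beta Q^1_3)_\bk(\sigma)$ is represented in $H_3(\mr{GL}_3(\R); \bk)$ by the image of a generator of the $3$-torsion subgroup of $H_3(A_3; \bk)$ under the permutation inclusion $A_3 \hookrightarrow \mr{GL}_3(\R)$, whose generator corresponds to the cyclic permutation matrix $P$. Observe that $X = \left( \begin{smallmatrix} 0 & -1 \\ 1 & -1 \end{smallmatrix} \right) \in \mr{GL}_2(\R)$ has order $3$ for any commutative ring, and so defines a subgroup inclusion $\bZ/3 \hookrightarrow \mr{GL}_2(\R)$. I would define $b \in H_3(\mr{GL}_2(\R); \bk) = H_{2,3}(\gR_\bk)$ as the image under this inclusion of the generator of the $3$-torsion subgroup of $H_3(\bZ/3; \bk) \cong \bk/3$; then $3b = 0$ automatically, and $\sigma \cdot b$ is represented by $\mr{diag}(X, 1)$.

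The remaining step, $(\beta Q^1_3)_\bk(\sigma) = \sigma \cdot b$, amounts to showing that the two subgroup inclusions $\bZ/3 \to \mr{GL}_3(\R)$ defined by $P$ and by $\mr{diag}(X,1)$ induce the same map on the $3$-primary part of $H_3$ with $\bk$-coefficients. The hard part will be this last step: when $3 \in \R^\times$ the matrices $P$ and $\mr{diag}(X,1)$ are conjugate in $\mr{GL}_3(\R)$ via the change-of-basis matrix with columns $(1,1,1)$, $(1,-1,0)$, $(0,1,-1)$, whose determinant is $3$; but when $3$ is not a unit in $\R$ this direct conjugacy fails, and one must instead argue at the chain level using the Nesterenko--Suslin property in the spirit of part~(i), exploiting the decomposition of $\R^3$ into the fixed line $\R \cdot (1,1,1)$ and the $P$-invariant hyperplane $\{(x_1,x_2,x_3) : x_1 + x_2 + x_3 = 0\}$ to rewrite $[P]$ as a sum of products involving only classes in the image of $\mr{GL}_2(\R) \hookrightarrow \mr{GL}_3(\R)$.
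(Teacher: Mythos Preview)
Your part~(i) is correct, though it leans on the abelianisation structure of $H_1$ (writing $[w] = [r] + [\mr{diag}(-1,1)]$ and killing $[r]$ via elementary matrices), which does not transport to the $H_3$ computation needed in part~(ii). The paper instead conjugates the permutation matrix by a determinant-$1$ change of basis to a block upper-triangular form and then invokes the Nesterenko--Suslin property to replace it by its block-diagonal part; this argument works at the level of the homomorphism $C_p \to \mr{GL}_p(\R)$ and hence in all homological degrees at once.

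Your part~(ii) has a genuine gap. The decomposition of $\R^3$ into the fixed line $\R\cdot(1,1,1)$ and the $P$-invariant hyperplane $\{x_1+x_2+x_3=0\}$ that you propose is only a direct sum decomposition when $3 \in \R^\times$: for example, $e_1$ lies in the sum of these two submodules only if $3t=1$ is solvable in $\R$. So your sketch for the general case does not get off the ground, and ``arguing at the chain level'' is left vague precisely where the difficulty lies. The paper's resolution is that one does not need a $P$-invariant complement to the fixed line at all. Extending $(1,1,1)$ to the basis $\{e_1+e_2+e_3,\, e_2,\, e_3\}$ of $\R^3$ (change-of-basis matrix of determinant $1$, valid over any ring), conjugation sends $P$ to
\[
\begin{pmatrix} 1 & 0 & 1 \\ 0 & 0 & -1 \\ 0 & 1 & -1 \end{pmatrix},
\]
which fixes $\R e_1$ pointwise and hence lies in $\mr{GL}(\R^3,\text{fix }\R e_1)$. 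The Nesterenko--Suslin property now says directly that this homomorphism $C_3 \to \mr{GL}(\R^3,\text{fix }\R e_1) \subset \mr{GL}_3(\R)$ induces the same map on $\bk$-homology as its projection to $\mr{GL}_2(\R)$, namely $t \mapsto \left(\begin{smallmatrix} 0 & -1 \\ 1 & -1 \end{smallmatrix}\right)$, which is exactly $\sigma \cdot b$. The same manoeuvre with the basis $\{e_1+e_2,\, e_2\}$ gives the uniform argument for part~(i).
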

\begin{proof}
Writing $C_p = \langle t \mid t^p \rangle$ for the cyclic group of order $p$, by the definition above the class $(Q^1_2)_\bk(\sigma)$ lies in the image of the map on homology induced by
\[t \longmapsto \begin{bmatrix} 0 & 1 \\
	1 & 0 \end{bmatrix} \colon C_2  \lra \mr{GM}_2(\R) \subset \mr{GL}_2(\R)\]
Changing to the basis $\{e_1 + e_2 , e_2\}$, this is conjugate to $\begin{bsmallmatrix} 1 & 1 \\0 & -1 \end{bsmallmatrix}$. By definition of the Nesterenko--Suslin property the homomorphisms $\begin{bsmallmatrix} * & 0 \\0 & * \end{bsmallmatrix} \to \begin{bsmallmatrix} * & * \\0 & * \end{bsmallmatrix} \to \begin{bsmallmatrix} * & 0 \\0 & * \end{bsmallmatrix}$ both induce isomorphisms on $\bk$-homology, so the homomorphisms $t \mapsto \begin{bsmallmatrix} 1 & 1 \\0 & -1 \end{bsmallmatrix}$ and $t \mapsto \begin{bsmallmatrix} 1 & 0 \\0 & -1 \end{bsmallmatrix}$ are the same on $\bk$-homology. Thus we can take $a = [-1] \in H_1(\mr{GL}_1(\R);\bk)$.

Similarly, $(\beta Q^1_3)_\bk(\sigma)$ lies in the image of the map on homology induced by
	\[t \longmapsto \begin{bmatrix} 0 & 0 & 1 \\
	1 & 0 & 0\\ 0 & 1 & 0 \end{bmatrix} \colon C_3 \lra \mr{GM}_3(\R) \subset \mr{GL}_3(\R)\]
Changing to the basis $\{e_1+e_2+e_3, e_2, e_3\}$, this is conjugate to $\begin{bsmallmatrix} 1 & 0 & 1 \\0 & 0 & -1\\ 0 & 1 & -1 \end{bsmallmatrix}$. By the Nesterenko--Suslin property  again this homomorphism induces the same map as $t \mapsto \begin{bsmallmatrix} 1 & 0 & 0 \\0 & 0 & -1\\ 0 & 1 & -1 \end{bsmallmatrix}$ on homology. Thus we can take $b \in H_2(\mr{GL}_2(\R);\bk)$ to be the image of a generator of $H_3(C_3 ; \bk)$ under $t \mapsto \begin{bsmallmatrix}  0 & -1\\  1 & -1 \end{bsmallmatrix} \colon C_3 \to \mr{GL}_2(\R)$.
\end{proof}

\subsubsection{Some attaching maps}

We also record the following, concerning the classes $\rho_2$ and $\rho_3$ in Figure~\ref{fig:eInfHomology}.

\begin{proposition}\label{prop:RhoCellsLooselyAttached}
Letting $\gR'_\bZ \coloneqq \gR_\bZ \cup_{\sigma}^{E_\infty} D^{1,1} \sigma'$, the Hurewicz maps
\begin{align*}
	H_{2,2}(\gR'_\bZ) &\lra H_{2,2}^{E_\infty}(\gR'_\bZ) \cong \bZ/2\\
	H_{3,4}(\gR'_\bZ) &\lra H_{3,4}^{E_\infty}(\gR'_\bZ) \cong \bZ/3
\end{align*}
are surjective.
\end{proposition}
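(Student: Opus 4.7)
The plan is to use a lifting argument based on long exact sequences of pairs of $E_\infty$-algebras. First I would identify the target $E_\infty$-homology groups: since $Q^{E_\infty}_\bL$ is a left adjoint and hence preserves the pushout defining $\gR'_\bZ = \gR_\bZ \cup^{E_\infty}_\sigma D^{1,1}\sigma'$, attaching the $(1,1)$-cell $\sigma'$ only modifies $H^{E_\infty}_{*,*}$ at bidegrees $(1,0)$ and $(1,1)$. In particular $H^{E_\infty}_{2,2}(\gR'_\bZ) \cong H^{E_\infty}_{2,2}(\gR_\bZ) \cong \bZ/2$ and $H^{E_\infty}_{3,4}(\gR'_\bZ) \cong H^{E_\infty}_{3,4}(\gR_\bZ) \cong \bZ/3$, by Figure~\ref{fig:eInfHomology}.

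Following the pattern of Section~\ref{sec:determ-figure-reff}, I would set up Hurewicz comparisons using $\gA' \coloneqq \gA \cup^{E_\infty}_\sigma D^{1,1}\sigma'$ and, for the rank-$3$ claim, a larger auxiliary $\gB' \coloneqq \gB \cup^{E_\infty}_\sigma D^{1,1}\sigma'$ with $\gB \supset \gA$ built to agree with $\gR_\bZ$ in ranks $\leq 2$. Since attaching the same cell to both $\gA$ and $\gR_\bZ$ does not change the relative connectivity of their inclusion, the hypotheses of Proposition~$E_k$.11.9 still apply and give Hurewicz isomorphisms $H_{2,d}(\gR'_\bZ, \gA') \xrightarrow{\cong} H^{E_\infty}_{2,d}(\gR'_\bZ)$ and $H_{3,d}(\gR'_\bZ, \gB') \xrightarrow{\cong} H^{E_\infty}_{3,d}(\gR'_\bZ)$. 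Via the long exact sequences of these pairs, surjectivity of the absolute Hurewicz map at the bidegrees of interest reduces to the vanishing of the connecting maps $\partial\rho_2 \in H_{2,1}(\gA')$ and $\partial\rho_3 \in H_{3,3}(\gB')$.

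For $\rho_2$, Mirzaii's explicit computation of the $d^3$-differential in the proof of Theorem~\ref{thm:SuslinCalc} identifies $\partial\rho_2 \in H_{2,1}(\gA) = H_1(\mr{GM}_2(\R); \bZ)$ with the image of the generator of $H_1(\fS_2; \bZ)$ under $\fS_2 \hookrightarrow \mr{GM}_2(\R)$, which is by definition the integral Dyer--Lashof class $(Q^1_2)_\bZ(\sigma)$. Lemma~\ref{lem:SNrelations}(i) decomposes this as $\sigma \cdot a$, and since $\sigma = d\sigma'$ and $a$ is a cycle in $\gA'$, we have $\sigma \cdot a = d(\sigma' \cdot a)$, so $\partial\rho_2 = 0$. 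An analogous argument for $\rho_3$ uses Lemma~\ref{lem:SNrelations}(ii) with the identification $\partial\rho_3 = (\beta Q^1_3)_\bZ(\sigma) = \sigma \cdot b$, so that $\sigma \cdot b = d(\sigma' \cdot b)$ in $\gB'$ yields $\partial\rho_3 = 0$. The main obstacle is the $\rho_3$ case: unlike for $\rho_2$, no ready-made identification of $\partial\rho_3$ with $(\beta Q^1_3)_\bZ(\sigma)$ is available, and establishing it requires a spectral sequence argument rooted in the identification $H^{E_\infty}_{3,4}(\gR_\bZ) \cong H^{E_\infty}_{3,4}(\gN) \cong \bZ/3$, showing that the obstruction is detected by the mod-$3$ class of $\fS_3 \subset \mr{GM}_3(\R)$ matching the chain-level construction of the integral Dyer--Lashof operation.
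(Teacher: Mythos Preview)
Your plan for the $\rho_2$ case is essentially the same as the paper's, but the paper is more direct: it simply notes that $H_{2,1}(\gA')=0$ (indeed $\gA' \simeq \gE_\infty(S^{1,0}\otimes\widetilde{\bZ[B\R^\times]})$, whose rank-$2$ part has no $H_1$), so the connecting map is trivially zero and no identification of $\partial\rho_2$ is needed. Your citation of ``Mirzaii's explicit computation of the $d^3$-differential'' is also misplaced: formula~\eqref{eq:AttMap} concerns the connecting map in degree $3$, not degree $2$.

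For $\rho_3$ there is a genuine gap, and it is slightly worse than the one you flag. Even granting an identification of $\partial\rho_3$ with the image of $(\beta Q^1_3)_\bZ(\sigma)$ in $H_{3,3}(\gB')$, your argument ``$(\beta Q^1_3)_\bZ(\sigma)=\sigma\cdot b$, hence $=d(\sigma'\cdot b)$'' does not go through in $\gB'$: Lemma~\ref{lem:SNrelations}(ii) asserts this equality in $H_{3,3}(\gR_\bZ)$, not in $H_{3,3}(\gB)$ or $H_{3,3}(\gB')$. The element $b$ lives in $\gR_\bZ$; a lift $b'\in H_{2,3}(\gB)$ exists once $\gB$ is built large enough, but there is no reason for $(\beta Q^1_3)_\bZ(\sigma)-\sigma\cdot b'$ to vanish in $\gB$. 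So showing $\partial\rho_3=0$ in $\gB'$ this way would require an identity you do not have.

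The paper takes a different route that sidesteps both issues. Rather than showing a connecting map vanishes, it \emph{constructs} an explicit class $x_H\in H_{3,4}(\gR'_{\bZ_{(3)}})$ by gluing two nullhomotopies of the cycle $(\beta Q^1_3)_\bZ(\sigma)-\sigma\cdot b'$: one nullhomotopy $H$ exists in $\gR_{\bZ_{(3)}}$ precisely by Lemma~\ref{lem:SNrelations}(ii), and the other exists in $\gB'$ because $\sigma=\partial\sigma'$ kills $\sigma\cdot b'$. To check that the Hurewicz image of $x_H$ is nonzero, the paper does not invoke a Hurewicz isomorphism for $(\gR'_\bZ,\gB')$ at all; instead it maps the whole picture to the pair $(\gN,\gE)$ (via the splitting $\gA\to\gE$ and the augmentation $\gR_\bZ\to\gN$), where the Hurewicz map \emph{is} known to be an isomorphism by Remark~\ref{rem:EInfHomOfN}, and there $[H]$ is detected by its boundary $(\beta Q^1_3)_\bZ(\sigma)\neq 0\in H_{3,3}(\gE)$. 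This explicit-class-plus-comparison strategy is the missing idea in your outline.
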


\begin{proof}
  We may obtain a chart for $H_{*,*}^{E_\infty}(\gR'_\bZ)$ from Figure~\ref{fig:eInfHomology} by removing the copy of $\bZ$ in degree $(1,0)$. Let $\gA' \coloneqq \gA \cup_{\sigma}^{E_\infty} D^{1,1} \sigma'$, recalling that $\gA = \gE_\infty(S^{1,0} \otimes \bZ[BA^\times])$ was defined in \eqref{eqn:first-approx}.
    Then the Hurewicz map gives a map of long exact sequences
\[\begin{tikzcd} 
	H_{2,2}(\gA') \dar \rar &[-12pt] H_{2,2}(\gR'_\bZ) \rar \dar &[-8pt] H_{2,2}(\gR'_\bZ, \gA') \rar \dar{\simeq} &[-8pt] H_{2,1}(\gA')=0 \dar   \\
	0=H_{2,2}^{E_\infty}(\gA') \rar & H_{2,2}^{E_\infty}(\gR'_\bZ) \rar & H_{2,2}^{E_\infty}(\gR'_\bZ, \gA') \rar & H_{2,1}^{E_\infty}(\gA')=0,
\end{tikzcd}\]
and an application of Proposition $E_k$.11.9 shows the indicated vertical map is an isomorphism, this gives the first case.

For the second case, it is enough to work 3-locally, with $\gR_{\bZ_{(3)}}$; we implicity 3-localise $\gA$ too. We will extend $i \colon \gA \to \gR_{\bZ_{(3)}}$ to an improved approximation which takes into account the pre-Bloch group.

Let $0 \to P_1 \to P_0 \to \mathfrak{p}(\R)_{(3)} \to 0$ be a free resolution of $\mathfrak{p}(\R)_{(3)}$ as a $\bZ_{(3)}$-module, and $P_\bullet$ be the simplical $\bZ_{(3)}$-module corresponding to the chain complex $P_1 \to P_0$. The isomorphism $\mathfrak{p}(\R)_{(3)} \cong H_{2,3}(\gR_{\bZ_{(3)}}, \gA)$ is then induced by a map
\[S^{2,3} \otimes(P_\bullet) \lra \mathrm{Cone}(i),\]
or equivalently a map $\alpha\colon S^{2,2} \otimes P_\bullet \to \gA$ along with a nullhomotopy in $\gR_{\bZ_{(3)}}$. We may form an extension of the map $i$ to
	\[\gB \coloneqq \gA \cup^{E_\infty}_{\alpha} (D^{2,3} \otimes P_\bullet) \stackrel{j}\lra \gR_{\bZ_{(3)}}.\]

By construction we have $H_{n,d}(\gR_{\bZ_{(3)}}, \gB)=0$ for $n=1$, and for $n \geq 2$ with $d \leq 3$. In particular $j_* \colon H_{2,3}(\gB) \to H_{2,3}(\gR_{\bZ_{(3)}})$ is surjective, so the class $b \in H_{2,3}(\gR_{\bZ_{(3)}})$ from Lemma \ref{lem:SNrelations} (ii) may be lifted to a cycle $b' \colon S^{2,3} \to \gB$. We then form the cycle
	\[(\beta Q^1_3)_\bZ(\sigma) - b' \cdot \sigma \colon S^{3,3} \lra \gB,\]
which is nullhomotopic in $\gR_{\bZ_{(3)}}$ by Lemma \ref{lem:SNrelations} (ii). Choosing such a nullhomotopy $H$ of this cycle in $\gR_{\bZ_{(3)}}$, and using the canonical nullhomotopy of the same cycle in $\gB' \coloneqq \gB \cup^{E_\infty}_\sigma D^{1,1}_\bZ \sigma'$ induced by $\sigma = \partial(\sigma')$, gives a diagram
	\[\begin{tikzcd} 
	D^{3,4} \ar{rrd} & &  \\
	S^{3,3}  \dar \rar \uar \drar[phantom, "H"]& \gB  \dar \rar & \gB' \rar \dar& Q^{E_\infty}_\bL(\gB') \dar\\
	D^{3,4} \rar & \gR_{\bZ_{(3)}} \dar \rar & \gR'_{\bZ_{(3)}} \dar\rar & Q^{E_\infty}_\bL(\gR'_{\bZ_{(3)}}) \dar\\
	 & \gN  \rar& \gN' \rar & Q^{E_\infty}_\bL(\gN').
	\end{tikzcd}\]
The two maps $D^{3,4} \to \gR'_{\bZ_{(3)}}$ agreeing on $S^{3,3}$ give an element $x_H \in \smash{H_{3,4}(\gR'_{\bZ_{(3)}})}$.  Here we write $\gN$ for the 3-localisation of the object defined in Section~\ref{sec:e-infty-homology}, with $\gN(n) = \bZ_{(3)}$ for $n > 0$, and $\gN' \coloneqq \gN \cup^{E_\infty} D^{1,1}$.

We will now show that the image of $x_H$ in  $H_{3,4}^{E_\infty}(\gR'_{\bZ_{(3)}})$ is nontrivial. The inclusion 
	\[\gE \coloneqq \gE_\infty(S^{1,0}) \lra \gA \lra \gB\]
is a split monomorphism.  The splitting is induced by collapsing $B\R^\times \to \{*\}$, which annihilates the attaching map $\alpha$ for the pre-Bloch group, so may be extended to  a map of pairs $(\gR_{\bZ_{(3)}}, \gB) \to (\gN, \gE)$. Consider the zig-zag of maps
	\[\begin{tikzcd}[column sep=1.7em] 
	H_{3,4}(\gN, \gE) \dar& H_{3,4}(\gR_{\bZ_{(3)}}, \gB)  \lar \rar \dar& H_{3,4}(\gR'_{\bZ_{(3)}}, \gB')\dar & H_{3,4}(\gR'_{\bZ_{(3)}}) \lar \dar\\
	H_{3,4}^{E_\infty}(\gN, \gE) & H_{3,4}^{E_\infty}(\gR_{\bZ_{(3)}}, \gB)  \lar \rar{\sim}& H_{3,4}^{E_\infty}(\gR'_{\bZ_{(3)}}, \gB') & H_{3,4}^{E_\infty}(\gR'_{\bZ_{(3)}}) \lar[swap]{\sim}
	\end{tikzcd}\]
The image of $x_H$ under the fourth vertical map corresponds under the isomorphisms in the bottom row to the image of the nullhomotopy $[H] \in H_{3,4}(\gR_{\bZ_{(3)}}, \gB)$ under the second vertical map, so it is enough to show this is non-trivial. To do this it is enough to show the image of $[H]$ in $H_{3,4}^{E_\infty}(\gN, \gE)$ is nontrivial. In the composition
	\[H_{3,4}(\gR_{\bZ_{(3)}}, \gB) \lra H_{3,4}(\gN, \gE) \lra H_{3,4}^{E_\infty}(\gN, \gE),\]
the second map is an isomorphism as in Remark \ref{rem:EInfHomOfN}, and the first map is non-trivial because the composition
	\[H_{3,4}(\gR_{\bZ_{(3)}}, \gB) \lra H_{3,4}(\gN, \gE) \overset{\partial}\lra H_{3,3}(\gE)\]
sends $[H]$ to $(\beta Q^1_3)_\bZ(\sigma) \neq 0$ by construction.
\end{proof}

\subsection{The Nesterenko--Suslin theorem}\label{sec:NS}

In this section we will explain how the $E_\infty$-homology calculations Section \ref{sec:EinfHomologyCalc} can be used to recover the following theorem of Nesterenko--Suslin \cite{SN}, and Guin \cite{Guin}.  The case where $\R$ is an infinite field was proved earlier by Suslin \cite[Theorem 3.4]{SuslinCharClass}.

To state it, recall that as in \cite[Section 3]{SN} the \emph{Milnor $K$-theory} $K^M_*(\R)$ of a ring $\R$ is the quotient of the graded tensor algebra $T^*_\bZ(\R^\times)$ by the homogeneous ideal generated by the \emph{Steinberg relations}
	\[x \otimes (1-x) \in T^2_\bZ(\R^\times) \quad \text{ for }\quad x,1-x \in \R^\times.\]
The $\bZ$-span of these relations contains $x \otimes y + y \otimes x$ for all $x, y \in \R^\times$, by \cite[Lemma 3.2]{SN}, so the ring $K^M_*(\R)$ is graded-commutative. Recall that we write $\tilde{\wedge}^n M \coloneqq [M^{\otimes n} \otimes \bZ^{-}]_{\fS_n}$, so that $\tilde{\wedge}^* M \coloneqq \bigoplus_{n \geq 0} \tilde{\wedge}^n M$ is the free graded-commutative $\bZ$-algebra on a $\bZ$-module $M$ placed in degree 1, we can therefore also define Milnor $K$-theory as the corresponding quotient of $\tilde{\wedge}^* \R^\times$. In the case of fields the following is \cite[Theorem 3.4]{SuslinCharClass}, for local rings it is \cite[Theorems 2.7, 3.25]{SN}, and in general it is a special case of \cite[Th{\'e}or{\`e}me 2]{Guin}.

\begin{theorem}[Suslin, Nesterenko--Suslin, Guin] \label{thm:NS}
Let $\R$ be a connected semi-local ring with all residue fields infinite. Then
\begin{align*}
	H_{d}(\mr{GL}_n(\R), \mr{GL}_{n-1}(\R);\bZ) &=0 \text{ for } d < n, \text{ and}\\
	H_{n}(\mr{GL}_n(\R), \mr{GL}_{n-1}(\R);\bZ) &\cong K_n^M(\R).
\end{align*}
\end{theorem}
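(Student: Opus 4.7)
The plan is to translate the statement into a question about the $E_\infty$-algebra $\gR_\bZ$ and then apply the CW-$E_\infty$-approximation machinery enabled by the $E_\infty$-homology vanishing of Corollary \ref{cor:einfty-hom-indec}, identifying the Nesterenko-Suslin line with Milnor $K$-theory via the rank-1 multiplicative structure. Fix a generator $\sigma \in H_{1,0}(\gR_\bZ) \cong \bZ$ and form the cofibre $\gR_\bZ/\sigma$ of the stabilisation map $\sigma \cdot - \colon S^{1,0} \otimes \gR_\bZ \to \gR_\bZ$. Because $\R$ is connected semi-local, all finitely generated projective $\R$-modules are free, so $H_{n,d}(\gR_\bZ/\sigma) \cong H_d(\mr{GL}_n(\R), \mr{GL}_{n-1}(\R);\bZ)$, and both parts of the theorem are assertions about these bigraded groups.

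\textbf{Part (i).} For the vanishing, I would use $H^{E_\infty}_{n,d}(\gR_\bZ) = 0$ for $d < 2n-2$ together with the Hurewicz theorem (Theorem $E_k$.11.21) to build a CW-$E_\infty$-algebra $\gC \simeq \gR_\bZ$ having one cell for $\sigma$ in tridegree $(1,0,0)$ and additional cells $\alpha$ of tridegrees $(n_\alpha, d_\alpha, d_\alpha)$ satisfying $n_\alpha \geq 2$ and $d_\alpha \geq 2(n_\alpha-1)$. The induced skeletal filtration on $\gR_\bZ/\sigma$ has associated graded governed by the free $E_\infty$-algebra on the $\alpha$-cells after killing $\sigma$. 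Each such cell has $d_\alpha \geq n_\alpha$ (since $2(n_\alpha - 1) \geq n_\alpha$ for $n_\alpha \geq 2$), so every iterated $E_\infty$-product of rank $n$ lies in degree $\geq n$. For Dyer--Lashof contributions involving $\sigma$, Lemma \ref{lem:SNrelations} shows that $(Q^1_2)_\bZ(\sigma) = \sigma \cdot a$ and $(\beta Q^1_3)_\bZ(\sigma) = \sigma \cdot b$, so these die in the cofibre $\gR_\bZ/\sigma$; I expect the same mechanism, rooted in the Nesterenko--Suslin property, to eliminate all higher $\sigma$-based Dyer--Lashof operations, yielding the desired vanishing $H_{n,d}(\gR_\bZ/\sigma) = 0$ for $d < n$.

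\textbf{Part (ii).} For the identification with Milnor $K$-theory, the graded-commutative $E_\infty$-product together with $H_{1,1}(\gR_\bZ) = \R^\times$ induces a map
\begin{equation*}
\tilde\wedge^n \R^\times \lra H_{n,n}(\gR_\bZ) \lra H_n(\mr{GL}_n(\R), \mr{GL}_{n-1}(\R);\bZ),
\end{equation*}
factoring through the skew-symmetric power by graded-commutativity of the $E_\infty$-structure. To see the Steinberg relations, I would use the calculation $H_2(\mr{GL}_2, \mr{GM}_2) \cong \bZ/2$ from Theorem \ref{thm:SuslinCalc} together with the Serre spectral sequence analysis of the monomial-to-general inclusion: the kernel of $\tilde\wedge^2 \R^\times \cong H_2(\mr{GM}_2, \mr{GM}_1) \to H_2(\mr{GL}_2, \mr{GL}_1)$ contains $u \,\tilde\wedge\, (1-u)$ whenever $u, 1-u \in \R^\times$, yielding the defining relations of $K_2^M(\R)$; the higher case follows because $K_n^M$ is multiplicatively generated by $K_1^M$ in products. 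To conclude, one verifies that the induced map $K_n^M(\R) \to H_n(\mr{GL}_n, \mr{GL}_{n-1})$ is bijective by producing an inverse from the CW description of part (i), where the NS-line cells are exactly accounted for by products of rank-1 generators together with classes (like the $\rho_2$ cell in Figure \ref{fig:eInfHomology}) that match the 2-torsion in $K_n^M$.

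\textbf{Main obstacle.} The key obstruction is the Dyer--Lashof analysis in part (i): I must verify that every nontrivial Dyer--Lashof operation in the free $E_\infty$-algebra on the CW cells of $\gC$ either lands in degree $\geq n$ or, if built from $\sigma$, lies in the image of $\sigma$-multiplication and therefore vanishes in $\gR_\bZ/\sigma$. Lemma \ref{lem:SNrelations} is the base case of this phenomenon, and extending it systematically to higher operations---including iterated Dyer--Lashof operations and their interactions with the $\alpha$-cells---requires careful bookkeeping, but I expect the Nesterenko--Suslin property of $(\R, \bZ)$ to yield the analogous divisibility by $\sigma$ at each level.
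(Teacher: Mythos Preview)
Your approach has a genuine gap in part (i), stemming from working with an \emph{absolute} CW-$E_\infty$-structure on $\gR_\bZ$ rather than a relative one. If you filter $\overline{\gC}/\sigma$ by skeleta of $\gC \simeq \gR_\bZ$, the associated graded in filtration~0 is $\overline{\gE_\infty(S^{1,0,0}\sigma)}/\sigma$, whose homology in rank $n$ is $H_*(\fS_n,\fS_{n-1};\bZ)$. This is \emph{not} concentrated in degrees $\geq n$: already $H_1(\fS_2,\fS_1;\bZ)=\bZ/2$ sits in bidegree $(2,1)$. The relations $(Q^1_2)_\bZ(\sigma)=\sigma\cdot a$ etc.\ from Lemma~\ref{lem:SNrelations} hold in $\gR_\bZ$, not in the free $E_\infty$-algebra that appears on the $E^1$-page, so they do not make these classes vanish there; you would need to know the attaching maps of the higher cells kill them via differentials, and since the spectral sequence converges to the very groups $H_{*,*}(\overline{\gR}_\bZ/\sigma)$ you are trying to compute, there is no contradiction available to force this. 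Moreover your hope that ``the Nesterenko--Suslin property yields analogous divisibility at each level'' would have to cover every iterated Dyer--Lashof operation on $\sigma$ at every prime, which is far from the two base cases established.

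The paper sidesteps all of this by working instead with the \emph{relative} pair $\gR_\bZ \to \gN$. Since $\overline{\gN}/\sigma \simeq \bZ[0,0]$ is known, and the relative $E_\infty$-cells (encoding $H^{E_\infty}_{*,*}(\gN,\gR_\bZ)$, see Figure~\ref{fig:eInfRelHomology}) all satisfy $d_\alpha \geq n_\alpha+1$, the resulting spectral sequence has $E^1_{n,p,q}=H_{n,p+q,q}\bigl(\overline{\gR}_\bZ/\sigma[0]\otimes\gE_\infty^+(\bigoplus_\alpha S^{n_\alpha,d_\alpha,d_\alpha})\bigr)$ converging to something trivial. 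There is no $\sigma$ in the free factor, hence no problematic Dyer--Lashof classes; a minimal-counterexample argument then gives the vanishing for $d<n$, and analysis of the $d^2$-differential along $d=n$ gives surjectivity of $K^M_*(\R)\to\bigoplus_n H_{n,n}(\overline{\gR}_\bZ/\sigma)$. Injectivity is obtained by constructing $\gR''$ from $\gR'=\gR_\bZ\cup^{E_\infty}_\sigma D^{1,1}$ by coning off the classes $\rho_2,\rho_3$ (using Proposition~\ref{prop:RhoCellsLooselyAttached}) and computing $\bigoplus_n H_{n,n}(\overline{\gR}'')$ directly from its cell structure; this is where the attaching-map computation \eqref{eq:AttMap} enters to supply the Steinberg relations, as you anticipated.
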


Recall that $\overline{\gR}_\bZ$ is the associative algebra obtained by rectifying the unital $E_\infty$-algebra $\gR_\bZ^+$, and $\overline{\gR}_\bZ/\sigma$ is the cofiber of the stabilisation map. Thus, in our terminology, the Nesterenko--Suslin theorem is the vanishing of $H_{n,d}(\overline{\gR}_\bZ/\sigma)$ for $d < n$ and an isomorphism of $H_{n,n}(\overline{\gR}_\bZ/\sigma)$ with $n$th Milnor $K$-theory. Let us make explicit what this isomorphism will be.

We write $D_n \coloneqq H_{n,n}(\overline{\gR}_\bZ)$ for the $n$th diagonal homology of the associative algebra $\overline{\gR}_\bZ$, so that $D_* \coloneqq \bigoplus_{n \geq 0} D_n$ is a graded-commutative ring. Similarly, we write $M_n \coloneqq H_{n,n}(\overline{\gR}_\bZ/\sigma)$ for the $n$th diagonal homology of the $\overline{\gR}_\bZ$-module $\overline{\gR}_\bZ/\sigma$, so that $M_* \coloneqq \bigoplus_{n \geq 0} M_n$ is a graded $D_*$-module. The quotient map gives a map of $D_*$-modules $D_* \to M_*$. As the ring $D_*$ is graded-commutative the identification $\R^\times \overset{\sim}\to H_{1,1}(\overline{\gR}_\bZ) = D_1$ extends to a ring homomorphism $\tilde{\wedge}^* \R^\times \to D_*$, and hence to a $\tilde{\wedge}^* \R^\times$-module homomorphism
	\[\phi\colon \tilde{\wedge}^* \R^\times \lra D_* \lra M_*.\]
We will show that this map is surjective, and that its kernel is the $\tilde{\wedge}^* \R^\times$-submodule generated by the Steinberg relations.

\begin{proof}[Proof of Theorem \ref{thm:NS}]
	In Section \ref{sec:e-infty-homology} we have discussed the map $\epsilon \colon \gR_\bZ \to \gN$ of $E_\infty$-algebras induced by the augmentations $B\mr{GL}_n(\R) \to \{*\}$ (these go through for $\R$ connected semi-local in exactly the same way). Theorem \ref{thm:proof-of-conjecture-rank-leq-3} shows that $\smash{H^{E_2}_{n,d}(\gN, \gR_\bk)}=0$ for $d<2n$ and $d \leq 3$, and also for $d < 2n-1$, and the argument of Corollary \ref{cor:EInfHomNrelR} shows that $H^{E_\infty}_{n,d}(\gN, \gR_\bZ)$ vanishes in the same range of bidegrees. This is depicted in Figure \ref{fig:eInfRelHomology}.  (Conjecture~\ref{conj:double-steinberg-local} would imply vanishing for $d < 2n$ for all $d \geq 0$, and that the slope-two line through the origin in the figure continues past $d = 3$.)  That the entry $(n,d) = (2,4)$ is $\mathfrak{p}(\R)$ will be established later and is not needed right now (see Remark~\ref{remark:entry-two-four}).

\begin{figure}[h]
	\begin{tikzpicture}
	\begin{scope}
	\clip (-2,-1) rectangle ({2.5*4+2},7.5);
	\draw (0,0)--(10.5,0);
	\draw (0,0) -- (0,7.5);
	\foreach \s in {0,...,7}
	{
		\draw [dotted] (-1.5,\s)--(10.5,\s);
		\node [fill=white] at (-1.5,\s) [left] {\tiny $\s$};
	}
	\foreach \s in {0,...,4}
	{
		\draw [dotted] ({2.5*\s},-0.5)--({2.5*\s},7.5);
		\node [fill=white] at ({2.5*\s},-.5) {\tiny $\s$};
	}
	
	\draw [very thick,Mahogany,densely dotted] (0,0) -- (6*1.25,6) -- (7*1.25,6) -- (9*1.25,8);

	\node [fill=white] at (2.5,2) {$\R^\times$};
	\node [fill=white] at (2.5,3) {$H_2(\R^\times;\bZ)$};
	\node [fill=white] at (2.5,4) {$H_3(\R^\times;\bZ)$};
	\node [fill=white] at (2.5,5) {$H_4(\R^\times;\bZ)$};
	\node [fill=white] at (2.5,6) {$H_5(\R^\times;\bZ)$};
	\node [fill=white] at (2.5,7) {$H_6(\R^\times;\bZ)$};
	\node [fill=white] at (5,4) {$\mathfrak{p}(\R)$};  
	\node [fill=white] at (5,5) {?};
	\node [fill=white] at (5,6) {?};
	\node [fill=white] at (5,7) {?};
	\node [fill=white] at (7.5,6) {?};
	\node [fill=white] at (7.5,7) {?};
	\node [fill=white] at (10,7) {?};

	\node at (-.5,-.5) {$\nicefrac{d}{n}$};
	\end{scope}
	\end{tikzpicture}
	\caption{The $E_\infty$-homology of the pair $(\gN,\gR_\bZ)$, which vanishes below the dotted line. On the line $d=n+1$ it just has $\R^\times$ in bidegree $(1,2)$.}
	\label{fig:eInfRelHomology}
\end{figure}

We may therefore construct a relative CW-$E_\infty$-algebra $\gR_\bZ \hookrightarrow \gC \overset{\sim}\to \gN$ by attaching $(n,d)$-cells only in the necessary bidegrees. This has a skeletal filtration $\mathrm{sk}(\gC)$ and the spectral sequence (see Theorem $E_k$.10.10) of the filtered object $\overline{\mathrm{sk}(\gC)}/\sigma$ then has the form
	\[E^1_{n,p,q} = H_{n,p+q,p}\left(\overline{\gR}_\bZ/\sigma[0] \otimes E_\infty^+(\bigoplus S^{n_\alpha, d_\alpha, d_\alpha}x_\alpha)\right) \Longrightarrow H_{n,p+q}(\overline{\gN}/\sigma).\]
Here it may be helpful to recall from Section \ref{sec:notat-other-recoll} that for fixed $n$ the grading conventions of our spectral sequences are as those for the homological Serre spectral sequence.

Since $\overline{\gN}/\sigma \simeq \bZ[0,0]$, the spectral sequence has $E^\infty_{0,0,0} = \bZ$ and $E^\infty_{n,p,q} = 0$ for $(n,p,q) \neq (0,0,0)$.  All the cells satisfy $n_\alpha \geq 1$ and $d_\alpha \geq n_\alpha+1$, as shown in Figure \ref{fig:eInfRelHomology}, and the groups we are interested in appear as $E^1_{n,0,d} = H_{n,d}(\overline{\gR}_\bZ/\sigma)$.

The vanishing part of the theorem is the following.

\vspace{1ex}

\noindent \textbf{Claim}. $H_{n,d}(\overline{\gR}_\bZ/\sigma) = 0$ for $d < n$.

\begin{proof}[Proof of Claim]
Suppose for a contradiction that $H_{n,d}(\overline{\gR}_\bZ/\sigma) \neq 0$ for some $d < n$, and let $d$ be minimal with this property. Then $E^1_{n,0,d} \neq 0$, so consider differentials $d^r \colon E^r_{n,r,d-r+1} \to E^r_{n,0,d}$ going into the first column.

The K{\"u}nneth exact sequence expresses $E^1_{n,r,d-r+1}$ as an extension of
	\[\bigoplus_{\substack{n'+n''=n \\ d'+d''=d+1}}H_{n', d'}(\overline{\gR}_\bZ/\sigma) \otimes H_{n'', d'', r}(\gE_\infty^+(\bigoplus S^{n_\alpha, d_\alpha, d_\alpha}x_\alpha))\]
by
	\[\bigoplus_{\substack{n'+n''=n \\ d'+d''=d}}\mr{Tor}^\bZ_1\left(H_{n', d'}(\overline{\gR}_\bZ/\sigma) , H_{n'', d'', r}(\gE_\infty^+(\bigoplus S^{n_\alpha, d_\alpha, d_\alpha}x_\alpha))\right),\]
and $E^r_{n,r,d-r+1}$ is a subquotient of $E^1_{n,r,d-r+1}$. We will show that these are zero.

In the first case consider the term
	\[H_{n', d'}(\overline{\gR}_\bZ/\sigma) \otimes H_{n'', d'', r}(\gE_\infty^+(\bigoplus S^{n_\alpha, d_\alpha, d_\alpha}x_\alpha))\]
with $n'+n''=n$ and $d'+d''=d+1$. The tridegrees of the $x_\alpha$ ensure that the right-hand factor can be non-trivial only if $r \geq 2$, $d'' \geq n''+1$, and $d'' \geq r$. But in this case
	\[d'-n' = d+1-d'' - (n-n'') = (d-n) + (n''+1-d'')<0\]
so $d'<n'$, and $d' = d+1-d'' \leq d+1-r < d$, so the left-hand factor must be zero, by our assumption that $d$ was minimal. 

Similarly, for the term
	\[\mr{Tor}^\bZ_1\left(H_{n', d'}(\overline{\gR}_\bZ/\sigma) , H_{n'', d'', r}(\gE_\infty^+(\bigoplus S^{n_\alpha, d_\alpha, d_\alpha}x_\alpha))\right)\]
with $n'+n''=n$ and $d'+d''=d$, if the right-hand factor is non-trivial then
	\[d'-n' = d-d'' - (n-n'') = (d-n) + (n''-d'')<-1\]
so again $d' < n'$, and $d' = d-d'' \leq d-r < d$, so the left-hand factor must again be trivial.

Thus the domains of differentials arriving at $E^r_{n,0,d}$ are zero for all $r$, so $E^\infty_{n,0,d} \neq 0$ and hence $H_{n,d}(\overline{\gN}/\sigma) \neq 0$, a contradiction.
\end{proof}

For the remaining part we will analyse the differentials in this spectral sequence. The only differentials entering $E^1_{n,0,n} = H_{n,n}(\overline{\gR}_\bZ/\sigma)$ are of the form $d^r \colon E^r_{n, r,n-r+1,} \to E^r_{n,0,n}$. By the K{\"u}nneth theorem, using the vanishing just established and the fact that $d_\alpha \geq n_\alpha+1$, we have
	\[E^1_{n,r,n-r+1} = \bigoplus_{a+b=n} H_{a,a}(\overline{\gR}_\bZ/\sigma) \otimes H_{b, b+1,r}(\gE_\infty^+(\bigoplus S^{n_\alpha, d_\alpha, d_\alpha}x_\alpha)).\]
The only cells satisfying $d_\alpha=n_\alpha+1$ are those for $(n_\alpha,d_\alpha)=(1,2)$, so this is only nonzero for $r=2$, where we can write $E^1_{n,2,n-1} = H_{n-1,n-1}(\overline{\gR}_\bZ/\sigma) \otimes E^1_{1,2,2}$. Thus the only differential entering $E^1_{n,0,n} = H_{n,n}(\overline{\gR}_\bZ/\sigma)$ is
\begin{equation}
	d^2 \colon E^2_{n,2,n-1} = H_{n-1,n-1}(\overline{\gR}_\bZ/\sigma) \otimes E^2_{1,2,0} \lra E^2_{n,0,n} = H_{n,n}(\overline{\gR}_\bZ/\sigma).\label{eq:31}
\end{equation}

Note that the differential
	\[d^1 \colon E^1_{1,3,0} = \bigoplus_{(n_\alpha, d_\alpha)=(1,3)} \bZ \lra E^1_{1,2,0} = \bigoplus_{(n_\alpha, d_\alpha)=(1,2)} \bZ\]
is that of the relative $E_\infty$-cellular chain complex of $(\gN, \gR_\bZ)$ in these degrees, giving that $E^2_{1,2,0} \cong H^{E_\infty}_{1,2}(\gN, \gR_\bZ) = \R^\times$.  Therefore the differential~\eqref{eq:31} gives a map
	\[H_{n-1,n-1}(\overline{\gR}_\bZ/\sigma) \otimes \R^\times \lra H_{n,n}(\overline{\gR}_\bZ/\sigma),\]
which must be surjective for all $n \geq 1$, since $E^\infty_{n,0,n}= 0$. By the multiplicative properties of the spectral sequence (module structure over $H_{*,*}(\overline{\gR}_\bZ)$) this proves the generation part of the theorem.

\vspace{.5em}

For the relations part of the theorem, we first note that the long exact sequence for the triple $\mr{GM}_1(\R) \leq \mr{GM}_2(\R) \leq \mr{GL}_2(\R)$ together with Suslin's formula \eqref{eq:AttMap} shows that the map
	\[\tilde{\wedge}^2 \R^\times = H_2(\mr{GM}_2(\R),\mr{GM}_1(\R)) \lra H_2(\mr{GL}_2(\R),\mr{GL}_1(\R)) = H_{2,2}(\overline{\gR}_\bZ/\sigma)\]
annihilates $x \tilde{\wedge} (1-x)$ for all $x \in \R^\times \setminus \{1\}$, and therefore the kernel of the surjective left $\tilde{\wedge}^* \R^\times$-module map
	\[\tilde{\wedge}^* \R^\times \lra \bigoplus_{n \geq 0} H_{n,n}(\overline{\gR}_\bZ/\sigma)\]
contains the $\tilde{\wedge}^* \R^\times$-submodule generated by the Steinberg relations $x \tilde{\wedge} (1-x)$. It remains to show that the kernel is no larger.

To do so, consider the $E_\infty$-algebra $\gR' \coloneqq {\gR_\bZ} \cup^{E_\infty}_\sigma D^{1,1}\sigma'$. The universal property of module quotients provides an $\overline{\gR}_\bZ$-module map
\[\overline{\gR}_\bZ/\sigma \lra \overline{\gR}' = \overline{{\gR_\bZ} \cup^{E_\infty}_\sigma D^{1,1}\sigma'}.\]
Let us write $M(\bZ/2)$ for the (simplicial abelian group corresponding to the) chain complex $(\bZ \smash{\xrightarrow{2}} \bZ)$ and similarly for $M(\bZ/3)$.  Proposition \ref{prop:RhoCellsLooselyAttached} shows that we may choose maps $\rho_2'\colon S^{2,2} \otimes M(\bZ/2) \to \gR'$ and $\rho_3' \colon S^{3,4} \otimes M(\bZ/3) \to \gR'$ representing non-trivial $E_\infty$-homology classes.  We then define $\gR' \to \gR''$ by taking pushout with $D^{2,3} \otimes M(\bZ/2)$
 along $\rho'_2$ and cone off $\rho'_3$ in a similar way.  The $E_\infty$-homology of $\gR''$ then looks like Figure~\ref{fig:eInfHomology}, except the entries $(1,0)$, $(2,2)$, and $(3,4)$ have been set to zero.

Recalling $\gA = \gE_\infty(S^{1,0} \otimes \bZ[B\R^\times])$ and defining
\begin{equation*}
	\gA'' \coloneqq \gA \cup_{\sigma}^{E_\infty} D^{1,1} \sigma',
\end{equation*}
the map $\gA \to \gR$ extends to a map $\gA'' \to \gR''$. In the long exact sequence
\[\cdots \lra H^{E_\infty}_{n,d}(\gA'') \lra H^{E_\infty}_{n,d}(\gR'') \lra H^{E_\infty}_{n,d}(\gR'',\gA'') \lra H^{E_\infty}_{n,d-1}(\gA'') \lra \cdots\]
the terms $\smash{H^{E_\infty}_{n,d}(\gA'')}$ vanish unless $n=1$, in which case the left map is an isomorphism. We conclude that 
\begin{equation*}
H^{E_\infty}_{n,d}(\gR'',\gA'') \cong \begin{cases} 0 & \text{if $n=1$,} \\
	H^{E_\infty}_{n,d}(\gR'') & \text{otherwise.}\end{cases}
\end{equation*}
Inspecting Figure \ref{fig:eInfHomology}, we see that $(n,d) = (2,3)$ is the only non-zero entry in bidegrees $d \leq n+1$, implying that $\gR''$ may be obtained, up to equivalence, from $\gA''$ by attaching cells of bidegree $(n,d)$ having $d > n+1$, after attaching $(2,3)$-cells along the connecting homomorphism
\begin{equation}\label{eq:33}
  H^{E_\infty}_{2,3}(\gR'', \gA'') \overset{\cong}\longleftarrow H_{2,3}(\gR'', \gA'') \overset{\partial}\lra H_{2,2}(\gA'') = \tilde{\wedge}^2 \R^\times.
\end{equation}
The isomorphism follows from the Hurewicz theorem for $E_k$-homology (Corollary $E_k$.11.12). To understand the map $\partial$, consider the commutative diagram
\[\begin{tikzcd}
	H_{2,3}^{E_\infty}(\gR,\gA) \dar{\cong} & \lar[swap]{\cong} H_{2,3}(\gR, \gA) \rar \dar & H_{2,2}(\gA) \dar \\
	H_{2,3}^{E_\infty}(\gR'',\gA'') & \lar[swap]{\cong} H_{2,3}(\gR'', \gA'') \rar{\partial} & H_{2,2}(\gA'')
\end{tikzcd}\]
where the top-left horizontal map is an isomorphism by the discusison in Section~\ref{sec:determ-figure-reff} around equation \eqref{eq:HurewiczInGrading2}, the bottom-left horizontal map is an isomorphism by the Hurewicz theorem as just discussed, and the left vertical map is an isomorphism by comparison of the long exact sequences.
Arguing as in Section~\ref{sec:PreBloch} then gives the identification
\[\begin{tikzcd}H_{2,3}(\gR'', \gA'')  \rar{\partial} \dar{\cong} & H_{2,2}(\gA'') \dar{\cong} & \lar H_{2,2}(\gA) \dar{\cong} \\
	H_3(\mr{GL}_2(\R),\mr{GM}_2(\R)) \rar & H_2(\mr{GM}_2(\R),\mr{GM}_1(\R)) & \lar H_2(\mr{GM}_2(\R))\end{tikzcd}\]
    under which the connecting homomorphism~\eqref{eq:33} becomes identified with the second component of \eqref{eq:AttMap}.
    
Since all cells of $\gR''$ are on or above the line $d = n$, it is easy to calculate the graded commutative ring $\oplus_{n \geq 0} H_{n,n}(\gR'')$: it is generated by the cells with $d=n$ and subject only to the relations coming from attaching maps of cells with $d = n+1$.  By inspecting Figure~\ref{fig:eInfHomology} again, and by the above analysis of the attaching map, we therefore get a ring isomorphism
	\[\bigoplus_{n \geq 0} H_{n,n}(\overline{\gR}'') \cong \frac{\bigoplus_{n \geq 0} \tilde{\wedge}^n \R^\times}{(x \tilde{\wedge} (1-x) \text{ for } x \in \R^\times \setminus \{1\})}\]
which is a presentation of Milnor $K$-theory. The composition
	\[\tilde{\wedge}^* \R^\times \lra \bigoplus_{n \geq 0} H_{n,n}(\overline{\gR}_\bZ/\sigma) \lra \bigoplus_{n \geq 0} H_{n,n}(\overline{\gR}'') = K_*^M(\R)\]
has kernel the submodule generated by the Steinberg relations, so the first map does too.
\end{proof}

\subsection{One degree higher}\label{sec:NSplus1}

The $E_\infty$-homology calculations in Section \ref{sec:EinfHomologyCalc} may also be used to address the relative rational homology groups $H_{n+1}(\mr{GL}_{n}(\R),\mr{GL}_{n-1}(\R);\bQ)$, in the degree above that addressed by the Nesterenko--Suslin theorem. Our answer is described in terms of the third Harrison homology of the (graded) commutative ring $K_*^M(\R)_\bQ$; this is Theorem \ref{thm:Harrison}.

\begin{theorem}\label{thm:NSPlusOne}
Let $\R$ be a connected semi-local ring with all residue fields infinite. The $\tilde{\wedge}^* (\R^\times)_\bQ$-module structure on $\bigoplus_{n \geq 0} H_{n,n+1}(\overline{\gR}_\bQ/\sigma)$ descends to a $K_*^M(\R)_\bQ$-module structure. Furthermore there is a natural homomorphism of graded $\bQ$-vector spaces
  \begin{equation*}
    \bigoplus_{n \geq 0} \mr{Harr}_{3}(K_*^M(\R)_\bQ)_n \lra \bQ \otimes_{K_*^M(\R)_\bQ} \bigoplus_{n \geq 0}  H_{n,n+1}(\overline{\gR}_\bQ/\sigma),
  \end{equation*}
  which is an isomorphism for $n \geq 5$. If $\R$ is an infinite field then this map is an isomorphism for $n \geq 4$.
\end{theorem}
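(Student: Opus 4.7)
The plan is to run an analysis parallel to the proof of Theorem~\ref{thm:NS}, extending the spectral-sequence argument one diagonal further, and to identify the resulting obstruction groups with Harrison homology by exploiting the fact that rationally an $E_\infty$-algebra strictifies to a commutative differential graded algebra (CDGA), whose derived indecomposables compute Andr\'e--Quillen homology. Under this strictification, the $E_\infty$-homology of $\gR_\bQ$ is identified with Andr\'e--Quillen homology of a CDGA whose homotopy ring along the diagonal is $K_*^M(\R)_\bQ$, and a Quillen-type spectral sequence then relates these two homologies through Harrison homology of $K_*^M(\R)_\bQ$.

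First I would verify that the $\tilde\wedge^*(\R^\times)_\bQ$-action descends to $K_*^M(\R)_\bQ$. For any Steinberg element $x\tilde\wedge(1-x) \in \tilde\wedge^2(\R^\times)_\bQ$, the Nesterenko--Suslin isomorphism $H_{2,2}(\overline{\gR}_\bQ/\sigma) \cong K_2^M(\R)_\bQ$ of Theorem~\ref{thm:NS} forces its image in $H_{2,2}(\overline{\gR}_\bQ/\sigma)$ to vanish, so the long exact sequence for multiplication by $\sigma$ writes $x\tilde\wedge(1-x) = \sigma \cdot y$ in $H_{2,2}(\overline{\gR}_\bQ)$ for some $y \in H_{1,2}(\overline{\gR}_\bQ)$. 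Since $\sigma$ acts nullhomotopically on its own cofiber $\overline{\gR}_\bQ/\sigma$, the Steinberg element annihilates every $H_{n,d}(\overline{\gR}_\bQ/\sigma)$, so the action descends as claimed.

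Next I would construct the natural Harrison map. Build a minimal CW-$E_\infty$-algebra model $\gC \overset{\sim}{\lra} \gR_\bQ$ whose cells of bidegree $(n_\alpha, d_\alpha)$ realise a $\bQ$-basis of $H^{E_\infty}_{*,*}(\gR_\bQ)$; by Theorem~\ref{thm:E2Homology} together with the bar-construction argument in Corollary~\ref{cor:einfty-hom-indec}, all cells other than the distinguished one in bidegree $(1,0)$ satisfy $d_\alpha \geq 2n_\alpha - 2$. Rational strictification presents $\gC$ as a filtered CDGA whose skeletal-filtration spectral sequence converges to $H_{*,*}(\overline{\gR}_\bQ)$. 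The diagonal $d = n$ recovers $K_*^M(\R)_\bQ$ by Theorem~\ref{thm:NS}, while the line $d = n+1$ receives contributions from cells representing relations (Steinberg) and relations-among-relations. After quotienting by $\sigma$ and tensoring over $K_*^M(\R)_\bQ$ with $\bQ$, the surviving contributions along $d = n+1$ are precisely those not already forced by the Milnor $K$-theory ring structure, and these are organised by the Quillen-type spectral sequence comparing Andr\'e--Quillen homology of the CDGA $\gC$ with Harrison homology of its homotopy ring; under this identification, $\mr{Harr}_3$ classes are exactly those whose lowest non-trivial target is the $(n, n+1)$-line of $\bigoplus_n \bQ \otimes_{K_*^M(\R)_\bQ} H_{n,n+1}(\overline{\gR}_\bQ/\sigma)$.

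The main obstacle will be keeping track of the low-degree exceptional contributions that cause the isomorphism to fail for small $n$. Two sources stand out. First, rational $E_\infty$-cells lying off the Nesterenko--Suslin line, most prominently the pre-Bloch group $\mathfrak{p}(\R)_\bQ$ in bidegree $(2,3)$ from Figure~\ref{fig:eInfHomology}, produce genuinely non-Harrison contributions to $H_{n,n+1}$ at small $n$; the unknown entry at $(2,4)$ contaminates a similar small range via spectral-sequence differentials. Second, for general connected semi-local rings we only control $H^{E_2}_{n,2n-2}(\gR_\bQ)$ through Theorem~\ref{thm:proof-of-conjecture-rank-leq-3} for $n \leq 3$, whereas for infinite fields the full strength of Theorem~\ref{thm:A} and Corollary~\ref{cor:einfty-hom-indec} is available. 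Tracking precisely which bidegrees can be polluted by these exceptional cells, using the slope-$2$ vanishing line to bound the reach of differentials, then yields the explicit ranges $n \geq 5$ in the semi-local case and the sharper $n \geq 4$ in the field case.
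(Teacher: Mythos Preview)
Your descent argument for the module structure is correct in spirit, though the paper's route is cleaner: it observes that $\overline{\gR}_\bQ/\sigma \simeq \overline{\gR}'$ where $\gR' = \gR_\bQ \cup^{E_\infty}_\sigma D^{1,1}\sigma'$ is itself an $E_\infty$-algebra, so its homology is a graded-commutative ring whose diagonal is $K_*^M(\R)_\bQ$ by Theorem~\ref{thm:NS}, and the module structure on the $(n,n+1)$-line is then automatic.

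The serious gap is in your construction of the Harrison map. The ``Quillen-type spectral sequence comparing Andr\'e--Quillen homology of the CDGA $\gC$ with Harrison homology of its homotopy ring'' does exist, but the homotopy ring of $\gC$ (equivalently of $\gR'$) is all of $H_{*,*}(\overline{\gR}')$, which is a bigraded ring with a great deal of content \emph{above} the diagonal. Only the diagonal subring is $K_*^M(\R)_\bQ$; Harrison homology of the full bigraded ring is not the same thing, and you have not explained how to isolate the diagonal contribution. The paper's key move, which you are missing, is to construct a further $E_\infty$-algebra $\gK$ by attaching $(n,d)$-cells with $d \geq n+2$ to $\gR'$ so as to kill everything off the diagonal, arranging $H_{n,d}(\gK) = K_n^M(\R)_\bQ$ for $d=n$ and zero otherwise. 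Then $H^{E_\infty}_{*,*}(\gK)$ is \emph{literally} Harrison homology of $K_*^M(\R)_\bQ$, and the long exact sequence for the pair $(\gK,\gR')$ compares it to $H^{E_\infty}_{*,*}(\gR')$ and the relative groups $H^{E_\infty}_{*,*}(\gK,\gR')$.

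There is a second missing ingredient: the identification of the $K_*^M(\R)_\bQ$-module indecomposables of $\bigoplus_n H_{n,n+1}(\overline{\gR}_\bQ/\sigma)$ with something accessible. The paper does this by setting $\gI = \mr{fib}(\overline{\gR}' \to \overline{\gK})$, applying the bar construction $B(\bQ,\overline{\gR}',-)$ to the cofiber sequence $\gI \to \overline{\gR}' \to \overline{\gK}$, and reading off from the bar spectral sequence that $\bQ \otimes_{K_*^M(\R)_\bQ} \bigoplus_n H_{n,n+1}(\overline{\gR}') \cong \bigoplus_n H^{\overline{\gR}'}_{n,n+2}(\overline{\gK})$. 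The right-hand side is then analysed by a canonical-multiplicative-filtration spectral sequence (Proposition~\ref{prop:CanMultFiltOnBar}) whose $E^1$-page is a free graded-commutative algebra on $H^{E_\infty}_{*,*}(\gK,\gR')$, and the range of degrees is exactly where this collapses to $H^{E_\infty}_{n,n+2}(\gK) = \mr{Harr}_3(K_*^M(\R)_\bQ)_n$. Your skeletal-filtration picture gestures at something like this but does not supply either the formal algebra $\gK$ or the bar-construction identification, and without those the argument does not close.

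Your range analysis correctly identifies that the obstruction is the possible nonvanishing of $E_\infty$-homology near the line $d = 2n-2$; the precise mechanism is that one needs $H^{E_\infty}_{n,n+1}(\gR')$ and $H^{E_\infty}_{n,n+2}(\gR')$ to vanish, which the slope-2 line gives for $n \geq 5$, and Corollary~\ref{cor:einfty-hom-indec} (the boundary groups are torsion for fields) improves to $n \geq 4$.
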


If $\R$ is a field of positive characteristic then it is a consequence of conjectures of Parshin and Beilinson that the quadratic algebra $K_*^M(\R)_\bQ$ is Koszul (see \cite[Section 0.6]{PositselskiAT}). When $K_*^M(\R)_\bQ$ is Koszul, $\mr{Harr}_{3}(K_*^M(\R)_\bQ)_n$ is supported in grading $n=3$. Assuming this Koszulness property gives the following.

\begin{corollary}\label{cor:NSPlusOneKoszul}
  If $K_*^M(\R)_\bQ$ is Koszul then $\bigoplus_{n \geq 0} H_{n,n+1}(\overline{\gR}_\bQ/\sigma)$ is generated as a $K_*^M(\R)_\bQ$-module in grading
  $n \leq 4$ (grading $n \leq 3$ if $\R$ is a field).
\end{corollary}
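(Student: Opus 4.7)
The plan is to combine Theorem~\ref{thm:NSPlusOne} with a standard Koszul-theoretic vanishing result for Harrison homology. Writing $A = K_*^M(\R)_\bQ$ for brevity and $M = \bigoplus_{n \geq 0} H_{n,n+1}(\overline{\gR}_\bQ/\sigma)$, I would first note that since $A$ is a connected graded $\bQ$-algebra, the graded $A$-module $M$ is generated in degrees $\leq d$ if and only if its indecomposables $\bQ \otimes_A M$ vanish in all degrees $> d$. So the task is to show the indecomposables of $M$ vanish for $n \geq 5$ in general, or for $n \geq 4$ when $\R$ is a field.

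Next, Theorem~\ref{thm:NSPlusOne} identifies $\mr{Harr}_3(A)_n$ with $(\bQ \otimes_A M)_n$ in exactly the range where we need vanishing. Thus the entire corollary reduces to the general fact: \emph{if $A$ is a Koszul commutative $\bQ$-algebra generated in weight one, then $\mr{Harr}_p(A)_n = 0$ unless $n = p$}. Specialising to $p = 3$ then gives the required vanishing for $n \geq 4$.

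To prove this general fact, I would relate Harrison homology to the primitives of the bar Hopf algebra $T \coloneqq \mr{Tor}^A_*(\bQ,\bQ)$. In characteristic zero, $T$ is a commutative graded Hopf algebra whose (bigraded) primitives are canonically identified, up to a homological shift, with Andr\'e--Quillen homology $\mr{AQ}_{*-1}(A/\bQ;\bQ)$, and this in turn agrees with Harrison homology $\mr{Harr}_*(A)$. By the Milnor--Moore theorem, $T \cong \mr{Sym}(P(T))$ as bigraded $\bQ$-algebras. The Koszul hypothesis says exactly that $T_{p,n} = 0$ unless $n = p$; since $\mr{Sym}(-)$ preserves the property of being concentrated on the diagonal $n = p$ in bigraded vector spaces (any off-diagonal primitive would propagate to off-diagonal classes in the symmetric algebra), the primitives $P(T)$ must also be concentrated on this diagonal. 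Therefore $\mr{Harr}_p(A)_n = 0$ unless $n = p$, and we conclude.

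The main obstacle is bookkeeping: one needs to pin down the correct conventions relating Harrison homology, Andr\'e--Quillen homology, and the primitives of the bar Hopf algebra in the bigraded setting, so that the shifts in homological degree line up correctly with the internal weight. Once these identifications are made cleanly, the argument is essentially formal and reduces entirely to the standard fact that Koszulness transfers from a free graded-commutative Hopf algebra to its space of primitives.
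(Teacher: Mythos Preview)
Your proposal is correct and follows the same route as the paper. The paper does not give a separate proof of the corollary at all: immediately before stating it, the paper simply asserts that when $K_*^M(\R)_\bQ$ is Koszul, $\mr{Harr}_3(K_*^M(\R)_\bQ)_n$ is supported in grading $n=3$, and then invokes Theorem~\ref{thm:NSPlusOne}. Your reduction to this vanishing, via the observation that module generation is detected by indecomposables, is exactly the intended argument.

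One small comment on your justification of the vanishing: the Milnor--Moore argument is correct but more than you need. Since the paper already cites Barr's theorem that Harrison homology is a direct summand of Hochschild homology (equivalently, of $\mr{Tor}^A_*(\bQ,\bQ)$), the diagonal vanishing of $\mr{Tor}$ that defines Koszulness passes to its summand $\mr{Harr}$ immediately. Your route via primitives amounts to the same thing, since the inclusion $P(T) \subset T$ already forces $P(T)$ onto the diagonal; invoking the isomorphism $T \cong \mr{Sym}(P(T))$ is not actually needed for that direction.
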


\begin{proof}
Letting $\gR' \coloneqq {\gR}_\bQ \cup^{E_\infty}_\sigma D^{1,1} \sigma'$, the natural map
	\[\overline{\gR}_\bQ/\sigma \lra \overline{\gR}'\]
\sloppy is a weak equivalence. This may be seen by considering the induced map of cell-attachment spectral sequences (see Section $E_k$.10.3.2) in the categories of $\overline{\gR}_\bQ$-modules and $E_\infty$-algebras respectively which, together with the identification $H_{*,*}(\gE_\infty^+(S^{1,1}_\bQ\sigma')) \simeq \Lambda_\bQ[\sigma']$ where $\sigma'$ has bidegree $(1,1)$, both have $E^1$-pages given by $H_*(\gR_\bQ) \otimes \Lambda_\bQ[\sigma']$ with $d^1(x \otimes \sigma') = x \cdot \sigma$, and degenerate at $E^2$.

Now $\gR'_\bQ$ is an $E_\infty$-algebra having $E_\infty$-homology only in bidegrees $(n,d)$ such that $d \geq n$, and by Theorem \ref{thm:NS} having
	\[\bigoplus_{n \geq 0} H_{n,n}(\overline{\gR}') = K_*^M(\R) \otimes \bQ\]
as a graded ring. This in particular shows that the $\tilde{\wedge}^* (\R^\times)_\bQ$-module structure on
	\[\bigoplus_{n \geq 0} H_{n,n+1}(\overline{\gR}_\bQ/\sigma) = \bigoplus_{n \geq 0} H_{n,n+1}(\overline{\gR}')\]
descends to a $K_*^M(\R)_\bQ$-module structure.

We may attach $E_\infty$-$(n,d)$-cells with $d \geq n+2$ to $\gR'$ to form an $E_\infty$-algebra $\gK$ having
	\[H_{n,d}(\gK) = \begin{cases}
	K_n^M(\R) \otimes \bQ & \text{if } n=d, n>0,\\
	0 & \text{else}.\end{cases}\]
Let $\gI$ denote the homotopy fibre of the $\overline{\gR}'$-module map $\overline{\gR}' \to \overline{\gK}$. We then have that $H_{n,d}(\gI)=0$ for $d \leq n$, and on the line above this we have
\[H_{n,n+1}(\gI) \overset{\sim}\lra H_{n,n+1}(\overline{\gR}').\]
The fibration sequence $\gI \to \overline{\gR}' \to \overline{\gK}$ of $\overline{\gR}'$-modules is also a cofibration sequence, so there is a cofibration sequence
\begin{equation}\label{eq:CofSeqKoszul}
	B(\bQ, \overline{\gR}', \gI) \lra B(\bQ, \overline{\gR}', \overline{\gR}') \simeq \bQ \lra B(\bQ, \overline{\gR}', \overline{\gK}).
\end{equation}
We can compute the leftmost term by the bar spectral sequence
	\[E^1_{n,p,q} = \mathrm{Tor}_p^{H_{*,*}(\overline{\gR}')}(\bQ[0,0], H_{*,*}(\gI))_{n,q} \Longrightarrow H_{n,p+q}^{\overline{\gR}'}(\gI)\]
which along the line $p+q=n+1$ yields an identification
	\[\bigoplus_{n \geq 0} H_{n,n+1}^{\overline{\gR}'}(\gI) = \bQ \otimes_{K^M_*(\R)_\bQ} \left(\bigoplus_{n \geq 0} H_{n,n+1}(\gI)\right) = \bQ \otimes_{K^M_*(\R)_\bQ} \left(\bigoplus_{n \geq 0} H_{n,n+1}(\overline{\gR}')\right)\]
and combining this with the cofibration sequence \eqref{eq:CofSeqKoszul} and the discussion above gives an identification
	\[\bQ \otimes_{K^M_*(\R)_\bQ} \left(\bigoplus_{n \geq 0} H_{n,n+1}(\overline{\gR}_\bQ/\sigma)\right) \cong \bigoplus_{n \geq 0} H_{n,n+2}^{\overline{\gR}'}(\overline{\gK}).\]

To access this latter term we use Theorem $E_k$.15.6, which provides a strongly convergent spectral sequence
\begin{equation}\label{eq:CanMultSS}
	E^1_{n,p,q} = \mr{S}^*[(-1)_* H^{E_\infty}_{*,*}(\gK, \gR')]_{n,p+q,p} \Longrightarrow H_{n,p+q}^{\overline{\gR}'}(\overline{\gK})
\end{equation}
with differentials $d^r \colon E^r_{n,p,q} \to E^r_{n,p-r,q+r-1}$ and $\mr{S}^*[-]$ denoting the free graded-commutative algebra.
As $\gK$ is obtained from $\gR'$ by attaching $E_\infty$-$(n,d)$-cells with $d \geq n+2$, we have $H^{E_\infty}_{n,d}(\gK, \gR')=0$ for $d < n+2$. Thus $E^1_{n,-1,p}=0$ for $p-1 < n+2$, and by taking products we find that
\begin{equation}\label{eq:E1Vanish}
	E^1_{n,p,q}=0 \text{ for } 3p+q < n.
\end{equation}
Furthermore, along the line $d=n+2$ we have an exact sequence
	\[\cdots \lra H^{E_\infty}_{n,n+2}(\gR') \lra H^{E_\infty}_{n,n+2}(\gK) \lra H^{E_\infty}_{n,n+2}(\gK, \gR')\lra H^{E_\infty}_{n,n+1}(\gR') \lra \cdots.\]
In characteristic zero $E_\infty$-homology agrees with Harrison hyperhomology \cite{HarrisonCoh,QuillenCoh, Fresse} up to a shift of degrees, so there is for any $\gA \in \Alg_{E_\infty}(\cat{sMod}_\bQ^\bN)$ a hyperhomology spectral sequence
$${{}^{\prime}E^2_{n,p,q}} = \mr{Harr}_p(H_{*,*}(\gA))_{n, q} \Longrightarrow H^{E_\infty}_{n,p+q-1}(\gA)$$
with differentials $d^r : {{}^{\prime}E^r_{n,p,q}} \to {{}^{\prime}E^r_{n,p-r,q+r-1}}$. For $\gA=\gK$, the fact that $H_{*,*}(\gK)$ is supported in diagonal bidegrees, and along the diagonal is given by $K_*^M(\R)_\bQ$, means that this spectral sequence collapses and gives
\[H^{E_\infty}_{n,n+2}(\gK) \cong \mr{Harr}_{3}(K_*^M(\R)_\bQ)_n.\] On the other hand $H^{E_\infty}_{n,n+2}(\gR') = H^{E_\infty}_{n,n+1}(\gR') = 0$ as long as $n \geq 5$, by Figure \ref{fig:eInfHomology}. Furthermore, if $\R$ is a field then this vanishing holds for $n \geq 4$, by Corollary \ref{cor:einfty-hom-indec}. This provides a map
	\[\mr{Harr}_{3}(K_*^M(\R)_\bQ)_n \lra H^{E_\infty}_{n,n+2}(\gK, \gR') = E^1_{n,-1,n+3}\]
which is an isomorphism for $n \geq 5$ ($n \geq 4$ if $\R$ is a field). There are no possible differentials entering this position, and differentials leaving this position go to $E^r_{n,-1-r,n+3+r-1}$, but these groups are zero by \eqref{eq:E1Vanish}.  Therefore the edge homomorphism
\begin{equation*}
	H_{n,n+2}^{\overline{\gR}'}(\overline{\gK}) \twoheadrightarrow E^\infty_{n,-1,n+3} \subset E^1_{n,-1,n+3}
\end{equation*}
is surjective.  A similar argument shows that $E^1_{n,q,n+2-q}=0$ for $q<-1$, which proves that this edge homomorphism is injective.  We therefore obtain
	\[\mr{Harr}_{3}(K_*^M(\R)_\bQ)_n \lra H^{E_\infty}_{n,n+2}(\gK, \gR') \stackrel{\cong}{\longleftarrow} H_{n,n+2}^{\overline{\gR}'}(\overline{\gK})\]
which is an isomorphism in the claimed range of degrees.
\end{proof}

\subsection{Rings with vanishing rational $K_2$}\label{sec:TrivK2Rat}

If $\R$ is a field or is a semi-local ring with infinite residue fields, then $K_2^M(\R) = K_2(\R)$ (for fields this is Matsumoto's theorem \cite[\S 11]{MilnorKThy}, and for semi-local rings with infinite residue fields it follows from \cite[Theorem 7.1, 8.4]{vanderKallenK2} or \cite[Corollary 4.3]{SN}). Thus if $K_2(\R)_\bQ=0$ then $K_2^M(\R)_\bQ=0$ and so the algebra $K^M_*(\R)_\bQ$ is Koszul. 

\begin{example}\label{exam:vanishing-k2}
For $\R =\bF$ a field, the assumption $K_2(\bF)_\bQ = 0$ is satisfied for $\bF$ a finite field, a number field (using localisation and then Borel's calculation that $K_2(\mathcal{O}_\bF) \otimes \bQ=0$), or $\bF_q(t)$, and hence by taking colimits also for the algebraic closures $\bar{\bQ}$ or $\bar{\bF}_p$. If $\bF$ has infinite transcendence degree over $\bF_p$ or $\bQ$, then $K_2(\bF)_\bQ \neq 0$. For these and more examples see \cite[III.\S 6]{WeibelK}.\end{example}

If $\R$ is a connected semi-local ring with all residue fields infinite and such that $K_2(\R)_\bQ=0$, then Corollary \ref{cor:NSPlusOneKoszul} implies that $H_{n+1}(\mr{GL}_n(\R), \mr{GL}_{n-1}(\R);\bQ)=0$ as long as $n \geq 5$.
 Here we will show that under this hypothesis one in fact obtains a much stronger homological stability range, of slope greater than 1; this is Theorem~\ref{thm:stab-special-assumptions}~(i).

\begin{theorem}\label{thm:K2vanish}
If $\R$ is a connected semi-local ring with all residue fields infinite and such that $K_2(\R)_\bQ=0$, then
	\[H_d(\mr{GL}_n(\R), \mr{GL}_{n-1}(\R);\bQ)=0\]
in degrees $d < \tfrac{4n-1}{3}$.
\end{theorem}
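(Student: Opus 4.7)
The plan is to exploit the hypothesis $K_2(\R)_\bQ = 0$, which forces $K_*^M(\R)_\bQ$ to reduce to the square-zero trivial extension $\bQ \oplus \R^\times_\bQ$ with $\R^\times_\bQ$ placed in degree 1 (since all higher Milnor $K$-groups then vanish). As in earlier arguments, I pass from $\overline{\gR}_\bQ/\sigma$ to the weakly equivalent $\overline{\gR}'_\bQ$, where $\gR'_\bQ \coloneqq \gR_\bQ \cup_\sigma^{E_\infty} D^{1,1}\sigma'$, so the problem reduces to showing $H_{n,d}(\gR'_\bQ) = 0$ for $d < \tfrac{4n-1}{3}$.

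I would first handle the region $d \leq n$ using the Nesterenko--Suslin theorem (Theorem~\ref{thm:NS}): $H_{n,n}(\overline{\gR}_\bQ/\sigma) \cong K_n^M(\R)_\bQ$, which vanishes for $n \geq 2$ under our hypothesis, while $H_{n,d} = 0$ for $d<n$. Since $\tfrac{4n-1}{3} > n$ for $n \geq 2$, this covers the line $d = n$. Next, the line $d = n+1$ is handled via Theorem~\ref{thm:NSPlusOne} and Corollary~\ref{cor:NSPlusOneKoszul}: because $K_*^M(\R)_\bQ$ is trivially Koszul, the graded module $\bigoplus_n H_{n,n+1}(\overline{\gR}_\bQ/\sigma)$ is generated over $K_*^M(\R)_\bQ$ in low degree, and because the module structure factors through the trivial extension, two successive actions by $\R^\times_\bQ$ compose through $K_2^M(\R)_\bQ = 0$ and vanish. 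Iterating this once gives $H_{n,n+1}(\overline{\gR}_\bQ/\sigma) = 0$ for $n$ sufficiently large -- for $\R$ an infinite field this gives $n \geq 5$ as required ($d = n+1 < \tfrac{4n-1}{3}$ precisely when $n > 4$); the general semi-local case at $n = 5$ requires additional care.

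For the remaining lines $d = n + k$ with $k \geq 2$, I plan to establish a higher-degree analogue of Theorem~\ref{thm:NSPlusOne}. Attaching cells to $\gR'_\bQ$ to form an $E_\infty$-algebra $\gK$ with $H_{*,*}(\overline{\gK}) \cong K_*^M(\R)_\bQ$, and analyzing the cofibration $\gI \to \overline{\gR}'_\bQ \to \overline{\gK}$ together with the canonical multiplicative filtration spectral sequence of Proposition~\ref{prop:CanMultFiltOnBar},
\[E^1_{n,p,q} = \mathrm{S}^*\!\left[(-1)_* H^{E_\infty}_{*,*}(\gK, \gR'_\bQ)\right]_{n,p+q,q} \Longrightarrow H^{\overline{\gR}'_\bQ}_{n,p+q}(\overline{\gK}),\]
together with the connectivity estimate $E^1_{n,p,q} = 0$ for $p + 3q < n$ that appears in the proof of Theorem~\ref{thm:NSPlusOne}, should give a systematic identification of $H_{n,n+k}(\overline{\gR}_\bQ/\sigma)$ with $\mathrm{Harr}_{k+2}(K_*^M(\R)_\bQ)_n$ modulo lower-degree module contributions, within a range of $n$ that grows with $k$. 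For the square-zero extension $A = \bQ \oplus V$ the bar complex has zero differential and $\mathrm{Tor}^A_p(\bQ, \bQ) = V^{\otimes p}$ sits on the diagonal, so higher Harrison homology is concentrated in a narrow range of internal degree; combining this with the $V \cdot V = 0$ phenomenon, iterating the module-level argument produces the claimed vanishing for $n \geq 3k+2$.

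The main obstacle is the higher-degree generalization of Theorem~\ref{thm:NSPlusOne}: one must carefully track the precise range of $n$ for which the identification with higher Harrison is an isomorphism (this range necessarily increases with $k$, since more connectivity of $H^{E_\infty}_{*,*}(\gR'_\bQ)$ is needed to exclude spurious multi-cell contributions in the spectral sequence), the potentially obstructing differentials in the canonical multiplicative spectral sequence beyond the Harrison leading term, and the boundary cases $n = 3k+2$ where the iterated $\R^\times_\bQ$-action argument is tightest. A secondary issue is handling the general semi-local case at the $n=5$ boundary on the line $d=n+1$, where the iso in Theorem~\ref{thm:NSPlusOne} only begins at $n \geq 5$ rather than $n \geq 4$, requiring either an improved low-rank bound or a direct computation.
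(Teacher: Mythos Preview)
Your approach is substantially different from the paper's, and it has a genuine gap that is more than a boundary nuisance.

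The paper does not work line by line. Instead it uses the map $\gR' \to \gK$ you describe, but extracts a single connectivity estimate on \emph{relative} $E_\infty$-homology. The key observation you are not using is that Koszulness of $K_*^M(\R)_\bQ$ gives $H^{E_\infty}_{n,d}(\gK) = 0$ for $d \neq 2n-1$ (not just control of Harrison groups on one line at a time). Combining this with the known vanishing $H^{E_\infty}_{n,d}(\gR'_\bQ)=0$ for $d < 2n-2$ and at $(1,0),(2,2),(3,4)$, together with the cell bound $d \geq n+2$, the long exact sequence for the pair $(\gK,\gR')$ yields $H^{E_\infty}_{n,d}(\gK,\gR')=0$ for $d-1 < \tfrac{4}{3}n$. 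This feeds directly into a spectral sequence for the skeletal filtration of $\gK$ relative to $\gR'$, converging to $H_{*,*}(\gK)$ (which is known), and a minimal-counterexample argument on $\tfrac{d-1}{n-1}$ gives the result in one stroke. There is no need for higher Harrison analogues, no module-structure iteration, and no inductive bookkeeping over $k = d-n$.

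Your line-by-line scheme, even granting all the unproved higher Harrison identifications, does not reach the stated bound. Already at $k=1$ for semi-local $\R$: Theorem~\ref{thm:NSPlusOne} gives the indecomposables of $\bigoplus_n H_{n,n+1}(\overline{\gR}_\bQ/\sigma)$ only for $n \geq 5$, so module generators can occur in degrees $\leq 4$; acting once by $K_1^M(\R)_\bQ$ reaches degree $5$, and the square-zero argument then gives vanishing only for $n \geq 6$, whereas the theorem requires $n \geq 5$. The same slippage propagates and worsens for larger $k$. The missing idea is precisely to bypass the Harrison identification on each line and instead control the relative $E_\infty$-homology globally via the long exact sequence and the full strength of Koszulness of $\gK$.
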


\begin{proof}
We use the map $\gR' \to \gK$ constructed in the proof of Theorem \ref{thm:NSPlusOne}, where 
	\[H_{n,d}(\gR') = H_{n,d}(\overline{\gR}_\bQ/\sigma)=H_d(\mr{GL}_n(\R), \mr{GL}_{n-1}(\R);\bQ).\]
This map may be obtained as a relative CW-$E_\infty$-algebra by attaching $E_\infty$-$(n,d)$-cells to $\gR'$ with $d \geq n+2$, so $H^{E_\infty}_{n,d}(\gK, \gR')=0$ for $d < n+2$. Furthermore, we have $H^{E_\infty}_{1,0}(\gR')=0$ by construction, and we have $H^{E_\infty}_{n,d}(\gR')=0$ for $d < 2n-2$ and for $(n,d)=(2,2)$ or $(3,4)$, by Figure \ref{fig:eInfHomology}. Under the assumption $K_2(\R)_\bQ=0$, as we have discussed above, the algebra $K_*^M(\R)_\bQ$ is Koszul so $H^{E_\infty}_{n,d}(\gK)=0$ for $d \neq 2n-1$. Combining these estimates, using the long exact sequence on $E_\infty$-homology for the pair $(\gK, \gR')$, we see that $H^{E_\infty}_{n,d}(\gK, \gR')=0$ for $d-1 < \tfrac{4}{3}n$.
 
We now consider the spectral sequence associated to the skeletal filtration of Corollary $E_k$.10.19
\[E^1_{n,p,q} = H_{n,p+q,p}\left(\gR'[0] \otimes \gE_\infty^+(\bigoplus_\alpha S_\bQ^{n_\alpha, d_\alpha, d_\alpha}x_\alpha)\right) \Longrightarrow H_{n,p+q}(\gK)\]
with $d_\alpha \geq n_\alpha+2$ and $d_\alpha-1 \geq \tfrac{4}{3}n_\alpha$, and differential $d^r \colon E^r_{n,p,q} \to E^r_{n,p-r,q+r-1}$. The target of this spectral sequence is non-trivial only in bidegrees $(0,0)$ (where it is $\bQ$) and $(1,1)$ (where it is $(\R^\times)_\bQ$).

Suppose for a contradiction that $H_{n,d}(\gR')$ is nonzero, with $n>1$ and $\tfrac{d-1}{n-1} < \tfrac{4}{3}$, contributing to the zeroeth column; without loss of generality we may suppose that $d$ is minimal with this property. As $n>1$ this cannot survive the spectral sequence, so must be the target of a nontrivial $d^r$-differential starting at
	\[E^1_{n,r,d-r+1} = \bigoplus_{\substack{n'+n'' = n\\ d'+d''=d+1}}H_{n',d'}(\gR') \otimes H_{n'', d'',r}\left(\gE_\infty^+(\bigoplus_\alpha S_\bQ^{n_\alpha, d_\alpha, d_\alpha}x_\alpha)\right).\]
The tridegrees of the $x_\alpha$ ensure that the right-hand factor can be non-trivial only if $r \geq 2$, $d'' \geq r$, and $d''-1 \geq \tfrac{4}{3} n''$. Now
\begin{align*}
	d'-1 = (d-1)-(d''-1) &< \tfrac{4}{3}(n-1) - (d''-1) = \tfrac{4}{3}(n'-1+n'') - (d''-1)
\end{align*}
so
	\[\tfrac{d'-1}{n'-1} < \tfrac{4}{3}+ \tfrac{1}{n'-1}(\tfrac{4}{3} n'' - (d''-1)) \leq \tfrac{4}{3},\]
and $d' < d$ as $d'' \geq r \geq 2$. Under these conditions the left-hand factor vanishes, as we supposed that $d$ was minimal; this is a contradiction.
\end{proof}

\subsection{Algebraically closed fields and conjectures of Mirzaii and Yagunov}\label{sec:MirzYagConj}

If $\bF$ is an algebraically closed field then $\bF^\times$ is a divisible group. This means that $\bF^\times \otimes_\bZ \bZ/p$ vanishes for all primes $p$, which has implications for the $E_\infty$-homology calculations in Section \ref{sec:EinfHomologyCalc} when working with $\bZ/p$-coefficients. The group $\bF^\times$ being divisible implies that Milnor $K$-theory is divisible, and so by the Nesterenko--Suslin theorem gives
	\[H_{n}(\mr{GL}_n(\bF), \mr{GL}_{n-1}(\bF);\bZ/p)=0\]
for all $n>0$. We will show in Corollary \ref{cor:alg-closed-2} below that the same vanishing holds in the next homological degree up, as long as $n>3$. We will deduce this from the following much more general homological stability range, which implies Theorem \ref{thm:stab-special-assumptions} (\ref{enum:p-div}):

\begin{theorem}\label{thm:alg-closed-1}
	Let $p$ be a prime number and $\R$ be a connected semi-local ring with infinite residue fields such that $\R^\times \otimes_\bZ \bZ/p = 0$. Then
		\[H_{d}(\mr{GL}_n(\R), \mr{GL}_{n-1}(\R);\bZ/p)=0\]
	in degrees $d < \tfrac{3}{2}n$. 
	
	If in addition $A$ is a field and $\mathfrak{p}(\R) \otimes_\bZ \bZ/p = 0$, then this holds in degrees $d < \tfrac{5}{3}n$.
\end{theorem}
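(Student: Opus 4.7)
The plan is to adapt the strategy of Theorem \ref{thm:K2vanish} to $\bF_p$-coefficients, using the hypothesis $\R^\times \otimes \bF_p = 0$ as the key input. As in the proof of Theorem \ref{thm:NSPlusOne}, I would first identify $H_d(\mr{GL}_n(\R), \mr{GL}_{n-1}(\R); \bF_p) = H_{n,d}(\gR')$ where $\gR' \coloneqq \gR_{\bF_p} \cup^{E_\infty}_\sigma D^{1,1}_{\bF_p}\sigma'$, via the cell-attachment spectral sequence comparison that shows $\overline{\gR}_{\bF_p}/\sigma \to \overline{\gR}'$ is a weak equivalence.

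Next I would sharpen the $E_\infty$-homology of $\gR'$ beyond the slope-$2$ line $d < 2n-2$ from Corollary \ref{cor:einfty-hom-indec}. By the universal coefficient theorem applied to Figure \ref{fig:eInfHomology}, the classes $\rho_2 \in H^{E_\infty}_{2,2}(\gR_\bZ)$ and $\rho_3 \in H^{E_\infty}_{3,4}(\gR_\bZ)$ only contribute to mod-$p$ $E_\infty$-homology when $p=2$ and $p=3$ respectively. In these cases I would use Lemma \ref{lem:SNrelations}, which expresses $(Q^1_2)_{\bF_p}(\sigma) = \sigma \cdot a$ with $a = [-1] \in H_1(\R^\times;\bF_p)$ (and similarly for $(\beta Q^1_3)_{\bF_p}$): under the hypothesis $\R^\times \otimes \bF_p = 0$, the class $a$ vanishes, so $Q^1_p(\sigma)$ becomes $\sigma$-decomposable and the image of $\rho_p$ in the mod-$p$ $E_\infty$-homology of $\gR_{\bF_p}$ vanishes (cf.\ Proposition \ref{prop:RhoCellsLooselyAttached}). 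Combined with the Nesterenko--Suslin vanishing $K_n^M(\R)\otimes\bF_p = 0$ for $n \geq 1$ (immediate from the hypothesis), this puts $\gR_{\bF_p}$ into the framework of a CW-$E_\infty$-algebra with cells of bidegree $(n_\alpha, d_\alpha)$ satisfying $d_\alpha \geq 2n_\alpha - 1$ for $n_\alpha \geq 2$, apart from $\sigma$ at $(1,0)$ (to be killed by $\sigma'$) and rank-$1$ cells contributed by $H_{*}(\R^\times;\bF_p)$.

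Then I would attach $E_\infty$-cells to $\gR'$ to construct a map $\gR' \to \gK$ where $\gK$ has the simplest homology consistent with the hypothesis; since $K^M_n(\R) \otimes \bF_p = 0$ the choice $\gK \simeq 0$ in positive ranks is natural. Exactly as in the proof of Theorem \ref{thm:K2vanish}, I would use the skeletal filtration of $\gK$ (modified to vanish below degree $0$) to obtain a conditionally convergent spectral sequence
\[E^1_{n,p,q} = H_{n,p+q,q}\left(\gR'[0] \otimes \gE^+_\infty\left(\bigoplus_\alpha S^{n_\alpha, d_\alpha, d_\alpha}_{\bF_p} x_\alpha\right)\right) \Longrightarrow H_{n,p+q}(\gK),\]
where the cells $x_\alpha$ inherit the improved bidegree bounds from the previous step. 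The minimality contradiction argument then runs as before: assuming $H_{n,d}(\gR') \neq 0$ with $d$ minimal below the $3n/2$ (resp.\ $5n/3$) line, any differential hitting this class factors through a tensor product $H_{n',d'}(\gR') \otimes H_{n'',d'',r}$ in which the slope of the cells forces $d'$ to lie below the minimality threshold, contradicting the inductive hypothesis. The sharper $5n/3$-bound under the additional assumption $\mathfrak{p}(\R) \otimes \bF_p = 0$ (which for fields makes the $(2,3)$ entry $\mathfrak{p}(\R) \otimes \bF_p$ of Figure \ref{fig:eInfHomology} vanish together with the associated Tor term) removes the lowest-slope cell that would otherwise obstruct the $5n/3$ conclusion.

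The main obstacle will be numerical bookkeeping: the rank-$1$ cells from $H_*(\R^\times;\bF_p)$ (which are not pushed to a higher slope by the hypothesis, since $\R^\times[p]$ may be nontrivial) must be shown not to obstruct the argument, and the slope arithmetic must be matched carefully to produce exactly $3n/2$, respectively $5n/3$. Here the fact that such rank-$1$ cells only contribute tensor factors with $n'' = 1$ should imply they cannot degrade the slope bound coming from higher-rank cells, provided the induction is set up on $d - \tfrac{3}{2}n$ (resp.\ $d - \tfrac{5}{3}n$).
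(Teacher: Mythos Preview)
Your outline has the right spectral-sequence skeleton but two genuine gaps make it fail as stated.

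\textbf{First gap.} The identification $\overline{\gR}_{\bF_p}/\sigma \simeq \overline{\gR}'$ with $\gR' = \gR_{\bF_p}\cup^{E_\infty}_\sigma D^{1,1}\sigma'$ is false over $\bF_p$. The argument in Theorem~\ref{thm:NSPlusOne} compares two cell-attachment spectral sequences whose $E^1$-pages are $H_{*,*}(\gR)\otimes H_{*,*}(\gE^+_\infty(S^{1,1}\sigma'))$, and uses that over $\bQ$ the second factor is $\Lambda_\bQ[\sigma']$. Over $\bF_p$ this factor carries Dyer--Lashof operations on $\sigma'$ and is much larger, so the comparison breaks down. (Try $\gR=\gE_\infty(S^{1,0}\sigma)$: then $\gR'\simeq 0$ but $H_{n,d}(\overline{\gR}_{\bF_p}/\sigma)=H_d(\fS_n,\fS_{n-1};\bF_p)\neq 0$.) So your spectral sequence would be computing the wrong groups.

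\textbf{Second gap.} The class $\rho_p$ does \emph{not} vanish in $H^{E_\infty}_{*,*}(\gR_{\bF_p})$ under the hypothesis. Lemma~\ref{lem:SNrelations} is a relation in ordinary homology, whereas $H^{E_\infty}_{2,2}(\gR_\bZ)\cong H_2(\mr{GL}_2(\R),\mr{GM}_2(\R);\bZ)=\bZ/2$ by Theorem~\ref{thm:SuslinCalc}, independently of any condition on $\R^\times$; by the universal coefficient theorem its reduction is nonzero in $H^{E_\infty}_{2,2}(\gR_{\bF_2})$. Proposition~\ref{prop:RhoCellsLooselyAttached} only says the Hurewicz map hits $\rho_2$, not that $\rho_2=0$. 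With $\rho_2$ present, building $\gK\simeq 0$ from $\gR'$ requires a $(2,3)$-cell, and $3-1<\tfrac{3}{2}\cdot 2$, so your minimality argument would not yield slope $\tfrac{3}{2}$.

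The paper avoids both issues by following the template of Theorem~\ref{thm:NS} rather than Theorem~\ref{thm:K2vanish}: work with $\overline{\gR}_{\bZ/p}/\sigma$ directly on the $q=0$ line (never passing to $\gR'$), and map $\gR_{\bZ/p}\to\gN$ rather than to $0$. The relevant cells are then governed by the \emph{relative} $E_\infty$-homology of Figure~\ref{fig:eInfRelHomology}, where no $\rho_p$ classes appear (they map isomorphically to $\gN$). Universal coefficients plus $\R^\times\otimes\bF_p=0$ kills the $(1,2)$-entry, and what remains satisfies $d_\alpha-1\geq\tfrac{3}{2}n_\alpha$. For the $\tfrac{5}{3}$-bound one uses the field hypothesis to get the full $d<2n$ vanishing, and then proves separately (the ``Claim'' in the paper) that $H^{E_\infty}_{2,4}(\gN,\gR_{\bZ/p})=0$; this last step is not just a UCT reading of $\mathfrak{p}(\R)\otimes\bF_p=0$ but requires showing a certain map $H^{E_\infty}_{2,4}(\gR_\bZ)\to\bZ/2$ is onto, which is where Lemma~\ref{lem:SNrelations} actually enters.
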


\begin{proof}
We proceed as in the proof of Theorem \ref{thm:NS}, considering the map $\epsilon \colon \gR_{\bZ} \to \gN$ and its base change to $\bZ/p$. 	By applying the universal coefficient theorem to Figure \ref{fig:eInfRelHomology}, and using that $A^\times \otimes_\bZ \bZ/p=0$, the $E_\infty$-homology of the pair $(\gN, \gR_{\bZ/p})$ satisfies $H_{n,d}^{E_\infty}(\gN, \gR_{\bZ/p})=0$ for $d-1 < \tfrac{3}{2} n$.	As in the proof of Theorem \ref{thm:NS} we construct a relative CW-$E_\infty$-algebra $\gR_{\bZ/p} \hookrightarrow \gC \overset{\sim} \to \gN$ only having cells in the necessary degrees and skeletal filtration $\mathrm{sk}(\gC)$. The filtered object $\overline{\mathrm{sk}(\gC)}/\sigma$ has a spectral sequence
	\[E^1_{n,p,q} = H_{n,p+q,p}(\overline{\gR}_{\bZ/p}/\sigma[0] \otimes \gE_\infty^+(\bigoplus_\alpha S^{n_\alpha, d_\alpha, d_\alpha}x_\alpha)) \Longrightarrow H_{n,p+q}(\overline{\gN}/\sigma),\]
and $H_{*,*}(\overline{\gN}/\sigma) = \bZ/p[0,0]$. The tridegrees of the $x_\alpha$ satisfy $d_\alpha-1 \geq \tfrac{3}{2} n_\alpha$. 
	
Suppose for a contradiction that $H_{n, d}(\overline{\gR}_{\bZ/p}/\sigma)$ is nonzero, with $n>0$ and $\tfrac{d}{n} < \tfrac{3}{2}$; without loss of generality we may suppose that $d$ is minimal with this property. As $n>0$ this cannot survive the spectral sequence, so must be the target of a $d^r$-differential starting at 
	\[E^1_{n,r, d-r+1} = \bigoplus_{\substack{n'+n'' = n\\ d'+d''=d+1}}H_{n',d'}(\overline{\gR}_{\bZ/p}/\sigma) \otimes H_{n'', d'',r}(\gE_\infty^+(\bigoplus_\alpha S^{n_\alpha, d_\alpha, d_\alpha}x_\alpha)).\]
The tridegrees of the $x_\alpha$ ensure that the right-hand factor is non-trivial only if $r \geq 2$, $d'' \geq r$, and $d''-1 \geq \tfrac{3}{2} n''$. Now
	\[d' = d+1-d'' < \tfrac{3}{2}n + 1 - d'' = \tfrac{3}{2}n' + (\tfrac{3}{2} n'' + 1 - d'') \leq \tfrac{3}{2} n'\]
and $d' < d$ as $d'' \geq r \geq 2$. Under these conditions the left-hand factor vanishes, as we supposed $d$ was minimal; this is a contradiction.

Suppose now that $A$ is a field and that $\mathfrak{p}(\R) \otimes_\bZ \bZ/p = 0$. By Corollary \ref{cor:EInfHomNrelR} we have $H_{n,d}^{E_\infty}(\gN, \gR_{\bZ/p})=0$ for $d < 2n$, and we also have $H_{1,2}^{E_\infty}(\gN, \gR_{\bZ/p})= A^\times \otimes_\bZ \bZ/p=0$. In rank $n=2$ we have the following:

\vspace{1ex}
	
\noindent \textbf{Claim.} We have $H_{2,d}^{E_\infty}(\gN, \gR_{\bZ/p})=0$ for $d \leq 4$.
	
\begin{proof}[Proof of Claim]
We already know that $H_{2,d}^{E_\infty}(\gN, \gR_{\bZ})=0$ for $d \leq 3$. As in Remark \ref{rem:EInfHomOfN} we have
	\[H_{2,d}^{E_\infty}(\gN) = H_{2,d}^{E_\infty}(\gN, \gE_\infty(S^{1,0})) \overset{\sim}\lla H_{2,d}(\gN, \gE_\infty(S^{1,0})) \overset{\sim}\lra \tilde{H}_{d-1}(\fS_2 ; \bZ).\]
The long exact sequence for the pair is then
	\[H_{2,4}^{E_\infty}(\gR_\bZ) \lra \bZ/2 \lra H_{2,4}^{E_\infty}(\gN, \gR_\bZ) \lra \mathfrak{p}(\R) \lra 0\]
and we will show that the first map is surjective.  Under the assumption $\mathfrak{p}(\R) \otimes_\bZ \bZ/p = 0$, the claim then follows from the resulting isomorphism $H_{2,4}^{E_\infty}(\gN, \gR_\bZ) \cong \mathfrak{p}(\R)$ together with the Universal Coefficient Theorem.                
		
For this, as in Section \ref{sec:EinfHomologyCalc} let $\gA = \gE_\infty(S^{1,0} \otimes \bZ[B\R^\times])$, and also write $\gE = \gE_\infty(S^{1,0})$, and then consider the commutative square
\begin{equation*}
	\begin{tikzcd}
	\gA \dar \rar& \gR_\bZ \dar\\
	\gE \rar& \gN,
	\end{tikzcd}
\end{equation*}
whose left-hand vertical map is induced by $B\R^\times \to \{*\}$ and so is a split epimorphism. As $\gA$ only has $E_\infty$-cells in rank 1, the map $H_{2,*}^{E_\infty}(\gR_\bZ) \to H_{2,*}^{E_\infty}(\gR_\bZ, \gA)$ is an isomorphism; similarly with $\gN$ and $\gE$. Consider the diagram
\begin{equation*}
	\begin{tikzcd}
	H_{2,4}^{E_\infty}(\gR_\bZ, \gA) \dar & H_{2,4}(\gR_\bZ, \gA) \lar[swap]{\sim} \dar \rar& H_{2,3}(\gA) \dar \rar &  H_{2,3}(\gR_\bZ)\\
	H_{2,4}^{E_\infty}(\gN, \gE) & H_{2,4}(\gN, \gE) \lar[swap]{\sim} \rar{\sim}& H_{2,3}(\gE)
	\end{tikzcd}
\end{equation*}
where the left-hand isomorphisms are obtained as in Remark \ref{rem:EInfHomOfN}, using Proposition $E_k$.11.9.
		
All groups in the bottom row are isomorphic to $H_3(B\fS_2;\bZ) = \bZ/2$, and we need to show that the leftmost vertical map is surjective; it is therefore enough to show that the middle vertical map is surjective. 
		
Consider the splitting $\gE \to \gA$, which gives a class $\alpha \in H_{2,3}(\gA)$ mapping to a generator of $H_{2,3}(\gE)=\bZ/2$.  The class $\alpha$ is, by construction, given by the homomorphism
	\begin{equation*}
	t \longmapsto \begin{bsmallmatrix} 0 & 1 \\
	1 & 0 \end{bsmallmatrix} \colon C_2 \lra \mr{GM}_2(\R)
	\end{equation*}
evaluated on the generator of $H_3(BC_2;\bZ) = \bZ/2$. As in the proof of Lemma \ref{lem:SNrelations} this homomorphism induces the same as $t \mapsto \begin{bsmallmatrix} 1 & 0 \\ 0 & -1 \end{bsmallmatrix}$ on homology when considered as a homomorphism to $\mr{GL}_2(\R)$. Thus there is a class $\alpha' \in H_{3}(\mr{GM}_1(\R);\bZ)$ such that $\alpha - \sigma \cdot \alpha'=0 \in \mr{Ker}(H_{2,3}(\gA) \to H_{2,3}(\gR_\bZ))$, so $\alpha - \sigma \cdot \alpha'$ lifts to $H_{2,4}(\gR_\bZ, \gA)$. But under the map $H_{2,3}(\gA) \to H_{2,3}(\gE)$ the class $\alpha - \sigma \cdot \alpha'$ is sent to a generator, as $\alpha'$ is sent to to 0.
\end{proof}

In particular we have $H_{n,d}^{E_\infty}(\gN, \gR_{\bZ/p})=0$ for $d-1 < \tfrac{5}{3} n$. The same spectral sequence argument as above, with $\tfrac{3}{2}$ replaced by $\tfrac{5}{3}$, gives the claimed vanishing range.
\end{proof}

\begin{remark}\label{remark:entry-two-four}
The proof of the isomorphism $H_{2,4}^{E_\infty}(\gN, \gR_\bZ) \cong \mathfrak{p}(\R)$ given above applies for any connected semi-local ring with infinite residue fields, and shows that the entry $(n,d) = (2,4)$ of Figure~\ref{fig:eInfRelHomology} is as depicted.
\end{remark}

The assumption that $\R^\times \otimes_\bZ \bZ/p=0$ is in particular satisfied when $\R$ is an algebraically closed field $\bF$. In that case it is a theorem of Dupont and Sah \cite[Theorem 5.1]{DupontSah} that the pre-Bloch group $\mathfrak{p}(\bF)$ is divisible, so we also have $\mathfrak{p}(\bF) \otimes_\bZ \bZ/p=0$. This implies Theorem \ref{thm:stab-special-assumptions} (\ref{enum:alg-closed}). Combining a number of deep results in algebraic $K$-theory shows that $\mathfrak{p}(\bF) \otimes_\bZ \bZ/p=0$ under much weaker assumptions than being algebraically closed:

\begin{corollary}\label{cor:roots-of-unity}
If $\bF$ is an infinite field satisfying (i) $\bF^\times \otimes \bZ/p = 0$, and (ii) the polynomial $x^p-1$ splits into linear factors, then
	\[H_{d}(\mr{GL}_n(\bF), \mr{GL}_{n-1}(\bF);\bZ/p)=0\]
in degrees $d < \tfrac{5}{3}n$.
\end{corollary}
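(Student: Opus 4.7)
The plan is to deduce Corollary~\ref{cor:roots-of-unity} from the field case of Theorem~\ref{thm:alg-closed-1} by verifying that $\mathfrak{p}(\bF) \otimes \bZ/p = 0$ under the hypotheses (i) and (ii). The strategy is to combine the presentation of the pre-Bloch group from Section~\ref{sec:PreBloch} with Suslin's theorem relating $K_3^{\mr{ind}}(\bF)$ to the Bloch group, using Merkurjev--Suslin (Bloch--Kato) to kill the relevant motivic cohomology modulo $p$.

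First, I would use the four-term exact sequence
\begin{equation*}
0 \to \mathfrak{B}(\bF) \to \mathfrak{p}(\bF) \to \tilde{\wedge}^2 \bF^\times \to K_2^M(\bF) \to 0,
\end{equation*}
where the Bloch group $\mathfrak{B}(\bF)$ is defined as the kernel of the attaching map $[x] \mapsto -x\,\tilde{\wedge}\,(1-x)$ of~\eqref{eq:AttMap}, and the cokernel is $K_2^M(\bF)$ by Matsumoto's theorem. Set $A = \mathfrak{p}(\bF)/\mathfrak{B}(\bF) \subset \tilde{\wedge}^2 \bF^\times$. Hypothesis~(i) says $\bF^\times$ is $p$-divisible, which immediately gives $\tilde{\wedge}^2 \bF^\times \otimes \bZ/p = 0$ and $K_2^M(\bF) \otimes \bZ/p = 0$. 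Applying the $\mr{Tor}$ long exact sequence to $0 \to A \to \tilde{\wedge}^2 \bF^\times \to K_2^M(\bF) \to 0$ then reduces the vanishing $A \otimes \bZ/p = 0$ to showing $K_2^M(\bF)[p] = 0$.

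Next, I would invoke Suslin's exact sequence $0 \to \widetilde{\mr{Tor}}(\mu(\bF), \mu(\bF)) \to K_3^{\mr{ind}}(\bF) \to \mathfrak{B}(\bF) \to 0$, which exhibits $\mathfrak{B}(\bF) \otimes \bZ/p$ as a quotient of $K_3^{\mr{ind}}(\bF) \otimes \bZ/p$. Both $K_3^{\mr{ind}}(\bF)/p$ and $K_2^M(\bF)[p]$ are simultaneously controlled by the Bockstein sequence associated to $0 \to \bZ(2) \xrightarrow{p} \bZ(2) \to \bZ/p(2) \to 0$, using the identifications $K_3^{\mr{ind}}(\bF) \cong H^1(\bF, \bZ(2))$ and $K_2^M(\bF) \cong H^2(\bF, \bZ(2))$:
\begin{equation*}
0 \to K_3^{\mr{ind}}(\bF)/p \to H^1(\bF, \bZ/p(2)) \to K_2^M(\bF)[p] \to 0.
\end{equation*}
When $\mr{char}(\bF) \neq p$, Merkurjev--Suslin gives $H^1(\bF, \bZ/p(2)) \cong H^1_{\mr{et}}(\bF, \mu_p^{\otimes 2})$; hypothesis~(ii) trivialises the Galois action on $\mu_p$, so $\mu_p^{\otimes 2} \cong \mu_p$, and Kummer theory together with~(i) then gives $H^1_{\mr{et}}(\bF, \mu_p) = \bF^\times/p = 0$. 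Hence both $K_3^{\mr{ind}}(\bF) \otimes \bZ/p = 0$ and $K_2^M(\bF)[p] = 0$.

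Combining the above, $\mathfrak{B}(\bF) \otimes \bZ/p = 0$ and $A \otimes \bZ/p = 0$, and the short exact sequence $0 \to \mathfrak{B}(\bF) \to \mathfrak{p}(\bF) \to A \to 0$ then forces $\mathfrak{p}(\bF) \otimes \bZ/p = 0$, completing the deduction. The main obstacle I expect is handling the case $\mr{char}(\bF) = p$, where Merkurjev--Suslin does not apply directly; there, however, (ii) is automatic (as $x^p - 1 = (x-1)^p$), (i) forces $\bF$ to be perfect, and the needed facts $K_2^M(\bF)[p] = 0$ and $K_3^{\mr{ind}}(\bF)/p = 0$ can be obtained from Izhboldin's theorem and Geisser--Levine's computation of the mod-$p$ $K$-theory of fields of characteristic~$p$.
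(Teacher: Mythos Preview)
Your proposal is correct and follows essentially the same route as the paper. Both arguments reduce to showing $\mathfrak{p}(\bF)\otimes\bZ/p=0$ via the four-term sequence and Suslin's sequence for $K_3^{\mr{ind}}$, so that one is left with proving $K_3^{\mr{ind}}(\bF)/p=0$ and $K_2^M(\bF)[p]=0$; in the $\mr{char}(\bF)\neq p$ case your Bockstein sequence in weight-two motivic cohomology is exactly the Kolster exact sequence the paper cites, and in the $\mr{char}(\bF)=p$ case your appeal to Izhboldin and Geisser--Levine plays the same role as the paper's appeal to Izhboldin and Merkurjev--Suslin (the observation that $\bF$ is perfect is correct but not needed).
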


\begin{proof}	
We verify that $\mathfrak{p}(\bF) \otimes_\bZ \bZ/p = 0$ under the above assumptions. There are two cases: either (a) $\mr{char}(\bF) = p$, or (b) $\bF$ contains $p$th roots of unity. From the tautological exact sequence
	\[0 \lra B(\bF) \lra \mathfrak{p}(\bF) \lra \tilde{\wedge}^2 \bF^\times \lra K^M_2(\bF) \lra 0,\]
and the exact sequence of  \cite[Theorem 5.2]{SuslinK3}
	\[0 \lra \mr{Tor}_1^{\bZ}(\bF^\times,\bF^\times)^\sim \lra K_3(\bF)^\mr{ind} \lra B(\bF) \lra 0,\]
it follows that it suffices to prove that both $K_3(\bF)^\mr{ind} \otimes \bZ/p$ and the $p$-torsion in $K^M_2(\bF)$ vanish.
	
In case (a), $K_3(\bF)^\mr{ind} \otimes \bZ/p = 0$ by \cite[Theorem 8.5]{SuslinMerkujev} and the $p$-torsion in $K^M_2(\bF)$ vanishes by \cite{Izhboldin}. In case (b), we will have $\mr{char}(\bF) \neq p$. Then on \cite[p.~257]{Kolster} we find there is an exact sequence
	\[0 \lra K_3(\bF)^\mr{ind} \otimes \bZ/p \lra \mu_p \otimes \bF^\times \lra \Ker\left[p \colon K^M_2(\bF) \to K^M_2(\bF)\right] \lra 0\]
under the assumption that $\mr{char}(\bF) \neq p$ and $\mu_p \subset \bF$. Since $\mu_p \cong \bZ/p$, the middle term vanishes and hence so do the two outer terms.
\end{proof}

\begin{remark}\label{rem:k-theory-r} If $\mr{char}(\bF) \neq p$ and $\bF$ does not contain $p$th roots of unity, write $\bE \coloneqq \bF(\zeta_p)$ and $G \coloneqq \mr{Gal}(\bE/\bF)$. We then have \cite[Proposition 1.2]{Kolster} \cite{Merkurev}
	\[0 \lra K_3(\bF)^\mr{ind} \otimes \bZ/p \lra (\mu_p \otimes \bE^\times)^G \lra \Ker\left[p \colon K^M_2(\bF) \to K^M_2(\bF)\right] \lra 0.\]
So when $\mr{char}(\bF) \neq p$ we may replace hypothesis (ii) in Corollary \ref{cor:roots-of-unity} by the assumption that $\bE^\times \otimes \bZ/p = 0$.
\end{remark}

Since $n+1<\tfrac{5}{3}n$ when $n>1$, we deduce:

\begin{corollary}\label{cor:alg-closed-2}
If $\bF$ is an algebraically closed field then
\[H_{n+1}(\mr{GL}_n(\bF), \mr{GL}_{n-1}(\bF);\bZ/p)=0\]
for all $n>1$ and all primes $p$. 
\end{corollary}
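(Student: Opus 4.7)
The plan is to deduce this immediately from Corollary~\ref{cor:roots-of-unity} by verifying its hypotheses in the algebraically closed case, and then substituting $d = n+1$ into the vanishing range $d < \tfrac{5}{3}n$.

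First I would check hypothesis~(i) of Corollary~\ref{cor:roots-of-unity}, namely that $\bF^\times \otimes_{\bZ} \bZ/p = 0$. Since $\bF$ is algebraically closed, every element of $\bF^\times$ admits a $p$th root (solve $x^p = a$), so $\bF^\times$ is $p$-divisible, which gives $\bF^\times \otimes_{\bZ} \bZ/p = 0$. Next I would check hypothesis~(ii): the polynomial $x^p - 1 \in \bF[x]$ splits into linear factors, which is automatic for any algebraically closed field. Therefore Corollary~\ref{cor:roots-of-unity} applies and yields
\[
H_d(\mr{GL}_n(\bF),\mr{GL}_{n-1}(\bF);\bZ/p) = 0 \quad \text{for } d < \tfrac{5}{3}n.
\]

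Finally, I would observe that for $n > 1$ (equivalently $n \geq 2$) we have $n + 1 < \tfrac{5}{3}n$, since this rearranges to $3 < 2n$. Hence the degree $d = n+1$ lies in the vanishing range, and the claimed identity $H_{n+1}(\mr{GL}_n(\bF),\mr{GL}_{n-1}(\bF);\bZ/p) = 0$ follows.

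There is no real obstacle here: the corollary is a pure specialisation of Corollary~\ref{cor:roots-of-unity}, and all the substantive input has already been supplied (the spectral sequence argument of Theorem~\ref{thm:alg-closed-1}, and the verification that $\mathfrak{p}(\bF) \otimes \bZ/p = 0$ under the given hypotheses, which in the algebraically closed case is the Dupont--Sah divisibility theorem). The only content is the numerical comparison $n+1 < \tfrac{5}{3}n$ for $n \geq 2$.
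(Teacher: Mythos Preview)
The proposal is correct and takes essentially the same approach as the paper: both verify that an algebraically closed field satisfies the hypotheses giving the $\tfrac{5}{3}n$ vanishing range (the paper invokes Dupont--Sah directly before stating Corollary~\ref{cor:roots-of-unity}, you go through Corollary~\ref{cor:roots-of-unity} itself), and then observe the elementary inequality $n+1 < \tfrac{5}{3}n$ for $n>1$.
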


In \cite[Section 4]{mirzaiiclosed}, Mirzaii proved this statement for $2< n \leq 4$, and related the vanishing of these groups to divisibility of certain higher pre-Bloch groups $\mathfrak{p}_n(\bF)$ suggested by Loday \cite[Section 4.4]{lodaycomp}. He shows \cite[Theorem 3.4]{mirzaiiclosed} that the conclusion of Corollary \ref{cor:alg-closed-2} implies that these higher pre-Bloch groups satisfy
	\[\mathfrak{p}_n(\bF) \otimes \bZ/p = \begin{cases}
	\bZ/p & \text{$n$ odd,}\\
	0 & \text{$n$ even,}
	\end{cases}\]
which resolves \cite[Conjecture 3.5]{mirzaiiclosed}.

A different notion of higher pre-Bloch groups has been defined by Yagunov \cite{yagunov}, and are called $\wp^n(\bF)$ and $\wp^n(\bF)_{cl}$. By \cite[Remark 2.3]{mirzaiiclosed}, there are identifications \[\wp^n(\bF)_{cl} = \begin{cases}
	\Ker\left[\text{an epimorphism } \mathfrak{p}_n(\bF) \to \bZ\right] & \text{$n$ odd,}\\
	\mathfrak{p}_n(\bF) & \text{$n$ even,}
	\end{cases}\] 
whence it follows from the above that $\wp^n(\bF)_{cl} \otimes \bZ/p=0$ for all primes $p$ so that $\wp^n(\bF)_{cl}$ is a divisible group. The group $\wp^n(\bF)$ is defined in relation to a coefficient ring $\bk$, but if $\bF$ is algebraically closed and $2 \in \bk^\times$ then a certain map $\wp^n(\bF)_{cl} \otimes \bk \to \wp^n(\bF)$ is surjective; thus $\wp^n(\bF)$ is a divisible $\bk$-module (i.e.~for all $x \in \wp^n(\bF)$ and integer $n \geq 1$ there exists $y \in \wp^n(\bF)$ such that $ny=x$). This resolves Yagunov's Conjecture 0.2 (at least for coefficient rings $\bk$ in which 2 is invertible, but see \cite[Remark 4.1]{yagunov}).

Finally, recall from Example \ref{exam:vanishing-k2} that the rational $K_2$ of the algebraic closures $\bar{\bQ}$ and $\bar{\bF}_p$ vanishes, so both Theorem \ref{thm:K2vanish} and \ref{thm:alg-closed-1} apply to these fields, from which we obtain the following \emph{integral} homological stability statement. 

\begin{corollary}
If $\bF$ is $\bar{\bQ}$ or $\bar{\bF}_p$ then
	\[H_{d}(\mr{GL}_n(\bF), \mr{GL}_{n-1}(\bF);\bZ)=0\]
in degrees $d < \tfrac{4n-1}{3}$.
\end{corollary}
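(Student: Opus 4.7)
The plan is to combine the rational homological stability range from Theorem~\ref{thm:K2vanish} with the mod-$q$ range from Theorem~\ref{thm:alg-closed-1} via the Universal Coefficient Theorem. First I would verify that both theorems apply to $\bF \in \{\bar{\bQ}, \bar{\bF}_p\}$: such fields satisfy $K_2(\bF)_\bQ = 0$ by Example~\ref{exam:vanishing-k2}, so Theorem~\ref{thm:K2vanish} yields
\[H_d(\mr{GL}_n(\bF), \mr{GL}_{n-1}(\bF); \bQ) = 0 \quad \text{for } d < \tfrac{4n-1}{3}.\]
Moreover both fields are algebraically closed, so $\bF^\times$ is divisible and hence $\bF^\times \otimes \bZ/q = 0$ for every prime $q$; and the pre-Bloch group $\mathfrak{p}(\bF)$ is divisible by the Dupont--Sah theorem recalled in Section~\ref{sec:MirzYagConj}, so $\mathfrak{p}(\bF) \otimes \bZ/q = 0$ as well. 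Theorem~\ref{thm:alg-closed-1} therefore gives
\[H_d(\mr{GL}_n(\bF), \mr{GL}_{n-1}(\bF); \bZ/q) = 0 \quad \text{for } d < \tfrac{5n}{3} \text{ and every prime } q.\]

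Set $X_d \coloneqq H_d(\mr{GL}_n(\bF), \mr{GL}_{n-1}(\bF); \bZ)$. The case $n = 1$ is trivial because the range reduces to $d = 0$ and $H_0$ of the relative pair between path-connected classifying spaces vanishes, so assume $n \geq 2$. A direct check shows $(4n-1)/3 + 1 = (4n+2)/3 \leq 5n/3$ precisely when $n \geq 2$; hence for any $d < (4n-1)/3$ both $H_d(-;\bQ) = 0$ and $H_{d+1}(-;\bZ/q) = 0$ for every prime $q$. Applying the Universal Coefficient Theorem, the first vanishing shows that $X_d$ is a torsion abelian group, while the second vanishing implies $\mr{Tor}_1^{\bZ}(X_d, \bZ/q) = 0$, i.e.\ $X_d$ has no $q$-torsion. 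A torsion abelian group with no $q$-torsion for any prime $q$ must be zero, so $X_d = 0$ as desired. There is no real obstacle in this argument; the work has already been done in Theorems~\ref{thm:K2vanish} and~\ref{thm:alg-closed-1}, and the UCT patch between rational and mod-$q$ information is routine once the arithmetic of the ranges is checked.
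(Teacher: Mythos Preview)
Your proof is correct and follows essentially the same approach as the paper: use Theorem~\ref{thm:K2vanish} to see the groups are torsion in the range $d < \tfrac{4n-1}{3}$, then use the $\tfrac{5}{3}$-range from Theorem~\ref{thm:alg-closed-1} in degree $d+1$ to kill the $q$-torsion for every prime $q$. The only cosmetic difference is that the paper phrases the second step via the Bockstein exact sequence (so that vanishing of $H_{d+1}(-;\bZ/q)$ forces multiplication by $q$ to be injective on $H_d(-;\bZ)$), whereas you use the Universal Coefficient Theorem to reach the same conclusion through $\mr{Tor}_1^{\bZ}(X_d,\bZ/q)=0$; these are equivalent here.
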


\begin{proof}
By Theorem \ref{thm:K2vanish} these groups are torsion for $d < \tfrac{4n-1}{3}$. By the portion 
	\[\cdots \lra H_{n,d+1}(\gR_{\bZ/p}) \overset{\beta}\lra H_{n,d}(\gR_{\bZ}) \overset{p}\lra H_{n,d}(\gR_{\bZ}) \lra \cdots\]
of the Bockstein sequence and Theorem \ref{thm:alg-closed-1}, the $p$-torsion subgroup must vanish as long as $d+1 < \tfrac{5n}{3}$
\end{proof}

\begin{remark}The homology groups of $\mr{GL}_n(\bar{\bF}_p)$ with $\bZ/\ell$-coefficients for $\gcd(\ell,p) = 1$ were completely computed by Quillen \cite[Theorem 3]{quillenfinite}.\end{remark}

\subsection{The ``weak'' injectivity conjecture of Suslin and Mirzaii}
\label{sec:weak-inject-conj}

For a connected semi-local ring with infinite residue fields, and a coefficient ring $\bk$,  the inclusion of diagonal matrices into all matrices leads to a homomorphism
	\[H_1(\mr{GL}_{1}(\R);\bk)^{\otimes n} \to H_{n}(\mr{GL}_{n}(\R);\bk) \to H_{n}(\mr{GL}_{n}(\R),\mr{GL}_{n-1}(\R);\bk) \cong K_n^M(\R) \otimes_{\bZ} \bk,\]
which is induced from the canonical quotient homomorphism $(\R^\times)^{\otimes n} \to K^M_n(\R)$.  It is in particular surjective, implying by the long exact sequence that the surjection $H_{n-1}(\mr{GL}_{n-1}(\R)) \to H_{n-1}(\mr{GL}_n(\R))$ is an isomorphism.  In one degree above, we obtain an exact sequence
\begin{equation}\label{eq:32}
	H_n(\mr{GL}_{n-1}(\R);\bk) \lra H_n(\mr{GL}_n(\R);\bk) \lra K_n^M(\R)\otimes_\bZ \bk \lra 0.
\end{equation}
If $(n-1)!$ is invertible in $\bk$, this sequence may be explicitly split using the composition $K_n^M(\R) \to K_n(\R) = \pi_n(B\mr{GL}(\R)^+) \to H_n(B\mr{GL}(\R) ^+) \cong H_n(B\mr{GL}_n(\R))$ of the map from Milnor $K$-theory to $K$-theory, followed by the Hurewicz map.  Indeed, the composition $K^M_n(\R) \to H_n(\mr{GL}_n(\R)) \to H_n(\mr{GL}_n(\R),\mr{GL}_{n-1}(\R)) \cong K^M_n(\R)$ is multiplication by $(-1)^{n-1}(n-1)!$, see \cite{SuslinCharClass}, \cite[Theorem 4.1a]{SN}.

In this section we complete a programme of Mirzaii \cite{MirzaiiInj} to prove that when $\R = \bF$ is an infinite field and $\bk$ is a field in which $(n-1)!$ is invertible, then the first map in~\eqref{eq:32} is injective, leading to the direct sum decomposition
\begin{equation}\label{eq:36}
  H_n(\mr{GL}_n(\bF);\bk) \cong (\bk \otimes_\bZ K^M_n(\bF)) \oplus H_n(\mr{GL}_{n-1}(\bF);\bk).
\end{equation}
For $\bk = \bQ$, Suslin has asked more generally whether the stabilisation maps
\[H_i(\mr{GL}_{n-1}(\bF);\bQ) \lra H_i(\mr{GL}_n(\bF);\bQ)\]
might always be injective (see \cite[Problem 4.13]{Sah}, \cite[Remark 7.7]{borelyang}, \cite[Conjecture 2]{DeJeu}, \cite[Conjecture 1]{MirzaiiInj}).  As explained above, this is the case for $i < n$ by \cite[Theorem 3.4(c)]{SuslinCharClass}.

\begin{theorem}\label{thm:WeakInj}
If $\bF$ is an infinite field and $\bk$ is a field in which $(n-1)!$ is invertible then the stabilisation map
\[H_n(\mr{GL}_{n-1}(\bF);\bk) \lra H_n(\mr{GL}_n(\bF);\bk)\]
is injective.
\end{theorem}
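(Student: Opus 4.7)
The plan is to translate injectivity into the $E_\infty$-algebra framework of Section~\ref{sec:GLfield} and then reduce it to a vanishing statement about a connecting homomorphism. The stabilisation map $H_n(\mr{GL}_{n-1}(\bF);\bk) \to H_n(\mr{GL}_n(\bF);\bk)$ coincides with multiplication by $\sigma \in H_{1,0}(\gR_\bk)$ on bidegree $(n-1,n) \to (n,n)$ in the associative algebra $\overline{\gR}_\bk$. Using the cofibre sequence $\overline{\gR}_\bk \xrightarrow{\cdot \sigma} \overline{\gR}_\bk \to \overline{\gR}_\bk/\sigma$, injectivity of this multiplication is equivalent to the vanishing of the connecting homomorphism $\partial \colon H_{n,n+1}(\overline{\gR}_\bk/\sigma) \to H_{n-1,n}(\gR_\bk)$.

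The next step will be to exploit the $K$-theoretic splitting $s \colon K^M_n(\bF) \otimes \bk \to H_n(\mr{GL}_n(\bF);\bk)$ discussed in the preamble to this section: since $(n-1)!$ is invertible in $\bk$, dividing the composition $K^M_n(\bF) \to K_n(\bF) \to H_n(B\mr{GL}(\bF)^+;\bk) \cong H_n(\mr{GL}_n(\bF);\bk)$ by $(-1)^{n-1}(n-1)!$ yields a section of the Nesterenko--Suslin surjection $H_{n,n}(\gR_\bk) \twoheadrightarrow H_{n,n}(\overline{\gR}_\bk/\sigma) \cong K^M_n(\bF) \otimes \bk$. Since $\partial$ is $H_{*,*}(\overline{\gR}_\bk)$-linear, and hence in particular $K^M_*(\bF) \otimes \bk$-linear via $s$, it will suffice to verify $\partial = 0$ on a $K^M$-module generating set of $H_{n,n+1}(\overline{\gR}_\bk/\sigma)$.

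I would then analyse these generators following the strategy of Theorem \ref{thm:NSPlusOne}: build a relative CW-$E_\infty$-algebra $\gR_\bk \to \gK$ where $\overline{\gK}$ has diagonal homology $K^M_*(\bF) \otimes \bk$, and apply the canonical multiplicative bar spectral sequence of Proposition \ref{prop:CanMultFiltOnBar}, now with $\bk$-coefficients in place of $\bQ$ (justified because the required divisions by small factorials are all invertible). The indecomposable $K^M$-generators of $H_{n,n+1}(\overline{\gR}_\bk/\sigma)$ should be detected by $E_\infty$-cells of bidegree $(n, n+2)$, and I would show that the splitting $s$ can be arranged to lift each such generator to a cycle in $\overline{\gR}_\bk$, forcing $\partial$ to vanish.

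The main obstacle will be to promote the rank-$n$ splitting $s$ to a compatible choice at the level of $E_\infty$-attaching maps, controlling the connecting homomorphism cell-by-cell. This is precisely where Mirzaii's approach in \cite{MirzaiiInj} requires additional input, which the $E_\infty$-cell machinery of \cite{e2cellsIv2} is meant to provide. The invertibility of $(n-1)!$ in $\bk$ is essential because it guarantees the existence of the anti-symmetrising idempotent in $\bk[\fS_{n-1}]$ used to define $s$ at the chain level. Verifying that this idempotent is compatible with the attaching maps of the Harrison-type generators in the CW-$E_\infty$-algebra model---and hence that each such generator lifts to $\overline{\gR}_\bk$---will be the technical heart that completes Mirzaii's programme.
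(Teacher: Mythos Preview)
Your reformulation as the vanishing of the connecting map $\partial \colon H_{n,n+1}(\overline{\gR}_\bk/\sigma) \to H_{n-1,n}(\overline{\gR}_\bk)$ is correct, and in fact the paper records exactly this observation in the Remark immediately following the statement of Theorem~\ref{thm:WeakInj}. But that Remark also explains why this route only succeeds under an extra hypothesis you have not assumed: Koszulness of $K^M_*(\bF)_\bk$. Your proposal has a genuine gap at precisely this point.

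The reduction to a $K^M_*$-generating set is only useful if that set lies in degrees where $\partial$ is already known to vanish. Theorem~\ref{thm:NSPlusOne} identifies the $K^M_*$-indecomposables of $\bigoplus_n H_{n,n+1}(\overline{\gR}_\bQ/\sigma)$ in degree $n\geq 4$ with $\mr{Harr}_3(K^M_*(\bF)_\bQ)_n$, and for a general infinite field these groups need not vanish for large $n$. So your ``generating set'' has a new generator in every rank $n$, and verifying $\partial=0$ on the rank-$n$ generator is essentially the original problem at rank $n$; nothing has been reduced. The paper's Remark uses Corollary~\ref{cor:NSPlusOneKoszul} to confine the generators to degrees $\leq 3$ under the Koszul hypothesis, then quotes Sah and Elbaz--Vincent for $\partial=0$ there. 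Without Koszulness this argument does not close.

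Two further issues: first, the splitting $s$ lifts elements of $K^M_n \cong H_{n,n}(\overline{\gR}_\bk/\sigma)$, not elements of $H_{n,n+1}(\overline{\gR}_\bk/\sigma)$, so ``arranging $s$ to lift each such generator'' in step~4 conflates two different groups; there is no mechanism here. Second, Proposition~\ref{prop:CanMultFiltOnBar} gives a symmetric algebra on the $E^1$-page only for $\bk=\bQ$; in positive characteristic the associated graded is a free $E_\infty$-algebra with Dyer--Lashof operations, and ``invertibility of small factorials'' does not reduce this to a symmetric algebra in all the tridegrees you need.

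The paper's actual proof takes a different route. It does not attack $\partial$ directly but instead proves Mirzaii's Conjecture~2 (exactness of the sequence~\eqref{eq:34}), which together with \cite[Proposition~4]{MirzaiiInj} yields the injectivity by induction on $n$. The key step identifies Mirzaii's sequence with a portion of the $E^1$-page of the spectral sequence for the filtered bar construction $B(\bk,\overline{\gA}_\bk,\overline{\gR}_\bk)$ with $\gA_\bk=\gE_\infty(S^{1,0}\otimes\bk[B\bF^\times])$, and then shows $E^2_{k,k,0}=E^2_{k,k,1}=0$ by combining a $\mr{Tor}$-vanishing statement (Proposition~\ref{prop:for-clarity}, deduced from the Nesterenko--Suslin theorem) with a connectivity estimate for the bar construction (Proposition~\ref{prop:MirzaiiFinalStep}, deduced from the $E_\infty$-cell bounds in Figure~\ref{fig:eInfHomology}). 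The invertibility of $(k-1)!$ enters through Lemmas~\ref{lem:AlmostFree} and~\ref{lem:AlmostRes}, controlling the failure of the free $E_\infty$-algebra to be a symmetric algebra in the relevant range.
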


This is straightforward for $n=1,2$, was proven for $n=3$ by Sah (a consequence of \cite[Remark 3.19]{Sah}) and by Elbaz-Vincent (a consequence of the proof of  \cite[Theorem 1.22]{E-Z}), and for $n=4$ by Mirzaii \cite[Theorem 3]{MirzaiiInj}. The cases $n > 4$ are new.

\begin{remark}
For infinite fields whose rational Milnor $K$-theory is Koszul, we may also deduce this for $\bk=\bQ$ from Corollary \ref{cor:NSPlusOneKoszul}. It is equivalent to show that the connecting homomorphism
\[\partial_n \colon H_{n+1}(\mr{GL}_n(\bF),\mr{GL}_{n-1}(\bF);\bQ) \lra H_n(\mr{GL}_{n-1}(\bF);\bQ)\]
is zero. In our notation these maps are $\partial_n \colon H_{n, n+1}(\overline{\gR}_\bQ/\sigma) \to H_{n-1,n}(\overline{\gR}_\bQ)$, which assemble into to a map 
\[\partial \colon \bigoplus_{n \geq 0} H_{n,n+1}(\overline{\gR}_\bQ/\sigma) \lra \bigoplus_{n \geq 0} H_{n-1,n}(\overline{\gR}_\bQ)\]
of $\tilde{\wedge}^* \bF^\times_\bQ$-modules. The maps $\partial_n$ are known to be zero for $n \leq 3$ by the aforementioned work of Sah \cite{Sah} and Elbaz-Vincent \cite{E-Z}, but under the Koszulness hypotheses the domain is generated as a $\tilde{\wedge}^* \bF^\times_\bQ$-module in degrees $n \leq 3$ by Corollary \ref{cor:NSPlusOneKoszul}, so the map $\partial$ is zero.
\end{remark}

Let us recall Conjecture 2 of \cite{MirzaiiInj}, which is what we shall prove.  Consider the diagram of groups
\begin{equation}\label{eq:35}
  \begin{tikzcd}
    \bF^\times \times \bF^\times \times \mr{GL}_{k-1}(\bF) \rar[shift left=.5ex] \rar[shift left=-.5ex] &
    \bF^\times \times \mr{GL}_{k-1}(\bF) \rar["a"] & \mr{GL}_k(\bF),
  \end{tikzcd}
\end{equation}
where $a$ denotes the standard block sum map, and the two parallel arrows are given by $(x,y,X) \mapsto (x,a(y,X))$ and $(x,y,X) \mapsto (y,a(x,X))$, respectively.  Now \cite[Conjecture 2]{MirzaiiInj} can be phrased as this diagram becoming a coequalizer diagram upon applying $H_k(-;\bk)$.  In other words, the sequence
\tikzcdset{scale cd/.style={every label/.append style={scale=#1},
    cells={nodes={scale=#1}}}}
\begin{equation}\label{eq:34}
\begin{tikzcd}[scale cd=0.9] 
H_k((\bF^\times)^2 \times \mr{GL}_{k-2}(\bF);\bk) \rar{(\bF^\times \times a)_* \circ (\mr{id}-\tau_*)}  \ar[draw=none]{d}[name=X, anchor=center]{}&[40pt]   H_k(\bF^\times \times \mr{GL}_{k-1}(\bF);\bk)
\ar[rounded corners,
to path={ -- ([xshift=2ex]\tikztostart.east)
	|- (X.center) \tikztonodes
	-| ([xshift=-2ex]\tikztotarget.west)
	-- (\tikztotarget)}]{dl}[pos=0.8, swap]{a_*} \\
  H_k(\mr{GL}_k(\bF);\bk) \rar &  0 &   
\end{tikzcd}
\end{equation}
should be exact, where $\tau \colon \bF^\times \times \bF^\times \times \mr{GL}_{k-1}(\bF) \to \bF^\times \times \bF^\times \times \mr{GL}_{k-1}(\bF)$ is the homomorphism which swaps the first two factors.

\sloppy It was shown in \cite[Proposition 4]{MirzaiiInj} that exactness of this sequence together with injectivity of the maps $H_{k-1}(\mr{GL}_{k-2}(\bF);\bk) \to H_{k-1}(\mr{GL}_{k-1}(\bF);\bk)$ and $H_{k-2}(\mr{GL}_{k-3}(\bF);\bk) \to H_{k-2}(\mr{GL}_{k-2}(\bF);\bk)$ implies injectivity of 
\[H_{k}(\mr{GL}_{k-1}(\bF);\bk) \lra H_{k}(\mr{GL}_{k}(\bF);\bk),\]  We shall show that~\eqref{eq:34} is exact for all $k \geq 3$ when $\max(6,(k-1)!)$ is invertible in $\bk$, from which Theorem~\ref{thm:WeakInj} then follows by induction (except when $n=3 = \mr{char}(\bk)$).

In the rest of this section we shall use both letters $k$ and $n$ for the rank grading (first index), following the convention that $k$ is used for the rank in which we are proving exactness of~\eqref{eq:34}, and $n$ as a generic letter in spectral sequences etc.

\begin{remark}\label{rem:Mirzaii-remarks}
  Let us comment on which parts of \cite{MirzaiiInj} we use, referring also to \cite{Mirzaii15}\footnote{We thank Behrooz Mirzaii for drawing our attention to this reference.} for some corrections and improvements. In particular, we recommend replacing both \cite[Definition 2]{MirzaiiInj} and \cite[Proposition 3]{MirzaiiInj} by \cite[Corollary 2.2]{Mirzaii15}, which states and proves an explicit formula for the composition
  \[\begin{tikzcd} K^M_n(\bF) \rar & K_n(\bF) \rar{\text{Hurewicz}} &[15pt] H_n(B\mr{GL}(\bF)^+) & \lar[swap]{\cong} H_n(B\mr{GL}_n(\bF))\\[-15pt]
    \{a_1, \dots, a_n\} \arrow[|->]{rrr} & & & {[a_1,\dots, a_n]}\end{tikzcd}\]
  The element $[a_1, \dots, a_n] \in H_n(\mr{GL}_n(\bF))$ is defined using an explicit group homomorphism $\bZ^n \to \mr{GL}_n(\bF)$ given by the diagonal matrices $A_{i,n}$ stated in op.cit.,\ and the fundamental class of $(S^1)^n \simeq B\bZ^n$.

  In addition to the general strategy, the only part of \cite{MirzaiiInj} which we use here is its Proposition 4.  Using \cite[Corollary 2.2]{Mirzaii15} as suggested, the proof of \cite[Proposition 4]{MirzaiiInj} then does not use any other parts of \cite{MirzaiiInj}, in fact not even the spectral sequences mentioned in the same section (the symbols $\smash{{\mathfrak{d}'}^1_{1,n}}$ and $\smash{\beta_2^{(n)}}$ denote the same homomorphism).  Let us explain the strategy, since it is quite simple.  Assuming that applying  $H_k(-;\bk)$ to \eqref{eq:35} gives a presentation of $H_k(\mr{GL}_k(\bF);\bk)$ as a quotient of
  \begin{equation*}
    H_k(\bF^\times \times \mr{GL}_{k-1}(\bF);\bk) \cong \bigoplus_{i + j =k} H_i(\bF^\times;\bk) \otimes_\bk H_j(\mr{GL}_{k-1}(\bF);\bk),
  \end{equation*}
  Mirzaii uses the induction hypothesis and the resulting direct sum decompositions~\eqref{eq:36} of $H_{k-1}(\mr{GL}_{k-1}(\bF);\bk)$ and $H_{k-2}(\mr{GL}_{k-2}(\bF);\bk)$ to define a retraction $\varphi$ of $H_k(\bF^\times \times \mr{GL}_{k-1}(\bF);\bk)$ onto the K\"unneth summand with $(i,j) = (0,k)$, such that $\varphi$ coequalizes the two homomorphisms from $H_k(\bF^\times \times \bF^\times \times \mr{GL}_{k-2}(\bF);\bk)$ from~\eqref{eq:35}.  A retraction with this property supplies a left inverse to the restriction of $a_*$ to the $(0,k)$-summand, so this restriction is injective.
\end{remark}

As the left-hand map in~\eqref{eq:34} is anti-invariant with respect to swapping the two $\bF^\times$-factors, we may replace its domain by the anti-coinvariants
\[[H_k((\bF^\times)^2 \times \mr{GL}_{k-2}(\bF);\bk) \otimes \bk^{-}]_{\fS_2}.\]
Note that the field $\bk$ is always required to contain $\tfrac{1}{2}$, so we may write this as
\[H_k(\mr{GM}_2(\bF) \times \mr{GL}_{k-2}(\bF);\bk^{-}),\]
where $\bk^-$ is the coefficient module given by $\mr{GM}_2(\bF) \times \mr{GL}_{k-2}(\bF) \to \mr{GM}_2(\bF) \to \fS_2$ and the sign action. 

\begin{lemma}
Modifying Mirzaii's complex in this way, it is isomorphic up to sign to the portion
\[E^1_{k,2,k} \overset{d^1}\lra E^1_{k,1,k} \overset{d^1}\lra E^1_{k,0,k} \lra 0\]
of the $E^1$-page of the spectral sequence of the filtration of the bar construction
\[B(\bk, \overline{\gE_\infty(S^{1,0} \otimes \bk[B\bF^\times])}, \overline{\gR}_\bk)\]
given by putting $\bk$ and $\overline{\gR}_\bk$ in filtration 0 and putting $S^{1,0} \otimes \bk[B\bF^\times]$ in filtration 1, i.e.\ the filtered object $B(0_* \bk, \overline{\gE_\infty(1_* S^{1,0} \otimes \bk[B\bF^\times])}, 0_*\overline{\gR}_\bk)$.
\end{lemma}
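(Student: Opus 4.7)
The plan is to explicitly compute each group $E^1_{k,k,q}$ for $q = 0,1,2$ together with the $d^1$-differentials, and to identify the resulting three-term complex with the modified Mirzaii complex. I will write $X \coloneqq S^{1,0} \otimes \bk[B\bF^\times]$ and $\overline{\gA} \coloneqq \overline{\gE_\infty(1_*X)}$ throughout. By the canonical multiplicative filtration discussed in Section~$E_k$.5.4, $\overline{\gA}$ is a filtered object with $\mr{gr}^0 \overline{\gA} = \bk$ and $\mr{gr}^n \overline{\gA} \simeq (X^{\otimes n})_{h\fS_n}$ for $n \geq 1$; since $\tfrac{1}{2} \in \bk$ in all cases of interest, this coincides with the underived $\mr{Sym}^n X$ for $n \leq 2$, and in fact the filtration splits in that range.

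First I would describe the associated graded of $B_s = \overline{\gA}^{\otimes s} \otimes \overline{\gR}_\bk$ in normalised form: the piece $\mr{gr}^q B_s$ is the direct sum over tuples $(q_1,\ldots,q_s)$ with $q_i \geq 1$ and $\sum q_i = q$ of $\mr{Sym}^{q_1} X \otimes \cdots \otimes \mr{Sym}^{q_s} X \otimes \overline{\gR}_\bk$. The crucial simplification is that two of the three face maps are trivial on the associated graded: the face $d_0$ augments the first tensor factor and annihilates the augmentation ideal, while the terminal face $d_s$ applies the module action $\overline{\gA} \otimes \overline{\gR}_\bk \to \overline{\gR}_\bk$, which preserves the filtration and hence is zero on $\mr{gr}^{\geq 1} \overline{\gA}$ because $\overline{\gR}_\bk$ is concentrated in filtration $0$. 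Only the interior multiplications survive. For $q = 0, 1, 2$ this leaves, respectively, the constant object $\overline{\gR}_\bk$ in simplicial degree $0$; the object $X \otimes \overline{\gR}_\bk$ in simplicial degree $1$; and the two-term complex
\[
X^{\otimes 2} \otimes \overline{\gR}_\bk \xrightarrow{\,\mu \otimes \mr{id}\,} \mr{Sym}^2 X \otimes \overline{\gR}_\bk
\]
in simplicial degrees $2$ and $1$, where $\mu$ is the symmetrisation. Since $\mu$ is surjective with kernel the antisymmetric tensors $(X^{\otimes 2})^{-}$, taking internal homology at bidegree $(k,k+q)$ and using the K\"unneth formula together with Shapiro's lemma for $(\bF^\times)^2 \subset \mr{GM}_2(\bF)$ yields
\[ E^1_{k,k,0} \cong H_k(\mr{GL}_k(\bF);\bk),\quad E^1_{k,k,1} \cong H_k(\bF^\times \times \mr{GL}_{k-1}(\bF);\bk), \]
\[ E^1_{k,k,2} \cong H_k(\mr{GM}_2(\bF) \times \mr{GL}_{k-2}(\bF);\bk^{-}), \]
exactly matching the three terms of the modified Mirzaii complex.

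Next I would identify the $d^1$-differentials by lifting a cycle to $F^q$ of the actual bar construction and computing the genuine simplicial differential modulo $F^{q-2}$. For $d^1 \colon E^1_{k,k,1} \to E^1_{k,k,0}$ only the action face $d_1$ contributes (as $d_0$ is augmentation), and it is precisely the block-sum stabilisation $a_*$. For $d^1 \colon E^1_{k,k,2} \to E^1_{k,k,1}$, an antisymmetric representative $\tfrac{1}{2}(a \otimes b - b \otimes a) \otimes m \in \overline{\gA}^{\otimes 2} \otimes \overline{\gR}_\bk$ satisfies $d_0 = 0$ (augmentation), $d_1 = 0$ (the rectified multiplication in $\overline{\gA}$ kills antisymmetric tensors by splitness of the filtration), and $d_2$ yields $\tfrac{1}{2}(a \otimes \phi(b)\cdot m - b \otimes \phi(a)\cdot m)$ with $\phi \colon \overline{\gA} \to \overline{\gR}_\bk$ the structural map. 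Under the identifications above this is exactly $(\bF^\times \times a)_* \circ (\mr{id} - \tau_*)$ applied to $[a,b,m]$, as in Mirzaii's formulation.

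The principal obstacle I anticipate is the verification that the rectified multiplication in $\overline{\gA}$ really does vanish on antisymmetric tensors on the nose, not merely in $\mr{gr}^2 \overline{\gA}$; this is essential for the $d_1$-contribution to $d^1 \colon E^1_{k,k,2} \to E^1_{k,k,1}$ to vanish and for the formula to reduce cleanly to Mirzaii's. This relies both on the splitness of the canonical filtration on $\overline{\gE_\infty(1_*X)}$ and on the invertibility of $2 \in \bk$. Sign conventions between the bar differential and Mirzaii's conventions will produce an overall sign, but this is allowed by the statement of the lemma.
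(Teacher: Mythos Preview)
Your identification of the three $E^1$-groups is correct, and your route via direct computation of the normalised bar complex in each filtration degree is a reasonable alternative to the paper's. The paper instead invokes Theorem~$E_k$.13.8 to identify the whole associated graded at once as $\gE^+_\infty(S^{1,1,1}\otimes\bk[B\bF^\times])\otimes 0_*\overline{\gR}_\bk$, and then reads off
\[E^1_{n,p,q}=H_p(\mr{GM}_q(\bF)\times\mr{GL}_{n-q}(\bF);\bk^-)\]
uniformly in $q$; for $q\le 2$ your hands-on argument reaches the same answer. Your treatment of $d^1\colon E^1_{k,k,1}\to E^1_{k,k,0}$ also matches the paper's.

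The gap is in the second differential. You correctly flag the obstacle, but your proposed resolution does not work. The rectified algebra $\overline{\gE_\infty(X)}$ is strictly associative but only homotopy-commutative; there is no reason for the canonical multiplicative filtration to split \emph{after} rectification, and invertibility of $2$ does not force $a\cdot b=b\cdot a$ on the nose for $a,b\in X$. Consequently your lift $\tfrac12(a\otimes b-b\otimes a)\otimes m$ has $d_1$-image $\tfrac12(a\cdot b-b\cdot a)\otimes m$ lying genuinely in $F^2 B_1$ rather than in $F^1 B_1$: its class in $\mr{gr}^2$ is a boundary, not zero. To even represent a chain-level cycle in $\mr{gr}^2 B$ you must add a correction term coming from the commuting homotopy in $(X^{\otimes 2})_{h\fS_2}$, and tracking that correction through the connecting map is precisely the coherence you were hoping to avoid.

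The paper sidesteps this completely: since the associated graded is itself an $E_\infty$-algebra, $d^1$ satisfies a Leibniz rule, and every class in $E^1_{*,*,2}$ is a product $(S^{1,1,1}\otimes x)\cdot(S^{1,1,1}\otimes y)\otimes r$ of two filtration-$1$ classes. Applying Leibniz to the already-established formula $d^1((S^{1,1,1}\otimes x)\otimes r)=-a_*(x\otimes r)$ yields the identification with $(\bF^\times\times a)_*\circ(\mr{id}-\tau_*)$ directly, with no chain-level lifting required. That multiplicativity step is what you are missing.
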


\begin{proof}
This filtered object has associated graded
\[B(\bk, \overline{\gE_\infty(S^{1,0,1} \otimes \bk[B\bF^\times])}, \bk) \otimes 0_*\overline{\gR}_\bk \simeq \gE^+_\infty(S^{1,1,1} \otimes \bk[B\bF^\times]) \otimes 0_*\overline{\gR}_\bk,\]
where the equivalence is by Theorem $E_k$.13.8. In second grading $q \geq 0$ (i.e.~third index) the object $\gE^+_\infty(S^{1,1,1} \otimes \bk[B\bF^\times])$ is equivalent to
\[(S^{1,1} \otimes \bk[B\bF^\times] )^{\otimes q} \otimes_{\fS_q} E\fS_q \simeq S^{q,q} \otimes \left(\bk^- \otimes_{\mr{GM}_q(\bF)} E\mr{GM}_q(\bF) \right),\]
which gives
\begin{equation}
\begin{aligned}
  E^1_{n,p,q} &= H_{n,p+q,p}(\gE^+_\infty(S^{1,1,1} \otimes \bk[B\bF^\times]) \otimes 0_*\overline{\gR}_\bk) \\
  &= H_{q}(\mr{GM}_p(\bF) \times \mr{GL}_{n-p}(\bF) ; \bk^-)
\end{aligned}\label{eq:39}
\end{equation}
with differentials $d^r \colon E^r_{n,p,q} \to E^r_{n,p-r,q+r-1}$. This identifies the claimed groups with the terms in (our modification of) Mirzaii's sequence. It remains to show that the $d^1$-differential of this spectral sequence is identified with Mirzaii's maps.

The differential $d^1 \colon E^1_{n,1,p} \to E^1_{n,0,p}$ may be analysed as follows. Let us write $B_\bullet \coloneqq B_\bullet(0_* \bk, \overline{\gE_\infty(1_* S^{1,0} \otimes \bk[B\bF^\times])}, 0_*\overline{\gR}_\bk)$, $B = |B_\bullet|$, and $F_q B_\bullet$ for the $q$th stage of the filtration. Let us describe the equivalence $\frac{F_1 B}{F_0 B} \simeq S^{1,1} \otimes \bk[B\bF^\times] \otimes \overline{\gR}_\bk$.  It is induced by the evident map
\begin{equation}
  S^{1,0} \otimes \bk[B\bF^\times] \otimes \overline{\gR}_\bk \lra F_1 B_1\label{eq:37}
\end{equation}
in the following way.  The map~\eqref{eq:37} has the property that its composition with either face map $d_0, d_1 \colon F_1 B_1 \to F_1 B_0$ lands in $F_0 B_0 \subset F_1 B_0$.  Such a map can be regarded as a map of pairs of simplicial objects
\begin{equation*}
  (\Delta^1 _\bullet, \partial \Delta^1_\bullet) \otimes S^{1,0} \otimes \bk[B\bF^\times] \otimes \overline{\gR}_\bk \lra (F_1 B_\bullet,F_0 B_\bullet)
\end{equation*}
which upon realising and passing from pairs to their quotients gives the map $S^{1,1} \otimes \bk[B\bF^\times] \otimes \overline{\gR}_\bk \lra \frac{F_1 B}{F_0 B}$.  In this situation the connecting map
\begin{equation*}
  \frac{F_1 B}{F_0 B} \simeq \mathrm{Cofib}\big(F_0 B \to F_1 B) \lra S^{0,1} \otimes F_0 B
\end{equation*}
is homotopic to the composition of~\eqref{eq:37} with $d_0 - d_1 \colon B_1 \to B_0$. The map $d_0$ is zero (as the augmentation $\overline{\gE_\infty(S^{1,0} \otimes \bk[B\bF^\times])} \to \bk$ is trivial on $S^{1,0} \otimes \bk[B\bF^\times]$), and the map $d_1$ is that induced by $\bF^\times \times \mr{GL}_{n-1}(\bF) \to \mr{GL}_n(\bF)$, i.e.\ the map $a_*$. Under the identification~\eqref{eq:39} the differential $d^1 \colon E^1_{n,1,q} \to E^1_{n,0,q}$ therefore becomes $-a_*$, which for $n = q = k$ is (minus) the map in \eqref{eq:34}.

The formula just proved may be stated more concisely as
\begin{equation*}
  d^1((S^{1,1,1} \otimes x) \otimes r) = - a_*(x \otimes r),
\end{equation*}
for $x \in H_i(\bF^\times;\bk)$ and $r \in H_{n,q-i}(\overline{\gR}_\bk)$.  By multiplicativity of the spectral sequence we deduce
\begin{multline*}
  d^1((S^{1,1,1} \otimes x)  (S^{1,1,1} \otimes y)  \otimes r) =\\
  (-1)^{i} (S^{1,1,1} \otimes y) \otimes (- a_*(x \otimes r)) + (-1)^{i+1} (S^{1,1,1} \otimes x) \otimes (- a_*(y \otimes r))
\end{multline*}
for $x \in H_i(\bF^\times;\bk)$ and $y \in H_j(\bF^\times;\bk)$ and $r \in H_{n,q-i-j}(\overline{\gR}_\bk)$.  The isomorphism~\eqref{eq:39} now identifies the differential $d^1 \colon E^1_{n,2,q} \to E^1_{n,1,q}$ with $(\bF^\times \times a)_* \circ (\mr{id}-\tau_*)$, up to (degree-dependent) sign.  Setting $(n,q) = (k,k)$ this is first map in~\eqref{eq:34}.
\end{proof}

To prove exactness of~\eqref{eq:34} we must show that $E^2_{k,0,k}$ and $E^2_{k,1,k}$ vanish.  Under the stronger assumption $\mr{char}(\bk) = 0$, the argument goes as follows.  In this case, the natural maps
\begin{equation}\label{eq:Approx}
  \mr{S}^*_\bk(S^{1,1,1} \otimes H_*(\bF^\times;\bk)) \lra H_{*,*,*}(E^+_\infty(S^{1,1,1} \otimes \bk[B\bF^\times])),
\end{equation}
and
\begin{equation}\label{eq:ChainMap}
\begin{tikzcd}
\mr{S}^*_\bk(S^{1,1,1} \otimes H_*(\bF^\times;\bk)) \otimes H_{*,0,*}(\overline{\gR}_\bk) \dar \\  H_{*,*,*}(E^+_\infty(S^{1,1,1} \otimes \bk[B\bF^\times]))\otimes H_{*,0,*}(\overline{\gR}_\bk) \cong E^1_{*,*,*}
\end{tikzcd}
\end{equation}
are isomorphisms in all degrees.  Furthermore, as in the proof above, the Leibniz rule determines the entire $d^1$-differential since the $E^1$-page is multiplicatively generated by $E^1_{*,p,*}$ with $p \leq 1$.  As a trigraded differential algebra, the domain of~\eqref{eq:ChainMap} may be regarded as the Koszul resolution
\begin{equation}\label{eq:KoszulResolution}
  (\mathrm{S}^*_\bk(S^{1,1,1} \otimes H_*(\bF^\times;\bk)) \otimes \mr{S}^*_\bk(S^{1,0,0} \otimes H_*(\bF^\times;\bk)), d) \lra \bk[0,0,0],
\end{equation}
whose differential is determined by $d(S^{1,1,1} \otimes v) = S^{1,0,0} \otimes v$ and the Leibniz rule, tensored with $H_{*,*,0}(\overline{\gR}_\bk)$ over $\mathrm{S}^*_\bk(S^{1,0,0} \otimes H_*(\bF^\times;\bk))$.  When $\mr{char}(\bk) = 0$ the $E^1$-page therefore calculates $\mr{Tor}$ of $\bk$ and $H_{*,*}(\overline{\gR}_\bk)$ over $\mathrm{S}^*_\bk(S^{1,0} \otimes H_*(\bF^\times;\bk))$, giving an isomorphism
\begin{equation}\label{eq:TorCompare}
  \mathrm{Tor}_p^{\mathrm{S}^*_\bk(S^{1,0} \otimes H_*(\bF^\times;\bk))}(\bk, H_{*,*}(\overline{\gR}_\bk))_{n,q} \lra E^2_{n,p,q}.
\end{equation}
Under the assumption $\mr{char}(\bk) = 0$, the proof is concluded in two steps.  The first (Proposition~\ref{prop:for-clarity} below) is to show that $E^2_{k,r,k-r+1}$ and $E^2_{k,r+1,k-r+1}$ vanish for all $r \geq 2$, using Suslin's theorem (our Theorem~\ref{thm:NS}) to estimate the $\mr{Tor}$ groups.  Since no non-zero differentials can exit $E^2_{n,p,q}$ for $p \leq 1$ this shows that $E^2_{k,0,k} = E^\infty_{k,0,k}$ and $E^2_{k,1,k} = E^\infty_{k,1,k}$. This spectral sequence converges to the homology of the bar construction $B(\bk, \overline{\gE_\infty(S^{1,0} \otimes \bk[B\bF^\times])}, \overline{\gR}_\bk)$, and the second step (Proposition~\ref{prop:MirzaiiFinalStep} below) is to use our estimates on cells in a relative cell structure on $\gE_\infty(S^{1,0} \otimes \bk[B\bF^\times]) \to \overline{\gR}_\bk$ to see that this bar construction has no homology in bidegrees $(k,k)$ and $(k,k+1)$.

Under the weaker assumption that $(k-1)!$ is invertible in $\bk$ the structure of the argument is the same, but the maps~\eqref{eq:Approx} and~\eqref{eq:ChainMap} will only be isomorphisms in a range and the~\eqref{eq:KoszulResolution} will only be a resolution in a range.  Before proceeding we make these ranges more precise.

\begin{lemma}\label{lem:AlmostFree}
  The map \eqref{eq:Approx} is an isomorphism in tridegrees $(n,p,q)$ when $n!$ is invertible in $\bk$, for any $p$ and $q$.  If $k \geq 4$ and $(k-1)!$ is invertible in $\bk$, then it is also an isomorphism when $n = k$ and $p+q < \tfrac{8k}{5}$.
\end{lemma}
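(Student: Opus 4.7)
The strategy is to use $\gE^+_\infty(V)(n) \simeq (V^{\otimes n})_{h\fS_n}$ with $V = S^{1,1,1} \otimes \bk[B\bF^\times]$, together with the K\"unneth isomorphism $H_*(V^{\otimes n}) \cong H_*(V)^{\otimes n}$ and the hyperhomology spectral sequence
\[E^2_{i,j} = H_i\bigl(\fS_n;\, H_*(V)^{\otimes n}\bigr)_j \Longrightarrow H_{i+j}\bigl((V^{\otimes n})_{h\fS_n}\bigr),\]
where $\fS_n$ acts on $H_*(V)^{\otimes n}$ by permutation with Koszul signs. The map \eqref{eq:Approx} in rank $n$ is induced by $n$-fold multiplication in $\gE^+_\infty(V)$; its image lies in the bottom-filtration piece $F_0 H_* \subseteq H_*((V^{\otimes n})_{h\fS_n})$, which in each total degree is a quotient of $E^2_{0,*} = \mr{S}^n_\bk(S^{1,1,1}\otimes H_*(\bF^\times;\bk))$. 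Hence in total degrees where $E^2_{i,*}=0$ for all $i>0$ (in the given and in the next-higher total degree), the map becomes an isomorphism.

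For the first statement, when $n!$ is invertible in $\bk$ the group ring $\bk[\fS_n]$ is semisimple, so $H_i(\fS_n;M)=0$ for $i>0$ and any $\bk[\fS_n]$-module $M$. The spectral sequence collapses on the zero line, and \eqref{eq:Approx} is an isomorphism in every tridegree.

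For the second statement, observe that $(k-1)!$ being invertible in $\bk$ forces $\mr{char}(\bk)$ to be either $0$ or a prime $p \geq k$. If $p > k$ (including $p = 0$), then $k!$ is also invertible and the first part already suffices. The only remaining case is $\mr{char}(\bk) = k = p$ prime; combined with $k \geq 4$, this forces $p \geq 5$. Restricting to $n = p$, the cokernel of \eqref{eq:Approx} is spanned by the Dyer--Lashof operations $Q^s w$ and $\beta Q^s w$ applied to generators $w$ of $V$, by the standard structure theorem for the mod-$p$ homology of free $E_\infty$-algebras (Cohen--Lada--May; compare Section~$E_k$.16). Generators of $V$ sit in tridegree $(1, 1+r, 1)$ for $r \geq 0$, and a Dyer--Lashof class on a generator of homological degree $1+r$ has tridegree $(p, D, p)$ with $D \geq (1+r) + 2(p-1) - 1 = r + 2p - 2 \geq 2p - 2$. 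Translating to $E^1$-indices via $(n, p_1, q_1) = (p, D - p, p)$ gives $p_1 + q_1 = D \geq 2p - 2$, and the elementary inequality $2p - 2 \geq 8p/5$ for $p \geq 5$ places all such Dyer--Lashof classes outside the range $p_1 + q_1 < 8k/5$.

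The main obstacle will be pinning down the cokernel of \eqref{eq:Approx} precisely as the span of Dyer--Lashof classes with minimum total degree $2p - 2$; once this structural input is cited, the numerical verification is elementary, and the bound is tight at $p = 5$, where $\beta Q^1 \sigma$ sits exactly at $p_1 + q_1 = 8 = 8k/5$ and is excluded only by the strict inequality.
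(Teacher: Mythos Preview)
Your argument is correct. The spectral sequence setup and the first statement match the paper exactly. For the second statement you take a different route: the paper stays inside the spectral sequence, decomposing the $\bk[\fS_k]$-module $H_t((S^1 \otimes \bk[B\bF^\times])^{\otimes k})$ via K\"unneth and Shapiro's lemma into pieces indexed by compositions $(n_0,\ldots,n_\ell)$ of $k$, observes that all pieces with every $n_i < k$ have stabiliser of order dividing a power of $(k-1)!$, and handles the remaining piece $n_0 = k$ by citing Hausmann's vanishing $H_\ell(\fS_p;\bk^-) = 0$ for $\ell < p-2$. You instead invoke the Cohen--Lada--May structure theorem to identify $H_*(\gE^+_\infty(V)(p);\bF_p)$ directly as $\mr{S}^p(H_*(V))$ plus a span of length-one Dyer--Lashof classes, then compute their minimal degree as $2p-2$. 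These are equivalent inputs---Hausmann's first nonvanishing group at $\ell = p-2$ \emph{is} the class $\beta Q^1\bar\sigma$---so neither approach is deeper; yours is quicker once the structure theorem is granted, while the paper's is more self-contained in that it only needs one specific group-homology vanishing rather than the full Dyer--Lashof apparatus. One presentational point: your Dyer--Lashof paragraph argues only for the cokernel; injectivity follows from the same structure theorem (the length-zero part is a free graded-commutative subalgebra, hence a direct summand in each rank), but you should say so explicitly rather than leave it to the spectral sequence criterion you set up and then did not return to.
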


\begin{proof}
In rank $n$ this map is
\[\mathrm{S}^n_\bk(S^{1} \otimes H_*(\bF^\times;\bk)) \lra H_*((S^1 \otimes \bk[B\bF^\times])^{\otimes n} \hcoker \fS_n)\]
which is identified with the edge homomorphism for the homotopy orbits spectral sequence
\[E^2_{s,t} = H_s(\fS_n ; H_t((S^1 \otimes \bk[B\bF^\times])^{\otimes n})) \Longrightarrow H_{s+t}((S^1 \otimes \bk[B\bF^\times])^{\otimes n} \hcoker \fS_n).\]
We must therefore show that the columns with $s>0$ do not contribute in these ranges of degrees.  If $n!$ is invertible in $\bk$ this is simply because $\fS_n$ has no higher homology with $\bk$-module coefficients.
 
If $n = k \geq 4$ and $(k-1)!$ is invertible in $\bk$,
consider $H_*((S^1 \otimes \bk[B\bF^\times])^{\otimes k}) = (S^1 \otimes H_*(\bF^\times;\bk))^{\otimes k}$ as a graded $\fS_k$-representation, recalling that this action involves the Koszul sign rule. In degree $t=k+\ell$ it is given by
\[\bigoplus_{\substack{n_0 + n_1 + \cdots + n_\ell = k \\ n_1 + 2n_2 + \cdots + \ell n_\ell = \ell}} \left(\mathrm{Ind}^{\fS_k}_{\fS_{n_0} \times \cdots \times \fS_{n_\ell}} \bigotimes_{i =0}^{\ell} ((\bk^{-})^{\otimes i+1} \otimes H_i(\bF^\times;\bk)^{\otimes n_i})\right).\]
By Shapiro's lemma we have
\begin{align*}&H_s\left(\fS_k ; \mathrm{Ind}^{\fS_k}_{\fS_{n_0} \times \cdots \times \fS_{n_\ell}} \bigotimes_{i =0}^{\ell} ((\bk^{-})^{\otimes i+1} \otimes H_i(\bF^\times;\bk)^{\otimes n_i}) \right) \\
&\qquad= H_s\left(\fS_{n_0} \times \cdots \times \fS_{n_\ell}; \bigotimes_{i = 0}^{\ell} ((\bk^{-})^{\otimes i+1} \otimes H_i(\bF^\times;\bk)^{\otimes n_i}) \right).\end{align*}
Unless $n_i=k$ for some $i$ (the remaining $n_j$'s being zero), the group $\fS_{n_0} \times \cdots \times \fS_{n_\ell}$ has order dividing a power of $(k-1)!$ and so this homology vanishes for $s>0$. The terms with $n_i=k$ and $i>0$ contribute to degree at least $2k > \frac{3k}5$. The remaining term is given by $n_0=k$, which contributes $H_\ell(\fS_k;\bk^{-})$ to degree $k + \ell$.

If $k$ is not a prime then $|\fS_k| = k!$ divides a power of $(k-1)!$ and so these groups all vanish. Similarly if $\bk$ has characteristic coprime to $k$. The remaining case is that $k=p$ a prime and $\bk$ has characteristic $p \geq 5$. By \cite[Proposition B]{hausmann} $H_\ell(\fS_{p};\bk^{-})=0$ for $\ell < p-2$, so in particular for $\ell < \frac{3k}{5}$.
\end{proof}

\begin{remark}
This lemma is false if $k=3$ and $\mr{char}(\bk) = 3$ because of an extra class due to $H_1(\fS_3 ; \bF_3^{-}) = \bF_3$.
\end{remark}

\begin{lemma}\label{lem:AlmostRes}
If $(k-1)!$ is invertible in $\bk$ then the augmentation~\eqref{eq:KoszulResolution} induces an isomorphism on homology in tridegrees $(n,p,d)$ with $d < 2k-1$, and an epimorphism for $d=2k-1$.
\end{lemma}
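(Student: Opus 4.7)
The plan is to factor the Koszul complex as a tensor product indexed by a basis of $V \coloneqq H_*(\bF^\times;\bk)$ and to apply K\"unneth. Since $\bk$ is a field, $V$ is a free graded $\bk$-module and we may pick a homogeneous basis $\{v_\alpha\}_{\alpha \in I}$. The complex in \eqref{eq:KoszulResolution} then factors as $K = \bigotimes_\alpha K_\alpha$ where $K_\alpha = \mathrm{S}^*_\bk(S^{1,1,1}\otimes \bk v_\alpha) \otimes \mathrm{S}^*_\bk(S^{1,0,0}\otimes \bk v_\alpha)$, and in any fixed rank only finitely many factors contribute, so $\bk$-freeness gives $H_{*,*,*}(K) = \bigotimes_\alpha H_{*,*,*}(K_\alpha)$. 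The problem therefore reduces to computing each single-generator piece.

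For each basis element $v_\alpha$, write $e = S^{1,1,1}\otimes v_\alpha$ and $f = S^{1,0,0}\otimes v_\alpha$, in tridegrees $(1,|v_\alpha|,1)$ and $(1,|v_\alpha|,0)$ with total homological degrees $|v_\alpha|+1$ and $|v_\alpha|$ respectively; these have opposite parities for the Koszul sign rule. When $|v_\alpha|$ is even, $K_\alpha = \bk[f]\otimes\Lambda(e)$ with $d(e)=f$ is the classical Koszul resolution $0\to\bk[f]\cdot e\xrightarrow{f}\bk[f]\to\bk\to 0$ and is acyclic. When $|v_\alpha|$ is odd, $K_\alpha = \bk[e]\otimes\Lambda(f)$ with $d(e)=f$; the Leibniz rule gives $d(e^n) = n\,e^{n-1}f$ and $d(e^{n-1}f)=0$, so in rank $n\geq 1$ the homology is $\ker(n\colon\bk\to\bk)$ at tridegree $(n,n|v_\alpha|,n)$ and $\bk/n\bk$ at $(n,n|v_\alpha|,n-1)$, both of which vanish precisely when $n$ is invertible in $\bk$.

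Under the hypothesis that $(k-1)!$ is invertible in $\bk$, every integer $1\leq n \leq k-1$ is invertible, so each $H_{*,*,*}(K_\alpha)$ is concentrated at the origin below rank $k$. At rank $n\geq k$ the smallest possible total degree $p+q$ of a surviving class in $K_\alpha$ is $n|v_\alpha|+n-1$, minimised by $|v_\alpha|=1$ and $n=k$ to give exactly $2k-1$. K\"unneth then transports this bound to the full complex: any nonzero homology of $K$ outside tridegree $(0,0,0)$ lives in total degree $d\geq 2k-1$, while $H_{0,0,0}(K)=\bk$. Consequently the augmentation to $\bk[0,0,0]$ is an isomorphism on homology in every tridegree with $d<2k-1$; the claim of mere epimorphism at $d=2k-1$ is automatic because the target is zero there.

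The main subtlety is sign bookkeeping: the parity governing graded-commutativity is the total homological degree $p+q$, not $p$ or $q$ alone, so the two shifts $S^{1,1,1}$ and $S^{1,0,0}$ introduce opposite parities on $e$ and $f$. This swap is why an odd-degree $v_\alpha$ contributes via $\bk[e]\otimes\Lambda(f)$ (rather than the other way round) and is the sole source of characteristic-dependent failure; once the convention is pinned down the rest of the argument is elementary, and the use of an infinite basis is harmless because in each fixed rank the tensor product is finite.
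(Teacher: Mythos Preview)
Your proof is correct and takes essentially the same approach as the paper: decompose over a homogeneous basis of $H_*(\bF^\times;\bk)$, observe that even-degree generators give acyclic single-variable Koszul complexes while odd-degree generators give complexes whose homology is $n$-torsion in rank $n$, and then minimize the total degree of the first surviving class. One small bookkeeping slip: in the paper's convention the second tridegree index \emph{is} the total homological degree $d$, so $e = S^{1,1,1}\otimes v_\alpha$ sits in tridegree $(1,|v_\alpha|+1,1)$ rather than $(1,|v_\alpha|,1)$; your subsequent computations with ``total degree $p+q$'' are nonetheless correct and land on the right bound $2k-1$.
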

\begin{proof}
By writing $H_*(\bF^\times;\bk)$ as a sum of 1-dimensional graded $\bk$-vector spaces it is enough to show that
\begin{align*}
\epsilon \colon \left(\Lambda^*_\bk(\bk[2i+1]) \otimes \mathrm{Sym}^*_\bk(\bk[2i]), d\right) &\lra \bk\\
\epsilon \colon \left(\mr{Sym}^*_\bk(\bk[2i+2]) \otimes \Lambda^*_\bk(\bk[2i+1]), d\right) &\lra \bk
\end{align*}
are homology isomorphisms in degrees $* < 2k-1$ for all $i \geq 0$. The first is in fact a homology isomorphism in all degrees, and the second in degrees $* < k(2i+2)-1$ under our assumption that $(k-1)!$ is a unit in $\bk$.
\end{proof}

By comparing~\eqref{eq:KoszulResolution} with a free resolution, we see that the domain of~\eqref{eq:ChainMap} calculates $\mr{Tor}$ in bidegrees $(n,p,q)$ with $q < 2k-1$ if $(k-1)!$ is invertible in $\bk$, and hence taking homology of~\eqref{eq:ChainMap} induces a map~\eqref{eq:TorCompare} in such bidegrees.

\begin{corollary}\label{cor:AlmostEpi}
  If $k!$ is invertible in $\bk$ with $k \geq 3$ then~\eqref{eq:TorCompare} is an isomorphism for $q \leq 2k$ and $n \leq k$. If $k \geq 4$ and $(k-1)!$ is invertible in $\bk$, then~\eqref{eq:TorCompare} is an isomorphism for $n=k$ and $p+q < \tfrac{8k}{5}-1$, and is an epimorphism for $n=k$ and $p+q < \tfrac{8k}{5}$. 
\end{corollary}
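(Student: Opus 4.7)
The plan is to compare both sides of~\eqref{eq:TorCompare} by taking homology in the $q$-direction of a chain map.  Fixing $(n,p)$, both the domain and target of~\eqref{eq:ChainMap} are naturally chain complexes in $q$: on the target, the differential is the $d^1$ of the spectral sequence, so its $q$-th homology is $E^2_{n,p,q}$ by definition; on the domain, the Koszul differential acts, and the $q$-th homology computes $\mr{Tor}^{\mr{S}^*}_q(\bk,H_{*,*}(\overline{\gR}_\bk))_{n,p}$, provided the Koszul complex~\eqref{eq:KoszulResolution} is a resolution of $\bk$ in the relevant range, which is controlled by Lemma~\ref{lem:AlmostRes}.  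The comparison map~\eqref{eq:TorCompare} is then the map induced on $H_q$ by~\eqref{eq:ChainMap}, and our task is to check in which range it is an iso or epi.

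For part~(i), suppose $k!$ is invertible with $k \geq 3$.  Since $n! \mid k!$ for every $n \leq k$, Lemma~\ref{lem:AlmostFree} gives that~\eqref{eq:ChainMap} is already an isomorphism of chain complexes in all tridegrees with $n \leq k$.  Moreover, as $k! = ((k+1)-1)!$ is invertible, Lemma~\ref{lem:AlmostRes} applied with $k$ replaced by $k+1$ shows that the Koszul augmentation is a quasi-isomorphism in tridegrees with $d < 2k+1$, which is more than enough to identify $H_q$ of the domain with Tor throughout the range $n \leq k$, $p \leq 2k$.  Taking $H_q$ on both sides of the chain-iso then gives that~\eqref{eq:TorCompare} is an isomorphism in this range.

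For part~(ii) the new case is $n=k$, since lower ranks are handled exactly as in part~(i) using that $n! \mid (k-1)!$ is invertible.  Lemma~\ref{lem:AlmostFree} now only gives that~\eqref{eq:ChainMap} at tridegree $(k,p,q)$ is an isomorphism when $p+q < \tfrac{8k}{5}$.  A standard homological comparison (an iso of chain groups at $q-1$, $q$ and $q+1$ implies an iso on $H_q$; an iso at $q-1$ and $q$ alone implies a surjection on $H_q$, because the cycle groups $Z_q$ are then identified while the boundary groups $B_q$ only get smaller on passing from the source to the target) translates these constraints into: iso of~\eqref{eq:TorCompare} at $(k,p,q)$ when $p + q + 1 < \tfrac{8k}{5}$, and surjection when $p + q < \tfrac{8k}{5}$.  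The Tor identification is valid throughout, since $\tfrac{8k}{5} < 2k-1$ for $k \geq 4$ and Lemma~\ref{lem:AlmostRes} applies.

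The principal technical subtlety will be verifying precisely in which range the partial Koszul resolution, tensored with $H_{*,*,0}(\overline{\gR}_\bk)$, genuinely computes Tor.  One must track the tridegrees of the extra cells needed to extend~\eqref{eq:KoszulResolution} to a true free resolution of $\bk$ over $\mr{S}^*_\bk(S^{1,0,0} \otimes H_*(\bF^\times;\bk))$ and check that they lie outside the ranges of interest.  In both parts the "slack", $2k+1$ in~(i) and $2k-1$ in~(ii), comfortably exceeds the relevant $p+q$ bounds ($2k$ and $\tfrac{8k}{5}$ respectively), so this poses no obstacle.
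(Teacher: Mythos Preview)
Your proof is correct and follows essentially the same approach as the paper: use Lemma~\ref{lem:AlmostFree} to see that~\eqref{eq:ChainMap} is an isomorphism of chain complexes in the relevant range (the paper phrases this as ``isomorphism on all K\"unneth summands apart from one'' in rank $n=k$, which amounts to your condition $p+q < \tfrac{8k}{5}$), and then take homology, with Lemma~\ref{lem:AlmostRes} ensuring the domain computes $\mr{Tor}$ throughout.

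One small slip: your justification for why an isomorphism at $q-1$ and $q$ yields a surjection on $H_q$ is garbled. It is \emph{not} that ``the boundary groups $B_q$ only get smaller on passing from the source to the target''; rather, the isomorphism on $Z_q$ (which you correctly deduce) already gives surjectivity on $H_q = Z_q/B_q$, since any surjection onto $Z_q(D)$ followed by the quotient $Z_q(D) \twoheadrightarrow H_q(D)$ is surjective. The conclusion stands.
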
 

\begin{proof}
  In the former case Lemma \ref{lem:AlmostFree} shows that the map of chain complexes \eqref{eq:ChainMap} is an isomorphism for ranks $n \leq k$.  In the latter case it shows
  that~\eqref{eq:ChainMap} is an isomorphism for ranks $n < k$, and that in rank $n=k$ it is an isomorphism on all K{\"u}nneth summands apart from
\[
\mathrm{S}^k_\bk\left(S^{1,1,1} \otimes H_*(\bF^\times;\bk)\right) \otimes H_{0,0,*}(\overline{\gR}_\bk) \to H_{k,*,*}\left(E^+_\infty(S^{1,1,1} \otimes \bk[B\bF^\times])\right)\otimes H_{0,0,*}(\overline{\gR}_\bk)\]
on which it is an isomorphism only in homological degrees (third index) $* < \tfrac{8k}{5}$. The result follows by taking homology.
\end{proof}

As we have described, $E^2_{k,0,k}$ and $E^2_{k,1,k}$ potentially receive $d^r$-differentials starting from $E^r_{k,r,k-r+1}$ and $E^r_{k,r+1,k-r+1}$ respectively, with $r \geq 2$. Under the assumptions that $k \geq 3$ and that $\max(6,(k-1)!)$ is invertible in $\bk$, Corollary \ref{cor:AlmostEpi} shows (using $k+2 \leq \tfrac{8k}{5}$ when $k \geq 4$) that the map \eqref{eq:TorCompare} surjects onto these groups, so to rule out such differentials it is enough to show that
\[\mathrm{Tor}_{r+\epsilon}^{\mathrm{S}^*_\bk(S^{1,0} \otimes H_*(\bF^\times;\bk))}(\bk, H_{*,*}(\overline{\gR}_\bk))_{k,k-r+1}=0\]
for $\epsilon=0,1$ and $r \geq 2$. For clarity, we will prove the following stronger result.

\begin{proposition}\label{prop:for-clarity}
We have $\mathrm{Tor}_p^{\mathrm{S}^*_\bk(S^{1,0} \otimes H_*(\bF^\times;\bk))}(\bk, H_{*,*}(\overline{\gR}_\bk))_{n,q}=0$ for $q < n$ and all $p \geq 0$.
\end{proposition}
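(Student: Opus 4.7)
The plan is to compute the Tor group by splitting the ring and iterating. Write
\[S \coloneqq \mathrm{S}^*_\bk(S^{1,0} \otimes H_*(\bF^\times;\bk)) \cong S_0 \otimes_\bk S_1,\]
where $S_0 \coloneqq \bk[\sigma]$ is the polynomial algebra on the stabilisation class $\sigma \in H_{1,0}(\overline{\gR}_\bk)$ and $S_1 \coloneqq \mathrm{S}^*_\bk(S^{1,0} \otimes H_{>0}(\bF^\times;\bk))$ is the subalgebra generated by the positive-degree part. Both $S_0$ and $S_1$ act on $N \coloneqq H_{*,*}(\overline{\gR}_\bk)$ with commuting actions, so the tensor product of a free resolution of $\bk$ over $S_0$ and one of $\bk$ over $S_1$ gives a free resolution of $\bk$ over $S$, and filtering the associated double complex produces a bigrading-preserving change-of-rings spectral sequence of the form
\[E^2 = \mathrm{Tor}^{S_1}\bigl(\bk,\; \mathrm{Tor}^{S_0}(\bk, N)\bigr) \Longrightarrow \mathrm{Tor}^S(\bk, N).\]
It will then suffice to prove that every entry of $E^2$ vanishes in bidegree $(n,p)$ with $p < n$.

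For the inner Tor, the two-term resolution $0 \to S_0 \xrightarrow{\sigma} S_0 \to \bk \to 0$ gives $\mathrm{Tor}^{S_0}_0(\bk, N) = N/\sigma N$ and $\mathrm{Tor}^{S_0}_1(\bk, N) = \{x \in N : \sigma x = 0\}$, with higher Tor vanishing. The long exact sequence for the cofibre sequence $S^{1,0}\otimes \overline{\gR}_\bk \xrightarrow{\sigma} \overline{\gR}_\bk \to \overline{\gR}_\bk/\sigma$ realises $(N/\sigma N)_{n,p}$ as a submodule of $H_{n,p}(\overline{\gR}_\bk/\sigma)$ and the $(n,p)$-part of $\mathrm{Tor}^{S_0}_1(\bk, N)$ as a quotient of $H_{n+1,p+1}(\overline{\gR}_\bk/\sigma)$; the Nesterenko--Suslin theorem (Theorem~\ref{thm:NS}) forces both of these to vanish when $p < n$.

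For the outer Tor, I would compute $\mathrm{Tor}^{S_1}_*(\bk, M)$ via the reduced bar complex $\bar{S}_1^{\otimes q} \otimes M$. Each generator of $S_1$ has rank $1$ and homological degree at least $1$, so $\bar{S}_1(a)_d$ vanishes whenever $d < a$; consequently the $(n,p)$-part of $\bar{S}_1^{\otimes q} \otimes M$ is a sum of terms of the form $\bar{S}_1(a_1)_{d_1} \otimes \cdots \otimes \bar{S}_1(a_q)_{d_q} \otimes M(a_0)_{d_0}$ with $\sum_i a_i = n$, $\sum_i d_i = p$, and $d_i \geq a_i$ for $i \geq 1$. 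If $M$ itself satisfies $M(a_0)_{d_0} = 0$ for $d_0 < a_0$, summing these inequalities forces $p \geq n$, so the bar complex, and hence its homology, vanishes in bidegree $(n,p)$ with $p < n$. Applying this to $M = \mathrm{Tor}^{S_0}_0(\bk, N)$ and $M = \mathrm{Tor}^{S_0}_1(\bk, N)$ gives the required vanishing on $E^2$ and completes the proof.

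The two steps are essentially orthogonal, and the only genuine input beyond elementary homological algebra is the Nesterenko--Suslin vanishing of $H_{n,d}(\overline{\gR}_\bk/\sigma)$ for $d < n$; I do not foresee a real obstacle. Strong convergence of the spectral sequence is automatic, since $\mathrm{Tor}^{S_0}$ is concentrated in homological degrees $\leq 1$ and only two rows contribute.
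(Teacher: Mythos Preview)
Your proof is correct and follows essentially the same strategy as the paper: both first quotient out the action of $\sigma$ (you via $\mathrm{Tor}^{S_0}$, the paper by adjoining $\sigma'$ with $d\sigma' = \sigma$), invoke Suslin's theorem to get vanishing below the diagonal, and then handle the remaining $S_1$-part by a bar complex degree count. The paper packages this in cdga language with an auxiliary weight grading (needed because it conflates $p$ and $q$ into a total degree $d=p+q$ and must then recover $p$), whereas your change-of-rings spectral sequence keeps the $(n,p)$-bigrading visible throughout and so avoids the weight bookkeeping entirely; this makes your version slightly more elementary, but the substance is identical.
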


\begin{proof}
  We shall deduce this from the theorem of Suslin \cite{SuslinCharClass}, Theorem~\ref{thm:NS} above.

  Let us abbreviate
\begin{align*}
A &\coloneqq \mathrm{S}^*_\bk(S^{1,0} \otimes H_*(\bF^\times;\bk))\\
B &\coloneqq H_{*,*}(\overline{\gR}_\bk),
\end{align*}
and consider these as commutative ring objects in $\cat{Ch}_\bk^{\bN}$ (we will call these cdgas), which happen to have zero differential. As they have zero differential they admit a further weight grading, defined to coincide with the homological grading; we will denote the weight $w$ piece by $(-)^w$.  The $\mr{Tor}$-group in the proposition is then
\begin{equation*} 
  H_{n,p+q}(\bk \otimes^\bL_A B)^{q},
\end{equation*}
which we wish to show vanishes when $q< n$. The number $p$ is not significant for the rest of the argument so we shall change notation and show $H_{n,d}(\bk \otimes^\bL_{A} B)^{w} = 0$ for $w < n$.

We shall do this by manipulating $A$ and $B$ as cdgas. First, define
\[A_0 \coloneqq \mathrm{S}_\bk^*(S^{1,1}\sigma') \otimes A \quad \text{ with differential determined by } d(\sigma') = \sigma\]
and $B_0 \coloneqq A_0 \otimes_A B = \mathrm{S}_\bk^*(S^{1,1}\sigma') \otimes B$, whose differential is then given by the same formula. The weight gradings on $A$ and $B$ extend to these by declaring $\sigma'$ to have weight 0. If $P \overset{\sim}\to \bk$ is a cofibrant resolution as a right $A_0$-module, then it is also cofibrant as a right $A$-module, and we have
\[\bk \otimes^\bL_{A_0} B_0 \simeq P \otimes_{A_0} B_0 = P \otimes_{A_0} (A_0 \otimes_{A} B) \cong P \otimes_A B \simeq \bk \otimes^\bL_A B.\]
Thus $H_{n,d}(\bk \otimes^\bL_A B)^{w}= H_{n,d}(\bk \otimes^\bL_{A_0} B_0)^{w}$; we may show the latter vanishes.  

We have
\[A = \mathrm{S}^*_\bk(S^{1,0} \otimes H_*(\bF^\times;\bk)) = \mathrm{S}^*_\bk(S^{1,0}\sigma) \otimes \mathrm{S}^*_\bk(S^{1,0} \otimes \tilde{H}_*(\bF^\times;\bk)),\]
a free module over $\mathrm{S}^*_\bk(S^{1,0}\sigma) = \bk[\sigma]$. As the chain complex $\mathrm{S}_\bk^*(S^{1,1}\sigma') \otimes \mathrm{S}_\bk^*(S^{1,0}\sigma)$ with the differential $d(\sigma') = \sigma$ is quasi-isomorphic to $\bk$, we find that
\[A_0 \simeq \mathrm{S}^*_\bk(S^{1,0} \otimes \tilde{H}_*(\bF^\times;\bk)).\]
This cdga has zero differential, and vanishes in bidegrees $(n,d)$ with $d < n$. Furthermore $H_{n,d}(A_0)$ is concentrated in weight $d$.

We do not know that $B$ is a free $\mathrm{S}^*_\bk(S^{1,0}\sigma)$-module, but
we have a long exact sequence
\[H_{n-1,d}(\overline{\gR}_\bk) \overset{\sigma \cdot }\lra H_{n,d}(\overline{\gR}_\bk) \lra H_{n,d}(B_0) \lra H_{n-1,d-1}(\overline{\gR}_\bk) \overset{\sigma \cdot }\lra H_{n,d-1}(\overline{\gR}_\bk).\]
which shows that $H_{n,d}(B_0)$ is a sum of parts of weights $d$ and $d-1$. It follows from the theorem of Suslin (Theorem~\ref{thm:NS}) that $H_{n,d}(B_0)=0$ for $d<n$, and that $H_{n,n}(\overline{\gR}_\bk) \to H_{n,n}(B_0)$ is surjective so the latter has weight precisely $n$. That is, for $d = n$ there is no weight $d-1$ part.

Therefore  $H_{n,d}(A_0)$ and $H_{n,d}(B_0)$ both vanish for $d < n$ and are both concentrated in weights $\geq n$. These properties are preserved by tensor product, so for $s \geq 0$ the object
\begin{equation*}
  (H_{*,*}(A_0)^{\otimes s} \otimes H_{*,*}(B_0))
\end{equation*}
is concentrated in bidegrees $(n,d)$ with $d \geq n$, and in weights $\geq n$.  As $s$ varies, these tensor powers form the $E^1$ page of a spectral sequence converging to the homology of $\bk \otimes^\bL_{A_0} B_0 \simeq \bk \otimes^\bL_{A} B$.  Therefore $H_{n,d}(\bk \otimes^\bL_{A} B)^w = 0$ for $w < n$, as claimed.
\end{proof}

It follows that $E^2_{k,0,k}$ and $E^2_{k,1,k}$ must survive the spectral sequence, and so must be subquotients of $H_{k,d}(B(\bk, \overline{\gE_\infty(S^{1,0} \otimes \bk[B\bF^\times])}, \overline{\gR}_\bk))$ with $d=k$ or $d=k+1$ respectively.  Exactness of~\eqref{eq:34} for $k \geq 3$ when $\max(6,(k-1)!)$ is invertible in $\bk$ is completed by showing that these groups vanish.  In fact we prove the following stronger statement, which finishes the argument as we have assumed that $(k-1)!$ is invertible in $\bk$.

\begin{proposition}\label{prop:MirzaiiFinalStep}
  If $6 \in \bk^\times$, then $H_{n,d}(B(\bk, \overline{\gE_\infty(S^{1,0} \otimes \bk[B\bF^\times])}, \overline{\gR}_\bk))=0$ for $d < \tfrac{3}{2}n$.
\end{proposition}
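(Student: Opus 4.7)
The plan is to apply the canonical multiplicative filtration of Proposition~\ref{prop:CanMultFiltOnBar} to the map $\gA_\bk \coloneqq \gE_\infty(S^{1,0} \otimes \bk[B\bF^\times]) \to \gR_\bk$. This produces a spectral sequence
\[E^1_{n,p,q} = H_{n,p+q,q}\left(\gE_\infty^+\left((-1)_* \, Q^{E_\infty}_\bL(\gR_\bk)/Q^{E_\infty}_\bL(\gA_\bk)\right)\right) \Longrightarrow H_{n,p+q}\left(B(\bk, \overline{\gA_\bk}, \overline{\gR_\bk})\right).\]
Because $\gA_\bk$ is freely generated in rank~$1$ and the map $\gA_\bk \to \gR_\bk$ induces an isomorphism on rank-one $E_\infty$-homology (both sides equal $H_*(B\bF^\times;\bk)$), the relative derived indecomposables are supported in ranks $n \geq 2$; hence an $s$-fold product lives in rank $\geq 2s$, only finitely many filtrations contribute at each fixed $n$, and the spectral sequence converges strongly.

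First I would verify that the generators $H^{E_\infty}_{n,d}(\gR_\bk, \gA_\bk)$ vanish for $d < \tfrac{3}{2}n$. For $n \geq 2$ this group agrees with $H^{E_\infty}_{n,d}(\gR_\bk)$, which by Corollary~\ref{cor:einfty-hom-indec} and the universal coefficient theorem vanishes for $d < 2n-2$ and, on the line $d = 2n-2$, equals $\bk/p$ when $n = p^k$ for some prime $p$ and $0$ otherwise. When $n \geq 4$ one has $2n-2 \geq \tfrac{3}{2}n$, placing these contributions on or above the target line. The only problematic cases are the classes $\bk/2$ at $(n,d) = (2,2)$ and $\bk/3$ at $(3,4)$, both lying strictly below $d = \tfrac{3}{2}n$; both vanish under the hypothesis $6 \in \bk^\times$. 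Hence every generator in the associated graded sits in bidegrees with $d \geq \tfrac{3}{2}n$.

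Next I would propagate this vanishing through the structure of $H_{*,*,*}(\gE_\infty^+(M))$, which is generated by products and, in positive characteristic $p$, by Dyer--Lashof operations $Q^s$ and $\beta Q^s$ at $p$. A product of classes in bidegrees $(n_i, d_i)$ with $d_i \geq \tfrac{3}{2}n_i$ has bidegree $(\sum n_i, \sum d_i)$ with $\sum d_i \geq \tfrac{3}{2}\sum n_i$, so products respect the line. Since $6 \in \bk^\times$, only operations at primes $p \geq 5$ occur; for $x$ in bidegree $(n,d)$ with $d \geq \tfrac{3}{2}n$, the class $Q^s(x)$ (defined for $s \geq d/2$) lies in bidegree $(pn, d+2s(p-1))$ with $d+2s(p-1) \geq dp \geq \tfrac{3}{2}pn$, while $\beta Q^s(x)$ (defined for $2s > d$) lies in degree $\geq dp + p - 2 \geq \tfrac{3}{2}pn$ since $p \geq 2$. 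Induction over Dyer--Lashof--product word length then confines the $E^1$-page to tridegrees with $p+q \geq \tfrac{3}{2}n$, and strong convergence yields the desired statement.

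I expect the Dyer--Lashof analysis to be the delicate step, because the estimate is tight at small ranks. Potential cells at $(2,3)$ coming from the pre-Bloch group $\mathfrak{p}(\bF)$ sit exactly on the line $d = \tfrac{3}{2}n$, so the bounds above must be sharp enough to keep iterated operations strictly above the line. This tightness also explains the hypothesis $6 \in \bk^\times$: in characteristic $3$ the $\bk/3$-generator at $(n,d) = (3,4)$ would reintroduce a cell strictly below the line, collapsing the estimate.
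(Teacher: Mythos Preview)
Your proof is correct and reaches the same conclusion, but it takes a more explicit route than the paper. The paper's proof is a two-line application of a black-box result, Theorem~$E_k$.15.4 from the companion paper, after observing from Figure~\ref{fig:eInfHomology} that $H^{E_\infty}_{n,d}(\gR_\bk,\gA_\bk)=0$ for $d < \tfrac{3}{2}n$ when $6 \in \bk^\times$. You instead invoke Proposition~\ref{prop:CanMultFiltOnBar} directly and unpack by hand what that black box does: verify the same vanishing of relative $E_\infty$-homology (your use of Corollary~\ref{cor:einfty-hom-indec} here is equivalent to reading Figure~\ref{fig:eInfHomology}), then check that products and Dyer--Lashof operations at primes $p \geq 5$ preserve the slope-$\tfrac{3}{2}$ line in the $E^1$-page. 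This is essentially the content behind Theorem~$E_k$.15.4 in this special case. Your approach buys self-containment within the present paper at the cost of the Dyer--Lashof bookkeeping; the paper's approach is shorter but relies on the companion reference. Both are valid, and your slope estimates for $Q^s$ and $\beta Q^s$ are correct.
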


\begin{proof}
Recall that we write $\gA_\bk = \gE_\infty(S^{1,0} \otimes \bk[B\bF^\times])$. From Figure \ref{fig:eInfHomology} we see that $H^{E_\infty}_{n,d}(\gR_\bk,\gA_\bk)=0$ for $\frac{d}{n} < \tfrac{3}{2}$, using the assumption that 2 and 3 are invertible in $\bk$. We can therefore apply Theorem $E_k$.15.9 to $\gA_\bk \to \gR_\bk$ with $\rho(n) = \tfrac{3}{2} n$, $\gM = \bk$, and $\mu(n) = \tfrac{3}{2} n$, to conclude that $H_{n,d}(B(\bk, \overline{\gA}_\bk, \overline{\gR}_\bk))=0$ for $d < \tfrac{3}{2}n$ as required.
\end{proof}

\begin{proof}[Proof of Theorem~\ref{thm:WeakInj}]
  This follows by induction when~\eqref{eq:34} is exact, as outlined above Remark~\ref{rem:Mirzaii-remarks}.  The base cases are $n=1$ and $n=2$, which are easy.  (This argument does not cover $n=3$ when $\mr{char}(\bk) = 3$.)
\end{proof}

\section{Further applications}\label{sec:AppFurther} Having discussed applications to homological stability, we finish with three applications of a different nature.

\subsection{Improved range for the Milnor conjecture for $\mr{GL}_n$}  \label{sec:milnor} Let $G$ be a Lie group, then we can compute its homology both as a topological group and as a discrete group. These are given by the homology of the classifying spaces $BG^\mr{top}$ and $BG$ respectively. (We use a superscript $\mr{top}$ to denote the Lie group as the topological group, as we have been using $\mr{GL}_n(-)$ as notation for the discrete group in the entire paper.) The canonical continuous homomorphism $G \to G^\mr{top}$ induces a map on homology with $\bk$-coefficients
\[H_*(BG;\bk) \lra H_*(BG^\mr{top};\bk).\]

Milnor has shown \cite[Theorem 1]{milnor-lie} that if $G^\mr{top}$ has finitely many components and $\bk$ is finite then this map is split surjective, and has conjectured that it is an isomorphism for any Lie group when $\bk$ is finite \cite[\S 1]{milnor-lie}. Suslin has proved that this for $\mr{GL}_n(\bC)$ as well as $\mr{GL}_n(\bR)$, in the limit as $n \to \infty$: the maps
\begin{align*}H_*(B\mr{GL}_\infty(\bR);\bk) &\lra H_*(B\mr{GL}_\infty(\bR)^\mr{top};\bk), \\ H_*(B\mr{GL}_\infty(\bC);\bk) &\lra H_*(B\mr{GL}_\infty(\bC)^\mr{top};\bk),\end{align*}
are isomorphisms for all degrees $*$ when $\bk$ is finite \cite[Corollary 4.8]{suslinlocal}. Using homological stability results this implies that the Milnor conjecture is true for $\mr{GL}_n(\bC)$ and $\mr{GL}_n(\bR)$ in a range of degrees going to infinity with $n$. For example, the results of Nesterenko and Suslin imply it is true in the range $* < n$. Our Theorem \ref{thm:alg-closed-1} implies that for all primes $p$ the map
\[H_d(\mr{GL}_n(\bC);\bZ/p) \lra H_d(\rm{GL}_\infty(\bC);\bZ/p)\]
is an isomorphism for $d < \tfrac{5}{3}n-1$ so we deduce that the Milnor conjecture holds for $\mr{GL}_n(\bC)$ in this range of degrees too. We obtain the same result for $\bR$ for odd primes $p$, using Remark \ref{rem:k-theory-r}.

\begin{remark}Morel has proven that the Milnor conjecture is equivalent to several other conjectures in \'etale cohomology and $\bA^1$-homotopy theory, and the same relations hold in a range  \cite{morel}.\end{remark}

\subsection{Vanishing results for Steinberg homology}\label{sec:steinberg-vanishing}

Conjectures of Church, Farb and Putman \cite{churchfarbputman} as well as applications in algebraic $K$-theory such as \cite{quillenintegers} involve the homology of $\mr{GL}_n(\R)$ with coefficients in the Steinberg module $\mr{St}(\R^n)$. Combining Theorem \ref{thmcor:BlockvsFlag} and Theorem \ref{thm:split-building-ek}, we see that if $(\R, \bk)$ satisfies the Nesterenko--Suslin property then there is an isomorphism
\begin{equation}\label{eqn:st-e1} H^{E_1}_{n,d}(\gR_\bk) \cong H_{d-n+1}(\mr{GL}_n(\R);\mr{St}(\R^n) \otimes \bk),\end{equation}
so the homology of the Steinberg module can be interpreted as $E_1$-homology of $\gR_\bk$. 

The following vanishing result for $E_1$-homology follows from our vanishing result for $E_2$-homology:

\begin{theorem}\label{thm:VanishSteinbergHomology}
If $\R$ is a connected semi-local ring with all residue fields infinite, then $H^{E_1}_{n,d}(\gR_\bZ) = 0$ for $d<\tfrac{3}{2}(n-1)$.
\end{theorem}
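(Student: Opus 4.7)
Via the isomorphism $H^{E_1}_{n,d}(\gR_\bZ) \cong H_{d-n+1}(\mr{GL}_n(\R); \mr{St}(\R^n))$ from \eqref{eqn:st-e1}, the theorem amounts to proving the vanishing of $H^{E_1}_{n,d}(\gR_\bZ)$ for $d < \tfrac{3}{2}(n-1)$. My plan is to combine the sharp slope-$2$ vanishing of $H^{E_2}_{n,d}(\gR_\bZ)$ from Theorem~\ref{thm:E2Homology} (and its extension to connected semi-local rings with infinite residue fields in Section~\ref{sec:local-semi-local}) with the bar spectral sequence that computes $E_2$-homology from $E_1$-homology.

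First I would set up this spectral sequence. Using $\tilde{B}^{E_2}(\gR_\bZ^+) \simeq \tilde{B}^{E_1}(\tilde{B}^{E_1}(\gR_\bZ^+))$ and $H_{n,d}(\tilde{B}^{E_1}(\gR_\bZ^+)) \cong H^{E_1}_{n,d-1}(\gR_\bZ)$, filtering the outer bar construction by simplicial degree produces a spectral sequence
\[E^1_{n,p,q} = \bigl((\Sigma H^{E_1}(\gR_\bZ))^{\otimes p}\bigr)_{n,q} \Longrightarrow H^{E_2}_{n,p+q-2}(\gR_\bZ),\]
with $d^r\colon E^r_{n,p,q} \to E^r_{n,p-r,q+r-1}$. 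I would then induct on $n$, using as base input the vanishing $H^{E_1}_{m,m-1}(\gR_\bZ) = H_0(\mr{GL}_m(\R); \mr{St}(\R^m)) = 0$ for $m\geq 2$, obtained by taking $\mr{GL}_m(\R)$-coinvariants of the Lee--Szczarba-type resolution of Section~\ref{sec:resol-steinb-modul}.

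For the inductive step, assume the vanishing for ranks $m < N$, and suppose for contradiction that $H^{E_1}_{N,d}(\gR_\bZ) \neq 0$ for some $d < \tfrac{3}{2}(N-1)$. The resulting class in $E^1_{N,1,d+1}$ converges to $H^{E_2}_{N,d}(\gR_\bZ)=0$ (as $d < 2(N-1)$) and so must die. Outgoing differentials land in $p \leq 0$ and hence vanish. For an incoming $d^r\colon E^r_{N,1+r,d+2-r} \to E^r_{N,1,d+1}$ the source is a sum of tensor products $\bigotimes_{i=1}^{r+1} H^{E_1}_{a_i,d_i}$ with $\sum a_i = N$ and all $a_i < N$; the inductive hypothesis forces $d_i \geq \tfrac{3}{2}(a_i-1)$ for the factors with $a_i \geq 2$, and summing gives $\sum d_i \geq \tfrac{3}{2}(N-r-1)$, whence $d \geq \tfrac{3N+r-5}{2}$. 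For $r \geq 2$ this contradicts $d < \tfrac{3}{2}(N-1)$, and for $r=1$ the same happens when $N$ is odd by integrality of $d$.

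The main obstacle is the remaining borderline case, $N$ even with $d = \tfrac{3N-4}{2}$, where the potential $d^1$-differential from $E^1_{N,2,d+1}$ is the degree-$1$ pairing on $H^{E_1}(\gR_\bZ)$ induced by the $E_2$-shuffle product on $\tilde{B}^{E_1}(\gR_\bZ^+)$. To rule out surjectivity of $d^1$ onto the hypothetical class I plan to exploit the $E_\infty$-structure of $\gR_\bZ$, which forces this secondary pairing to be highly constrained: in particular, both tensor factors must have odd rank and lie exactly on the slope-$\tfrac{3}{2}$ line, so graded antisymmetry of the pairing combined with a separate induction along that line (in the spirit of the Koszul-type arguments of Miller--Nagpal--Patzt) should annihilate the image. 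Closing this borderline case is the technical heart of the argument; once it is done, no differential can kill the class, yielding the required contradiction.
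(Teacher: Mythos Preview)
Your argument has a genuine gap at precisely the point you identify as ``the technical heart''. In the borderline case ($N$ even, $d = \tfrac{3N-4}{2}$), the $d^1$-differential you must control is the multiplication on $H_{*,*}(\tilde B^{E_1}(\gR_\bZ))$, i.e.\ the degree-$+1$ product on $E_1$-homology. Your plan to kill its image by ``graded antisymmetry'' does not work: the nonzero source terms are tensor products $H^{E_1}_{a,\frac{3(a-1)}{2}}\otimes H^{E_1}_{b,\frac{3(b-1)}{2}}$ with $a+b=N$ and $a,b$ odd, and these lie \emph{outside} the inductive vanishing range (the bound $d_i<\tfrac32(a_i-1)$ fails at equality). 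Graded commutativity only says the $(a,b)$ and $(b,a)$ summands have the same image up to sign; it does not force that image to be zero, and already at $N=4$ the source is $H^{E_1}_{1,0}\otimes H^{E_1}_{3,3}$, where the factors have different ranks and no antisymmetry argument applies. Nothing in your proposal rules out this product being surjective, in which case $E^2_{N,1,d+1}=0$ and no contradiction arises. (A secondary issue: over $\bZ$ your identification of the $E^1$-page as a tensor product ignores K\"unneth $\mathrm{Tor}$ terms.)

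The paper's proof takes a completely different route. Rather than running the $E_1\!\to\! E_2$ bar spectral sequence downward, it uses the $E_\infty$-vanishing $H^{E_\infty}_{n,d}(\gR_\bZ)=0$ for $d<2(n-1)$ to build a CW-$E_\infty$ approximation $\gE_\infty(X)\simeq\gR_\bZ$ with all cells on or above the slope-$2$ line, and then computes $B^{E_1}(\gE_\infty^+(X))\simeq\gE_\infty^+(\Sigma X)$ directly. Over $\bQ$ (or over $\bZ_{(p)}$ for $p\geq 5$) the homology of $\gE_\infty^+(S^{1,1}\bar\sigma)$ already lies in the desired range, and the remaining generators in $\Sigma\tilde X$ have slope $\geq\tfrac32$, which finishes the argument. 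The primes $2$ and $3$ are genuinely delicate: the classes $\bar\sigma^2$ (at $p=2$) and $(\beta Q^1_3(\bar\sigma))^k$ (at $p=3$) fall below the slope-$\tfrac32$ line, and the paper kills them by building in the relations $(Q^1_2)_\bZ(\sigma)=\sigma\cdot a$ and $(\beta Q^1_3)_\bZ(\sigma)=\sigma\cdot b$ from Lemma~\ref{lem:SNrelations} before taking $B^{E_1}$, then analysing a cell-attachment spectral sequence. These relations are exactly the input your approach lacks; no amount of formal antisymmetry will substitute for them.
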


Under the isomorphism \eqref{eqn:st-e1} this corresponds to the vanishing of the homology groups $H_{d}(\mr{GL}_n(\R);\mr{St}(\R^n))$  for $d< \tfrac{1}{2}(n-1)$, which is Theorem~\ref{thm:SteinbergHomology}. In the infinite field case this recovers the main result of \cite{ashputmansam} for general linear groups (we did the case of finite fields in \cite[Section 7]{e2cellsIII}).  See \cite[Section 6]{millernagpalpatzt} for another argument in the case of fields. The result for connected semi-local rings with all residue fields infinite is new.

\begin{proof}[Proof of Theorem \ref{thm:VanishSteinbergHomology}]
By Theorem \ref{thm:E2Homology}, or more generally Theorem \ref{thm:Solomon--Tits-local} combined with Theorem \ref{thmcor:BlockvsFlag}, under the given assumptions we have that $H_{n,d}^{E_2}(\gR_\bZ)=0$ for $d< 2(n-1)$. Thus we also have $\smash{H_{n,d}^{E_\infty}(\gR_\bZ)}=0$ for $d< 2(n-1)$, by Theorem $E_k$.14.4. With $\bQ$-coefficients we can then apply Theorem $E_k$.14.7, giving that $\smash{H^{E_1}_{n,d}(\gR_\bQ)} = 0$ for $d<\tfrac{3}{2}(n-1)$ as claimed. To obtain the claimed result we will explain how to adapt this argument to $\bZ_{(p)}$-coefficients, spelling out and refine the claims made in Remark $E_k$.14.8.

Note that as $S^{0,0} \oplus \Sigma Q^{E_1}_\bL(\gR_\bZ) \simeq B^{E_1}(\gR^+_\bZ)$, it is equivalent to show that $H_{n,d}(B^{E_1}(\gR^+_\bZ))=0$ for $d-1 < \tfrac{3}{2}(n-1)$.

First suppose that $p \geq 5$. As in the proof of Theorem $E_k$.14.6, by finding a suitable CW-approximation to $\gR_\bZ$ and filtering by skeleta we can reduce to the case of $\gE_\infty(X)$ where $X$ only has $(n,d)$-cells with $d \geq 2(n-1)$. In fact we can arrange that $X = S^{1,0}_\bZ \sigma \oplus \tilde{X}$ where $\tilde{X}$ only has $(n,d)$-cells satisfying $d \geq 2(n-1)$ and $d \geq n$. By Theorem $E_k$.13.8 we have\[B^{E_1}(\gE_\infty^+(X)) \simeq \gE_\infty^+(\Sigma X) \simeq \gE_\infty^+(S^{1,1}_\bZ \bar{\sigma}) \otimes \gE_\infty^+(\Sigma\tilde{X}),\]
and $\Sigma X$ only has $(n,d)$-cells satisfying $d \geq \tfrac{3}{2} n$, so $\gE_\infty^+(\Sigma\tilde{X})$ does too. On the other hand inspecting the description of the homology of free $E_\infty$-algebras (see Section $E_k$.16.3) we see that $H_{n,d}(\gE_\infty^+(S^{1,1} \bar{\sigma});\bF_p)=0$ for $d-1 < \tfrac{3}{2}(n-1)$; this is where we use that $p \geq 5$. The integral homology of $\gE_\infty^+(S^{1,1} \bar{\sigma})$ has finite type, so this implies that $H_{n,d}(\gE_\infty^+(S^{1,1} \bar{\sigma});\bZ_{(p)})=0$ for $d-1 < \tfrac{3}{2}(n-1)$. By the cell-attachment spectral sequence (Corollary $E_k$.10.17), attaching further $E_\infty$-$(n,d)$-cells with $d \geq \tfrac{3}{2}n$ cannot create any homology in these degrees, giving the required vanishing range for $\bZ_{(p)}$-homology.

For $p=3$ this argument does not quite work, because the classes $(\beta Q^1_3(\bar{\sigma}))^k$ of bidegree $(3k,4k)$ have $\tfrac{4k-1}{3k-1} \geq \tfrac{3}{2}$ as soon as $k \geq 2$. To fix this we shall use the fact that we know, from Lemma \ref{lem:SNrelations}, the relation $(\beta Q^1_3)_\bZ(\sigma) = \sigma \cdot b$ in the homology of $\gR_{\bZ}$. Thus we can obtain $\gR_{\bZ}$ from
\[\gE_\infty(S^{1,0} \sigma \oplus S^{2,4} b) \cup^{E_\infty}_{(\beta Q^1_3)_\bZ(\sigma) - \sigma \cdot b} D^{3,5}\rho_3\]
by attaching $(n,d)$-cells satisfying $d \geq 2(n-1)$ and $d \geq n$. As above, it is then enough to show that the $\bZ_{(3)}$-homology of 
\begin{equation}\label{eq:SmallModelAt3}
B^{E_1}(\gE_\infty^+(S^{1,0} \sigma \oplus S^{2,4} b) \cup^{E_\infty}_{(\beta Q^1_3)_\bZ(\sigma) - \sigma \cdot b} D^{3,5}\rho_3)
\end{equation}
vanishes in bidegrees $(n,d)$ with $d-1 < \tfrac{3}{2}(n-1)$. As this has finite type it is enough to show its $\bF_3$-homology vanishes in this range of degrees. We have
\begin{equation}\label{eq:BarOnce}
B^{E_1}(\gE_\infty^+(S^{1,0} \sigma \oplus S^{2,4} b)) \simeq \gE_\infty^+(S^{1,1} \bar{\sigma} \oplus S^{2,5} \bar{b})
\end{equation}
and \eqref{eq:SmallModelAt3} is obtained from this by attaching an $E_\infty$-cell along the map
\[\bar{\rho}_3 \colon S^{3,5} = \Sigma S^{3,4} \xrightarrow{\Sigma \rho_3}  \Sigma \gE_\infty^+(S^{1,0} \sigma \oplus S^{2,4} b) \lra B^{E_1}(\gE_\infty^+(S^{1,0} \sigma \oplus S^{2,4} b)),\]
which we must determine.

\vspace{1ex}

\noindent \textbf{Claim}. Under the equivalence \eqref{eq:BarOnce} we have $\bar{\rho}_3 = (\beta Q^1_3)_\bZ(\bar{\sigma})$ up to units.

\vspace{1ex}

Here $(\beta Q^1_3)_\bZ(\bar{\sigma}) \in H_{3,5}(\gE_\infty^+(S^{1,1} \bar{\sigma}))$ denotes the unique class which reduces to $\beta Q^1_3(\bar{\sigma})$ modulo 3. 

\begin{proof}[Proof of claim]
For any $E_1$-algebra $\gA$, the map
\[s \colon H_{n,d}(\gA^+) \overset{\sim}\lra H_{n,d+1}(\Sigma \gA^+) \lra H_{n,d+1}(B^{E_1}(\gA^+))\]
annihilates classes which are $E_1$-decomposable, that is, decomposable with respect to the product on homology induced by the multiplication of the $E_1$-algebra structure. Thus $\bar{\rho}_3 = s(\rho_3) = s ( (\beta Q^1_3)_\bZ(\sigma))$, and we must show that this is $(\beta Q^1_3)_\bZ(\bar{\sigma})$ up to units. It is enough to check this after passing to $\bF_3$-coefficients, where we can consider the  bar spectral sequence (Theorem $E_k$.14.1) for ${B}^{E_1}(\gE_\infty^+(S^{1,0}\sigma))$. By the description of the homology of free $E_\infty$-algebras (see Section $E_k$.16.2) we have that $H_{*,*}(\gE_\infty^+(S^{1,0}\sigma);\bF_3)$ is given by
\[\mr{S}^*_{\bF_3}\left[Q^I_3(\sigma) \, \middle\vert \, \text{$I = (\epsilon_1,s_1,\ldots,\epsilon_r,s_r)$ admissible, $e(I)+\epsilon_1>0$}\right],\]
a free graded-commutative algebra. 

Thus the bar spectral sequence starts with the tensor product of exterior algebras on classes $sQ^I_3(\sigma)$ when $Q^I_3(\sigma)$ is as above and has even degree, and divided power algebras on classes $sQ^I_3(\sigma)$ when $Q^I_3(\sigma)$ is as above and has odd degree. It converges to $H_{*,*}({B}^{E_1}(\gE_\infty^+(S^{1,0}\sigma));\bF_3)$. (The notation $s$ used here is consistent with that above, as the map $\Sigma \gA^+ \to B^{E_1}(\gA^+)$ is the inclusion of the 1-skeleton of the reduced bar construction.) The class $s \beta Q^1_3(\sigma)$ has tridegree $(3,4,1)$, so total bidegree $(3,5)$, and up to units is the unique class of this total bidegree. As 
\[\beta Q^1_3(\bar{\sigma}) \neq 0 \in H_{3,5}(\gE_\infty^+(S^{1,1} \bar{\sigma});\bF_3) = H_{3,5}({B}^{E_1}(\gE_\infty^+(S^{1,0}\sigma));\bF_3)\]
this must indeed be equal to $s \beta Q^1_3(\sigma)$ up to units.
\end{proof}

Thus \eqref{eq:SmallModelAt3} is equivalent to
\[\gE_\infty^+(S^{1,1} \bar{\sigma} \oplus S^{2,5} \bar{b}) \cup^{E_\infty}_{(\beta Q^1_3)_\bZ(\bar{\sigma})} D^{3,6} \bar{\rho}_3,\]
and we must show that its $\bF_3$-homology vanishes in bidegrees $(n,d)$ with $d-1 < \tfrac{3}{2}(n-1)$. It is enough to show that the homology of 
\[\gE_\infty^+(S^{1,1} \bar{\sigma}) \cup^{E_\infty}_{(\beta Q^1_3)_\bZ(\bar{\sigma})} D^{3,6} \bar{\rho}_3\]
has this vanishing line, because adding the trivially attached $E_\infty$-$(2,5)$-cell $\bar{b}$ has the effect of applying $- \otimes \gE_\infty^+(S^{2,5} \bar{b})$, which preserves this vanishing line. To show this we use the cell-attachment spectral sequence, which starts with
\[E^1_{n,p,q} = H_{n,p+q,p}(\gE_\infty^+(S^{1,1,0} \bar{\sigma} \oplus S^{3,6,1} \bar{\rho}_3);\bF_3) = \mr{S}^*_{\bF_3}[Q^I_3(\bar{\sigma}), Q^J_3(\bar{\rho}_3)],\]
where $I$ and $J$ are admissible and satisfy the appropriate excess condition. The $d^1$-differential satisfies $d^1 (\bar{\rho}_3) = \beta Q^1_3(\bar{\sigma})$ and by Theorem $E_k$.16.8 it vanishes on all other generators. By graded-commutativity we have $\bar{\sigma}^2=0$. We can therefore write $(E^1_{*,*,*}, d^1)$ as 
\[\left(\Lambda_{\bF_3}[\bar{\sigma}, \bar{\rho}_3] \otimes \bF_3[ \beta Q^1_3(\bar{\sigma})], d \bar{\rho}_3 = \beta Q^1_3(\bar{\sigma})\right) \otimes \mr{S}^*_{\bF_3}[\text{classes of slope $\geq \tfrac{3}{2}$}].\]
Thus  $E^2_{*,*,*} = \Lambda_{\bF_3}[\bar{\sigma}] \otimes \mr{S}^*_{\bF_3}[\text{classes of slope $\geq \tfrac{3}{2}$}]$ which vanishes in bidegrees $(n,d)$ with $d-1 < \tfrac{3}{2}(n-1)$ as required.

For $p=2$ we proceed in a similar way, using the relation $(Q^1_2)_\bZ(\sigma) = \sigma \cdot a$ in the homology of $\gR_{\bZ}$. This is imposed by a cell $\rho_2$, and the analogue of the Claim above is that $\bar{\rho}_2 = \bar{\sigma}^2$, which may be shown in the same way. As above we reduce to showing that the $\bF_2$-homology of $\gE_\infty^+(S^{1,1} \bar{\sigma}) \cup^{E_\infty}_{\bar{\sigma}^2} D^{2,3} \bar{\rho}_2$ vanishes in bidegrees $(n,d)$ with $d-1 < \tfrac{3}{2}(n-1)$. The $E^1$-page of the cell attachment spectral sequence is now
\[(\bF_2[\bar{\sigma}, \bar{\rho}_2], d^1 \bar{\rho}_2 = \bar{\sigma}^2) \otimes \bF_2[\text{classes of slope $\geq \tfrac{3}{2}$}]\]
so the $E^2$-page is $\bF_2[\bar{\sigma}, \bar{\rho}_2^2]/(\bar{\sigma}^2)  \otimes \bF_2[\text{classes of slope $\geq \tfrac{3}{2}$}]$. As $\bar{\rho}_2^2$ also has slope $\geq \tfrac{3}{2}$, the $E^2$-page vanishes in bidegrees $(n,d)$ with $d-1 < \tfrac{3}{2}(n-1)$ as required.
\end{proof}

\begin{remark}
We believe that the unit in the Claim is equal to 1. When $\gA$ is an $E_k$-algebra,  the map
\[s \colon H_{n,d}(\gA^+;\bF_p) \overset{\sim}\lra H_{n,d+1}(\Sigma \gA^+;\bF_p) \lra H_{n,d+1}(B^{E_1}(\gA^+);\bF_p)\]
should commute with those Dyer--Lashof operations which are defined for $E_{k-1}$-algebras, cf.\ \cite[Lemma 7.2]{BokstedtTHHZ}, \cite[Proposition 5.9]{AngeltveitRognes}. 
\end{remark}

\begin{remark}In fact, one can try to compute the Steinberg homology with rational coefficients when $\R=\bF$ is a number field, using the results of Borel and Yang \cite{borelyang}. This is analogous to the computations we did for Steinberg homology of finite fields in \cite[Section 7]{e2cellsIII}, but seems to be algebraically more involved.
\end{remark}

\subsection{A question of Quillen and comparing rank filtrations}\label{sec:QuillenQuestion}

Finally, we pick up a thread from Remark \ref{rem:ns-other}. In \cite[p.203]{QuillenChar} Quillen has asked---when translated to the language of this paper---whether the pair $(\bZ, \bQ)$ might satisfy the Nesterenko--Suslin property.

\begin{theorem}
The pair $(\bZ, \bQ)$ does not satisfy the Nesterenko--Suslin property.
\end{theorem}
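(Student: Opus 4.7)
The plan is to argue by contradiction. Suppose $(\bZ, \bQ)$ satisfies the Nesterenko--Suslin property; I would run the $E_\infty$-cellular machinery of the paper for $(\R, \bk) = (\bZ, \bQ)$ and derive predictions for the rational homology of $\mr{GL}_n(\bZ)$ that conflict with known unstable computations of the cohomology of low-rank arithmetic groups.

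The first step is to check that the connectivity statements of Section~\ref{sec:local-semi-local}, whose proofs use the infinitude of the residue fields, can nevertheless be established for $\bZ$. Charney's theorem on the poset of direct summands of $\bZ^n$ supplies the wedge-of-$n$-spheres description of $D^1(\bZ^n)$ in place of Theorem~\ref{thm:Solomon--Tits-local}~(\ref{item:7}), and a bisimplicial adaptation of the same argument furnishes the corresponding connectivity of $D^2(\bZ^n)$. With these two inputs and the hypothetical Nesterenko--Suslin property in hand, the remaining arguments of Section~\ref{sec:local-semi-local} and the low-degree $E_\infty$-homology calculations of Section~\ref{sec:EinfHomologyCalc} go through verbatim with $(\bZ, \bQ)$ in place of $(\R, \bk)$.

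Since $K_n^M(\bZ) \cong \bZ/2$ for every $n \geq 1$ (generated by $\{-1,\ldots,-1\}$), its rationalisation vanishes, and Theorem~\ref{thm:NS} then forces $H_d(\mr{GL}_n(\bZ),\mr{GL}_{n-1}(\bZ);\bQ) = 0$ for all $d \leq n$. Iterating in degree $d = 5$, the stabilisation map $H_5(\mr{GL}_4(\bZ);\bQ) \to H_5(\mr{GL}_\infty(\bZ);\bQ)$ would be surjective; since Borel's theorem identifies the target with $\bQ$, generated by the class corresponding to $K_5(\bZ)\otimes\bQ = \bQ$, we would conclude $H_5(\mr{GL}_4(\bZ);\bQ) \neq 0$.

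This contradicts the explicit computation of the rational cohomology of $\mr{SL}_n(\bZ)$ for small $n$ by Elbaz-Vincent, Gangl and Soul\'{e} (via the Voronoi reduction theory of perfect quadratic forms), which shows in particular that $H^\ast(\mr{SL}_4(\bZ);\bQ)$ is supported in degrees $0$ and $3$; passing to $\bZ/2$-invariants under the determinant then gives $H_5(\mr{GL}_4(\bZ);\bQ) = 0$. The main obstacle is the first step: Lemma~\ref{lemprop:choose-enough-elements}, on which the connectivity results of Section~\ref{sec:local-semi-local} rest, requires the general-position hypothesis that all residue fields are infinite, and for $\bZ$ this must be replaced with an adaptation of Charney's summand-complex techniques for both $D^1(\bZ^n)$ and $D^2(\bZ^n)$; everything else in the argument is already provided by the machinery already developed.
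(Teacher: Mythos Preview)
Your proposal has two substantial gaps that prevent it from being a proof.

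First, the connectivity of $D^2(\bZ^n)$ is not established. You assert that ``a bisimplicial adaptation'' of Charney's argument will furnish this, but the paper's proof of the analogous statement (Theorem~\ref{thm:Solomon--Tits-local}(\ref{item:8})) for semi-local rings with infinite residue fields is not a formal consequence of the $D^1$ case: it relies on the homotopy-colimit decomposition of Proposition~\ref{propcor:resolve}, which in turn depends on the general position Lemma~\ref{lemprop:choose-enough-elements}, and that lemma genuinely fails for $\bZ$. No known variant of Charney's arguments produces the required $(2n-1)$-connectivity of $D^2(\bZ^n)$, and this is precisely the delicate point that motivated the constructions of Section~\ref{sec:local-semi-local}.

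Second, even granting that connectivity, the claim that ``the low-degree $E_\infty$-homology calculations of Section~\ref{sec:EinfHomologyCalc} go through verbatim'' is not correct. Theorem~\ref{thm:SuslinCalc}, on which the determination of Figure~\ref{fig:eInfHomology} rests, is stated and proved for rings with many units; $\bZ$ does not have many units (indeed $\bZ^\times = \{\pm 1\}$), and the spectral sequence analysis there uses acyclicity of the complex $C_\bullet(\R^2)$, which is proved by the same general-position argument. Without these inputs the proof of Theorem~\ref{thm:NS} does not go through for $\bZ$, so you cannot conclude the stability range needed to reach the $H_5(\mr{GL}_4(\bZ);\bQ)$ contradiction.

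By contrast, the paper's proof avoids $D^2$ and the Section~\ref{sec:EinfHomologyCalc} machinery entirely. It observes that $V \mapsto V \otimes_\bZ \bQ$ gives a homeomorphism $D^1(\bZ^n) \cong D^1(\bQ^n)$, so Solomon--Tits already applies and (under the assumed Nesterenko--Suslin property, via Theorem~\ref{thmcor:BlockvsFlag}) one obtains $H^{E_1}_{n,d}(\gR_\bQ) \cong H_{d-(n-1)}(\mr{GL}_n(\bZ);\mr{St}(\bZ^n)\otimes\bQ)$. For $n=3$ the left side vanishes, by the bar spectral sequence and Soul\'e's $\tilde H_*(\mr{SL}_3(\bZ);\bQ)=0$, while the right side in degree $d=5$ is $H_3(\mr{GL}_3(\bZ);\mr{St}(\bZ^3)\otimes\bQ) \cong H^0(\mr{GL}_3(\bZ);\bQ) = \bQ$ by Borel--Serre virtual duality. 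This gives the contradiction using only $E_1$-homology and classical inputs, with no appeal to $D^2$ or to the many-units hypothesis.
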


Rognes has pointed out to us that this may be proved by elementary means by directly computing the rational homology of the group  $U \leq \mr{GL}_3(\bZ)$ of upper triangular matrices, and finding that $H_3(U;\bQ) \neq 0$ (whereas the diagonal matrices have trivial rational homology). This calculation is given in \cite[Lemma 6.8]{RognesDeg5}.

We shall give a proof based on $E_k$-homology, which might illuminate the role of the Nesterenko--Suslin property in earlier sections.

\begin{proof}
If it did, then by Theorem \ref{thmcor:BlockvsFlag} we would have a $\bQ$-homology equivalence
  \[\tilde{D}^{1}(\bZ^n) \hcoker \mr{GL}_n(\bZ) \lra {D}^{1}(\bZ^n) \hcoker \mr{GL}_n(\bZ)\]
for each $n$. The map $V \mapsto V \otimes \bQ$ induces a homeomorphism 
${D}^{1}(\bZ^n) \cong {D}^{1}(\bQ^n)$, so by the Solomon--Tits theorem (Theorem \ref{thm:SolomonTits}) and Lemma~\ref{lem:splittingcomplex-vs-building}, ${D}^{1}(\bZ^n)$ is also a wedge of $n$-spheres, with top homology $\mr{St}(\bZ^n) \cong \widetilde{H}_{n}({D}^{1}(\bZ^n))$.  Considering the $E_\infty$-algebra $\gR \simeq \bigsqcup_{n>0} B\mr{GL}_n(\bZ)$ constructed by the method of Section \ref{sec:contructing-r}, this would give an isomorphism as in \eqref{eqn:ek-homology-buildings}
\[H^{E_1}_{n,d}(\gR_\bQ) \cong H_{d-(n-1)}(\mr{GL}_n(\bZ) ; \mr{St}(\bZ^n)\otimes \bQ).\]
We will compute the two sides for $n=3$ by independent means, and show that this leads to a contradiction.

It is well known that $\tilde{H}_*(\mr{SL}_2(\bZ);\bQ)=0$, and Soul{\'e} has shown \cite[Theorem 4]{Soule} that $\tilde{H}_*(\mr{SL}_3(\bZ);\bQ)=0$ too. The same vanishing then holds for $\mr{GL}_2(\bZ)$ and $\mr{GL}_3(\bZ)$. Computing $E_1$-homology using the bar spectral sequence shows that $H^{E_1}_{3,*}(\gR_\bQ)=0$. 

On the other hand, $\mr{SL}_3(\bZ)$ is a virtual duality group of dimension $\binom{3}{2}=3$, and $\mr{St}(\bZ^3)$ is its rational dualising module. Using the splitting $\mr{GL}_3(\bZ) \cong \mr{SL}_3(\bZ) \times \bZ^\times$ where the second factor is given by $\begin{bsmallmatrix} -1 & 0 & 0 \\0 & -1 & 0\\ 0 & 0 & -1 \end{bsmallmatrix}$ which acts trivially on $\mr{St}(\bZ^3)$, $\mr{GL}_3(\bZ)$ is also a virtual duality group of dimension $3$, with the same rational dualising module $\mr{St}(\bZ^3)$. Thus $H_{3}(\mr{GL}_3(\bZ) ; \mr{St}(\bZ^3)\otimes \bQ) \cong H^{0}(\mr{GL}_3(\bZ);\bQ) = \bQ$, which gives the required contradiction.
\end{proof}

The discussion in this proof shows that Rognes' stable rank filtration of $K_*(\bZ)$ differs significantly from our split stable rank filtration. Firstly, it is shown in \cite{RognesDeg5} that a generator of $K_5(\bZ) \otimes \bQ = \bQ$ has filtration 3 where it represents a non-zero element in the associated graded with respect to the stable rank filtration. On the other hand, as we have seen in the proof above the homology groups $H_{n,d}(\gR_\bQ) = H_d(\mr{GL}_n(\bZ);\bQ)$ vanish for $n \leq 3$ and $d > 0$, and from this we can calculate the $E_\infty$-homology of $\gR_\bQ$ in this range, by iterated bar constructions:
\[H_{n,d}^{E_\infty}(\gR_\bQ) = \begin{cases}
\bQ & \text{ if $(n,d) = (1,0)$,}\\
0 & \text{ otherwise, for $n \leq 3$.}
\end{cases}\]
Using the fact that the $E_\infty$-homology is the homology of the associated graded of the split stable rank filtration, as in Remark \ref{rem:RkFiltComparison}, it follows that $K_5(\bZ) \otimes \bQ$ has filtration $> 3$ with respect to the split stable rank filtration.

In contrast, as we mentioned in Remark \ref{rem:ns-other}, Quillen has shown that $(\bZ[\tfrac{1}{p}], \bQ)$ satisfies the Nesterenko--Suslin property, and so as in Remark \ref{rem:RkFiltComparison} the split and ordinary stable rank filtrations of $K_*(\bZ[\tfrac{1}{p}]) \otimes \bQ$ coincide.  The localisation sequence in $K$-theory implies that the map $K_*(\bZ) \otimes \bQ \to K_*(\bZ[\tfrac{1}{p}]) \otimes \bQ$ is an isomorphism for $*>1$, and by functoriality of the stable rank filtration a generator of $K_5(\bZ[\frac1p]) \otimes \bQ$ is contained in filtration 3.  From the vanishing of $H_i(\mr{SL}_2(\bZ[\frac1p]);\bQ)$ for $i > 2$ (see e.g.\ \cite[Section II.1.4]{SerreTrees}) it may be deduced in a way similar to the above that $F_2 K_5(\bZ[\frac1p]) \otimes \bQ = 0$.  In the associated graded for the (split) stable rank filtration on $K_5(\bZ[\frac1p])\otimes \bQ$, a generator therefore shows up in filtration degree 3.  In particular, the isomorphism $K_5(\bZ)\otimes \bQ \to K_5(\bZ[\frac1p])\otimes \bQ$ is an isomorphism of filtered vector spaces with regards to the stable rank filtrations, but not with regards to the split stable rank filtrations.

\let\oldaddcontentsline\addcontentsline
\renewcommand{\addcontentsline}[3]{}

\bibliographystyle{amsalpha}
\bibliography{../../biblio}

\vspace{1cm}

\let\addcontentsline\oldaddcontentsline

\end{document}